\newtheorem{theorem}{Theorem}
\numberwithin{section}{part}
\newtheorem{corollary}{Corollary}
\newtheorem{lemma}{Lemma}
\newtheorem*{remark}{Remark}
\newtheorem*{definition}{Definition}
\newtheorem*{remarks}{Remarks}
\newenvironment{proof}[1][Proof]{\noindent\textbf{#1.} }{\ \rule{0.5em}{0.5em}}
\newcommand{\A}{\mathcal{A}}
\newcommand{\M}{\mathcal{M}}
\begin{document}

\title{Finiteness Principles for Smooth Selection}
\date{\today }
\author{Charles Fefferman, Arie Israel, Garving K. Luli \thanks{%
The first author is supported in part by NSF grant DMS-1265524, AFOSR
grant FA9550-12-1-0425, and Grant No 2014055 from the United States-Israel Binational Science Foundation (BSF). The third author is supported in part by NSF grant
DMS-1355968 and a start-up fund from UC Davis.}}
\maketitle

\section*{Introduction}

In this paper, we extend a basic finiteness principle \cite{bs-1994,f-2005},
used in \cite{fb1,fb2} to fit smooth functions $F$ to data. Our results
raise the hope that one can start to understand constrained interpolation
problems in which e.g. the interpolating function $F$ is required to be
nonnegative everywhere.

Let us set up notation. We fix positive integers $m$, $n$, $D$. We will work
with the function spaces $C^{m}(\mathbb{R}^{n},\mathbb{R}^{D})$ and $%
C^{m-1,1}(\mathbb{R}^{n},\mathbb{R}^{D})$ and their norms $\Vert F\Vert
_{C^{m}\left( \mathbb{R}^{n},\mathbb{R}^{D}\right) }$ and $\left\Vert
F\right\Vert _{C^{m-1,1}\left( \mathbb{R}^{n},\mathbb{R}^{D}\right) }$.
Here, $C^{m}\left( \mathbb{R}^{n},\mathbb{R}^{D}\right) $ denotes the space
of all functions $F:\mathbb{R}^{n}\rightarrow \mathbb{R}^{D}$ whose
derivatives $\partial ^{\beta }F$ (for all $\left\vert \beta \right\vert
\leq m$) are continuous and bounded on $\mathbb{R}^{n}$, and $%
C^{m-1,1}\left( \mathbb{R}^{n},\mathbb{R}^{D}\right) $ denotes the space of
all $F:\mathbb{R}^{n}\rightarrow \mathbb{R}^{D}$ whose derivatives $\partial
^{\beta }F$ (for all $\left\vert \beta \right\vert \leq m-1$) are bounded
and Lipschitz on $\mathbb{R}^{n}$. When $D=1$, we write $C^{m}\left( \mathbb{%
R}^{n}\right) $ and $C^{m-1,1}\left( \mathbb{R}^{n}\right) $ in place of $%
C^{m}\left( \mathbb{R}^{n},\mathbb{R}^{D}\right) $ and $C^{m-1,1}\left( 
\mathbb{R}^{n},\mathbb{R}^{D}\right) $.

Expressions $c\left( m,n\right) $, $C\left( m,n\right) $, $k\left(
m,n\right) $, etc. denote constants depending only on $m$, $n$; these
expressions may denote different constants in different occurrences. Similar
conventions apply to constants denoted by $C\left( m,n,D\right)$, $k\left(
D\right)$, etc.

If $X$ is any finite set, then $\#\left( X\right) $ denotes the number of
elements in $X$.

We recall the basic finiteness principle of \cite{f-2005}.

\begin{theorem}
\label{Th1}For large enough $k^{\#}=k\left( m,n\right) $ and $C^{\#}=C\left(
m,n\right) $ the following hold:

\begin{description}
\item[(A) $C^{m}\text{ FLAVOR}$] Let $f:E\rightarrow \mathbb{R}$ with $%
E\subset \mathbb{R}^{n}$ finite. Suppose that for each $S\subset E$ with $%
\#\left( S\right) \leq k^{\#}$ there exists $F^{S}\in C^{m}\left( \mathbb{R}%
^{n}\right) $ with norm $\left\Vert F^{S}\right\Vert _{C^{m}\left( \mathbb{R}%
^{n}\right) }\leq 1$, such that $F^{S}=f$ on $S$. Then there exists $F\in
C^{m}\left( \mathbb{R}^{n}\right) $ with norm $\left\Vert F\right\Vert
_{C^{m}\left( \mathbb{R}^{n}\right) }\leq C^{\#}$, such that $F=f$ on $E$.

\item[(B) $C^{m-1,1}\text{ FLAVOR} $] Let $f:E\rightarrow \mathbb{R}$ with $%
E\subset \mathbb{R}^{n}$ arbitrary. Suppose that for each $S\subset E$ with $%
\#\left( S\right) \leq k^{\#}$, there exists $F^{S}\in C^{m-1,1}\left( 
\mathbb{R}^{n}\right) $ with norm $\left\Vert F^{S}\right\Vert
_{C^{m-1,1}\left( \mathbb{R}^{n}\right) }\leq 1$, such that $F^{S}=f$ on $S$%
. Then there exists $F\in C^{m-1,1}\left( \mathbb{R}^{n}\right) $ with norm $%
\left\Vert F\right\Vert _{C^{m-1,1}\left( \mathbb{R}^{n}\right) }\leq C^{\#}$%
, such that $F=f$ on $E. $
\end{description}
\end{theorem}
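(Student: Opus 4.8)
The plan is to recast the problem in terms of $(m-1)$-jets and then to run the Glaeser-refinement machinery. Let $\mathcal{P}$ denote the vector space of real polynomials on $\mathbb{R}^{n}$ of degree $\leq m-1$, put $D'=\dim\mathcal{P}$, and write $J_{x}F\in\mathcal{P}$ for the $(m-1)$-jet of $F$ at $x$. For a finite $S\subseteq E$ and a constant $C>0$, the set of jets $\{J_{x}F : F\in C^{m}(\mathbb{R}^{n}),\ \|F\|_{C^{m}(\mathbb{R}^{n})}\leq C,\ F=f \text{ on } S\}$ is a compact convex subset of $\mathcal{P}$ (compact because $\|F\|_{C^{m}}\leq C$ bounds $\partial^{\beta}F(x)$ for $|\beta|\leq m-1$); intersecting over all such $S$ with $\#(S)\leq k$ defines $\Gamma(x,k,C)\subseteq\mathcal{P}$, again compact and convex, nonincreasing in $k$ and nondecreasing in $C$. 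The first step is to show $\Gamma(x,k,1)\neq\emptyset$ for every $x$, provided $(D'+1)\,k\leq k^{\#}$. By Helly's theorem in $\mathbb{R}^{D'}$ applied to this family of compact convex sets, it is enough that any $D'+1$ of the defining sets meet; but the intersection of the sets attached to $S_{1},\dots,S_{j}$ with $j\leq D'+1$ contains the jet-set attached to $S_{1}\cup\dots\cup S_{j}$, which has size $\leq(D'+1)k\leq k^{\#}$ and hence is nonempty by the hypothesis.

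\textbf{Glaeser refinement.} Next, introduce the Glaeser refinement: given a bundle $H=(H(x))_{x}$ of convex subsets of $\mathcal{P}$, let $\widetilde{H}(x)$ consist of those $P\in H(x)$ such that for every $\varepsilon>0$ there is $\delta>0$ with the property that every $y\in B(x,\delta)$ carries some $P_{y}\in H(y)$ with $|\partial^{\beta}(P-P_{y})(x)|\leq\varepsilon\,|x-y|^{m-|\beta|}$ for all $|\beta|\leq m-1$. Let $\Gamma^{\ell}$ be the bundle obtained from $\Gamma^{0}=\Gamma(\cdot,k_{0},1)$ by applying this operation $\ell$ times. Two facts drive the argument. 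First (Fefferman's finiteness of refinements, a dimension count on the ``rank'' of the family): iterating the Glaeser refinement a number $\ell^{\ast}=\ell^{\ast}(m,n)$ of times — of the order of $2D'+1$ — yields a \emph{Glaeser-stable} bundle, one equal to its own refinement. Second, a compactness (or absorption) estimate: if $\Gamma^{\ell}(x_{0})=\emptyset$ for some $\ell\leq\ell^{\ast}$ and some $x_{0}$, then unwinding the definition of refinement $\ell$ times exhibits a finite configuration of points of $\mathbb{R}^{n}$, of cardinality bounded by a constant $N(m,n)$, on which $f$ is not extendable by a $C^{m}$ function of norm $\leq C(m,n)$; each refinement only ``consumes'' boundedly many points, and each consumed jet can be realized on boundedly many points of $E$ by gluing the relevant local witnesses. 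Choosing $k^{\#}=C(m,n)$ so large that $k_{0}:=k^{\#}/(D'+1)$ dominates $N(m,n)$, this contradicts the hypothesis; hence the Glaeser-stable bundle $H^{\ast}:=\Gamma^{\ell^{\ast}}$ is nonempty at every point. Since $H^{\ast}(x)\subseteq\Gamma^{0}(x)\subseteq\Gamma(x,1,1)$, every $P\in H^{\ast}(x)$ satisfies $P(x)=f(x)$ whenever $x\in E$, and $H^{\ast}(x)$ lies in the unit ball of $\mathcal{P}$.

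\textbf{Extension from a Glaeser-stable bundle.} The last step is a bundle version of the Whitney extension theorem: a nonempty Glaeser-stable bundle $H^{\ast}$ with each $H^{\ast}(x)$ in the $C(m,n)$-ball of $\mathcal{P}$ admits a section $F\in C^{m}(\mathbb{R}^{n})$ with $\|F\|_{C^{m}(\mathbb{R}^{n})}\leq C^{\#}$ and $J_{x}F\in H^{\ast}(x)$ for all $x$. One proves this by taking a Whitney decomposition of $\mathbb{R}^{n}$, selecting a jet at the center of each cube — using Glaeser stability precisely to make the choices at neighbouring cubes mutually Whitney-compatible — interpolating each jet to a polynomial, and gluing the pieces with the associated partition of unity; the standard Whitney estimates, fed by the compatibility of neighbouring jets, yield $F\in C^{m}$ with the stated bound, and by construction $F=f$ on $E$. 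This proves case (A). Case (B) is identical with $C^{m-1,1}$ in place of $C^{m}$ throughout (so the jet-sets are cut out using Lipschitz bounds on $\partial^{m-1}F$ rather than uniform bounds on $\partial^{m}F$), and the restriction that $E$ be finite can be dropped: for infinite $E$ the sets $\Gamma(x,k,C)$ are still defined by intersecting over all \emph{finite} $S\subseteq E$ with $\#(S)\leq k$, and the only new ingredient is a routine weak-$\ast$ compactness argument, using that bounded subsets of $C^{m-1,1}$ are sequentially weak-$\ast$ compact on compact sets, to see that the relevant nested intersections of closed convex sets remain nonempty.

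\textbf{Main obstacle.} The two genuinely hard inputs are the finiteness of the number of Glaeser refinements needed to reach stability, whose proof is the conceptual core and rests on a dimension/rank count for the family of convex sets, and the Whitney extension theorem for Glaeser-stable bundles in the final step; everything else is careful bookkeeping of how the parameters $k$ and $C$ degrade — always by factors $C(m,n)$ — through the Helly argument, the finitely many refinements, and the final extension, which is what forces $k^{\#}$ and $C^{\#}$ to be the large constants $C(m,n)$ claimed in the statement.
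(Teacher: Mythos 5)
There is a genuine gap, and it is the central one. Your Glaeser refinement is the \emph{qualitative}, $\varepsilon$-$\delta$ one of Bierstone--Milman--Paw{\l}ucki and Fefferman: $P\in\widetilde{H}(x)$ iff for every $\varepsilon>0$ there is $\delta>0$ so that nearby $y$ carry jets $P_y$ with $|\partial^{\beta}(P-P_y)(x)|\le\varepsilon|x-y|^{m-|\beta|}$. That notion carries no norm parameter, which is why it is the right tool for the \emph{qualitative} Whitney extension problem (does a $C^m$ extension exist at all?) and the wrong one for the finiteness principle. In particular, your final step --- ``a nonempty Glaeser-stable bundle $H^{\ast}$ with each $H^{\ast}(x)$ in the $C(m,n)$-ball of $\mathcal{P}$ admits a section $F\in C^m(\mathbb{R}^n)$ with $\|F\|_{C^m}\le C^{\#}$'' --- is false as stated. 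Glaeser stability guarantees \emph{some} $C^m$ section, but the $\delta$ in the $\varepsilon$-$\delta$ condition can shrink arbitrarily fast as $\varepsilon\to 0$, so there is no uniform modulus of continuity on $\partial^{\alpha}F$ for $|\alpha|=m$, hence no bound on $\|F\|_{C^m}$. Boundedness of the jets in each $H^{\ast}(x)$ does not fix this; what is needed is quantitative Whitney-compatibility of jets at nearby points, and that is precisely what the qualitative refinement fails to give.

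The paper (following Fefferman 2005) uses a different, \emph{coarse quantitative} refinement: $P\in\Gamma_{l+1}(x,M)$ iff for each $y\in E$ there is $P'\in\Gamma_{l}(y,M)$ with $|\partial^{\beta}(P-P')(x)|\le C(m,n)\,M\,|x-y|^{m-|\beta|}$ for $|\beta|\le m-1$. Here the constant is fixed (not a shrinking $\varepsilon$), so the norm parameter $M$ is carried through the iteration. The hard result you would need in place of your last step is the paper's Theorem~\ref{Th5} (which is the Main Lemma of Fefferman's 2005 paper, and which this paper's Theorem~\ref{Th6} generalizes): for a universal $l_{\ast}(m,n)$, the quantitative refinement is absorbed, $\Gamma_{l_{\ast}}(x,M)\subset\Gamma(x,C_{\ast}M)$, i.e.\ every jet surviving $l_{\ast}$ rounds is actually the jet of a $C^m$ function of norm $\le C_{\ast}M$ agreeing with $f$ on all of $E$. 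That absorption estimate is the deep content; it is proved by a Calder\'on--Zygmund induction over ``$\mathcal{A}$-bases'', not by a rank/dimension count. Your two cited ``facts'' --- that $2D'+1$ iterations of $\varepsilon$-$\delta$ Glaeser refinement reach a fixed point, and that emptiness of $\Gamma^{\ell}(x_0)$ unwinds to a bounded witness set --- are both true in the qualitative setting, but neither one produces the quantitative inclusion $\Gamma_{l_{\ast}}(x,M)\subset\Gamma(x,C_{\ast}M)$ that closes the argument with the $C^{\#}$ bound. In short, your Helly reduction and your passage from finite $E$ to infinite $E$ in the $C^{m-1,1}$ case are fine and essentially match the paper, but the heart of the proof --- the quantitative absorption estimate --- is asserted, not proved, and your qualitative refinement is not a notion from which it can be derived.
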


Theorem \ref{Th1} and several related results were conjectured by Y. Brudnyi
and P. Shvartsman in \cite{bs-1994-a} and \cite{bs-1994} (see also \cite{pavel-1984,pavel-1986,pavel-1987}). The first nontrivial case $m=2$
with the sharp \textquotedblleft finiteness constant\textquotedblright\ $%
k^{\#}=3\cdot 2^{n-1}$ was proven by P. Shvartsman \cite{pavel-1982,pavel-1987}; see also \cite{pavel-1984, bs-2001}. The proof of Theorem \ref{Th1}
for general $m$, $n$ appears in \cite{f-2005}. For general $m$, $n$, the optimal $%
k^{\#}$ is unknown, but see \cite{bm-2007, s-2008}.

The proof \cite{pavel-1984,pavel-1987} of Theorem \ref{Th1} for $m=2$ was based on a generalization of the
following \textquotedblleft finiteness principle for Lipschitz selection" \cite{pavel-1986} for maps of metric spaces.

\begin{theorem}
\label{Th2}For large enough $k^{\#}=k^{\#}\left( D\right) $ and $%
C^{\#}=C^{\#}\left( D\right) $, the following holds. \newline
Let $X$ be a metric space. For each $x\in X$, let $K\left( x\right) \subset 
\mathbb{R}^{D}$ be an affine subspace in $\mathbb{R}^D$ of dimension $\leq d$. Suppose that for each $S\subset X$
with $\#\left( S\right) \leq k^{\#}$ there exists a map $F^{S}:S\rightarrow 
\mathbb{R}^{D}$ with Lipschitz constant $\leq 1$, such that $F^{S}\left(
x\right) \in K\left( x\right) $ for all $x\in S$.\newline
Then there exists a map $F:X\rightarrow \mathbb{R}^{D}$ with Lipschitz
constant $\leq C^{\#}$, such that $F\left( x\right) \in K\left( x\right) $
for all $x\in X$.
\end{theorem}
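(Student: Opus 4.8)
The plan is to argue by induction on $d$, the maximal dimension of the affine subspaces $K(x)$; this produces constants depending on $d$, hence on $D\ge d$, as required. A standard compactness/limiting argument — taking a limit of selections over finite subsets of $X$ along an ultrafilter on the directed family of finite subsets, with due care for the possible unboundedness of the $K(x)$ — reduces the statement to the case in which $X$ is \emph{finite}, with uniform constants; I will assume this reduction has been made, so every set-valued map below involves only finitely many ``test sets.'' The base case $d=0$ is immediate: each $K(x)$ is a single point $\kappa(x)$, the selection is forced to be $F=\kappa$, and applying the hypothesis to two-point sets $\{x,y\}$ gives $|\kappa(x)-\kappa(y)|\le\rho(x,y)$ (writing $\rho$ for the metric on $X$); so $F=\kappa$ works, with $k^{\#}=2$, $C^{\#}=1$. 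Assume now $d\ge1$ and that the theorem holds for affine subspaces of dimension $\le d-1$, with constants $k^{\#}(d-1)$, $C^{\#}(d-1)$.

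For the inductive step the main device is a \emph{refinement} of $x\mapsto K(x)$. For a finite $T\subset X$ and $x\in T$, let $\mathrm{Sel}_{\lambda}(T,x)\subset\mathbb{R}^{D}$ be the set of values $G(x)$ taken by maps $G:T\to\mathbb{R}^{D}$ of Lipschitz constant $\le\lambda$ with $G(t)\in K(t)$ for all $t\in T$; this set is convex and lies inside the affine subspace $K(x)$. Fixing suitable $\lambda_{1}=\lambda_{1}(d)\ge1$ and $k_{1}=k_{1}(d)$, define
\[
\sigma(x)\ :=\ \bigcap\bigl\{\,\mathrm{Sel}_{\lambda_{1}}(S\cup\{x\},x)\ :\ S\subset X,\ \#(S)\le k_{1}\,\bigr\}\ \subset\ K(x).
\]
The first claim is that $\sigma(x)\ne\varnothing$ for every $x$. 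Since $\dim K(x)\le d$, Helly's theorem (finite version) reduces this to showing that any $d+1$ of the sets being intersected have a common point; for test sets $S_{1},\dots,S_{d+1}$ this follows by applying the hypothesis to the single set $\{x\}\cup S_{1}\cup\dots\cup S_{d+1}$ — provided $k^{\#}(d)\ge 1+(d+1)k_{1}$ — since the resulting $1$-Lipschitz local selection, evaluated at $x$, lies in each $\mathrm{Sel}_{\lambda_{1}}(S_{i}\cup\{x\},x)$. Thus $x\mapsto\sigma(x)$ is a nonempty-convex-valued refinement of $K$: it keeps exactly those values at $x$ that survive as outputs of bounded-Lipschitz selections over all small neighborhoods of $x$.

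Now run a dichotomy on the ``effective dimension'' of $\sigma$. On the part of $X$ where $\sigma$ behaves like a genuinely $d$-dimensional body that varies Lipschitz-continuously in the Hausdorff metric, a classical Lipschitz selection from the space of compact convex sets — the Steiner point of a suitable bounded truncation of $\sigma(x)$, after quotienting out any direction in which $\sigma$ is uniformly unbounded — is itself a Lipschitz map with constant $C(D)$, giving a good selection there. On the part where $\sigma(x)$ instead collapses into an affine subspace $K'(x)$ of dimension $\le d-1$, we apply the inductive hypothesis to $x\mapsto K'(x)$; this requires first checking that $K'$ again satisfies a finiteness hypothesis — with a larger finiteness number, and with the Lipschitz bound $1$ replaced by some $C(d)$ (the latter mismatch is absorbed by rescaling the metric of $X$) — and this is the step that forces $k^{\#}(d)$ to be taken generously in terms of $k^{\#}(d-1)$ and $k_{1}$. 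Finally one glues the ``thick-region'' and ``thin-region'' selections along their interface, checks the Lipschitz estimate across it, and absorbs all constants into $C^{\#}(d)$; the finiteness number grows roughly geometrically in $d$, consistent with the value $3\cdot2^{n-1}$ in the $m=2$ application.

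The heart of the proof, and the expected main obstacle, is entirely in the inductive step, and specifically in two intertwined points: (i) the combinatorial and geometric bookkeeping in the Helly-based refinement — choosing $\lambda_{1}$ and $k_{1}$, and above all showing that the \emph{refined} (or collapsed) set-valued map inherits a finiteness property, which is what drives the growth of $k^{\#}(d)$; and (ii) making the ``thick versus thin'' dichotomy genuinely quantitative and uniform over an abstract metric space — one that carries no intrinsic length scale — since the ``thickness'' of $\sigma(x)$ must be measured against a scale function built from the data, and then gluing the two partial selections without losing control of the Lipschitz constant. The possible unboundedness of the $K(x)$ and of $\sigma(x)$ is a secondary technical nuisance, handled by the reduction to finite $X$ together with a basepoint normalization, and should not affect the structure of the argument.
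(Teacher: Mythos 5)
The paper does not supply a proof of Theorem~\ref{Th2}. It is stated purely as background, attributed to P.~Shvartsman (with the sharp constant $k^{\#}=2^{d+1}$ and later extensions to Hilbert and Banach spaces), and the machinery developed in the paper is aimed at the quite different Theorem~\ref{Th3}, set in $C^{m-1,1}(\mathbb{R}^n,\mathbb{R}^D)$ rather than Lipschitz maps on an abstract metric space. So there is no internal proof against which to compare your argument; I can only assess it on its own terms.

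On its own terms, your proposal is a plan rather than a proof, and the inductive step has a gap that is structural, not expository. You define the refinement $\sigma(x)$, show via Helly that it is nonempty, and propose a dichotomy: on the ``thick'' part of $X$ use a Steiner-point selection from $\sigma$, and on the ``thin'' part pass to the affine hull $K'(x)$ of $\sigma(x)$, which has dimension $\le d-1$, and invoke the inductive hypothesis. But the inductive hypothesis requires a finiteness hypothesis for the map $x\mapsto K'(x)$, and this is \emph{not} inherited from the hypothesis for $K$: the local selections $F^S$ of $K$ guaranteed by the hypothesis are in no way constrained to take values in $\sigma$, hence they need not take values in $K'$. Producing bounded-Lipschitz selections of the collapsed family $K'$ over small test sets is itself a refinement/Helly argument of the same kind, and it is precisely what must be proved to drive the induction; asserting it ``after checking'' leaves the inductive step unjustified. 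In addition, (i) the claim that $\sigma$ is Hausdorff-Lipschitz on its thick part is a substantive lemma, not a consequence of general principles, and (ii) the thick/thin dichotomy must be measured against a scale that the abstract metric space $X$ does not carry intrinsically, so both the construction of a scale function from the data and the gluing of the two partial selections along their interface are exactly where the Lipschitz estimate threatens to fail. You correctly flag all three of these as the main obstacles, but as written they are named, not resolved.
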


In fact, P. Shvartsman in \cite{pavel-1986} showed that one can take $k^\# = 2^{d+1}$ in Theorem \ref{Th2} and that the constant $k^\# = 2^{d+1}$ is sharp, see \cite{pavel-1992}.

P. Shvartsman also showed that Theorem \ref{Th2} remains valid when $\mathbb{R}^D$ is replaced by a Hilbert space (see \cite{s-2001}) or a Banach space (see \cite{pavel-2004}). 

It is conjectured in \cite{bs-1994} that Theorem \ref{Th2} should hold for any compact convex subsets $K(x) \subset \mathbb{R}^D$. In \cite{s-2002}, P. Shvartsman provided evidence for this conjecture: He showed that the conjecture holds in the case when $D = 2$ and in the case when $X$ is a finite metric space and the constant $C^\#$ is allowed to depend on the cardinality of $X$.

In this paper we prove finiteness principles for $C^{m}\left( \mathbb{R}^{n},%
\mathbb{R}^{D}\right) $-selection, and for $C^{m-1,1}\left( \mathbb{R}^{n},%
\mathbb{R}^{D}\right) $-selection, in particular providing a proof for the conjecture in \cite{bs-1994} for the case $X = \mathbb{R}^n$. 

\begin{theorem}
\label{Th3}For large enough $k^{\#}=k\left( m,n,D\right) $ and $%
C^{\#}=C\left( m,n,D\right) $, the following hold.

\begin{description}
\item[(A) $C^{m}$ FLAVOR] Let $E\subset \mathbb{R}^{n}$ be finite. For each $%
x\in E$, let $K\left( x\right) \subset \mathbb{R}^{D}$ be convex. Suppose
that for each $S\subset E$ with $\#\left( S\right) \leq k^{\#}$, there
exists $F^{S}\in C^{m}\left( \mathbb{R}^{n},\mathbb{R}^{D}\right) $ with
norm $\left\Vert F^{S}\right\Vert _{C^{m}\left( \mathbb{R}^{n},\mathbb{R}%
^{D}\right) }\leq 1$, such that $F^{S}\left( x\right) \in K\left( x\right) $
for all $x\in S$. \newline
Then there exists $F\in C^{m}\left( \mathbb{R}^{n},\mathbb{R}^{D}\right) $
with norm $\left\Vert F\right\Vert _{C^{m}\left( \mathbb{R}^{n},\mathbb{R}%
^{D}\right) }\leq C^{\#}$, such that $F\left( x\right) \in K\left( x\right) $
for all $x\in E$.

\item[(B) $C^{m-1,1}$ FLAVOR] For each $x\in \mathbb{R}^n$, let $K\left(
x\right) \subset \mathbb{R}^{D}$ be a closed convex set. Suppose that for
each $S\subset \mathbb{R}^{n}$ with $\#\left( S\right) \leq k^{\#}$, there
exists $F^{S}\in C^{m-1,1}\left( \mathbb{R}^{n},\mathbb{R}^{D}\right) $ with
norm $\left\Vert F^{S}\right\Vert _{C^{m-1,1}\left( \mathbb{R}^{n},\mathbb{R}%
^{D}\right) }\leq 1$, such that $F^{S}\left( x\right) \in K\left( x\right) $
for all $x\in S$.\newline
Then there exists $F\in C^{m-1,1}\left( \mathbb{R}^{n},\mathbb{R}^{D}\right) 
$ with norm $\left\Vert F\right\Vert _{C^{m-1,1}\left( \mathbb{R}^{n},%
\mathbb{R}^{D}\right) }\leq C^{\#}$, such that $F\left( x\right) \in K\left(
x\right) $ for all $x\in \mathbb{R}^{n}$.
\end{description}
\end{theorem}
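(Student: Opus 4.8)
The plan is to reduce the $C^m$ (resp.\ $C^{m-1,1}$) selection problem for convex-set-valued data to the scalar finiteness principle Theorem \ref{Th1}, by a combination of two devices: (i) a Helly-type argument in the target $\mathbb R^D$ to replace the convex sets $K(x)$ by finitely many scalar constraints at each point, and (ii) a careful bookkeeping of "Whitney fields" that lets one run the scalar theory coordinate-by-coordinate while respecting the convex constraints. Concretely, fix a point $x$ and consider the set $\mathcal K(x)$ of all jets (degree-$(m-1)$ Taylor polynomials, valued in $\mathbb R^D$) that are \emph{locally extendable} near $x$ to a bounded-norm selection meeting the constraints on nearby points; one shows this is itself a convex subset of the (finite-dimensional) space of jets. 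The core quantitative input is a bound on how the geometry of these jet-sets propagates from small subsets $S$ to all of $E$, and here is where the finiteness hypothesis enters.

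The key steps, in order, would be: First, set up the relevant local objects — at each $x\in E$ a convex "sublevel" set $\Gamma(x)$ of jets, defined so that membership encodes the existence of a bounded local selection through $K$ on a suitable neighborhood. Second, prove a \emph{Helly/Shvartsman-type lemma}: if every $S$ of bounded cardinality $k^\#$ admits a bounded selection, then the $\Gamma(x)$ are nonempty and vary in a controlled way; this is where one pays a factor depending on $D$ in the finiteness constant, roughly $k^\#(m,n,D)\sim k^\#(m,n)\cdot 2^{O(D)}$, since one must simultaneously control $D$ affine/convex constraints in the target à la Theorem \ref{Th2}. Third, extract from each $\Gamma(x)$ a single "representative" jet $P_x$ (e.g.\ a Steiner-point or John-ellipsoid center) depending on $x$ in a way that is itself controlled in the appropriate Whitney seminorm — this converts the set-valued problem into an ordinary Whitney extension problem for a specific jet field. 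Fourth, apply the scalar finiteness principle Theorem \ref{Th1} (in its Whitney-field formulation) to each of the $D$ components of the chosen jet field to produce $F\in C^m(\mathbb R^n,\mathbb R^D)$ (resp.\ $C^{m-1,1}$) of controlled norm, and verify that the construction forces $F(x)\in K(x)$. For part (B) one additionally needs a compactness/exhaustion argument to pass from the finite-subset hypothesis to the infinite set $\mathbb R^n$, using that the $K(x)$ are closed and that bounded subsets of $C^{m-1,1}$ are sequentially compact in the appropriate weak-$*$/local-uniform topology.

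The main obstacle I expect is Step Two and Step Three together: showing that the "locally extendable jets" form a convex set with \emph{quantitatively} controlled shape, and that one can select a representative jet from each $\Gamma(x)$ whose $x$-dependence is Whitney-controlled. The difficulty is that the scalar finiteness principle is applied to a chosen jet field, but the choice of jet at $x$ is constrained by the sets $\Gamma(x)$, which themselves are only known to be "compatible" on small subsets; threading a single consistent selection through all of them is exactly the kind of obstruction that, in the $m=2$ case, Shvartsman handled via the Lipschitz selection Theorem \ref{Th2}. For general $m$ one presumably needs an induction on $m$ (or on dimension of jet space), peeling off one order of derivative at a time, at each stage invoking a finiteness principle for the lower-order problem and a convex-selection lemma to fix the top-order part; managing the constants through this induction, and ensuring the convexity of $K(x)$ is preserved at every stage, is the technical heart of the argument.
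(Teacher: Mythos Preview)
Your proposal takes a route genuinely different from the paper's, and the gap you yourself flag in Step~3 is real and not closed by your sketch. The paper does \emph{not} try to select a single representative jet $P_x\in\Gamma(x)$ and then Whitney-extend. Instead it lifts dimension: it encodes a map $F:\mathbb R^n\to\mathbb R^D$ with $F(x)\in K(x)$ as $F(x)=\nabla_\xi\mathcal F(x,0)$ for a scalar function $\mathcal F$ on $\mathbb R^{n+D}$, so the constraint becomes $\mathcal F(x,0)=0$ and $\nabla_\xi\mathcal F(x,0)\in K(x)$. These conditions on the $m$-jet of $\mathcal F$ define a shape field on $E\times\{0\}\subset\mathbb R^{n+D}$, which is checked to be $(C,1)$-convex (Lemma~\ref{lemma-bfp1}). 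The paper's main engine, the Finiteness Principle for Shape Fields (Theorem~\ref{theorem-fp-for-wsf}), then produces a $C^{m+1}$ function $\mathcal F$ on each unit cube with the correct jets; setting $F^Q(x)=\nabla_\xi\mathcal F(x,0)$ and patching by a nonnegative partition of unity (which preserves the convex constraint) gives the global $F$. Part~(B) follows from Part~(A) by Whitney-extending the finite-$S$ data from $C^{m-1,1}$ to $C^m$, applying~(A), and using Ascoli compactness.

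The reason your Step~3 is a genuine obstacle, not just a technicality: Steiner-point or John-center selection gives Lipschitz dependence on the data, which is exactly what makes Shvartsman's Theorem~\ref{Th2} work for $m=1$, but there is no known selection theorem producing $C^m$- or Whitney-controlled dependence of a chosen jet on $x$ for $m\ge 2$. Your proposed induction on $m$ does not escape this: peeling off one derivative still requires, at the top order, a controlled selection from convex sets of lower-order jets, so the same problem recurs. The shape-field machinery is precisely the paper's device for avoiding any single consistent selection: it works with the entire convex set of admissible jets and their iterated refinements, and the $(C_w,\delta_{\max})$-convexity axiom is what guarantees that partition-of-unity patching of local solutions stays inside the constraint. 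In short, the paper replaces your ``select then extend'' with ``refine the constraints, extend locally, then patch,'' and the dimension-lifting trick is what converts the vector-valued convex constraint into a scalar shape-field problem to which that machinery applies.
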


We also prove a closely related result on interpolation by nonnegative functions. 

\begin{theorem}
\label{Th4}For large enough $k^{\#}=k\left( m,n\right) $ and $C^{\#}=C\left(
m,n\right) $ the following hold.

\begin{description}
\item[(A) $C^{m}$ FLAVOR] Let $f:E\rightarrow [0,\infty)$ with $%
E\subset \mathbb{R}^{n}$ finite. Suppose that for each $S\subset E$ with $%
\#\left( S\right) \leq k^{\#}$, there exists $F^{S}\in C^{m}\left( \mathbb{R}%
^{n}\right) $ with norm $\left\Vert F^{S}\right\Vert _{C^{m}\left( \mathbb{R}%
^{n}\right) }\leq 1$, such that $F^{S}=f\ $on $S$ and $F^{S}\geq 0$ on $%
\mathbb{R}^{n}$. \newline
Then there exists $F\in C^{m}\left( \mathbb{R}^{n}\right) $ with norm $%
\left\Vert F\right\Vert _{C^{m}\left( \mathbb{R}^{n}\right) }\leq C^{\#}$,
such that $F=f$ on $E$ and $F\geq 0$ on $\mathbb{R}^{n}$.

\item[(B) $C^{m-1,1}$ FLAVOR] Let $f:E\rightarrow [0,\infty)$ with $%
E\subset \mathbb{R}^{n}$ arbitrary. Suppose that for each $S\subset E$ with $%
\#\left( S\right) \leq k^{\#}$, there exists $F^{S}\in C^{m-1,1}\left( 
\mathbb{R}^{n}\right) $ with norm $\left\Vert F^{S}\right\Vert
_{C^{m-1,1}\left( \mathbb{R}^{n}\right) }\leq 1$, such that $F^{S}=f\ $on $S$
and $F^{S}\geq 0$ on $\mathbb{R}^{n}$. \newline
Then there exists $F\in C^{m-1,1}\left( \mathbb{R}^{n}\right) $ with norm $%
\left\Vert F\right\Vert _{C^{m-1,1}\left( \mathbb{R}^{n}\right) }\leq C^{\#}$%
, such that $F=f$ on $E$ and $F\geq 0$ on $\mathbb{R}^{n}$.
\end{description}
\end{theorem}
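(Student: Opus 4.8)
The plan is to deduce Theorem~\ref{Th4} from Theorem~\ref{Th3} by encoding the nonnegativity constraint ``$F \geq 0$ on $\mathbb{R}^n$ together with $F = f$ on $E$'' as a convex-set-valued selection problem. The key observation is that for a $C^m$ (or $C^{m-1,1}$) function of one real output variable, controlling the value and derivatives of $F$ at a point $x$ together with the sign of $F$ near $x$ is, up to constants, the same as requiring the $m$-jet $J_xF$ to lie in a suitable convex subset of the space of polynomials $\mathcal{P}$ of degree $\leq m$. So I would work not with $D=1$ but with $D = \dim \mathcal{P} = \binom{m+n}{n}$, identifying a function $F$ with its jet map $x \mapsto J_x F \in \mathcal{P} \cong \mathbb{R}^D$, and set up convex target sets $K(x) \subset \mathbb{R}^D$ so that a bounded-norm selection of the $K(x)$'s corresponds to a bounded-norm nonnegative interpolant.

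The main steps: First I would recall (or prove) the standard fact that the Whitney-type jet map $F \mapsto (J_xF)_{x \in \mathbb{R}^n}$ and its inverse (Whitney extension) set up an equivalence, with constants depending only on $m,n$, between $C^m(\mathbb{R}^n)$ (resp.\ $C^{m-1,1}(\mathbb{R}^n)$) functions of norm $\leq A$ and ``Whitney fields'' of norm $\leq A$; this lets me pass freely between a function and its field of jets. Second, for each $x \in E$ I would define $K(x) \subset \mathcal{P}$ to be the set of polynomials $P$ with $P(x) = f(x)$, $P(x) \geq 0$ trivially holds, and — crucially — $P$ is ``nonnegative to infinite order'' in the appropriate local sense: e.g.\ $K(x) = \{ P \in \mathcal{P} : P(x) = f(x),\ \partial^\beta P(x) = 0 \text{ whenever } f(x)=0 \text{ and } |\beta| = 1, \ldots\}$ — more carefully, $K(x)$ should be the set of jets at $x$ of nonnegative polynomials agreeing with $f$ at $x$, which one checks is convex. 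For $x \notin E$ (needed only in the $C^{m-1,1}$ case, part (B), where the constraint is on all of $\mathbb{R}^n$) I would set $K(x) = \{ P \in \mathcal{P} : P(x) \geq 0 \}$, an affine half-space, hence convex and closed. Third I would verify the hypothesis of Theorem~\ref{Th3}: given the local data $F^S$ provided by the hypothesis of Theorem~\ref{Th4}, the jet field $x \mapsto J_x F^S$ is a bounded-norm selection of the $K(x)$, $x \in S$ — for this one must know that the $m$-jet at $x$ of a nonnegative $C^m$ function agreeing with $f$ at $x$ actually lies in the convex set I called $K(x)$; when $f(x) > 0$ this is automatic, and when $f(x) = 0$ the function has a minimum at $x$ so its low-order derivatives there are constrained, and one shows these constraints cut out exactly (or are implied by) membership in $K(x)$, possibly after enlarging $K(x)$ by a harmless $C(m,n)$ factor. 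Fourth, apply Theorem~\ref{Th3} to get a bounded-norm jet field (equivalently, by Whitney extension, a function $\tilde F$ with $\|\tilde F\|_{C^m} \leq C(m,n)$) whose jet at each $x$ lies in $K(x)$. Fifth and last, I would upgrade $\tilde F$ to an honest nonnegative interpolant $F$: $\tilde F$ agrees with $f$ on $E$ by construction, and its jets lie in the ``nonnegative'' sets, so $\tilde F \geq -c(m,n)\,\mathrm{dist}(\cdot,E)^m$ or similar; a standard local correction (multiply by a cutoff, add a small nonnegative bump, or apply the known one-dimensional-target nonnegative-extension lemma locally) turns $\tilde F$ into a genuinely nonnegative $F$ that still matches $f$ on $E$ and has $C^m$-norm $\leq C(m,n)$.

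The main obstacle is Step five together with the precise design of $K(x)$ in Step two: it is not literally true that ``jet lies in $K(x)$ for all $x$'' forces the reconstructed Whitney extension to be pointwise nonnegative — nonnegativity is a genuinely global/infinite-order condition that finitely many jet constraints cannot capture exactly. One must either (i) choose $K(x)$ cleverly enough (e.g.\ using that a nonnegative $C^{m-1,1}$ function vanishing at $x$ has $\nabla$-data of size $O(\sqrt{\text{second derivative bound}})$, a quadratic relation — so linearizing it into a convex set requires care and the introduction of auxiliary ``slack'' coordinates in $\mathbb{R}^D$), and then (ii) run a separate local patching argument near the zero set $\{f = 0\}$ to fix up the residual negativity, exploiting that wherever $\tilde F(x_0)$ is slightly negative we have good control on $\tilde F$ and its derivatives near $x_0$ from the jet-membership, so we can add a controlled nonnegative correction supported near $x_0$. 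This is where the real work lies, and I expect it to parallel the ``cluster''/local-to-global gluing machinery already developed for the unconstrained finiteness principle; the convex-selection input from Theorem~\ref{Th3} handles the combinatorial core, and Theorem~\ref{Th4} is then obtained by specializing $D$ and threading the nonnegativity constraint through the jet formalism.
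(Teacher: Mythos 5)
You correctly identify the central obstacle---that jet constraints alone cannot force a Whitney extension to be globally nonnegative---but your proposed resolution (your Step five) stays a sketch, and your Step-two choice of $K(x)$ is not the right object. Two more specific issues: first, the jet map $x \mapsto J_x F$ of a $C^m$ (or $C^{m-1,1}$) function is not itself a $C^m$ (or $C^{m-1,1}$) map into $\mathbb{R}^D$, since its top-order components are merely continuous (Lipschitz), so Theorem~\ref{Th3} does not apply to it; second, the set of ``jets of nonnegative polynomials'' is the wrong convex set (a jet of a nonnegative function need not be a nonnegative polynomial and vice versa). Moreover, for part~(B) your route is unnecessarily heavy: the paper deduces Theorem~\ref{Th4}(B) from Theorem~\ref{Th3}(B) by taking $D=1$ and $K(x)=\{f(x)\}$ for $x\in E$, $K(x)=[0,\infty)$ for $x\notin E$---the constraint is on the \emph{value} $F(x)$, not on the jet, and no encoding of derivatives into $\mathbb{R}^D$ is needed.

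For part~(A), since $E$ is finite a constraint of the form ``$F(x)\in K(x)$ for $x\in E$'' can never enforce $F\geq 0$ away from $E$, so the paper does not go through Theorem~\ref{Th3} at all. It instead applies the shape-field finiteness principle (Theorem~\ref{Th7}, via Theorem~\ref{theorem-fp-for-wsf}) to the shape field $\Gamma_f(x,M)=\{P\in\Gamma_*(x,M):P(x)=f(x)\}$, where $\Gamma_*(x,M)$ is defined in \eqref{inf1} to be precisely the set of $(m-1)$-jets at $x$ of globally nonnegative $G\in C^m(\mathbb{R}^n)$ with $\|G\|_{C^m}\leq M$. Membership in $\Gamma_*(x,M)$ is thus \emph{by definition} a certificate that the jet admits a bounded nonnegative extension---this is the idea you are missing. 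The two real ingredients are then Lemma~\ref{lemma-inf1}, that $\vec{\Gamma}_f$ is $(C,1)$-convex (which requires an explicit construction gluing two nonnegative local solutions $F_1,F_2$ with $\theta_1^2F_1+\theta_2^2F_2$), and Lemma~\ref{lemma-inf2}, a Whitney-type gluing lemma: a consistent Whitney field $(P^x)_{x\in E}$ with $P^x\in\Gamma_*(x,M)$ is extended on each CZ cube $Q$ to a \emph{nonnegative} local piece $F_Q$, and then glued by a partition of unity $1=\sum_Q\theta_Q$ with $\theta_Q\geq0$ (note: $\theta_Q$, not $\theta_Q^2$), so that $F=\sum_Q\theta_Q F_Q\geq0$ automatically. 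Nonnegativity is thus built in from the start rather than recovered by a post-hoc ``small bump'' correction of a possibly-negative interpolant; the latter is exactly the unresolved step where your argument would have to do real work.
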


One might wonder how to decide whether the relevant $F^{S}$ exist in the
above results. This issue is addressed in \cite{f-2005} for Theorem \ref{Th1}; and in Sections \ref{ccs}, \ref{afc} below for Theorems \ref{Th3} and \ref%
{Th4}.

A weaker version of the case $D=1$ of Theorem \ref{Th3} appears in \cite%
{f-2005}. There, each $K\left( x\right) $ is an interval $\left[ f\left(
x\right) -\varepsilon \left( x\right) ,f\left( x\right) +\varepsilon \left(
x\right) \right] $. In place of the conclusion $F\left( x\right) \in K\left(
x\right) $ in Theorem \ref{Th3}, \cite{f-2005} obtains the weaker conclusion 
$F\left( x\right) \in \left[ f\left( x\right) -C\varepsilon \left( x\right)
,f\left( x\right) +C\varepsilon \left( x\right) \right] $ for a constant $C$
determined by $m$, $n$.

Our interest in Theorems \ref{Th3} and \ref{Th4} arises in part from their
possible connection to the interpolation algorithms of Fefferman-Klartag 
\cite{fb1,fb2}. Given a function $f:E\rightarrow \mathbb{R}$ with $E\subset 
\mathbb{R}^{n}$ finite, the goal of \cite{fb1,fb2} is to compute a function $%
F\in C^{m}\left( \mathbb{R}^{n}\right) $ such that $F=f$ on $E$, with norm $%
\left\Vert F\right\Vert _{C^{m}\left( \mathbb{R}^{n}\right) }$ as small as
possible up to a factor $C\left( m,n\right) $. Roughly speaking, the
algorithm in \cite{fb1,fb2} computes such an $F$ using $O\left( N\log
N\right) $ computer operations, where $N=\#\left( E\right) $. The algorithm
is based on ideas from the proof \cite{f-2005} of Theorem \ref{Th1}.
Accordingly, Theorems \ref{Th3} and \ref{Th4} raise the hope that we can
start to understand constrained interpolation problems, in which e.g. the
interpolant $F$ is constrained to be nonnegative everywhere on $\mathbb{R}%
^{n}$.

Theorems \ref{Th3} and \ref{Th4} follow from a more general result --
Theorem \ref{Th6} below -- which is the real content of this paper. To
arrive at the statement of Theorem \ref{Th6}, we first recall some ideas
from the proof of Theorem \ref{Th1} in \cite{f-2005}. Next, we formulate the
main definitions, namely that of a \textquotedblleft shape field" and that of
\textquotedblleft $\left( C_{w},\delta _{\max }\right) $-convexity".
Finally, we state Theorem \ref{Th6}, say a few words about its proof, and
explain its relationship with Theorems \ref{Th3} and \ref{Th4}.

We will need a bit more notation. For $F\in C_{loc}^{m-1}\left( \mathbb{R}%
^{n}\right) $ and $x\in \mathbb{R}^{n}$, we write $J_{x}\left( F\right) $
(the \textquotedblleft jet" of $F$ at $x$) to denote the $\left( m-1\right)
^{rst}$ degree Taylor polynomial of $F$ at $x$, i.e., 
\begin{equation*}
\left[ J_{x}\left( F\right) \right] \left( y\right) =\sum_{\left\vert \alpha
\right\vert \leq m-1}\frac{1}{\alpha !}\left( \partial ^{\alpha }F\left(
x\right) \right) \cdot \left( y-x\right) ^{\alpha }\text{ for }y\in \mathbb{R%
}^{n}\text{.}
\end{equation*}%
Thus, $J_{x}\left( F\right) $ belongs to $\mathcal{P} $, the vector space of
all real-valued polynomials of degree at most $\left( m-1\right) $ on $%
\mathbb{R}^{n}$. Given $x\in \mathbb{R}^{n}$, there is a natural
multiplication $\odot _{x}$ on $\mathcal{P} $ (\textquotedblleft
multiplication of jets at $x$"), uniquely specified by demanding that 
\begin{equation*}
J_{x}\left( FG\right) =J_{x}\left( F\right) \odot _{x}J_{x}\left( G\right)
\end{equation*}%
for $F,G\in C_{loc}^{m-1}\left( \mathbb{R}^{n}\right) $. More explicitly, 
\begin{equation*}
P\odot _{x}Q=J_{x}\left( PQ\right)
\end{equation*}%
for $P,Q\in \mathcal{P} $.

We now present the ideas from the proof of Theorem \ref{Th1} that will form
the background for the statement of Theorem \ref{Th6}. We pose a key
question:

{\em Let $f:E\rightarrow \mathbb{R}$ with $E\subset \mathbb{R}^{n}$ finite, and
let $M>0$. Suppose that $F=f$ on $E$ and $||F||_{C^m(\mathbb{R}^n)} \leq M$.
What can we say about $J_{x}\left( F\right) $ for a given $x\in \mathbb{R}%
^{n}$?}

Although the question makes sense for any $x\in \mathbb{R}^{n}$, we restrict
ourselves here to $x\in E$. We define the (possibly empty) convex set 
\begin{eqnarray}
\Gamma \left( x,M\right) &=&\left\{ J_{x}\left( F\right) :F\in C^{m}\left( 
\mathbb{R}^{n}\right) ,\left\Vert F\right\Vert _{C^{m}\left( \mathbb{R}%
^{n}\right) }\leq M,F=f\text{ on }E\right\}  \label{Eq1} \\
&\subset &\mathcal{P}  \notag
\end{eqnarray}
and undertake to compute its approximate size and shape.

The set $\Gamma \left( x,M\right) $ carries a lot of information. Already
the mere assertion that $\Gamma \left( x_{0},M_{0}\right) $ is non-empty for a
particular $x_{0}$, $M_{0}$ tells us that there exists an interpolant with $%
C^{m}$-norm $\leq M_{0}$. Once we compute the approximate size and shape of
the $\Gamma \left( x,M\right) $, Theorem \ref{Th1} will follow easily.

To understand $\Gamma \left( x,M\right) $, we first define the (possibly
empty) convex set 
\begin{equation}
\Gamma _{0}\left( x,M\right) =\left\{ P\in \mathcal{P} :\left\vert \partial
^{\beta }P\left( x\right) \right\vert \leq M\text{ for }\left\vert \beta
\right\vert \leq m-1,P\left( x\right) =f\left( x\right) \right\} \text{,}
\label{Eq2}
\end{equation}%
for $x\in E$, $M>0$.

Trivially, $\Gamma _{0}\left( x,M\right) \supset \Gamma \left( x,M\right) $
for all $x\in E$, $M>0$; most likely, $\Gamma _{0}\left( x,M\right) $ is
much bigger than $\Gamma \left( x,M\right) $. To remedy this, we pass from $%
\Gamma _{0}\left( x,M\right) $ to smaller sets $\Gamma _{l}\left( x,M\right) 
$, by induction on $l$. Each $\Gamma _{l}\left( x,M\right) $ $\left( x\in
E,M>0,l\geq 0\right) $ will be a (possibly empty) convex subset of $\mathcal{%
P} $. Our $\Gamma _{l}\left( x,M\right) $ will decrease with $l$, but we
will still have 
\begin{equation}
\Gamma _{l}\left( x,M\right) \supset \Gamma \left( x,M\right) \text{ for all 
}x\in E\text{, }M>0.  \label{Eq3}
\end{equation}

We pass from $\Gamma _{l}$ to $\Gamma _{l+1}$ as follows.

Let $x\in E$, $M>0$, $l\geq 0$. Then $\Gamma _{l+1}\left( x,M\right) $
consists of all $P\in \Gamma _{l}\left( x,M\right) $ such that for each $%
y\in E$ there exists $P^{\prime }\in \Gamma _{l}\left( y,M\right) $ such
that 
\begin{equation}
\left\vert \partial ^{\beta }\left( P-P^{\prime }\right) \left( x\right)
\right\vert \leq C\left( m,n\right) M\left\vert x-y\right\vert
^{m-\left\vert \beta \right\vert }\text{ for }\left\vert \beta \right\vert
\leq m-1.  \label{Eq4}
\end{equation}%
(Compare with the \textquotedblleft pedagogical algorithm" in \cite{fb1}.)

Evidently, $\Gamma _{l+1}\left( x,M\right) \subset \Gamma _{l}\left(
x,M\right)$; the $\Gamma _{l}\left( x,M\right) $ decrease with $l$, as
promised.

Moreover, if (\ref{Eq3}) holds for a given $l$, then 
\begin{equation}
\Gamma _{l+1}\left( x,M\right) \supset \Gamma \left( x,M\right) \text{ for
all }x\in E,M>0.  \label{Eq5}
\end{equation}

Indeed, suppose $P\in \Gamma \left( x,M\right) $. By definition, there
exists $F\in C^{m}\left( \mathbb{R}^{n}\right) $ with norm $\left\Vert
F\right\Vert _{C^{m}\left( \mathbb{R}^{n}\right) }\leq M$, such that $F=f$
on $E$ and $J_{x}\left( F\right) =P$.

Let $y\in E$, and set $P^{\prime }=J_{y}\left( F\right) $. By definition, $%
P^{\prime }\in \Gamma \left( y,M\right) $; hence, $P^{\prime }\in \Gamma
_{l}\left( y,M\right) $; by (\ref{Eq3}). Moreover, (\ref{Eq4}) holds, thanks
to Taylor's theorem. Thus, for every $y\in E$ we have found a $P^{\prime
}\in \Gamma_l\left( y,M\right) $ that satisfies (\ref{Eq4}). This means that $%
P\in \Gamma _{l+1}\left( x,M\right) $, completing the proof of (\ref{Eq5}).

We now know by induction that (\ref{Eq3}) holds for all $l\geq 0$.

The following result computes the approximate size and shape of the $\Gamma
\left( x,M\right) $.

\begin{theorem}
\label{Th5}For a large enough $l_{\ast }=l_{\ast }\left( m,n\right) $ and $%
C_{\ast }=C_{\ast }\left( m,n\right) $, we have 
\begin{equation*}
\Gamma \left( x,M\right) \subset \Gamma _{l_{\ast }}\left( x,M\right)
\subset \Gamma \left( x,C_{\ast }M\right) .
\end{equation*}
\end{theorem}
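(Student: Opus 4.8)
Only the second inclusion needs work: the first inclusion $\Gamma(x,M)\subset\Gamma_{l_{*}}(x,M)$ is exactly (\ref{Eq3}), which holds for every $l\geq 0$. For the second, rescale $f\mapsto f/M$ (the sets $\Gamma(x,M)$ and $\Gamma_{l}(x,M)$ are homogeneous of degree $1$ in $M$ once $f$ is rescaled too) to reduce to $M=1$; if $\Gamma_{l_{*}}(x,1)=\emptyset$ there is nothing to prove, so fix $x^{0}\in E$ and $P^{0}\in\Gamma_{l_{*}}(x^{0},1)$, and aim to build $F\in C^{m}(\mathbb{R}^{n})$ with $\|F\|_{C^{m}(\mathbb{R}^{n})}\leq C_{*}$, $F=f$ on $E$, and $J_{x^{0}}(F)=P^{0}$; this exhibits $P^{0}\in\Gamma(x^{0},C_{*})$, as required.

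To build $F$ I would use a Whitney-type extension. Take a Calder\'on--Zygmund decomposition of $\mathbb{R}^{n}$ into dyadic cubes $Q$ with sidelength $\delta_{Q}$ comparable to $\operatorname{dist}(Q,E)$ away from $E$ and shrinking near $E$, a partition of unity $\sum_{Q}\theta_{Q}\equiv 1$ with $|\partial^{\beta}\theta_{Q}|\leq C\delta_{Q}^{-|\beta|}$, a nearest point $x_{Q}\in E$ for each $Q$, and polynomials $P_{Q}\in\mathcal{P}$, and set $F=\sum_{Q}\theta_{Q}P_{Q}$. A standard computation shows $\|F\|_{C^{m}(\mathbb{R}^{n})}\leq C_{*}$ as soon as $P_{Q}\in\Gamma_{0}(x_{Q},C)$ and $|\partial^{\beta}(P_{Q}-P_{Q'})(x_{Q})|\leq C\delta_{Q}^{m-|\beta|}$ for overlapping $Q,Q'$ and $|\beta|\leq m-1$; the constraint $P_{Q}(x_{Q})=f(x_{Q})$ hidden inside $\Gamma_{0}$ then forces $F=f$ on $E$, and taking $P_{Q}=P^{0}$ for the cubes meeting $x^{0}$ gives $J_{x^{0}}(F)=P^{0}$. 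Thus everything reduces to \emph{choosing the local polynomials} $P_{Q}$: propagating, from $P^{0}$ at $x^{0}$ outward, a family of mutually compatible polynomials lying in the refined sets $\Gamma_{l}(x_{Q},C)$.

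The refinement is built for exactly this propagation: $P\in\Gamma_{l+1}(x,M)$ supplies, for each $y\in E$, a partner $P'\in\Gamma_{l}(y,M)$ obeying (\ref{Eq4}). Running it over arbitrarily long chains of cubes with only a bounded loss $C(m,n)$ in the constants, however, requires the family $\{\Gamma_{l}(x,M)\}_{x\in E}$ to have become essentially \emph{stable} by level $l_{*}$ --- concretely, $\Gamma_{l}(x,M)\supseteq\Gamma_{l_{*}}(x,cM)$ for all $l\geq l_{*}$ and a fixed $c=c(m,n)>0$, so that $\Gamma_{\infty}(x,M):=\bigcap_{l}\Gamma_{l}(x,M)$ contains $\Gamma_{l_{*}}(x,cM)$ and is a Glaeser-stable family; a compactness argument gives nonemptiness and stability of $\Gamma_{\infty}$, and its stability is what lets the choices of $P_{Q}$ propagate coherently through the patching above. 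The serious point is the \emph{quantitative} stabilization with $l_{*}=l_{*}(m,n)$: one attaches to each $x\in E$ a \emph{label} --- a monotone family of multi-indices drawn from a finite poset determined by $m,n$ --- encoding the approximate size and shape of $\Gamma_{l}(x,M)$, shows that any refinement step that genuinely shrinks some $\Gamma_{l}(x,M)$ beyond a constant dilation of $M$ strictly lowers a label, and organizes the bookkeeping so that the total number of such steps depends on $m,n$ alone rather than on $\#(E)$.

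The main obstacle is precisely this last point: designing the labels so that they are genuinely monotone under refinement, controlling the constant blow-up so that it stays $\leq C(m,n)$ uniformly in $\#(E)$, and assembling the scale-by-scale, cube-by-cube choices of $P_{Q}$ into one globally consistent Whitney field once stability has set in. By comparison, the Calder\'on--Zygmund patching and the compactness step for $\Gamma_{\infty}$ are routine. (One should note why the $m,n$-only dependence of $l_{*}$ is the crucial feature: given Theorem~\ref{Th5}, Theorem~\ref{Th1} follows as indicated in the text, since the combinatorial condition ``$\Gamma_{l_{*}}(x^{0},C)\neq\emptyset$'' involves only subsets of $E$ of size $\leq k^{\#}(m,n)$.)
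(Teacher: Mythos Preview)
Your sketch has the right coarse ingredients and correctly names the crux (quantitative stabilization with $l_*=l_*(m,n)$), but the mechanism you propose does not close. You posit that a refinement step $\Gamma_l\to\Gamma_{l+1}$ which ``genuinely shrinks'' $\Gamma_l(x,M)$ lowers a label attached to $x$, and that once labels stabilize, Glaeser stability of $\Gamma_\infty=\bigcap_l\Gamma_l$ lets the local polynomials $P_Q$ propagate coherently through your distance-to-$E$ Whitney cubes. But Glaeser stability only gives \emph{pairwise} compatibility: for $P\in\Gamma_\infty(x,M)$ and a single $y$, a Taylor-compatible $P'\in\Gamma_\infty(y,M)$. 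It does not produce a globally coherent Whitney field $(P^x)_{x\in E}$ with all pairwise estimates holding simultaneously --- propagating from $P^{0}$ at $x^{0}$ to each $y$ separately yields jets $P^y$ with $|\partial^\beta(P^y-P^z)(y)|$ controlled only by $|x^0-y|^{m-|\beta|}+|x^0-z|^{m-|\beta|}$, not by $|y-z|^{m-|\beta|}$. You flag this assembly as the main obstacle but offer no device to overcome it, and the distance-based CZ decomposition you use carries no structure that would force coherence.

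The paper (proving the more general Theorem~\ref{Th6}, following \cite{f-2005} for Theorem~\ref{Th5} itself) is organized oppositely: the label $\mathcal A\subseteq\mathcal M$ is not attached to points of $E$ and does not move under $l$-refinement. One runs an \emph{outer induction on $\mathcal A$}; inside each step one takes a CZ decomposition whose stopping rule depends on $\mathcal A$ --- a cube $Q$ stops once the data on $5Q$ admit a weak $(\hat{\mathcal A},\epsilon^{-1}\delta_Q,A)$-basis for some $\hat{\mathcal A}<\mathcal A$, or $\#(E\cap 5Q)\le1$ --- not on $\operatorname{dist}(\cdot,E)$. On each CZ cube the inductive hypothesis (the Main Lemma for $\hat{\mathcal A}$) supplies the local $F_Q$, and these are patched. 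Only a bounded number of refinement levels (three, via the Transport Lemma) are spent per induction step, and the induction depth is at most the number of monotonic subsets of $\mathcal M$, which gives $l_*=l_*(m,n)$. The label actually drops via the Relabeling Lemma (Lemma~\ref{lemma-pb2}) and Lemma~\ref{lemma-pb3}: when the $(\mathcal A,\delta)$-basis blows up at scale $\delta$, or a competing jet sits far from $P^0$, one re-indexes to a strictly smaller $\hat{\mathcal A}$. That drop is a \emph{spatial} phenomenon tied to the lengthscale $\delta_Q$, not to the refinement index $l$; a distance-to-$E$ Whitney decomposition cannot detect it.
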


Once we know Theorem \ref{Th5}, we bring in Helly's theorem \cite%
{rock-convex} to complete the proof of Theorem \ref{Th1} (in the $C^{m}$
FLAVOR). We recall

\begin{description}
\item[Helly's Theorem] Let $K_{1},\cdots ,K_{N}$ be convex subsets of $%
\mathbb{R}^{D}$. If any $\left( D+1\right) $ of the sets $K_{1},\cdots
,K_{N} $ have a point in common, then $K_{1},\cdots ,K_{N}$ have a point in
common.
\end{description}

This elementary result is clearly relevant to Theorem \ref{Th1}.

To exploit Helly's theorem, we define 
\begin{equation*}
\tilde{\Gamma}\left( x,M,S\right) =\left\{ J_{x}\left( F\right) :F\in
C^{m}\left( \mathbb{R}^{n}\right) ,\left\Vert F\right\Vert _{C^{m}\left( 
\mathbb{R}^{n}\right) }\leq M,F=f\text{ on }S\right\} \text{, for }S\subset
E,
\end{equation*}%
and then set 
\begin{equation*}
\tilde{\Gamma}_{l}\left( x,M\right) =\bigcap_{S\subset E,\#\left( S\right)
\leq \left( 2+\dim \mathcal{P} \right) ^{l}}\tilde{\Gamma}\left(
x,M,S\right) .
\end{equation*}

Suppose that $f:E\rightarrow \mathbb{R}$ satisfies

\begin{description}
\item[$\left( \star \right) $] For each $S\subset E$ with $\#\left( S\right)
\leq k^{\#}$ there exists $F^{S}\in C^{m}\left( \mathbb{R}^{n}\right) $ with
norm $\left\Vert F^{S}\right\Vert _{C^{m}\left( \mathbb{R}^{n}\right) }\leq
M $ such that $F^{S}=f$ on $S$.
\end{description}

If $k^{\#}\geq \left( 2+\dim \mathcal{P} \right) ^{l+1}$, then an easy
argument using Helly's theorem shows that $\tilde{\Gamma}_{l}\left(
x,M\right) $ is nonempty. Also, an easy induction on $l$, again using
Helly's theorem, shows that $\tilde{\Gamma}_{l}\left( x,M\right) \subset
\Gamma _{l}\left( x,M\right) $ for all $l\geq 0$.

Now let $l_{\ast }$ be as in Theorem \ref{Th5}, and suppose $\left( \star
\right) $ holds with $k^{\#}\geq \left( 2+\dim \mathcal{P} \right) ^{l_{\ast
}+1}$. Then $\tilde{\Gamma}_{l_{\ast }}\left( x,M\right) $ is nonempty,
hence $\Gamma _{l_{\ast }}\left( x,M\right) $ is nonempty. Theorem \ref{Th5}
now tells us that $\Gamma \left( x,C_{\ast }M\right) $ is non-empty, which
means that

\begin{description}
\item[$\left( \star \star \right) $] There exists $F\in C^{m}\left( \mathbb{R%
}^{n}\right) $ with norm $\left\Vert F\right\Vert _{C^{m}\left( \mathbb{R}%
^{n}\right) }\leq C_{\ast }M$ such that $F=f$ on $E$.
\end{description}

Thus, $\left( \star \right) $ implies $\left( \star \star \right) $, which
is the content of the $C^{m}$ FLAVOR\ of Theorem \ref{Th1}.

The $C^{m-1,1}$ FLAVOR of Theorem \ref{Th1} follows easily. This completes
our discussion of Theorem \ref{Th1}.

Motivated by the above discussion, we define a key notion, preparing for the
statement of Theorem \ref{Th6}. Let $E\subset \mathbb{R}^{n}$ be finite. A 
\underline{shape field} on $E$ is a family $\vec{\Gamma}=\left( \Gamma
\left( x,M\right) \right) _{x\in E,M>0}$ of (possibly empty) convex sets $%
\Gamma \left( x,M\right) \subset \mathcal{P} $, indexed by $x\in E$ and $M>0$%
, with the property that $M^{\prime }<M$ implies $\Gamma \left( x,M^{\prime
}\right) \subset \Gamma \left( x,M\right) $ for any $x$, $M$, $M^{\prime }$.

Note that finiteness of $E$ is part of the definition of a shape field.

Our goal is to give an analogue of Theorem \ref{Th5} starting from a shape
field $\vec{\Gamma}_{0}=\left( \Gamma _{0}\left( x,M\right) \right) _{x\in
E,M>0}$ much more general than that given by (\ref{Eq2}).

Instead of seeking functions $F\in C^{m}\left( \mathbb{R}^{n}\right) $ such
that $F=f$ on $E$, we will now be seeking functions $F\in C^{m}\left( 
\mathbb{R}^{n}\right) $ such that $J_{x}\left( F\right) \in \Gamma
_{0}\left( x,CM\right) $ for all $x\in E$. In the context of Theorem \ref%
{Th5} with $\vec{\Gamma}_{0}$ given by (\ref{Eq2}), this is consistent with
our previous discussion.

We can pass from $\vec{\Gamma}_{0}$ to its ``$l^{th}$-refinement" $\vec{\Gamma}%
_{l}=\left( \Gamma _{l}\left( x,M\right) \right) _{x\in E,M>0}$ for $%
l=1,2,3,\cdots $ by the same rule as before:

$\Gamma _{l+1}\left( x,M\right) $ consists of all $P\in \Gamma _{l}\left(
x,M\right) $ such that for each $y\in E$ there exists $P^{\prime }\in \Gamma
_{l}\left( y,M\right) $ such that $\left\vert \partial ^{\beta }\left(
P-P^{\prime }\right) \left( x\right) \right\vert \leq M\left\vert
x-y\right\vert ^{m-\left\vert \beta \right\vert }$ for $\left\vert \beta
\right\vert \leq m-1$.

We will need a hypothesis on $\vec{\Gamma}_{0}$ called $\left( C_{w},\delta
_{\max }\right) $-convexity. This notion is a bit technical.\footnote{%
The reader may prefer to skip over the definition of $\left( C_{w},\delta
_{\max }\right) $-convexity on first reading.} To motivate it, we anticipate
a situation that arises in the proof of Theorem \ref{Th6}.

We will be trying to produce a function $F\in C^{m}\left( \mathbb{R}%
^{n}\right) $ (with control on its derivatives up to $m^{th}$ order), such
that

\begin{description}
\item[(P1)] $J_{x}\left( F\right) \in \Gamma _{0}\left( x,CM\right) $ for
all $x\in E\cap Q_{\max }$, where $Q_{\max }$ is a cube of sidelength $%
\delta _{\max }$.
\end{description}

The cube $Q_{\max }$ will be partitioned into Calder\'on-Zygmund cubes $%
Q_{\nu }$ with sidelengths $\delta _{\nu }\leq \delta _{\max }$. We will
introduce a partition of unity

\begin{description}
\item[(P2)] $1=\sum_{\nu }\theta _{\nu }^{2}$ on $Q_{\max }$,
\end{description}

adapted to the Calder\'on-Zygmund cubes $Q_{\nu }$. More precisely, for each 
$\nu $,

\begin{description}
\item[(P3)] $\theta _{\nu }$ is supported in the double of $Q_{\nu }$, with
estimates $\left\vert \partial ^{\beta }\theta _{\nu }\right\vert \leq
C\delta _{\nu }^{-\left\vert \beta \right\vert }$ for $\left\vert \beta
\right\vert \leq m$.
\end{description}

For each $Q_{\nu }$, we will produce a local solution $F_{\nu }$ such that

\begin{description}
\item[(P4)] $J_{x}\left( F_{\nu }\right) \in \Gamma _{0}\left( x,M\right) $
for all $x\in E\cap Q_{\nu }$, with control on the derivatives of $F_{\nu }$
up to order $m$.
\end{description}

Moreover, the $F_{\nu }$ agree with one another, in the sense that

\begin{description}
\item[(P5)] $\left\vert \partial ^{\beta }\left( F_{\nu }-F_{\nu ^{\prime
}}\right) \right\vert \leq CM\delta _{\nu }^{m-\left\vert \beta \right\vert
} $ on $Q_{\nu }\cap Q_{\nu ^{\prime }}$, for $\left\vert \beta \right\vert
\leq m$.
\end{description}

Conditions (P2), (P3), (P5) are familiar from the classical proof \cite%
{whitney-1934} of the Whitney extension theorem. As in that proof, we patch
together our local solutions $F_{\nu }$ by setting

\begin{description}
\item[(P6)] $F=\sum_{\nu }\theta _{\nu }^{2}F_{\nu }$.
\end{description}

We hope that conditions (P2)$\cdots $(P5) will guarantee that the function 
$F$ given by (P6) will satisfy (P1).

The above discussion motivates the following definitions, where $P_{\nu }$, $%
Q_{\nu }$ play the r\^{o}les of $J_{x}\left( F_{\nu }\right) $, $%
J_{x}\left( \theta _{\nu }\right) $ respectively, for a given $x\in E\cap
Q_{\max }$.

Let $\vec{\Gamma}_{0}=\left( \Gamma _{0}\left( x,M\right) \right) _{x\in
E,M>0}$ be a shape field. Let $C_{w}$, $\delta _{\max }>0$ be given.

\begin{definition}
We say that $\vec{\Gamma}_{0}$ is $\left( C_{w},\delta _{\max }\right) $%
-convex if the following holds: Suppose $0<\delta \leq \delta _{\max }$; $%
x\in E$; $M>0$; $P_{1},P_{2}\in \Gamma _{0}\left( x,M\right) $; $%
Q_{1},Q_{2}\in \mathcal{P} $. Assume the estimates 
\begin{eqnarray*}
\left\vert \partial ^{\beta }\left( P_{1}-P_{2}\right) \left( x\right)
\right\vert &\leq &M\delta ^{m-\left\vert \beta \right\vert }, \\
\left\vert \partial ^{\beta }Q_{1}\left( x\right) \right\vert &\leq &\delta
^{-\left\vert \beta \right\vert }, \\
\left\vert \partial ^{\beta }Q_{2}\left( x\right) \right\vert &\leq &\delta
^{-\left\vert \beta \right\vert }\text{, for }\left\vert \beta \right\vert
\leq m-1.
\end{eqnarray*}%
Assume also that $Q_{1}\odot _{x}Q_{1}+Q_{2}\odot _{x}Q_{2}=1$. 

Then $%
Q_{1}\odot _{x}Q_{1}\odot _{x}P_{1}+Q_{2}\odot _{x}Q_{2}\odot _{x}P_{2}\in
\Gamma _{0}\left( x,C_{w}M\right) $.
\end{definition}

This notion is a close relative of \textquotedblleft Whitney convexity" in 
\cite{f-2005-a}, and our Theorem \ref{Th6} below generalizes the main
results of \cite{f-2005-a}.

\begin{theorem}
\label{Th6}For a large enough $l_{\ast }=l_{\ast }\left( m,n\right) $, the
following holds. Let $\Gamma _{0}=\left( \Gamma _{0}\left( x,M\right)
\right) _{x\in E,M>0}$ be a $\left( C_{w},\delta _{\max }\right) $-convex
shape field, and for $l\geq 1$, let $\Gamma _{l}=\left( \Gamma _{l}\left(
x,M\right) \right) _{x\in E,M>0}$ be its $l^{th}$ refinement. Suppose we are
given a cube $Q_{\max }$ of sidelength $\delta _{\max }$, a point $x_{0}\in
E\cap Q_{\max }$, a number $M_{0}>0$, and a polynomial $P_{0}\in \Gamma
_{l_{\ast }}\left( x_{0},M_{0}\right) $. Then there exists $F\in C^{m}\left( 
\mathbb{R}^{n}\right) $ such that

\begin{itemize}
\item $J_{x}(F)\in \Gamma _{0}(x,C_{\ast }M_{0})$ for all $x\in Q_{\max
}\cap E$, and

\item $|\partial ^{\beta }(F-P_{0})(x)|\leq C_{\ast }M_{0}\delta _{\max
}^{m-\left\vert \beta \right\vert }$ for all $x\in Q_{\max }$, $|\beta| \leq
m$.
\end{itemize}

Here, $C_{\ast }$ depends only on $m$, $n$, $C_{w}$.
\end{theorem}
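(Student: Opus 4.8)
The plan is to prove Theorem \ref{Th6} by induction on the sidelength $\delta_{\max}$ of the cube $Q_{\max}$, carried out in the spirit of the classical Calder\'on--Zygmund / Whitney patching argument sketched in conditions (P1)--(P6). More precisely, I would set up a statement $\mathrm{IH}(\delta)$ asserting that the conclusion of the theorem holds for every cube of sidelength at most $\delta$ (with the constants $l_*, C_*$ fixed in advance), and show that $\mathrm{IH}(\delta/2)$ implies $\mathrm{IH}(\delta)$. The base case (say $\delta_{\max}$ below some threshold, or $E\cap Q_{\max}$ a single point) is handled directly: when $E\cap Q_{\max}=\{x_0\}$ we just take $F=P_0$, and the two bulleted conclusions are immediate from $P_0\in\Gamma_{l_*}(x_0,M_0)\subset\Gamma_0(x_0,M_0)$ and Taylor's theorem. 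The real work is the inductive step.

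For the inductive step, given $Q_{\max}$ of sidelength $\delta_{\max}$ and $P_0\in\Gamma_{l_*}(x_0,M_0)$, I would run a Calder\'on--Zygmund stopping-time decomposition of $Q_{\max}$ into dyadic subcubes $Q_\nu$: a subcube $Q$ is ``OK'' if its sidelength $\delta_Q\le\tfrac12\delta_{\max}$ and if, after transporting $P_0$ to a representative point of $Q$, one can find $P_Q\in\Gamma_{l_*-1}(x_Q,M_0)$ close to $P_0$ in the appropriate jet sense near $Q$; we keep bisecting until every cube is OK, and the $Q_\nu$ are the maximal OK cubes. The point of dropping from $\Gamma_{l_*}$ to $\Gamma_{l_*-1}$ is exactly that the defining property of the refinement ($P\in\Gamma_{l+1}(x,M)$ forces, for each $y$, a nearby $P'\in\Gamma_l(y,M)$ satisfying the $|x-y|^{m-|\beta|}$ estimates) lets us transport the hypothesis $P_0\in\Gamma_{l_*}(x_0,M_0)$ to neighboring points with one level of refinement consumed. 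One proves, using good geometry of CZ cubes (neighboring $Q_\nu, Q_{\nu'}$ have comparable sidelengths, controlled by $|x-y|$ for $x,y$ in them), that the decomposition terminates and has the usual bounded-overlap properties. On each $Q_\nu$ we then apply the inductive hypothesis $\mathrm{IH}(\delta_\nu)$ with $\delta_\nu\le\tfrac12\delta_{\max}$, starting data $P_\nu:=P_{Q_\nu}\in\Gamma_{l_*-1}(x_\nu,M_0)\subset\Gamma_{l_*}(x_\nu, M_0)$ — wait, that inclusion goes the wrong way, so instead one applies $\mathrm{IH}$ with $l_*$ replaced by $l_*-1$, i.e. one runs the induction on \emph{both} $\delta$ and the refinement level $l$, proving the statement with $\Gamma_{l_*-j}$ for cubes down to scale $2^{-j}\delta_{\max}$; this is a standard bookkeeping device. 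This yields local solutions $F_\nu\in C^m$ with $J_x(F_\nu)\in\Gamma_0(x,C_*'M_0)$ for $x\in E\cap Q_\nu$ and $|\partial^\beta(F_\nu-P_\nu)|\le C_*'M_0\delta_\nu^{m-|\beta|}$ on $Q_\nu$, which via the closeness of $P_\nu$ to $P_\nu'$ (both being near $P_0$, or more carefully, compared through a chain of neighboring cubes) gives the compatibility estimate (P5): $|\partial^\beta(F_\nu-F_{\nu'})|\le C M_0\delta_\nu^{m-|\beta|}$ on $Q_\nu\cap Q_{\nu'}$.

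Next I would take a partition of unity $1=\sum_\nu\theta_\nu^2$ on $Q_{\max}$ with $\theta_\nu$ supported on the double of $Q_\nu$ and $|\partial^\beta\theta_\nu|\le C\delta_\nu^{-|\beta|}$, and set $F=\sum_\nu\theta_\nu^2 F_\nu$. The estimate $|\partial^\beta(F-P_0)|\le C_*M_0\delta_{\max}^{m-|\beta|}$ on $Q_{\max}$ is the classical Whitney computation: write $F-P_0=\sum_\nu\theta_\nu^2(F_\nu-P_0)$, differentiate, use (P3), (P5)-type estimates and the fact that each $F_\nu-P_0$ is controlled by $C M_0\delta_\nu^{m-|\beta|}$ on $Q_\nu$ (from (P4)-type bound plus $|P_\nu-P_0|$-control, summed along a chain of neighbors from $Q_\nu$ to the cube containing $x_0$). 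The crucial and genuinely new point — the one place where $(C_w,\delta_{\max})$-convexity is used, and which I expect to be the main obstacle — is verifying $J_x(F)\in\Gamma_0(x,C_*M_0)$ for $x\in E\cap Q_{\max}$. Fix such an $x$; it lies in the supports of only boundedly many $\theta_\nu$, say $\nu_1,\dots,\nu_k$, with all $\delta_{\nu_i}$ comparable and all $J_x(F_{\nu_i})\in\Gamma_0(x,CM_0)$ and pairwise close in the sense $|\partial^\beta(J_x(F_{\nu_i})-J_x(F_{\nu_j}))(x)|\le C M_0\delta^{m-|\beta|}$. Then $J_x(F)=\sum_i J_x(\theta_{\nu_i})\odot_x J_x(\theta_{\nu_i})\odot_x J_x(F_{\nu_i})$ with $\sum_i J_x(\theta_{\nu_i})\odot_x J_x(\theta_{\nu_i})=1$, and iterating the two-term $(C_w,\delta_{\max})$-convexity hypothesis $k-1$ times (grouping the partition-of-unity squares two at a time, with the running partial sums of $\theta^2$'s playing the role of $Q_1^2, Q_2^2$, which requires checking the intermediate jets still satisfy the $\delta^{-|\beta|}$ derivative bounds) shows $J_x(F)\in\Gamma_0(x,C_w^{k}CM_0)\subset\Gamma_0(x,C_*M_0)$ since $k\le k(m,n)$. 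Care is needed that $\delta\le\delta_{\max}$ so the convexity hypothesis applies, and that the various comparability constants absorb into $C_*=C_*(m,n,C_w)$; assembling all the constants consistently so the induction closes — in particular choosing $l_*$ large enough that the refinement levels don't run out before the scales do, i.e. $l_*\gtrsim$ the number of halvings needed, which is itself bounded in terms of $m,n$ via the CZ geometry — is the delicate bookkeeping that makes the whole scheme work.
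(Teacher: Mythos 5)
Your plan reproduces the right pieces of machinery — the transport of jets across one refinement level, the Calder\'on--Zygmund stopping time, the Whitney patching, and the iterated use of $(C_w,\delta_{\max})$-convexity via the identity $J_x(F)=\sum_\nu J_x(\theta_\nu)\odot_x J_x(\theta_\nu)\odot_x J_x(F_\nu)$ — but the induction scheme you propose does not close. You run the induction on scale, consuming one refinement level per halving: the inductive hypothesis at scale $2^{-j}\delta_{\max}$ uses $\Gamma_{l_*-j}$. You then assert that $l_*$ can be chosen once and for all because ``the number of halvings needed is itself bounded in terms of $m,n$.'' This is false. The depth of the CZ decomposition depends on the geometry of $E$, not only on $m,n$: take $n=1$, $m=2$, $E=\{0,\epsilon\}$ inside $Q_{\max}=[0,1]$; the stopping rule $\#(E\cap 5Q)\le 1$ forces subdivision down to scale $\sim\epsilon$, which is unbounded as $\epsilon\to 0$. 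So under your scheme $l_*$ would have to grow with the clustering of $E$, contradicting the requirement $l_*=l_*(m,n)$.

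The paper escapes this trap by inducting on a different, genuinely finite parameter: a monotonic subset $\mathcal{A}\subseteq\mathcal{M}$, ordered by a total order whose poset has size bounded by $m,n$ alone. The CZ stopping condition for the Main Lemma for $\mathcal{A}$ is \emph{not} ``we have halved enough times''; it is ``either $\#(E\cap 5Q)\le 1$, or there exists a strictly smaller label $\hat{\mathcal{A}}<\mathcal{A}$ with a weak $(\hat{\mathcal{A}},\epsilon^{-1}\delta_Q,A)$-basis at every point of $E\cap 5Q$.'' The decomposition may still go arbitrarily deep in scale, but at each stopping cube one invokes the Main Lemma for an $\mathcal{A}'<\mathcal{A}$, consuming a \emph{fixed} number of refinement levels (three) per decrement of $\mathcal{A}$. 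Since the chain $\mathcal{M}>\dots>\emptyset$ has length bounded by $m,n$, the total $l_*$ is bounded, independent of how finely the CZ decomposition must subdivide within any one call. Producing the strictly smaller $\hat{\mathcal{A}}$ at a stopping cube is exactly what the Rescaling/Relabeling Lemma and the ``Control $\Gamma$ Using Basis'' Lemma do; this is the content you are missing, and it is the reason the $\mathcal{A}$-basis formalism is indispensable rather than bookkeeping.
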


Just as Theorem \ref{Th5} implies Theorem \ref{Th1}, our Theorem \ref{Th6}
implies the following finiteness principle for shape fields.

\begin{theorem}
\label{Th7}For a large enough $k^{\#}=k\left( m,n\right) $ the following
holds. Let $\vec{\Gamma}_{0}=\left( \Gamma _{0}\left( x,M\right) \right)
_{x\in E,M>0}$ be a $\left( C_{w},1\right) $-convex shape field. Let $%
M_{0}>0 $. Suppose that for each $S\subset E$ with $\#\left( S\right) \leq
k^{\#}$, there exists $F^{S}\in C^{m}\left( \mathbb{R}^{n}\right) $ with
norm $\left\Vert F^{S}\right\Vert _{C^{m}\left( \mathbb{R}^{n}\right) }\leq
M_{0}$, such that $J_{x}\left( F^{S}\right) \in \Gamma _{0}\left(
x,M_{0}\right) $ for all $x\in S$. Then there exists $F\in C^{m}\left( 
\mathbb{R}^{n}\right) $ with norm $\left\Vert F\right\Vert _{C^{m}\left( 
\mathbb{R}^{n}\right) }\leq C_{\ast }M_{0}$, such that $J_{x}\left( F\right)
\in \Gamma _{0}\left( x,C_{\ast }M_{0}\right) $ for all $x\in E$. Here, $%
C_{\ast }$ depends only on $m$, $n$, $C_{w}$.
\end{theorem}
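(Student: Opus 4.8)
The plan is to mimic exactly the passage from Theorem \ref{Th5} to Theorem \ref{Th1} carried out in the excerpt, with Theorem \ref{Th6} playing the r\^ole of Theorem \ref{Th5} and with Helly's theorem controlling the combinatorics. Fix $l_\ast = l_\ast(m,n)$ as in Theorem \ref{Th6}, and set $D^\# = \dim \mathcal{P}$. We will take $k^\# = (2+D^\#)^{l_\ast + 1}$; this is the finiteness number we must feed into the hypothesis. For $S \subset E$ and $x \in E$, define
\begin{equation*}
\tilde\Gamma(x,M,S) = \left\{ J_x(F) : F \in C^m(\mathbb{R}^n),\ \|F\|_{C^m(\mathbb{R}^n)} \leq M,\ J_y(F) \in \Gamma_0(y,M)\ \text{for all}\ y \in S \right\},
\end{equation*}
a convex subset of $\mathcal{P}$, and then $\tilde\Gamma_l(x,M) = \bigcap_{S \subset E,\ \#(S) \leq (2+D^\#)^l} \tilde\Gamma(x,M,S)$.

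First I would record the two standard facts, proved exactly as in the excerpt. Fact one: if the hypothesis of the theorem holds with $k^\# \geq (2+D^\#)^{l+1}$, then for every $x \in E$ the set $\tilde\Gamma_l(x,M_0)$ is nonempty. The point is that each $\tilde\Gamma(x,M_0,S)$ with $\#(S) \leq (2+D^\#)^l$ is nonempty by hypothesis (take $F = F^S$), and since these are convex subsets of $\mathcal{P} \cong \mathbb{R}^{D^\#}$, Helly's theorem reduces the nonemptiness of the whole intersection to the nonemptiness of every subintersection of $D^\#+1$ of them; but the union of $D^\#+1$ sets each of size $\leq (2+D^\#)^l$ has size $\leq (D^\#+1)(2+D^\#)^l \leq (2+D^\#)^{l+1} \leq k^\#$, so that subintersection contains $J_x(F^{S'})$ for the corresponding $S'$. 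Fact two: $\tilde\Gamma_l(x,M) \subset \Gamma_l(x,M)$ for all $l \geq 0$, $x \in E$, $M > 0$, by induction on $l$. The base case $l=0$ is immediate since $\tilde\Gamma_0(x,M) = \tilde\Gamma(x,M,\{x\}) \subset \Gamma_0(x,M)$. For the inductive step one argues as in the derivation of \eqref{Eq5}: given $P \in \tilde\Gamma_{l+1}(x,M)$ and $y \in E$, one must produce $P' \in \Gamma_l(y,M)$ with $|\partial^\beta(P-P')(x)| \leq M|x-y|^{m-|\beta|}$; using the definition of $\tilde\Gamma_{l+1}$ together with Helly's theorem on the sets $\{\,J_y(F) : F$ witnesses $P \in \tilde\Gamma(x,M,S\cup\{y\})\,\}$ over $S$ of the appropriate size, one finds a single $F$ with $J_x(F)=P$ whose jet at $y$ lies in $\tilde\Gamma_l(y,M) \subset \Gamma_l(y,M)$, and Taylor's theorem supplies the derivative bound.

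Next I would assemble the conclusion. Apply Fact one with $l = l_\ast$ (legitimate since $k^\# = (2+D^\#)^{l_\ast+1}$): $\tilde\Gamma_{l_\ast}(x_0,M_0)$ is nonempty for any chosen $x_0 \in E$. By Fact two it follows that $\Gamma_{l_\ast}(x_0,M_0)$ is nonempty; pick $P_0 \in \Gamma_{l_\ast}(x_0,M_0)$. Now invoke Theorem \ref{Th6} with $\delta_{\max} = 1$ (the shape field is $(C_w,1)$-convex), with $Q_{\max}$ a cube of sidelength $1$ containing $x_0$ and large enough to contain $E$ — which we may assume after rescaling, or else cover $E$ by finitely many such cubes and patch, but the cleanest route is to note $E$ is finite and bounded and a single dilation reduces to $\mathrm{diam}(E) \leq 1$; alternatively one observes that the statement of Theorem \ref{Th7} is scale-invariant in the obvious way. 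Theorem \ref{Th6} then yields $F \in C^m(\mathbb{R}^n)$ with $J_x(F) \in \Gamma_0(x,C_\ast M_0)$ for all $x \in Q_{\max}\cap E = E$, and with $|\partial^\beta(F-P_0)(x)| \leq C_\ast M_0$ on $Q_{\max}$ for $|\beta|\leq m$; since $|\partial^\beta P_0(x_0)| \leq M_0$ for $|\beta| \leq m-1$ (as $P_0 \in \Gamma_{l_\ast}(x_0,M_0) \subset \Gamma_0(x_0,M_0)$, and membership in the $0$th shape field built from \eqref{Eq2}-type data — or directly from Fact one, since the witnessing $F^S$ have norm $\leq M_0$ — gives this bound), the triangle inequality upgrades this to $\|F\|_{C^m(\mathbb{R}^n)} \leq C_\ast M_0$ after adjusting $C_\ast$. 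This gives exactly the asserted conclusion.

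The main obstacle — everything else being bookkeeping already laid out in the excerpt — is the scale/localization issue in the last paragraph: Theorem \ref{Th6} is stated for a single cube $Q_{\max}$ of sidelength $\delta_{\max}$, whereas $E$ need not sit inside a unit cube, and the $(C_w,1)$-convexity hypothesis pins $\delta_{\max}=1$. The honest fix is a dilation argument: replace $\mathbb{R}^n$ by a rescaled copy so that $E$ lies in a cube of sidelength $1$, check that both the notion of $(C_w,\delta_{\max})$-convex shape field and the hypothesis of the theorem transform correctly under this dilation (the jet-multiplication $\odot_x$, the scaling weights $\delta^{m-|\beta|}$ and $\delta^{-|\beta|}$ in the definition, and the $C^m$ norm all scale consistently), and then undo the dilation at the end. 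I expect this to be a short but genuinely necessary lemma rather than a triviality, and it is the one place where care is required.
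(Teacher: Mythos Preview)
Your Helly-plus-induction structure is exactly what the paper does (its Lemmas \ref{lemma-fp1} and \ref{lemma-fp2}), with one cosmetic difference: the paper defines the auxiliary sets $\Gamma(x,S)$ using Whitney fields $(P^y)_{y\in S\cup\{x\}}$ with $\|\vec P\|_{\dot C^m}\le M_0$, rather than jets of $C^m$ functions. This is not just taste. Your $\tilde\Gamma$ gives the Taylor bound $|\partial^\beta(P-P')(x)|\le C(m,n)\,M_0|x-y|^{m-|\beta|}$ with a constant, whereas the refinement is defined with no constant; tracking this through the induction costs a factor $C^{l_\ast}$, which is harmless but should be said. Also, Helly in your Fact two does not produce ``a single $F$''; it produces a single polynomial $P'$ lying in all the relevant projected sets, and for each $S$ a different witness $F_S$ then certifies $P'\in\tilde\Gamma(y,M_0,S)$. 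These are both easily repaired.

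The real gap is the localization. The problem is \emph{not} scale-invariant: under a dilation $x\mapsto\lambda x$ the $C^m$ norm mixes derivatives of different orders with different powers of $\lambda$, and the $(C_w,1)$-convexity condition has $\delta_{\max}=1$ hard-wired. There is no natural transformation of an abstract shape field under dilation that simultaneously preserves $(C_w,1)$-convexity, the hypothesis $\|F^S\|_{C^m}\le M_0$, and the conclusion $\|F\|_{C^m}\le C_\ast M_0$. Your proposed ``short lemma'' does not exist. What the paper does instead (see the proof of Theorem \ref{theorem-tu1} and then the patching argument at the end of the proof of Theorem \ref{theorem-basic-fininite-principle}) is cover $\mathbb{R}^n$ by unit cubes $Q_\nu$, apply Theorem \ref{Th6} on each $Q_\nu$ to get local solutions $F_\nu$, and then set $F=\sum_\nu\theta_\nu^2 F_\nu$ for a partition of unity with $\mathrm{supp}\,\theta_\nu\subset Q_\nu$ and $|\partial^\beta\theta_\nu|\le C$. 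The key point you are missing is that $(C_w,1)$-convexity (in the multi-term form of Lemma \ref{lemma-wsf2}) is precisely what guarantees $J_x(F)\in\Gamma_0(x,CM_0)$ after patching; the norm bound follows from the second bullet of Theorem \ref{Th6} together with the derivative bound on $P_0$ that you correctly obtain from a witnessing $F^S$. Replace the dilation paragraph with this covering-and-patching step and your argument goes through.
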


Once we know Theorem \ref{Th7}, it is relatively straightforward to deduce
Theorems \ref{Th3} and \ref{Th4}. To deduce Theorem \ref{Th3} (in the $C^{m}$
FLAVOR), we introduce a new variable $\xi =\left( \xi _{1},\cdots ,\xi
_{D}\right) \in \mathbb{R}^{D}$. Given $\left( K\left( x\right) \right)
_{x\in E}$ as in Theorem \ref{Th3} (A), we set $E^{+}=E\times \left\{
0\right\} \subset \mathbb{R}^{n}\times \mathbb{R}^{D}$. To find functions $F:%
\mathbb{R}^{n}\rightarrow \mathbb{R}^{D}$ such that $F\left( x\right) \in
K\left( x\right) $ for $x\in E$, we look for functions $\mathcal{F}\left(
x,\xi \right) $ mapping $\mathbb{R}^{n+D}$ to $\mathbb{R}$ such that $%
\mathcal{F}\left( x,0\right) =0$ and $\nabla _{\xi }\mathcal{F}\left(
x,0\right) \in K\left( x\right) $ for all $\left( x,0\right) \in E^{+} $;
compare with Fefferman-Luli \cite{fl-2014}. Theorem \ref{Th7} then allows us
to find such an $\mathcal{F}\in C^{m+1}\left( \mathbb{R}^{n+D}\right) $.
Setting $F\left( x\right) =\nabla _{\xi }\mathcal{F}\left( x,0\right) $, we
obtain Theorem \ref{Th3} in the $C^{m}$ FLAVOR. The $C^{m-1,1}$ FLAVOR then
follows easily.

Theorem \ref{Th4} (B) is an immediate consequence of Theorem \ref{Th3} (B);
in the latter theorem, we take $D=1$ and set $K\left( x\right) =\left\{
f\left( x\right) \right\} $ for $x\in E$, $K\left( x\right) =[0,\infty )$
for $x\in \mathbb{R}^{n}\setminus E$.

We omit from this introduction the derivation of Theorem \ref{Th4} (A) from
Theorem \ref{Th6}.

We give a brief sketch of the proof of Theorem \ref{Th6}, sacrificing
accuracy for simplicity; see the later sections of this paper for correct
details.

We adapt from \cite{f-2007} the key idea of an \textquotedblleft $\mathcal{A}
$-basis". To formulate this notion, we introduce notation and definitions.

We write $\mathcal{M}$ to denote the set of all multiindices $\alpha =\left(
\alpha _{1},\cdots ,\alpha _{n}\right) $ of order $\left\vert \alpha
\right\vert =\alpha _{1}+\cdots +\alpha _{n}\leq m-1$. We denote subsets of $%
\mathcal{M}$ by $\mathcal{A}$, $\mathcal{B}$, $\mathcal{A}^{\prime }$, etc.
A subset $\mathcal{A}\subset \mathcal{M}$ will be called ``monotonic" if,
for any $\alpha \in \mathcal{A}$ and $\gamma \in \mathcal{M}$, $\alpha
+\gamma \in \mathcal{M}$ implies $\alpha +\gamma \in \mathcal{A}$. We use
the total order relation $<$ on subsets of $\mathcal{M}$ defined in \cite%
{f-2005}. With respect to this order relation $\mathcal{M}$ itself is the
minimal subset of $\mathcal{M}$, and the empty set $\emptyset $ is the
maximal subset.

Now suppose $\vec{\Gamma}=\left( \Gamma \left( x,M\right) \right) _{x\in
E,M>0}$ is a shape field. Let $x_{0}\in E$, $M_{0}>0$, $P_{0}\in \Gamma
\left( x_{0},M_{0}\right) $ be given. Let $\mathcal{A}\subset \mathcal{M}$
be monotonic, and let $C_{B},\delta >0$ be real numbers. Then we say that 
\underline{$\vec{\Gamma}$ has an $\left( \mathcal{A},\delta ,C_{B}\right) $%
-basis at $\left( x_{0},M_{0},P_{0}\right) $} if there exist polynomials $%
P_{\alpha }\in \mathcal{P} $ for $\alpha \in \mathcal{A}$ with the following
properties:

\begin{itemize}
\item $\partial ^{\beta }P_{\alpha }\left( x_{0}\right) =\delta _{\beta
\alpha }$ (Kronecker delta) for all $\beta ,\alpha \in \mathcal{A}$.

\item $\left\vert \partial ^{\beta }P_{\alpha }\left( x_{0}\right)
\right\vert \leq C_{B}\delta ^{\left\vert \alpha \right\vert -\left\vert
\beta \right\vert }$ for all $\alpha \in \mathcal{A}$, $\beta \in \mathcal{M}
$.

\item $P_{0}+\tau M_{0}\delta ^{m-\left\vert \alpha \right\vert }P_{\alpha
}\in \Gamma \left( x_{0},C_{B}M_{0}\right) $ for all $\alpha \in \mathcal{A}$%
, $\left\vert \tau \right\vert \leq \frac{1}{C_{B}}$.
\end{itemize}

Note that for $\mathcal{A}=\emptyset $, $\vec{\Gamma}$ always has an $\left( 
\mathcal{A},\delta ,C_{B}\right) $-basis at $\left( x_{0},M_{0},P_{0}\right) 
$, since the desired list of polynomials $\left( P_{\alpha }\right) _{\alpha
\in \mathcal{A}}$ is then empty.

By induction on the monotonic set $\mathcal{A}$ with respect to the order $<$%
, we will prove the following result. We write $\delta _{Q}$ to denote the
sidelength of a cube $Q\subset \mathbb{R}^{n}$.

\begin{lemma}[Main Lemma for $\mathcal{A}$]
Let $\vec{\Gamma}_{0}=\left( \Gamma _{0}\left( x,M\right) \right) _{x\in
E,M>0}$ be a $\left( C_{w},\delta _{\max }\right) $-convex shape field, with 
$l$-th refinement $\vec{\Gamma}_{l}=\left( \Gamma _{l}\left( x,M\right)
\right) _{x\in E,M>0}$ (each $l\geq 0$). Let $Q_{0}\subset \mathbb{R}^{n}$
be a cube with sidelength $\delta _{Q_{0}}\leq \delta _{\max }$, and let $%
x_{0}\in E\cap Q_{0}$, $M_{0}>0$, $P_{0}\in \Gamma _{l\left( \mathcal{A}%
\right) }\left( x_{0},M_{0}\right) $ for a suitable $l\left( \mathcal{A}%
\right) $ determined by $\mathcal{A}$, $m$, $n$. Assume $\vec{\Gamma}%
_{l\left( \mathcal{A}\right) }$ has an $\left( \mathcal{A},\delta
_{Q_{0}},C_{B}\right) $-basis at $\left( x_{0},M_{0},P_{0}\right) $. Then
there exists $F\in C^{m}\left( \mathbb{R}^{n}\right) $ such that 
\begin{equation}
\left\vert \partial ^{\beta }\left( F-P_{0}\right) \left( x\right)
\right\vert \leq C_{\ast }M_{0}\delta _{Q_{0}}^{m-\left\vert \beta
\right\vert }\text{ for }x\in Q_{0},\left\vert \beta \right\vert \leq m;
\label{intro14}
\end{equation}%
and 
\begin{equation}
J_{x}\left( F\right) \in \Gamma _{0}\left( x,C_{\ast }M_{0}\right) \text{
for all }x\in E\cap Q_{0}\text{.}  \label{intro15}
\end{equation}%
Here, $C_{\ast }$ depends only on $C_{B}$, $C_{w}$, $m$, $n$.
\end{lemma}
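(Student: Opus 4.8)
The plan is to induct on the monotonic set $\mathcal{A}$ with respect to the order $<$. The base case is $\mathcal{A}=\mathcal{M}$, which is the \emph{minimal} monotonic set. Here the $(\mathcal{M},\delta_{Q_0},C_B)$-basis hypothesis is extremely strong: it provides, for every multiindex $\alpha$ of order $\leq m-1$, a polynomial $P_\alpha$ realizing a full ``Taylor cushion,'' so that every jet $P_0 + \sum_\alpha c_\alpha M_0 \delta_{Q_0}^{m-|\alpha|} P_\alpha$ with small coefficients $c_\alpha$ lies in $\Gamma_0(x_0, CM_0)$. Informally, $\Gamma_0(x_0,\cdot)$ around $P_0$ contains a ball in every coordinate direction of $\mathcal{P}$, of the natural scaling size dictated by $Q_0$. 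In this situation one can take $F=P_0$ (or a trivial modification), provided $l(\mathcal{M})$ is chosen large enough that membership in $\Gamma_{l(\mathcal{M})}(x_0,M_0)$ together with the $(\mathcal{M},\dots)$-basis forces the nearby jets at \emph{other} points $x\in E\cap Q_0$ into $\Gamma_0(x,CM_0)$; this is where the refinement step \eqref{Eq4}-type estimate is used to transport the cushion from $x_0$ to $x$, and $(C_w,\delta_{\max})$-convexity is what guarantees the transported object still sits in $\Gamma_0$.

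For the inductive step, suppose $\mathcal{A}\subsetneq \mathcal{M}$ is monotonic and the Main Lemma is known for all monotonic $\mathcal{A}'>\mathcal{A}$ (equivalently, all monotonic $\mathcal{A}'$ that are ``larger,'' i.e. missing more low-order multiindices). Let $\bar\alpha$ be the minimal element of $\mathcal{M}\setminus\mathcal{A}$ in the standard order on multiindices, so that $\mathcal{A}^+ := \mathcal{A}\cup\{\bar\alpha\}$ is again monotonic and $\mathcal{A}^+ < \mathcal{A}$. First I would perform a Calder\'on--Zygmund decomposition of $Q_0$: call a dyadic subcube $Q_\nu$ ``OK'' if, on $E\cap Q_\nu$, the refined shape field $\vec\Gamma_{l'}$ (for an appropriate intermediate level $l'<l(\mathcal{A})$) admits an $(\mathcal{A}^+,\delta_{Q_\nu},C')$-basis at a suitable jet $P_\nu \in \Gamma_{l'}(x_\nu,M_0)$ near $P_0$, and stop bisecting when this holds; the key geometric point is that an $(\mathcal{A},\delta_{Q_0},C_B)$-basis at $x_0$ \emph{does} gain the extra basis vector $P_{\bar\alpha}$ once we pass to a small enough subcube, using monotonicity of $\mathcal{A}$ and the defining estimates of the basis (a small subcube rescales $\delta^{m-|\bar\alpha|}$ favorably). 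One must also check the CZ cubes have bounded geometry (good neighbors have comparable sidelengths), which follows by the usual argument from the stopping rule. On each OK cube $Q_\nu$ the inductive hypothesis for $\mathcal{A}^+$ produces a local solution $F_\nu$ satisfying (P4)-type conclusions \eqref{intro14}, \eqref{intro15} at scale $\delta_\nu$.

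Next I would establish the consistency estimates (P5): $|\partial^\beta(F_\nu - F_{\nu'})| \leq C M_0 \delta_\nu^{m-|\beta|}$ on $Q_\nu\cap Q_{\nu'}$. This is the crux and the main obstacle: it requires comparing the two local jets $P_\nu,P_{\nu'}$, which live near $P_0$ but are only controlled through their membership in the refined sets $\Gamma_{l'}(x_\nu,M_0)$ and $\Gamma_{l'}(x_{\nu'},M_0)$; one uses the definition of the refinement $\Gamma_{l'+1}$ (which forces a $y$-companion polynomial within the Taylor tolerance $M_0|x_\nu-x_{\nu'}|^{m-|\beta|}$) together with the basis vectors to absorb the discrepancy, and crucially one must not lose track of which coordinates of $\mathcal{P}$ have been ``pinned down'' — the multiindices in $\mathcal{A}$ versus those still free. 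Having (P2)-(P5) in hand, I would patch via a partition of unity $1=\sum_\nu \theta_\nu^2$ as in (P6): set $F = \sum_\nu \theta_\nu^2 F_\nu$ near $Q_0$. The derivative estimate \eqref{intro14} for $F-P_0$ follows from (P3), (P5) by the classical Whitney computation. For \eqref{intro15}, fix $x\in E\cap Q_\nu$; then $J_x(F) = \sum_{\nu'} J_x(\theta_{\nu'})\odot_x J_x(\theta_{\nu'})\odot_x J_x(F_{\nu'})$, a sum over the boundedly many $\nu'$ with $x$ in the double of $Q_{\nu'}$; writing this as an iterated application of the two-term $(C_w,\delta_{\max})$-convexity identity (with the partition-of-unity jets playing the role of $Q_1,Q_2$ and the $J_x(F_{\nu'})\in\Gamma_0(x,C'M_0)$ playing the role of $P_1,P_2$, the hypotheses of the convexity definition being exactly (P3) and (P5)), we conclude $J_x(F)\in\Gamma_0(x,C_\ast M_0)$. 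Tracking the constant through the finitely many steps of the induction (the number of monotonic sets, hence $l_\ast$ and the exponent blow-up in $C_\ast$, depends only on $m,n$) gives the stated dependence $C_\ast=C_\ast(C_B,C_w,m,n)$, completing the induction.
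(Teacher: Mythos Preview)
Your overall architecture matches the paper's: induction on monotonic $\mathcal{A}$ starting from the base case $\mathcal{A}=\mathcal{M}$ (with $F=P_0$), a Calder\'on--Zygmund decomposition on whose cubes the inductive hypothesis is invoked, and patching via $\sum_\nu\theta_\nu^2 F_\nu$ with $(C_w,\delta_{\max})$-convexity delivering the jet condition. (There is a sign slip: you state the inductive hypothesis for $\mathcal{A}'>\mathcal{A}$ but then use $\mathcal{A}^+<\mathcal{A}$; the paper's hypothesis is for all monotonic $\mathcal{A}'<\mathcal{A}$, consistent with your actual move.)

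The substantive gap is your stopping rule. You fix a single $\mathcal{A}^+=\mathcal{A}\cup\{\bar\alpha\}$ and assert that ``an $(\mathcal{A},\delta_{Q_0},C_B)$-basis at $x_0$ does gain the extra basis vector $P_{\bar\alpha}$ once we pass to a small enough subcube.'' This is not justified: shrinking $\delta$ weakens the membership requirement $P_0\pm cM_0\delta^{m-|\bar\alpha|}P_{\bar\alpha}\in\Gamma$, but it does not manufacture a polynomial $P_{\bar\alpha}$ with $\partial^{\bar\alpha}P_{\bar\alpha}(x_0)=1$ and $\partial^\beta P_{\bar\alpha}(x_0)=0$ for $\beta\in\mathcal{A}$ out of nothing; moreover, shrinking $\delta$ can \emph{destroy} the bound $|\partial^\beta P_\alpha(x_0)|\le C_B\delta^{|\alpha|-|\beta|}$ for $|\beta|<|\alpha|$, so even the old basis need not persist with a controlled constant. (Your $\mathcal{A}^+$ also need not be monotonic: that requires $\bar\alpha$ maximal in $\mathcal{M}\setminus\mathcal{A}$ for the componentwise order, not minimal in the total order $<$.) The paper does \emph{not} fix $\mathcal{A}^+$: its OK-criterion asks only that \emph{some} $\hat{\mathcal{A}}<\mathcal{A}$ admit a weak basis, and the monotonic $\mathcal{A}^\#_\nu<\mathcal{A}$ actually used on each CZ cube is produced case-by-case via two key lemmas---a Relabeling Lemma (upgrading a weak basis to a genuine one, possibly for a strictly smaller label set) and a ``Control $\Gamma$ Using Basis'' lemma (extracting a new basis direction from a second polynomial $P\in\Gamma$ that agrees with $P_0$ on $\mathcal{A}$-derivatives but is separated at scale $\delta$ in some $\mathcal{M}\setminus\mathcal{A}$ direction). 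The raw material for that second polynomial comes from auxiliary jets $P^y$ at nearby points $y\in E$, supplied by a Transport Lemma; their required estimates and the mutual consistency (your (P5)) are themselves proved by contradiction against the CZ maximality of $Q$. This machinery---not a rescaling argument---is what drives the induction, and the paper flags the second case of this step as the genuinely new ingredient with no analogue in earlier Whitney-problem work.
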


Theorem \ref{Th6} follows at once from the Main Lemma for $\mathcal{A}%
=\emptyset $ (the empty set), since there is always a $\left( \emptyset
,\delta _{Q_{0}},C_{B}\right) $-basis at $\left( x_{0},M_{0},P_{0}\right) $,
where (say) $C_{B}=1$. Thus, our task is to establish the above Main Lemma
by induction on $\mathcal{A}$.

The base case of our induction is the Main Lemma for $\mathcal{A}=\mathcal{M}
$. In this case, one can just set $F=P_{0}$. Then (\ref{intro14}) holds
trivially, and we can show that (\ref{intro15}) holds as well.

For the induction step, we fix a monotonic $\mathcal{A}\subset \mathcal{M}$ $%
\left( \mathcal{A\not=M}\right) $, and suppose that the Main Lemma for $%
\mathcal{A}^{\prime }$ is valid for all $\mathcal{A}^{\prime }<\mathcal{A}$.
We will then prove the Main Lemma for $\mathcal{A}$.

Suppose $\vec{\Gamma}_{l}$, $x_{0}$, $M_{0}$, $P_{0}$, $Q_{0}$ etc. are as
in the hypotheses of the Main Lemma for $\mathcal{A}$. We must find a
function $F\in C^{m}\left( \mathbb{R}^{n}\right) $ satisfying (\ref{intro14}%
) and (\ref{intro15}).

We will make a Calder\'on-Zygmund decomposition of $Q_{0}$ into subcubes $%
\left\{ Q_{\nu }\right\} $. Some of the $Q_{\nu }$ may have empty
intersection with $E$, but in this expository discussion we pretend that we
can pick $x_{\nu }\in E\cap Q_{\nu }$ for each $\nu $. We write $\frac{65}{64%
}Q_{\nu }$ to denote the cube obtained by dilating $Q_{\nu }$ about its
center by a factor $\frac{65}{64}$. For each $Q_{\nu }$, we will carefully
pick $\mathcal{A}_{\nu }^{\prime }<\mathcal{A}$ monotonic, and $P_{\nu }\in
\Gamma _{l\left( \mathcal{A}_{\nu }^{\prime }\right) }\left( x_{\nu
},CM_{0}\right) $, such that 
\begin{equation}
\vec{\Gamma}_{l\left( \mathcal{A}_{\nu }^{\prime }\right) }\text{ has an }%
\left( \mathcal{A}_{\nu }^{\prime },\delta _{\frac{65}{64}Q_{\nu
}},C^{\prime }\right) \text{-basis at }\left( x_{\nu },M_{0},P_{\nu }\right) 
\text{.}  \label{intro16}
\end{equation}%
Since $\mathcal{A}_{\nu }^{\prime }<\mathcal{A}$ and $\mathcal{A}_{\nu
}^{\prime }$ is monotonic, our induction hypothesis applies; thus, the Main
Lemma for $\mathcal{A}_{\nu }^{\prime }$ is valid. Therefore, by (\ref%
{intro16}), there exists $F_{\nu }\in C^{m}\left( \mathbb{R}^{n}\right) $
such that 
\begin{equation}
\left\vert \partial ^{\beta }\left( F_{\nu }-P_{\nu }\right) \left( x\right)
\right\vert \leq C_{\ast }M_{0}\delta _{Q_{\nu }}^{m-\left\vert \beta
\right\vert }\text{ for }x\in \frac{65}{64}Q_{\nu }\text{, }\left\vert \beta
\right\vert \leq m,  \label{intro17}
\end{equation}%
and 
\begin{equation}
J_{x}\left( F_{\nu }\right) \in \Gamma _{0}\left( x,CM\right) \text{ for }%
x\in E\cap \frac{65}{64}Q_{\nu }\text{.}  \label{intro18}
\end{equation}

We patch together the local solutions $F_{\nu }$ into an $F\in C^{m}\left( 
\mathbb{R}^{n}\right) $ as in (P1)$\cdots $(P5) above. By picking the
above $P_{\nu }$ with great care, we can arrange that 
\begin{equation*}
\left\vert \partial ^{\beta }\left( P_{\nu }-P_{\nu ^{\prime }}\right)
\left( x\right) \right\vert \leq CM_{0}\delta _{Q_{\nu }}^{m-\left\vert
\beta \right\vert }\text{ for }x\in \frac{65}{64}Q_{\nu }\cap \frac{65}{64}%
Q_{\nu ^{\prime }},\left\vert \beta \right\vert \leq m.
\end{equation*}%
Consequently, estimate (\ref{intro17}) implies the crucial consistency
condition (P5). Thus, our $\theta _{\nu }$ and $F_{\nu }$ satisfy (P2)$\cdots
$(P5). The $\left( C_{w},\delta _{\max }\right) $-convexity of $\vec{\Gamma}%
_{0}$ now implies that the function $F$ given by (P5) satisfies 
\begin{equation}
J_{x}\left( F\right) \in \Gamma _{0}\left( x,CM_{0}\right) \text{ for all }%
x\in E\cap Q_{0}.  \label{intro19}
\end{equation}%
Moreover, Whitney's classic argument \cite{whitney-1934} yields the estimate 
\begin{equation}
\left\vert \partial ^{\beta }\left( F-P_{0}\right) \left( x\right)
\right\vert \leq CM_{0}\delta _{Q_{0}}^{m-\left\vert \beta \right\vert }%
\text{ for }x\in Q_{0}\text{, }\left\vert \beta \right\vert \leq m\text{.}
\label{intro20}
\end{equation}

Our results (\ref{intro19}), (\ref{intro20}) are precisely the conclusions
of the Main Lemma for $\mathcal{A}$.

This completes our induction on $\mathcal{A}$, proving the Main Lemma for
all monotonic $\mathcal{A}$, and consequently establishing Theorems \ref{Th3}%
, \ref{Th4}, \ref{Th6}, \ref{Th7} as explained above.

Our Main Lemma and its proof are analogous to the main ideas in \cite%
{f-2005, f-2006, f-2007}. However, at one crucial point (Case 2 in the proof
of Lemma \ref{lemma-gn1} below), the analogy breaks down, and we are saved
from disaster by a lucky accident.

This concludes our summary of the proof of Theorem \ref{Th6}. We again warn the reader that it is oversimplified. In the following sections, we start from scratch and provide correct proofs.

This paper is part of a literature on extension, interpolation, and
selection of functions, going back to H. Whitney's seminal work \cite%
{whitney-1934}, and including fundamental contributions by G. Glaeser \cite%
{gl-1958}, Y, Brudnyi and P. Shvartsman \cite%
{bs-1985, bs-1994,bs-1997,bs-1998,bs-2001, pavel-1982,pavel-1984,pavel-1986, pavel-1987, pavel-1992, s-2001,s-2002,pavel-2004, s-2008}, J. Wells 
\cite{jw-1973}, E. Le Gruyer \cite{lgw-2009}, and E. Bierstone, P. Milman,
and W. Paw{\l }ucki \cite{bmp-2003,bmp-2006,bm-2007}, as well as our own
papers \cite{f-2005, f-2006, f-2007, f-2009-b, fb1, fb2, fl-2014}. See e.g. \cite{f-2009-b}
for the history of the problem, as well as Zobin \cite{zobin-1998,zobin-1999} for a related problem. 

We are grateful to the American Institute of Mathematics, the Banff
International Research Station, the Fields Institute, and the College of
William and Mary for hosting workshops on interpolation and extension. We
are grateful also to the Air Force Office of Scientific Research, the
National Science Foundation, and the Office of Naval Research for financial
support.

We are also grateful to Pavel Shvartsman and Alex Brudnyi for their comments on an earlier version of our manuscript, and to all the participants of the Eighth Whitney Problems Workshop for their interest in our work. 

\part{Shape Fields and Their Refinements}

\section{Notation and Preliminaries\label{notation-and-preliminaries}}

Fix $m$, $n\geq 1$. We will work with cubes in $\mathbb{R}^{n}$; all our
cubes have sides parallel to the coordinate axes. If $Q$ is a cube, then $%
\delta _{Q}$ denotes the sidelength of $Q$. For real numbers $A>0$, $AQ$
denotes the cube whose center is that of $Q$, and whose sidelength is $%
A\delta _{Q}$.

A \underline{dyadic} cube is a cube of the form $I_{1}\times I_{2}\times
\cdots \times I_{n}\subset \mathbb{R}^{n}$, where each $I_{\nu }$ has the
form $[2^{k}\cdot i_{\nu },2^{k}\cdot \left( i_{\nu }+1\right) )$ for
integers $i_{1},\cdots ,i_{n}$, $k$. Each dyadic cube $Q$ is contained in
one and only one dyadic cube with sidelength $2\delta _{Q}$; that cube is
denoted by $Q^{+}$.

We write $\mathcal{P}$ to denote the vector space of all real-valued
polynomials of degree at most $\left( m-1\right) $ on $\mathbb{R}^{n}$. If $%
x\in \mathbb{R}^{n}$ and $F$ is a real-valued $C^{m-1}$ function on a
neighborhood of $x$, then $J_{x}\left( F\right) $ (the \textquotedblleft
jet" of $F$ at $x$) denotes the $\left( m-1\right) ^{rst}$ order Taylor
polynomial of $F$ at $x$, i.e.,

\begin{equation*}
J_{x}\left( F\right) \left( y\right) =\sum_{\left\vert \alpha \right\vert
\leq m-1}\frac{1}{\alpha !}\partial ^{\alpha }F\left( x\right) \cdot \left(
y-x\right) ^{\alpha }.
\end{equation*}%
Thus, $J_{x}\left( F\right) \in \mathcal{P}$.

For each $x\in \mathbb{R}^{n}$, there is a natural multiplication $\odot
_{x} $ on $\mathcal{P}$ (\textquotedblleft multiplication of jets at $x$")
defined by setting%
\begin{equation*}
P\odot _{x}Q=J_{x}\left( PQ\right) \text{ for }P,Q\in \mathcal{P}\text{.}
\end{equation*}%
We write $C^{m}\left( \mathbb{R}^{n}\right) $ to denote the Banach space of
real-valued locally $C^{m}$ functions $F$ on $\mathbb{R}^{n}$ for which the
norm 
\begin{equation*}
\left\Vert F\right\Vert _{C^{m}\left( \mathbb{R}^{n}\right) }=\sup_{x\in 
\mathbb{R}^{n}}\max_{\left\vert \alpha \right\vert \leq m}\left\vert
\partial ^{\alpha }F\left( x\right) \right\vert
\end{equation*}%
is finite. Similarly, for $D\geq 1$, we write $C^{m}\left( \mathbb{R}^{n},%
\mathbb{R}^{D}\right) $ to denote the Banach space of all $\mathbb{R}^{D}$%
-valued locally $C^{m}$ functions $F$ on $\mathbb{R}^{n}$, for which the
norm 
\begin{equation*}
\left\Vert F\right\Vert _{C^{m}\left( \mathbb{R}^{n},\mathbb{R}^{D}\right)
}=\sup_{x\in \mathbb{R}^{n}}\max_{\left\vert \alpha \right\vert \leq
m}\left\Vert \partial ^{\alpha }F\left( x\right) \right\Vert
\end{equation*}%
is finite. Here, we use the Euclidean norm on $\mathbb{R}^{D}$.

If $F$ is a real-valued function on a cube $Q$, then we write $F\in
C^{m}\left( Q\right) $ to denote that $F$ and its derivatives up to $m$-th
order extend continuously to the closure of $Q$. For $F\in C^{m}\left(
Q\right) $, we define 
\begin{equation*}
\left\Vert F\right\Vert _{C^{m}\left( Q\right) }=\sup_{x\in
Q}\max_{\left\vert \alpha \right\vert \leq m}\left\vert \partial ^{\alpha
}F\left( x\right) \right\vert .
\end{equation*}%
Similarly, if $F$ is an $\mathbb{R}^{D}$-valued function on a cube $Q$,
then we write $F\in C^{m}\left( Q,\mathbb{R}^{D}\right) $ to denote that $F$
and its derivatives up to $m$-th order extend continuously to the closure of 
$Q$. For $F\in C^{m}\left( Q,\mathbb{R}^{D}\right) $, we define 
\begin{equation*}
\left\Vert F\right\Vert _{C^{m}\left( Q,\mathbb{R}^{D}\right) }=\sup_{x\in
Q}\max_{\left\vert \alpha \right\vert \leq m}\left\Vert \partial ^{\alpha
}F\left( x\right) \right\Vert \text{,}
\end{equation*}%
where again we use the Euclidean norm on $\mathbb{R}^{D}$.

If $F\in C^{m}\left( Q\right) $ and $x$ belongs to the boundary of $Q$, then
we still write $J_{x}\left( F\right) $ to denote the $\left( m-1\right)
^{rst}$ degree Taylor polynomial of $F$ at $x$, even though $F$ isn't
defined on a full neighborhood of $x\in \mathbb{R}^{n}$.

Let $S\subset \mathbb{R}^{n}$ be non-empty and finite. A \underline{Whitney
field} on $S$ is a family of polynomials 
\begin{equation*}
\vec{P}=\left( P^{y}\right) _{y\in S}\text{ (each }P^{y}\in \mathcal{P}\text{%
),}
\end{equation*}%
parametrized by the points of $S$.

We write $Wh\left( S\right) $ to denote the vector space of all Whitney
fields on $S$.

For $\vec{P}=\left( P^{y}\right) _{y\in S}\in Wh\left( S\right) $, we define
the seminorm 
\begin{equation*}
\left\Vert \vec{P}\right\Vert _{\dot{C}^{m}\left( S\right) }=\max_{x,y\in
S,\left( x\not=y\right), |\alpha| \leq m}\frac{\left\vert \partial ^{\alpha
}\left( P^{x}-P^{y}\right) \left( x\right) \right\vert }{\left\vert
x-y\right\vert ^{m-\left\vert \alpha \right\vert }}\text{.}
\end{equation*}

(If $S$ consists of a single point, then $\left\Vert \vec{P}\right\Vert _{%
\dot{C}^{m}\left( S\right) }=0$.)

We write $\mathcal{M}$ to denote the set of all multiindices $\alpha =\left(
\alpha _{1},\cdots ,\alpha _{n}\right) $ of order $\left\vert \alpha
\right\vert =\alpha _{1}+\cdots +\alpha _{n} \leq m-1$.

We define a (total) order relation $<$ on $\mathcal{M}$, as follows. Let $%
\alpha =\left( \alpha _{1},\cdots ,\alpha _{n}\right) $ and $\beta =\left(
\beta _{1},\cdots ,\beta _{n}\right) $ be distinct elements of $\mathcal{M}$%
. Pick the largest $k$ for which $\alpha _{1}+\cdots +\alpha
_{k}\not=\beta _{1}+\cdots +\beta _{k}$. (There must be at least one such $k$%
, since $\alpha $ and $\beta $ are distinct). Then we say that $\alpha
<\beta $ if $\alpha _{1}+\cdots +\alpha _{k}<\beta _{1}+\cdots +\beta _{k}$.

We also define a (total) order relation $<$ on subsets of $\mathcal{M}$, as
follows. Let $\mathcal{A},\mathcal{B}$ be distinct subsets of $\mathcal{M}$,
and let $\gamma $ be the least element of the symmetric difference $\left( 
\mathcal{A\setminus B}\right) \cup \left( \mathcal{B\setminus A}\right) $
(under the above order on the elements of $\mathcal{M}$). Then we say that $%
\mathcal{A}<\mathcal{B}$ if $\gamma \in \mathcal{A}$.

One checks easily that the above relations $<$ are indeed total order
relations. Note that $\mathcal{M}$ is minimal, and the empty set $\emptyset $
is maximal under $<$. A set $\mathcal{A}\subseteq \mathcal{M}$ is called 
\underline{monotonic} if, for all $\alpha \in \mathcal{A}$ and $\gamma \in 
\mathcal{M}$, $\alpha +\gamma \in \mathcal{M}$ implies $\alpha +\gamma \in 
\mathcal{A}$. We make repeated use of a simple observation:

Suppose $\mathcal{A}\subseteq \mathcal{M}$ is monotonic, $P\in \mathcal{P}$
and $x_{0}\in \mathbb{R}^{n}$. If $\partial ^{\alpha }P\left( x_{0}\right)
=0 $ for all $\alpha \in \mathcal{A}$, then $\partial ^{\alpha }P\equiv 0$
on $\mathbb{R}^{n}$.

This follows by writing $\partial ^{\alpha }P\left( y\right)
=\sum_{\left\vert \gamma \right\vert \leq m-1-\left\vert \alpha \right\vert }%
\frac{1}{\gamma !}\partial ^{\alpha +\gamma }P\left( x_{0}\right) \cdot
\left( y-x_{0}\right) ^{\gamma }$ and noting that all the relevant $\alpha
+\gamma $ belong to $\mathcal{A}$, hence $\partial ^{\alpha +\gamma }P\left(
x_{0}\right) =0$.

We need a few elementary facts about convex sets. We recall

\theoremstyle{plain} \newtheorem*{thm Helly}{Helly's Theorem}%
\begin{thm Helly}
Let $K_{1},\cdots ,K_{N}\subset \mathbb{R}^{D}$ be convex. Suppose that $K_{i_{1}}\cap \cdots \cap K_{i_{D+1}}$ is nonempty for any $i_{1}, \cdots, i_{D+1}\in
\{1,\cdots ,N\}$. Then $K_{1}\cap \cdots \cap K_{N}$ is nonempty.\end{thm Helly}

See \cite{rock-convex}.

We also use the following \theoremstyle{plain} 
\newtheorem*{thm Trivial
Remark on Convex Sets}{Trivial Remark on Convex Sets}%
\begin{thm Trivial Remark on Convex Sets}
Let $\Gamma $ be a convex set, and let $P_{0}$, $P_{0}~+~P_{\nu }$, $P_{0}~-~P_{\nu }\in \Gamma $ for $\nu =1,\cdots ,\nu _{\max }$. Then for any
real numbers $t_{1},\cdots ,t_{\nu _{\max }}$ with 
\begin{equation*}
\sum_{\nu =1}^{\nu _{\max }}\left\vert t_{\nu }\right\vert \leq 1,
\end{equation*}we have 
\begin{equation*}
P_{0}+\sum_{\nu =1}^{\nu _{\max }}t_{\nu }P_{\nu }\in \Gamma \text{.}
\end{equation*}\end{thm Trivial Remark on Convex Sets}

To see this, we write each $t_{\nu }=\sigma _{\nu }\left\vert t_{\nu
}\right\vert $ with $\sigma _{\nu } \in \left\{ -1,1\right\} $. Then 
\begin{equation*}
P_{0}+\sum_{\nu =1}^{\nu _{\max }}t_{\nu }P_{\nu }=\sum_{\nu =1}^{\nu _{\max
}}\left\vert t_{\nu }\right\vert \cdot \left( P_{0}+\sigma _{\nu }P_{\nu
}\right) +\left[ 1-\sum_{\nu =1}^{\nu _{\max }}\left\vert t_{\nu
}\right\vert \right] \cdot \left( P_{0}\right) \text{.}
\end{equation*}%
The right-hand side is a convex combination of vectors in $\Gamma $, proving
the trivial remark.

For finite sets $X$, we write $\#\left( X\right) $ to denote the numbers of
elements in $X$.

If $\lambda =\left( \lambda _{1},\cdots ,\lambda _{n}\right) $ is an $n$%
-tuple of positive real numbers, and if $\beta =\left( \beta _{1},\cdots
,\beta _{n}\right) \in \mathbb{Z}^{n}$, then we write $\lambda ^{\beta }$ to
denote 
\begin{equation*}
\lambda _{1}^{\beta _{1}}\cdots \lambda _{n}^{\beta _{n}}.
\end{equation*}%
We write $B_{n}\left( x,r\right) $ to denote the open ball in $\mathbb{R}%
^{n} $ with center $x$ and radius $r$, with respect to the Euclidean metric. 

\section{Shape Fields}

Let $E\subset \mathbb{R}^{n}$ be finite. For each $x\in E$, $M\in \left(
0,\infty \right) $, let $\Gamma \left( x,M\right) \subseteq \mathcal{P}$ be
a (possibly empty) convex set. We say that $\vec{\Gamma}=\left( \Gamma
\left( x,M\right) \right) _{x\in E,M>0}$ is a \underline{shape field} if for
all $x\in E$ and $0<M^{\prime }\leq M<\infty $, we have 
\begin{equation*}
\Gamma \left( x,M^{\prime }\right) \subseteq \Gamma \left( x,M\right) .
\end{equation*}

Let $\vec{\Gamma}=\left( \Gamma \left( x,M\right) \right) _{x\in E,M>0}$ be
a shape field and let $C_{w},\delta _{\max }$ be positive real numbers. We
say that $\vec{\Gamma}$ is \underline{$\left( C_{w},\delta _{\max }\right) $%
-convex} if the following condition holds:

Let $0<\delta \leq \delta _{\max }$, $x\in E$, $M\in \left( 0,\infty \right) 
$, $P_{1}$, $P_{2}$, $Q_{1}$, $Q_{2}\in \mathcal{P}$. Assume that

\begin{itemize}
\item[\refstepcounter{equation}\text{(\theequation)}\label{wsf1}] $P_1,P_2
\in\Gamma(x,M)$;

\item[\refstepcounter{equation}\text{(\theequation)}\label{wsf2}] $%
|\partial^\beta(P_1-P_2)(x)| \leq M\delta^{m-|\beta|}$ for $|\beta| \leq m-1$%
;

\item[\refstepcounter{equation}\text{(\theequation)}\label{wsf3}] $%
|\partial^\beta Q_i(x)|\leq \delta^{-|\beta|}$ for $|\beta| \leq m-1$ for $%
i=1,2$;

\item[\refstepcounter{equation}\text{(\theequation)}\label{wsf4}] $%
Q_1\odot_x Q_1 + Q_2 \odot_x Q_2 =1$.
\end{itemize}

Then

\begin{itemize}
\item[\refstepcounter{equation}\text{(\theequation)}\label{wsf5}] $%
P:=Q_1\odot_x Q_1\odot_x P_1 + Q_2 \odot_x Q_2 \odot_x P_2\in\Gamma(x,C_wM)$.
\end{itemize}

\begin{lemma}
\label{lemma-wsf1} Suppose $\vec{\Gamma}=\left( \Gamma \left( x,M\right)
\right) _{x\in E,M>0}$ is a $\left( C_{w},\delta _{\max }\right) $-convex
shape field. Let

\begin{itemize}
\item[\refstepcounter{equation}\text{(\theequation)}\label{6}] $0<\delta
\leq \delta_{\max}$, $x \in E$, $M>0$, $P_1,P_2,Q_1,Q_2 \in \mathcal{P}$ and 
$A^{\prime },A^{\prime \prime }>0$.
\end{itemize}

Assume that

\begin{itemize}
\item[\refstepcounter{equation}\text{(\theequation)}\label{7}] $P_1,P_2 \in
\Gamma(x,A^{\prime }M)$;

\item[\refstepcounter{equation}\text{(\theequation)}\label{8}] $\left\vert
\partial ^{\beta }\left( P_{1}-P_{2}\right) \left( x\right) \right\vert \leq
A^{\prime}M \delta^{  m-\left\vert \beta \right\vert }$ for $\left\vert \beta
\right\vert \leq m-1$;

\item[\refstepcounter{equation}\text{(\theequation)}\label{9}] $\left\vert
\partial ^{\beta }Q_{i}\left( x\right) \right\vert \leq A^{\prime \prime 
}\delta^{-\left\vert \beta \right\vert }$ for $\left\vert \beta \right\vert \leq m-1$
and $i=1,2$;

\item[\refstepcounter{equation}\text{(\theequation)}\label{10}] $Q_{1}\odot
_{x}Q_{1}+Q_{2}\odot _{x}Q_{2}=1$.
\end{itemize}

Then

\begin{itemize}
\item[\refstepcounter{equation}\text{(\theequation)}\label{11}] $%
P:=Q_{1}\odot _{x}Q_{1}\odot _{x}P_{1}+Q_{2}\odot _{x}Q_{2}\odot
_{x}P_{2}\in \Gamma \left( x,CM\right) $ with $C$ determined by $A^{\prime }$%
, $A^{\prime \prime }$, $C_{w}$, $m$, and $n$.
\end{itemize}
\end{lemma}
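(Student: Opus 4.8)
\textbf{Proof plan for Lemma \ref{lemma-wsf1}.}

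The plan is to reduce Lemma \ref{lemma-wsf1} to the defining property \eqref{wsf1}--\eqref{wsf5} of $(C_w,\delta_{\max})$-convexity by rescaling the hypotheses so that they match the normalization required there. The obstruction is that the $(C_w,\delta_{\max})$-convexity hypothesis insists on three specific normalizations that are not present in \eqref{7}, \eqref{8}, \eqref{9}: the polynomials $P_i$ must lie in $\Gamma(x,M)$ (not $\Gamma(x,A'M)$), the difference bound must have constant $1$ in front of $M\delta^{m-|\beta|}$, and the $Q_i$ must satisfy $|\partial^\beta Q_i(x)|\le\delta^{-|\beta|}$ with constant $1$. We will absorb $A'$ and $A''$ by a combination of replacing $M$ by a larger multiple and $\delta$ by a comparable multiple, and then restore the original $\delta$ and $M$ at the end using the monotonicity of the shape field in $M$.

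First I would handle $A'$. Set $\widetilde M = A' M$. Then \eqref{7} says $P_1,P_2\in\Gamma(x,\widetilde M)$, and \eqref{8} says $|\partial^\beta(P_1-P_2)(x)|\le \widetilde M\,\delta^{m-|\beta|}$ for $|\beta|\le m-1$ — exactly \eqref{wsf1} and \eqref{wsf2} with $M$ replaced by $\widetilde M$. Next I would handle $A''$. Choose an integer $N=N(A'',m,n)$ so large that we may bound everything in terms of $\delta/N$; more simply, replace $\delta$ by $\delta' = \lambda\delta$ for a suitable $\lambda = \lambda(A'',m)\ge 1$ so that \eqref{9} gives $|\partial^\beta Q_i(x)|\le A''\delta^{-|\beta|}\le (\delta')^{-|\beta|}$ for all $|\beta|\le m-1$; indeed taking $\lambda = A''$ works since $A''\delta^{-|\beta|}=(\delta')^{-|\beta|}\cdot \lambda^{|\beta|}\cdot A''\cdot\lambda^{-|\beta|}$, and one checks $A''\le\lambda^{|\beta|}$ when $|\beta|\ge 1$ by taking $\lambda=\max(A'',1)$, while the $|\beta|=0$ case $|Q_i(x)|\le A''$ needs separate care — so instead I would simply rescale $Q_i$ by a constant and use the splitting $Q_i\odot_x Q_i$ together with the ``Trivial Remark on Convex Sets'' if needed. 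Cleanly: pick $\lambda=\max(1,A'')$ and note $A''\delta^{-|\beta|}=\lambda\cdot\delta^{-|\beta|}\le \lambda^{|\beta|+1}(\lambda\delta)^{-|\beta|}$ is not quite what we want for $|\beta|=0$; the honest fix is to observe that after dividing $Q_i$ by a constant we can only decrease the left side of \eqref{10}, which is not allowed, so the right approach is to absorb $A''$ into $\widetilde M$ instead. Concretely, the estimate \eqref{wsf2} for the rescaled difference with $\delta'=A''\delta$ reads $|\partial^\beta(P_1-P_2)(x)|\le A'M\delta^{m-|\beta|}=A'M (A'')^{-(m-|\beta|)}(\delta')^{m-|\beta|}\le A'M(A'')^{m}\cdot(A'')^{-m}\cdots$, so by enlarging $M$ to $\widehat M = A'\max(1,A'')^{m}\,M$ we get both \eqref{wsf2} and \eqref{wsf3} with $\delta$ replaced by $\delta'=\max(1,A'')\delta$, at the cost of a larger constant in front of $M$.

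Then I would apply the $(C_w,\delta_{\max})$-convexity hypothesis with the data $(\delta', x, \widehat M, P_1, P_2, Q_1, Q_2)$ — noting that $\delta'\le\delta_{\max}$ requires $\max(1,A'')\delta\le\delta_{\max}$, which we may assume holds after possibly first shrinking $\delta$; if it fails, one instead subdivides, but since $\delta\le\delta_{\max}$ is given, one can also simply take $\delta_{\max}$ large relative to the data, or carry an extra hypothesis, and I would check the exact bookkeeping in the write-up. The conclusion \eqref{wsf5} then gives $P=Q_1\odot_xQ_1\odot_xP_1+Q_2\odot_xQ_2\odot_xP_2\in\Gamma(x,C_w\widehat M)$. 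Finally, since $C_w\widehat M = C_w A'\max(1,A'')^m M =: CM$ with $C=C(A',A'',C_w,m,n)$, this is exactly the assertion \eqref{11}; the shape-field monotonicity in $M$ is what licenses writing the conclusion with this (larger) $M$-multiple. The main obstacle, as indicated, is the bookkeeping to reduce the general constants $A',A''$ to the normalized constant $1$ demanded by \eqref{wsf1}--\eqref{wsf5}, and in particular ensuring the rescaled sidelength $\delta'$ still does not exceed $\delta_{\max}$; everything else is routine.
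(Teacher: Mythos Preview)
Your overall strategy (rescale $M$ and $\delta$ to match the normalized hypotheses \eqref{wsf1}--\eqref{wsf4}) is exactly right, but you scale $\delta$ in the wrong direction, and this is a genuine gap, not just bookkeeping. You take $\delta'=\max(1,A'')\,\delta\ge\delta$, but then $(\delta')^{-|\beta|}\le\delta^{-|\beta|}$, so the bound $|\partial^\beta Q_i(x)|\le A''\delta^{-|\beta|}$ becomes \emph{harder}, not easier, to convert into $|\partial^\beta Q_i(x)|\le(\delta')^{-|\beta|}$. In fact for $A''>1$ your proposed rescaling simply does not yield \eqref{wsf3}, and as you noticed it also threatens $\delta'\le\delta_{\max}$. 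Your suggested fallbacks (subdividing, taking $\delta_{\max}$ large, or adding a hypothesis) are not available here.

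The paper's fix is to scale $\delta$ \emph{down}: assume without loss of generality $A',A''\ge1$, set $\tilde\delta=\delta/A''$ and $\tilde M=A'(A'')^{m}M$. Then $0<\tilde\delta\le\delta\le\delta_{\max}$ automatically, and for $|\beta|\ge1$ one has $A''\delta^{-|\beta|}=(A'')^{1-|\beta|}\tilde\delta^{-|\beta|}\le\tilde\delta^{-|\beta|}$, while $|\partial^\beta(P_1-P_2)(x)|\le A'M\delta^{m-|\beta|}\le \tilde M\,\tilde\delta^{\,m-|\beta|}$. The one case your discussion circles around without resolving is $|\beta|=0$: here the point is that \eqref{10} evaluated at $x$ gives $Q_1(x)^2+Q_2(x)^2=1$, so $|Q_i(x)|\le1=\tilde\delta^{\,0}$ for free, independently of $A''$. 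With these substitutions all of \eqref{wsf1}--\eqref{wsf4} hold with $(\tilde M,\tilde\delta)$, and the definition yields $P\in\Gamma(x,C_w\tilde M)=\Gamma(x,C_wA'(A'')^mM)$.
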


\begin{proof}
Without loss of generality, we may suppose that $A^{\prime },A^{\prime
\prime }\geq 1$. We simply apply the definition of $(C_w,\delta_{\max})$%
-convexity, with $M, \delta$ (in the definition) replaced by $[A^{\prime
}\cdot (A^{\prime \prime  })^m\cdot M]$ and $[\frac{\delta}{A^{\prime \prime }}%
] $, respectively. Note that $|Q_i(x)| \leq 1$ by \eqref{10}, and $0< \frac{\delta}{A^{\prime \prime }}\leq
\delta_{\max}$. We obtain the desired conclusion \eqref{11} with $C = C_w
A^{\prime }\cdot (A^{\prime \prime })^m$.
\end{proof}

\begin{lemma}
\label{lemma-wsf2} Suppose $\vec{\Gamma}=\left( \Gamma \left( x,M\right)
\right) _{x\in E,M>0}$ is a $\left( C_{w},\delta _{\max }\right) $-convex
shape field. Let

\begin{itemize}
\item[\refstepcounter{equation}\text{(\theequation)}\label{12}] $0<\delta
\leq \delta _{\max }$, $x\in E$, $M>0,A^{\prime },A^{\prime \prime }>0$, $%
P_{1},\cdots P_{k},Q_{1},\cdots ,Q_{k}\in \mathcal{P}$.
\end{itemize}

Assume that

\begin{itemize}
\item[\refstepcounter{equation}\text{(\theequation)}\label{13}] $P_{i}\in
\Gamma \left( x,A^{\prime }M\right) $ for $i=1,\cdots ,k$;

\item[\refstepcounter{equation}\text{(\theequation)}\label{14}] $\left\vert
\partial ^{\beta }\left( P_{i}-P_{j}\right) \left( x\right) \right\vert \leq
A^{\prime}M\delta^{  m-\left\vert \beta \right\vert }$ for $\left\vert \beta
\right\vert \leq m-1$, $i,j=1,\cdots ,k$;

\item[\refstepcounter{equation}\text{(\theequation)}\label{15}] $\left\vert
\partial ^{\beta }Q_{i}\left( x\right) \right\vert \leq A^{\prime \prime }\delta^{
-\left\vert \beta \right\vert }$ for $\left\vert \beta \right\vert \leq m-1$
and $i=1,\cdots ,k $;

\item[\refstepcounter{equation}\text{(\theequation)}\label{16}] $%
\sum_{i=1}^{k}Q_{i}\odot _{x}Q_{i}=1$.
\end{itemize}

Then

\begin{itemize}
\item[\refstepcounter{equation}\text{(\theequation)}\label{17}] $%
\sum_{i=1}^kQ_{i}\odot _{x}Q_{i}\odot _{x}P_{i}\in \Gamma \left( x,CM\right)
, $ with $C$ determined by $A^{\prime }$, $A^{\prime \prime }$, $C_{w}$, $m$%
, $n$, $k$.
\end{itemize}
\end{lemma}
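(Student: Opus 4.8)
The plan is to reduce Lemma \ref{lemma-wsf2} to Lemma \ref{lemma-wsf1} by an iterated pairwise averaging argument, controlling the growth of the constant at each of the $k-1$ steps. The basic idea is that the partition-of-unity identity $\sum_{i=1}^k Q_i \odot_x Q_i = 1$ can be built up one term at a time: define partial sums $R_j := \sum_{i=1}^j Q_i \odot_x Q_i \in \mathcal{P}$ for $j = 1, \dots, k$, so that $R_k = 1$ and $R_j = R_{j-1} + Q_j \odot_x Q_j$. Correspondingly, set $\tilde{P}_j := R_j^{-1} \odot_x \left( \sum_{i=1}^j Q_i \odot_x Q_i \odot_x P_i \right)$ whenever $R_j(x) \neq 0$ — here $R_j^{-1}$ denotes the jet-inverse of $R_j$ at $x$, which exists as an element of $\mathcal{P}$ precisely when $R_j(x) \neq 0$. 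Then $\tilde{P}_k$ is the polynomial in \eqref{17}, and $\tilde{P}_1 = P_1$. The goal is to show inductively that each $\tilde{P}_j \in \Gamma(x, C_j M)$ together with the derivative estimate $|\partial^\beta(\tilde{P}_j - P_i)(x)| \leq C_j M \delta^{m-|\beta|}$ for all $i \le j$ and $|\beta| \le m-1$, plus a bound on the relevant "multiplier" jets.

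First I would make the averaging step explicit: one checks the algebraic identity
\begin{equation*}
R_j \odot_x R_j \odot_x \tilde{P}_j = R_{j-1}\odot_x \tilde P_{j-1}' + Q_j \odot_x Q_j \odot_x P_j,
\end{equation*}
wait — more carefully, the clean way is to normalize at each stage. Write $S_{j-1}^2 := R_{j-1}$ (jet square root at $x$, available since $R_{j-1}(x)>0$ once we've arranged things) and $T_j := Q_j$. Then $(S_{j-1}\odot_x U_{j-1})\odot_x(S_{j-1}\odot_x U_{j-1}) + T_j'\odot_x T_j' = 1$ for appropriate rescaled jets $U_{j-1}, T_j'$ obtained by dividing by (a jet square root of) $R_j$. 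This is exactly the shape of the hypothesis \eqref{10}/\eqref{wsf4} in Lemma \ref{lemma-wsf1}, with $P_1 \leftarrow \tilde P_{j-1}$, $P_2 \leftarrow P_j$. Applying Lemma \ref{lemma-wsf1} then yields $\tilde P_j \in \Gamma(x, C'_{j} M)$. The derivative estimates on the normalized multiplier jets come from the fact that $R_j(x) = \sum_{i\le j} Q_i(x)^2 \in (0,1]$ is bounded below away from $0$ — but this is the delicate point (see below) — together with the product and chain rules for jets and the hypothesized bounds \eqref{15} on $\partial^\beta Q_i(x)$; this gives bounds of the form $|\partial^\beta(\text{rescaled jet})(x)| \le A''' \delta^{-|\beta|}$ with $A'''$ depending on $A''$, $m$, $n$, and the lower bound for $R_j(x)$. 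The derivative estimate $|\partial^\beta(\tilde P_j - P_i)(x)| \le C_j M \delta^{m-|\beta|}$ for $i \le j$ follows because $\tilde P_j$ is a jet-convex-combination (with multiplier-jet weights summing to $1$ in the $\odot_x$ sense) of $\tilde P_{j-1}$ and $P_j$, and we may invoke \eqref{14} and the inductive bound for $\tilde P_{j-1}$.

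The main obstacle is precisely the lower bound on the partial sums $R_j(x) = \sum_{i=1}^j Q_i(x)^2$: a priori some of the early partial sums could be zero or tiny even though the full sum is $1$, which would make the jet-inverse $R_j^{-1}$ blow up or fail to exist. The fix is to not use the partitions in their given order but to reorder the indices $1,\dots,k$ so that $Q_1(x)^2 \ge Q_2(x)^2 \ge \cdots \ge Q_k(x)^2$; since $\sum_i Q_i(x)^2 = 1$ we then have $Q_1(x)^2 \ge 1/k$, hence every partial sum $R_j(x) \ge 1/k$ for all $j \ge 1$. (One must check that reordering is harmless — it is, since hypotheses \eqref{13}–\eqref{16} are symmetric in the index $i$, and the conclusion \eqref{17} is the same sum regardless of order.) With this reordering in place, $R_j(x) \in [1/k, 1]$ throughout, the jet inverses and jet square roots are all under control with constants depending on $k$ (and $m,n$), and the induction closes with a final constant $C = C_k$ depending only on $A'$, $A''$, $C_w$, $m$, $n$, $k$, as claimed. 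I would present the reordering reduction first, then state the one-step lemma (the $j-1 \to j$ passage) cleanly as an application of Lemma \ref{lemma-wsf1}, and finish with the trivial induction on $j$.
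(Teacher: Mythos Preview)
Your proposal is correct and rests on the same two ideas as the paper's proof: a pigeonhole lower bound of $1/k$ on some $Q_i(x)^2$ to make jet square roots and inverses available, and repeated application of Lemma~\ref{lemma-wsf1}. The organization differs slightly. The paper runs a top-down induction on $k$: at each stage it picks an index with $Q_i(x)^2\ge 1/k$, merges that term with one neighbor via $Q^{\#}=J_x\bigl((Q_{k-1}^2+Q_k^2)^{1/2}\bigr)$ and the corresponding weighted $P^{\#}$, checks $P^{\#}\in\Gamma(x,CM)$ by Lemma~\ref{lemma-wsf1}, and then invokes the case $k-1$. You instead reorder once so that every partial sum $R_j(x)\ge 1/k$, and run a bottom-up induction on $j$, absorbing one new term at a time into the running average $\tilde P_j$. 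Your scheme trades the paper's ``re-pick the largest index at each level'' for a single global sort; in exchange you must control jet inverses of the partial sums $R_j$ rather than just of a two-term sum, but since $R_j(x)\in[1/k,1]$ and $|\partial^\beta R_j(x)|\le C(A'',k)\delta^{-|\beta|}$ this is no harder. Either organization yields the same dependence of the final constant on $A',A'',C_w,m,n,k$.
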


\begin{proof}
We write $c$, $C$, $C^{\prime }$, etc., to denote constants determined by $%
A^{\prime }$, $A^{\prime \prime }$, $C_w$, $m$, $n$, $k$. These symbols may
denote different constants in different occurrences. We first check %
\eqref{17} in the (trivial) case $k=1$. From \eqref{16} we have $Q_1 \odot_x
Q_1 =1$, hence $\sum_{i=1}^k Q_i \odot_x Q_i \odot_x P_i =P_1$, and %
\eqref{17} follows from \eqref{13}. To prove \eqref{17} for $k \geq 2$, we
proceed by induction on $k$. In the base case $k=2$, \eqref{17} follows at
once from $\eqref{12} \cdots \eqref{16}$ and Lemma \ref{lemma-wsf1}. For the
induction step, we fix $k \geq 3$ and assume the inductive hypothesis

\begin{itemize}
\item[\refstepcounter{equation}\text{(\theequation)}\label{18}] Lemma \ref%
{lemma-wsf2} holds with $(k-1)$ in place of $k$.
\end{itemize}

Under the assumption \eqref{18}, we prove that $\eqref{12} \cdots \eqref{16}$
imply \eqref{17}. This will complete the proof of Lemma \ref{lemma-wsf2}.

Suppose $\eqref{12} \cdots \eqref{16}$ hold. Thanks to \eqref{16}, we have $%
(Q_i(x))^2 \geq \frac{1}{k}$ for some $i \in \{1,\cdots, k \}$. We assume
without loss of generality that

\begin{itemize}
\item[\refstepcounter{equation}\text{(\theequation)}\label{19}] $(Q_k(x))^2
\geq \frac{1}{k}$.
\end{itemize}

We then define

\begin{itemize}
\item[\refstepcounter{equation}\text{(\theequation)}\label{20}] $Q_i^\#=Q_i$
for $i=1,\cdots, k-2$;
\end{itemize}

\begin{itemize}
\item[\refstepcounter{equation}\text{(\theequation)}\label{21}] $%
Q_{k-1}^{\#}=J_{x}\left( \left[ \left( Q_{k-1}\right) ^{2}+\left(
Q_{k}\right) ^{2}\right] ^{1/2}\right) $;

\item[\refstepcounter{equation}\text{(\theequation)}\label{22}] $%
P_{i}^{\#}=P_{i}$ for $i=1,\cdots ,k-2$;

\item[\refstepcounter{equation}\text{(\theequation)\label{23}}] $%
P_{k-1}^{\#}=J_{x}\left( \frac{\left( Q_{k-1}\right) ^{2}P_{k-1}+\left(
Q_{k}\right) ^{2}P_{k}}{\left( Q_{k-1}\right) ^{2}+\left( Q_{k}\right) ^{2}}%
\right) $;

\item[\refstepcounter{equation}\text{(\theequation)\label{24}}] $\tilde{Q}%
_{1}=J_{x}\left( \frac{Q_{k-1}}{\left[ \left( Q_{k-1}\right) ^{2}+\left(
Q_{k}\right) ^{2}\right] ^{1/2}}\right) $;

\item[\refstepcounter{equation}\text{(\theequation)\label{25}}] $\tilde{Q}%
_{2}=J_{x}\left( \frac{Q_{k}}{\left[ \left( Q_{k-1}\right) ^{2}+\left(
Q_{k}\right) ^{2}\right] ^{1/2}}\right) $.
\end{itemize}

The above definitions make sense thanks to \eqref{19}. We have the
identities:

\begin{itemize}
\item[\refstepcounter{equation}\text{(\theequation)}\label{26}] $\tilde{Q}%
_{1}\odot _{x}\tilde{Q}_{1}+\tilde{Q}_{2}\odot _{x}\tilde{Q}_{2}=1$,
\end{itemize}

\begin{itemize}
\item[\refstepcounter{equation}\text{(\theequation)}\label{26a}] $%
\sum_{i=1}^{k-1}Q_{i}^{\#}\odot Q_{i}^{\#}=1$,
\end{itemize}

\begin{itemize}
\item[\refstepcounter{equation}\text{(\theequation)}\label{27}] $%
P_{k-1}^{\#}=\tilde{Q}_{1}\odot _{x}\tilde{Q}_{1}\odot _{x}P_{k-1}+\tilde{Q}%
_{2}\odot _{x}\tilde{Q}_{2}\odot _{x}P_{k}$, and

\item[\refstepcounter{equation}\text{(\theequation)\label{28}}] $%
\sum_{i=1}^{k}Q_{i}\odot _{x}Q_{i}\odot
_{x}P_{i}=\sum_{i=1}^{k-1}Q_{i}^{\#}\odot _{x}Q_{i}^{\#}\odot _{x}P_{i}^{\#}$%
.
\end{itemize}

From \eqref{20}, \eqref{21}, \eqref{24}, \eqref{25}, and \eqref{15}, %
\eqref{19}, we have the estimates

\begin{itemize}
\item[\refstepcounter{equation}\text{(\theequation)}\label{29}] $\left\vert
\partial ^{\beta }\tilde{Q}_{1}(x)\right\vert $, $\left\vert \partial
^{\beta }\tilde{Q}_{2}\left( x\right) \right\vert \leq C\delta ^{-\left\vert
\beta \right\vert }$ for $\left\vert \beta \right\vert \leq m-1$
\end{itemize}

and

\begin{itemize}
\item[\refstepcounter{equation}\text{(\theequation)}\label{30}] $\left\vert
\partial ^{\beta }Q_{i}^{\#}\left( x\right) \right\vert \leq C\delta
^{-\left\vert \beta \right\vert }$ for $\left\vert \beta \right\vert \leq
m-1 $, $i=1,\cdots ,k-1$.
\end{itemize}

Note also that

\begin{itemize}
\item[\refstepcounter{equation}\text{(\theequation)}\label{31}] $\left\vert
\partial ^{\beta }\left( P_{i}^{\#}-P_{j}^{\#}\right) \left( x\right)
\right\vert \leq CM\delta ^{m-\left\vert \beta \right\vert }$ for $%
\left\vert \beta \right\vert \leq m-1$, $i,j=1,\cdots ,k-1$.
\end{itemize}

Indeed \eqref{31} is immediate from \eqref{14}, \eqref{22} unless $i$ or $%
j=k-1$.

If (say) $i=k-1$ and $j<k-1$, then we write 
\begin{equation*}
P_{k-1}^{\#}-P_{j}^{\#}=\tilde{Q}_{1}\odot _{x}\tilde{Q}_{1}\odot _{x}\left(
P_{k-1}-P_{j}\right) +\tilde{Q}_{2}\odot _{x}\tilde{Q}_{2}\odot _{x}\left(
P_{k}-P_{j}\right)
\end{equation*}%
and apply our estimates \eqref{29} and \eqref{14} to complete the proof of %
\eqref{31}.

Thus, \eqref{31} holds in all cases.

Recall that $0<\delta \leq \delta _{\max }$. Thanks to \eqref{13}, \eqref{14}%
, \eqref{29}, and \eqref{26}, we may apply Lemma \ref{lemma-wsf1} to
conclude that%
\begin{equation*}
\tilde{Q}_{1}\odot _{x}\tilde{Q}_{1}\odot _{x}P_{k-1}+\tilde{Q}_{2}\odot _{x}%
\tilde{Q}_{2}\odot _{x}P_{k}\in \Gamma \left( x,CM\right) \text{.}
\end{equation*}%
Thus, 
\begin{equation*}
P_{k-1}^{\#}\in \Gamma \left( x,CM\right) \text{ (see (\ref{27})).}
\end{equation*}%
In view of (\ref{13}) and (\ref{22}), we have

\begin{itemize}
\item[\refstepcounter{equation}\text{(\theequation)}\label{32}] $%
P_{i}^{\#}\in \Gamma \left( x,CM\right) $ for all $i=1,\cdots ,k-1$.
\end{itemize}

Now, applying (\ref{26a}), (\ref{30}), (\ref{31}), (\ref{32}) and our
induction hypothesis (\ref{18}), we conclude that 
\begin{equation*}
\sum_{k=1}^{k-1}Q_{i}^{\#}\odot _{x}Q_{i}^{\#}\odot _{x}P_{i}^{\#}\in \Gamma
\left( x,CM\right) .
\end{equation*}%
By (\ref{28}), we have therefore 
\begin{equation*}
\sum_{i=1}^{k}Q_{i}\odot _{x}Q_{i}\odot _{x}P_{i}\in \Gamma \left(
x,CM\right) \text{,}
\end{equation*}%
which is our desired conclusion (\ref{17}).

This completes our induction on $k$, proving Lemma \ref{lemma-wsf2}.
\end{proof}

Next, we define the \underline{first refinement} of a shape field $\vec{%
\Gamma}=\left( \Gamma \left( x,M\right) \right) _{x\in E,M>0}$ to be $\vec{%
\Gamma}^{\#}=\left( \Gamma ^{\#}\left( x,M\right) \right) _{x\in E,M>0}$,
where $\Gamma ^{\#}\left( x,M\right) $ consists of those $P^{\#}\in \mathcal{%
P}$ such that for all $y\in E$ there exists $P\in \Gamma \left( y,M\right) $
for which

\begin{itemize}
\item[\refstepcounter{equation}\text{(\theequation)}\label{33}] $\left\vert
\partial ^{\beta }\left( P^{\#}-P\right) \left( x\right) \right\vert \leq
M\left\vert x-y\right\vert ^{m-\left\vert \beta \right\vert }$ for $%
\left\vert \beta \right\vert \leq m-1$.
\end{itemize}

Note that each $\Gamma ^{\#}\left( x,M\right) $ is a (possibly empty) convex
subset of $\mathcal{P}$, and that $M^{\prime }\leq M$ implies $\Gamma
^{\#}\left( x,M^{\prime }\right) \subseteq \Gamma ^{\#}\left( x,M\right) $.
Thus, $\vec{\Gamma}^{\#}$ is again a shape field. Taking $y=x$ in \eqref{33}%
, we see that $\Gamma^\#(x,M) \subset \Gamma(x,M)$.

\begin{lemma}
\label{lemma-wsf3} Let $\vec{\Gamma}=\left( \Gamma \left( x,M\right) \right)
_{x\in E,M>0}$ be a $\left( C_{w},\delta _{\max }\right) $-convex shape
field, and $\vec{\Gamma}^{\#}=\left( \Gamma ^{\#}\left( x,M\right) \right)
_{x\in E,M>0}$ be the first refinement of $\vec{\Gamma}^{\#}$. Then $\vec{%
\Gamma}^{\#}$ is $\left( C,\delta _{\max }\right) $-convex, where $C$ is
determined by $C_{w}$, $m$, $n$.
\end{lemma}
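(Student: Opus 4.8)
The plan is to verify directly that $\vec{\Gamma}^{\#}$ satisfies the defining condition of $(C, \delta_{\max})$-convexity, using the $(C_w, \delta_{\max})$-convexity of $\vec{\Gamma}$ together with Lemma \ref{lemma-wsf1}. So suppose we are given $0 < \delta \leq \delta_{\max}$, $x \in E$, $M > 0$, and polynomials $P_1^{\#}, P_2^{\#} \in \Gamma^{\#}(x,M)$, $Q_1, Q_2 \in \mathcal{P}$ satisfying the hypotheses \eqref{wsf1}--\eqref{wsf4} relative to $\vec{\Gamma}^{\#}$; we must show $P^{\#} := Q_1 \odot_x Q_1 \odot_x P_1^{\#} + Q_2 \odot_x Q_2 \odot_x P_2^{\#} \in \Gamma^{\#}(x, CM)$. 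By the definition of the first refinement, this means: for each $y \in E$, we must produce $P \in \Gamma(y, CM)$ with $|\partial^\beta(P^{\#} - P)(x)| \leq CM|x-y|^{m-|\beta|}$ for $|\beta| \leq m-1$.

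First I would fix $y \in E$. Since $P_1^{\#}, P_2^{\#} \in \Gamma^{\#}(x,M)$, the definition of the refinement gives us $P_1, P_2 \in \Gamma(y, M)$ with $|\partial^\beta(P_i^{\#} - P_i)(x)| \leq M|x-y|^{m-|\beta|}$ for $|\beta| \leq m-1$, $i=1,2$. The natural candidate is $P := Q_1 \odot_y Q_1 \odot_y P_1 + Q_2 \odot_y Q_2 \odot_y P_2$ — that is, form the same convex-combination-type expression but multiplying jets at $y$ rather than at $x$. To apply Lemma \ref{lemma-wsf1} (at the point $y$, with $\delta$ replaced by something comparable) and conclude $P \in \Gamma(y, CM)$, I need three things: (i) $P_1, P_2 \in \Gamma(y, A'M)$ — this holds with $A'=1$; (ii) a bound $|\partial^\beta(P_1 - P_2)(y)| \leq A'M \rho^{m-|\beta|}$ for a suitable scale $\rho$; and (iii) bounds $|\partial^\beta Q_i(y)| \leq A'' \rho^{-|\beta|}$, together with $Q_1 \odot_y Q_1 + Q_2 \odot_y Q_2 = 1$. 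The last algebraic identity is automatic since $Q_1^2 + Q_2^2 = 1$ as polynomials (this is \eqref{wsf4} at $x$, but it is an identity in $\mathcal{P}$ independent of the base point, modulo the degree truncation; more precisely $Q_1 \odot_x Q_1 + Q_2 \odot_x Q_2 = J_x(Q_1^2 + Q_2^2)$, and since the truncated product at $x$ equals $1$, and $Q_1^2+Q_2^2$ has degree $\leq 2(m-1)$... ) — here I would instead note that the condition $Q_1\odot_x Q_1 + Q_2 \odot_x Q_2 = 1$ means $J_x(Q_1^2+Q_2^2)=1$; translating the Taylor expansion from $x$ to $y$ needs care, so the cleanest route may be to work with $\tilde Q_i := $ suitable renormalizations, or to observe that $\partial^\beta(Q_1^2+Q_2^2)(x)=\delta_{\beta 0}$ for $|\beta|\le m-1$ and propagate. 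For the scale, the right choice is $\rho = |x-y| + \delta$ (or $\max(|x-y|,\delta)$): on the one hand $\rho \le \delta + \delta_{\max} \le 2\delta_{\max}$ only if $|x-y|\le\delta_{\max}$, so I may need $\rho = \min(|x-y|,\delta_{\max})$-type truncation, but since we only ever need the estimate $|x-y|^{m-|\beta|}$ on the right of the refinement inequality, I can split into the cases $|x-y| \leq \delta$ and $|x-y| > \delta$ and handle the scales separately.

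Carrying this out: by Taylor's theorem, $|\partial^\beta(P_1-P_2)(y)|$ is controlled by $|\partial^\beta(P_1-P_2)(x)|$ plus $|x-y|$-powers times higher derivatives; combining \eqref{wsf2} (at $x$, for $P_1^{\#}-P_2^{\#}$) with the refinement estimates relating $P_i^{\#}$ and $P_i$, one gets $|\partial^\beta(P_1-P_2)(y)| \leq CM\rho^{m-|\beta|}$ with $\rho = \max(\delta, |x-y|)$. Similarly $|\partial^\beta Q_i(y)| \leq C\rho^{-|\beta|}$ by translating \eqref{wsf3} from $x$ to $y$ (the worst case is when $|x-y|$ is small compared to $\delta$, where one just needs $\rho^{-|\beta|} \gtrsim \delta^{-|\beta|}$, which fails — so actually the correct scale for the $Q_i$ bound going the other way is $\rho=\min$, hence I should take $\rho = \delta$ when $|x-y|\le\delta$ and handle $|x-y|>\delta$ by noting $Q_i$ are fixed polynomials of bounded degree so $|\partial^\beta Q_i(y)|$ is controlled by $\sum_{|\gamma|\le m-1-|\beta|}|\partial^{\beta+\gamma}Q_i(x)||x-y|^{|\gamma|}/\gamma! \le C\sum \delta^{-|\beta|-|\gamma|}|x-y|^{|\gamma|}$, which when $|x-y|>\delta$ is bounded by $C|x-y|^{-|\beta|}(|x-y|/\delta)^{m-1-|\beta|}$ — not of the form $C|x-y|^{-|\beta|}$). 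The main obstacle, and the step I expect to require real care, is exactly this mismatch of scales: the hypotheses on $Q_i$ are naturally at scale $\delta$, the conclusion of the refinement wants estimates at scale $|x-y|$, and Lemma \ref{lemma-wsf1}'s flexibility in $A'$ and $A''$ must absorb the ratio $\delta/|x-y|$ or $|x-y|/\delta$. I would resolve this by applying Lemma \ref{lemma-wsf1} at point $y$ with $\delta$-parameter equal to $\delta$ itself (not $|x-y|$): then hypotheses (iii) hold with $A'' = C$, hypothesis (ii) holds with $A' = C$ (using $\max(\delta,|x-y|)^{m-|\beta|} \le (\text{stuff})\,\delta^{m-|\beta|}$ only when $|x-y|\lesssim\delta$ — so I further split off the case $|x-y| \ge c\,\delta_{\max}$, in which $|x-y|^{m-|\beta|} \gtrsim \delta_{\max}^{m-|\beta|} \ge$ everything and one can take $P = P_1$ or a trivial choice), and Lemma \ref{lemma-wsf1} yields $P \in \Gamma(y, CM)$. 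Finally, $|\partial^\beta(P^{\#} - P)(x)| \leq |\partial^\beta(P^{\#} - \tilde P)(x)| + |\partial^\beta(\tilde P - P)(x)|$ where $\tilde P$ is the version with $\odot_x$ replacing $\odot_y$ built from the $P_i$; the first difference is estimated by expanding $Q_i \odot_x Q_i \odot_x (P_i^{\#} - P_i)$ using \eqref{wsf3} and the refinement bound $|\partial^\gamma(P_i^{\#}-P_i)(x)| \le M|x-y|^{m-|\gamma|}$, giving $\le CM|x-y|^{m-|\beta|}$; the second difference, $(\odot_x$ vs $\odot_y)$, is the difference of two jet-multiplications at nearby points applied to the same polynomials, and standard Taylor-remainder bookkeeping gives $|\partial^\beta(\tilde P - P)(x)| \le CM|x-y|^{m-|\beta|}$. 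Assembling, $P \in \Gamma(y,CM)$ satisfies the required inequality, so $P^{\#} \in \Gamma^{\#}(x,CM)$, which is \eqref{wsf5} for $\vec{\Gamma}^{\#}$ with a constant $C$ depending only on $C_w, m, n$. Since $\vec\Gamma^{\#}$ is already known to be a shape field, this establishes that it is $(C,\delta_{\max})$-convex, completing the proof. $\quad\rule{0.5em}{0.5em}$
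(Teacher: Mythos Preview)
Your overall architecture matches the paper's: split on whether $|x-y|$ is small or large compared to $\delta$, in the far case take $P=P_1$ (the refinement witness for $P_1^{\#}$), and in the near case form the analogous $Q$-weighted combination of $P_1,P_2$ at $y$ and invoke Lemma~\ref{lemma-wsf1}. But two things in your write-up are not yet a proof.

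\textbf{The main gap is the identity at $y$.} You need $Q_1\odot_y Q_1 + Q_2\odot_y Q_2 = 1$ to apply Lemma~\ref{lemma-wsf1} at $y$, and you correctly notice this fails: the hypothesis $Q_1\odot_x Q_1 + Q_2\odot_x Q_2 = 1$ only says $J_x(Q_1^2+Q_2^2)=1$, and since $Q_1^2+Q_2^2$ has degree up to $2(m-1)$, its $(m{-}1)$-jet at a different point $y$ need not be $1$. You gesture at ``suitable renormalizations'' but do not carry them out; this is exactly the device the paper supplies. The paper introduces the \emph{functions} (not polynomials)
\[
\theta_i \;=\; \frac{Q_i^{\#}}{\bigl[(Q_1^{\#})^2+(Q_2^{\#})^2\bigr]^{1/2}}
\quad\text{on } B_n(x,c_0\delta),
\]
which are $C^m$ with $|\partial^\beta\theta_i|\le C\delta^{-|\beta|}$ there, satisfy $\theta_1^2+\theta_2^2=1$ \emph{identically}, and have $J_x(\theta_i)=Q_i^{\#}$. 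Then for $y\in B_n(x,c_0\delta)$ one sets $P=J_y(\theta_1^2 P_1+\theta_2^2 P_2)$; the partition-of-unity identity now holds automatically at $y$ because it holds for the functions, and Lemma~\ref{lemma-wsf1} applies with $Q_i=J_y(\theta_i)$. The subsequent estimate of $P^{\#}-P$ also becomes clean: writing $P^{\#}=J_x(\theta_1^2 P_1^{\#}+\theta_2^2 P_2^{\#})$ and using $\theta_1^2+\theta_2^2=1$, the difference decomposes into $\theta_i^2(P_i^{\#}-P_i)$ terms plus a single Taylor-remainder term $\theta_2^2(P_2-P_1)-J_y\bigl(\theta_2^2(P_2-P_1)\bigr)$, each of which is directly bounded by $CM|x-y|^{m-|\beta|}$. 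Your ``$\odot_x$ vs.\ $\odot_y$ bookkeeping'' step would have to reproduce this, and without the exact functional identity it is genuinely awkward.

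\textbf{The case split threshold is wrong as written.} You split off $|x-y|\ge c\,\delta_{\max}$, but the correct threshold is $|x-y|\gtrsim\delta$ (this is what the paper does, taking $y\notin B_n(x,c_0\delta)$ as Case~2). With your threshold the intermediate range $\delta<|x-y|<c\,\delta_{\max}$ is left unhandled: your derivative bounds on $Q_i$ at $y$ blow up there, and your bound on $|\partial^\beta(P_1-P_2)(y)|$ is $CM\max(\delta,|x-y|)^{m-|\beta|}$, not $CM\delta^{m-|\beta|}$. Once you split at $|x-y|\sim\delta$, the far-case argument you sketch (take $P=P_1$, use $P^{\#}-P_1^{\#}=Q_2^{\#}\odot_x Q_2^{\#}\odot_x(P_2^{\#}-P_1^{\#})$ so $|\partial^\beta(P^{\#}-P_1^{\#})(x)|\le CM\delta^{m-|\beta|}\le CM|x-y|^{m-|\beta|}$) goes through exactly as in the paper.
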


\begin{proof}
We write $c$, $C$, $C^{\prime }$, etc., to denote constants determined by $%
C_w$, $m$, $n$. These symbols may denote different constants in different
occurrences. Let $0 < \delta \leq \delta_{\max}$, $M > 0$, $x \in E$, $%
P_1^\#, P_2^\#, Q_1^\#, Q_2^\# \in \mathcal{P}$, and assume that

\begin{itemize}
\item[\refstepcounter{equation}\text{(\theequation)}\label{34}] $P_i^\# \in
\Gamma^\#(x,M)$ for $i=1,2$;
\end{itemize}

\begin{itemize}
\item[\refstepcounter{equation}\text{(\theequation)}\label{35}] $\left\vert
\partial ^{\beta }\left( P_{1}^{\#}-P_{2}^{\#}\right) \left( x\right)
\right\vert \leq M\delta ^{m-\left\vert \beta \right\vert }$ for $\left\vert
\beta \right\vert \leq m-1$;

\item[\refstepcounter{equation}\text{(\theequation)}\label{36}] $\left\vert
\partial ^{\beta }Q_{i}^{\#}\left( x\right) \right\vert \leq \delta
^{-\left\vert \beta \right\vert }$ for $\left\vert \beta \right\vert \leq
m-1 $, $i=1,2$; and

\item[\refstepcounter{equation}\text{(\theequation)}\label{37}] $%
Q_{1}^{\#}\odot _{x}Q_{1}^{\#}+Q_{2}^{\#}\odot _{x}Q_{2}^{\#}=1$.
\end{itemize}

Under the above assumptions, we must show that%
\begin{equation*}
Q_{1}^{\#}\odot _{x}Q_{1}^{\#}\odot _{x}P_{1}^{\#}+Q_{2}^{\#}\odot
_{x}Q_{2}^{\#}\odot _{x}P_{2}^{\#}\in \Gamma ^{\#}\left( x,CM\right) \text{.}
\end{equation*}%
By definition of $\Gamma ^{\#}\left( \cdot ,\cdot \right) $, this means that
given any $y\in E$ there exists

\begin{itemize}
\item[\refstepcounter{equation}\text{(\theequation)}\label{38}] $P \in
\Gamma(y,CM)$ such that
\end{itemize}

\begin{itemize}
\item[\refstepcounter{equation}\text{(\theequation)}\label{39}] $%
|\partial^\beta(P^\#-P)(x)| \leq CM|x-y|^{m-|\beta|}$ for $|\beta| \leq m-1$%
, where
\end{itemize}

\begin{itemize}
\item[\refstepcounter{equation}\text{(\theequation)}\label{40}] $%
P^\#=Q_{1}^{\#}\odot _{x}Q_{1}^{\#}\odot _{x}P_{1}^{\#}+Q_{2}^{\#}\odot
_{x}Q_{2}^{\#}\odot _{x}P_{2}^{\#}$.
\end{itemize}

Thus, to prove Lemma \ref{lemma-wsf3}, we must prove that there exists $P$
satisfying \eqref{38}, \eqref{39}, under the assumptions \eqref{34}$\cdots$%
\eqref{37}. We begin the proof of \eqref{38}, \eqref{39} (for some $P$) by
defining the functions

\begin{itemize}
\item[\refstepcounter{equation}\text{(\theequation)}\label{41}] $\theta _{i}=%
\frac{Q_{i}^{\#}}{\left[ \left( Q_{1}^{\#}\right) ^{2}+\left(
Q_{2}^{\#}\right) ^{2}\right] ^{1/2}}$ on $B_{n}(x,c_{0}\delta )$ ($i=1,2$).
\end{itemize}

We pick $c_0 <1$ small enough so that \eqref{36}, \eqref{37} guarantee that $%
\theta_i$ is well-defined on $B_n(x,c_0\delta)$ and satisfies

\begin{itemize}
\item[\refstepcounter{equation}\text{(\theequation)}\label{42}] $|\partial
^{\beta }\theta _{i}|\leq C\delta ^{-\left\vert \beta \right\vert }$ on $%
B_{n}\left( x,c_{0}\delta \right) $ for $\left\vert \beta \right\vert \leq m$%
, $i=1,2$,
\end{itemize}

and

\begin{itemize}
\item[\refstepcounter{equation}\text{(\theequation)}\label{43}] $%
\theta_1^2+\theta_2^2=1$ on $B_n(x,c_0\delta)$.
\end{itemize}

Also

\begin{itemize}
\item[\refstepcounter{equation}\text{(\theequation)}\label{44}] $%
J_x(\theta_i)=Q_i^\#$ for $i=1,2$,
\end{itemize}

thanks to \eqref{37}.

We now divide the proof of \eqref{38}, \eqref{39} (for some $P$) into two
cases.

\underline{CASE 1: Suppose $y\in B_n(x,c_0\delta)$.}

By \eqref{34} and the definition of $\Gamma^\#(\cdot, \cdot)$, there exist

\begin{itemize}
\item[\refstepcounter{equation}\text{(\theequation)}\label{45}] $P_i \in
\Gamma(y,M)$ ($i=1,2$)
\end{itemize}

satisfying

\begin{itemize}
\item[\refstepcounter{equation}\text{(\theequation)}\label{46}] $%
|\partial^\beta(P_i^\# - P_i)(x)| \leq M|x-y|^{m- |\beta|}$ for $|\beta|
\leq m-1$, $i=1,2$.
\end{itemize}

Since we are in CASE 1, estimates \eqref{35} and \eqref{46} together imply
that

\begin{itemize}
\item[\refstepcounter{equation}\text{(\theequation)}\label{47}] $%
|\partial^\beta(P_1-P_2)(x)| \leq CM\delta^{m-|\beta|}$ for $|\beta| \leq
m-1 $.
\end{itemize}

Consequently,

\begin{itemize}
\item[\refstepcounter{equation}\text{(\theequation)}\label{48}] $%
|\partial^\beta(P_1-P_2)|\leq CM \delta^{m-|\beta|}$ on $B_n(x,c_0 \delta)$
for $|\beta| \leq m$.
\end{itemize}

(Recall that $P_1,P_2$ are polynomials of degree at most $m-1$.) In
particular,

\begin{itemize}
\item[\refstepcounter{equation}\text{(\theequation)}\label{49}] $%
|\partial^\beta(P_1-P_2)(y)| \leq CM \delta^{m-|\beta|}$ for $|\beta| \leq
m-1$.
\end{itemize}

Thanks to \eqref{45}, \eqref{49}, and \eqref{42}, \eqref{43}, and the $%
\left( C_{w},\delta _{\max }\right) $-convexity of $\vec{\Gamma}$, we may
apply Lemma \ref{lemma-wsf1} to the polynomials $P_{1}$, $P_{2}$, $Q_{1}$, $%
Q_{2}$, where $Q_{i}=J_{y}\left( \theta _{i}\right) $ for $i=1,2$. This
tells us that

\begin{itemize}
\item[\refstepcounter{equation}\text{(\theequation)}\label{50}] $%
P:=J_{y}\left( \theta _{1}^{2}P_{1}+\theta _{2}^{2}P_{2}\right) \in \Gamma
\left( y,CM\right) $.
\end{itemize}

That is, the $P$ in \eqref{50} satisfies \eqref{38}. We will show that it
also satisfies \eqref{39}.

Thanks to \eqref{40}, \eqref{44}, we have

\begin{itemize}
\item[\refstepcounter{equation}\text{(\theequation)}\label{51}] $P^\# =
J_x(\theta_1^2 P_1^\#+\theta_2^2 P_2^\#)$.
\end{itemize}

In view of \eqref{50}, \eqref{51}, our desired estimate \eqref{39} is
equivalent to the following:

\begin{itemize}
\item[\refstepcounter{equation}\text{(\theequation)}\label{52}] $\left\vert
\partial ^{\beta }\left( \theta _{1}^{2}P_{1}^{\#}+\theta
_{2}^{2}P_{2}^{\#}-J_{y}\left( \theta _{1}^{2}P_{1}+\theta
_{2}^{2}P_{2}\right) \right) \left( x\right) \right\vert \leq CM\left\vert
x-y\right\vert ^{m-\left\vert \beta \right\vert }$ 
\end{itemize}
for $\left\vert \beta
\right\vert \leq m-1$.

Thus, we have proven the existence of $P$ satisfying \eqref{38}, \eqref{39}
in CASE 1, provided we can prove \eqref{52}.

Since $\theta _{1}^{2}+\theta _{2}^{2}=1$ (see \eqref{43}) and $%
J_{y}P_{1}=P_{1}$, the following holds on $B_n(x,c_0\delta)$: 
\begin{eqnarray*}
&&\left( \theta _{1}^{2}P_{1}^{\#}+\theta _{2}^{2}P_{2}^{\#}-J_{y}\left(
\theta _{1}^{2}P_{1}+\theta _{2}^{2}P_{2}\right) \right) \\
&=&\theta _{1}^{2}\left( P_{1}^{\#}-P_{1}\right) +\theta _{2}^{2}\left(
P_{2}^{\#}-P_{2}\right) \\
&&+\left[ \theta _{1}^{2}P_{1}+\theta _{2}^{2}P_{2}-J_{y}\left( \theta
_{1}^{2}P_{1}+\theta _{2}^{2}P_{2}\right) \right] \\
&=&\theta _{1}^{2}\left( P_{1}^{\#}-P_{1}\right) +\theta _{2}^{2}\left(
P_{2}^{\#}-P_{2}\right) \\
&&+\left[ P_{1}+\theta _{2}^{2}\left( P_{2}-P_{1}\right) -J_{y}\left(
P_{1}+\theta _{2}^{2}\left( P_{2}-P_{1}\right) \right) \right] \\
&=&\theta _{1}^{2}\left( P_{1}^{\#}-P_{1}\right) +\theta _{2}^{2}\left(
P_{2}^{\#}-P_{2}\right) \\
&&+\left[ \theta _{2}^{2}\left( P_{2}-P_{1}\right) -J_{y}\left( \theta
_{2}^{2}\left( P_{2}-P_{1}\right) \right) \right] .
\end{eqnarray*}%
Consequently, the desired estimate (\ref{52}) will follow if we can show
that 
\begin{equation}
\left\vert \partial ^{\beta }\left[ \theta _{i}\left(
P_{i}^{\#}-P_{i}\right) \left( x\right) \right] \right\vert \leq
CM\left\vert x-y\right\vert ^{m-\left\vert \beta \right\vert }\text{ for }%
\left\vert \beta \right\vert \leq m-1\text{, }i=1\text{,}2\text{,}
\label{53}
\end{equation}%
and 
\begin{equation}
\left\vert \partial ^{\beta }\left[ \theta _{2}^{2}\left( P_{1}-P_{2}\right)
-J_{y}\left( \theta _{2}^{2}\left( P_{1}-P_{2}\right) \right) \right] \left(
x\right) \right\vert \leq CM\left\vert x-y\right\vert ^{m-\left\vert \beta
\right\vert }\text{ for }\left\vert \beta \right\vert \leq m-1.  \label{54}
\end{equation}

Moreover, (\ref{53}) follows at once from (\ref{46}) and (\ref{42}), since $%
\delta ^{-\left\vert \beta \right\vert }\leq C\left\vert x-y\right\vert
^{-\left\vert \beta \right\vert }$ in CASE\ 1.

To check (\ref{54}), we apply (\ref{42}) and (\ref{48}) to deduce that 
\begin{equation*}
\left\vert \partial ^{\beta }\left[ \theta _{2}^{2}\left( P_{1}-P_{2}\right) %
\right] \left( y\right) \right\vert \leq CM\delta ^{m-\left\vert \beta
\right\vert }\text{ for all }y\in B_{n}\left( x,c_{0}\delta \right) \text{
for }\left\vert \beta \right\vert \leq m\text{.}
\end{equation*}%
In particular, $\left\vert \partial ^{\beta }\left[ \theta _{2}^{2}\left(
P_{1}-P_{2}\right) \right] \right\vert \leq CM$ on $B\left( x,c_{0}\delta
\right) $ for $\left\vert \beta \right\vert =m$.

Therefore, (\ref{54}) follows from Taylor's theorem.

This proves the existence of a $P$ satisfying (\ref{38}), (\ref{39}) in\
CASE 1.

\underline{CASE 2: Suppose that $y\not\in B_{n}(x,c_{0}\delta )$.}

Since $P_{1}^{\#}\in \Gamma \left( x, M\right) $ (see (\ref{34})), there
exists

\begin{itemize}
\item[\refstepcounter{equation}\text{(\theequation)}\label{55}] $P_{1}\in
\Gamma \left( y,M\right) $
\end{itemize}

such that

\begin{itemize}
\item[\refstepcounter{equation}\text{(\theequation)}\label{56}] $\left\vert
\partial ^{\beta }\left( P_{1}^{\#}-P_{1}\right) \left( x\right) \right\vert
\leq M\left\vert x-y\right\vert ^{m-\left\vert \beta \right\vert }$ for $%
\left\vert \beta \right\vert \leq m-1$.
\end{itemize}

Thanks to (\ref{37}), we may rewrite (\ref{40}) in the form $$%
P^{\#}=P_{1}^{\#}+Q_{2}^{\#}\odot _{x}Q_{2}^{\#}\odot _{x}\left(
P_{2}^{\#}-P_{1}^{\#}\right).$$ Our assumptions (\ref{35}), (\ref{36})
therefore yield the estimates $$\left\vert \partial ^{\beta }\left(
P^{\#}-P_{1}^{\#}\right) \left( x\right) \right\vert \leq CM\delta
^{m-\left\vert \beta \right\vert }$$ for $\left\vert \beta \right\vert \leq
m-1$.

Since we are in CASE 2, it follows that

\begin{itemize}
\item[\refstepcounter{equation}\text{(\theequation)}\label{57}] $\left\vert
\partial ^{\beta }\left( P^{\#}-P_{1}^{\#}\right) \left( x\right)
\right\vert \leq CM\left\vert x-y\right\vert ^{m-\left\vert \beta
\right\vert }$ for $\left\vert \beta \right\vert \leq m-1$.
\end{itemize}

From \eqref{56} and \eqref{57}, we learn that

\begin{itemize}
\item[\refstepcounter{equation}\text{(\theequation)}\label{58}] $\left\vert
\partial ^{\beta }\left( P^{\#}-P_{1}\right) \left( x\right) \right\vert
\leq CM\left\vert x-y\right\vert ^{m-\beta }$ for $\left\vert \beta
\right\vert \leq m-1$.
\end{itemize}

We now know from \eqref{55} and \eqref{58} that $P:=P_1$ satisfies \eqref{38}
and \eqref{39}. Thus, in CASE 2 we again have a polynomial $P$ satisfying %
\eqref{38} and \eqref{39}. We have seen in all cases that there exists $P
\in \mathcal{P}$ satisfying \eqref{38}, \eqref{39}.

The proof of Lemma \ref{lemma-wsf3} is complete.
\end{proof}

Next we define the higher refinements of a given shape field $\vec{\Gamma}_0
= \left( \Gamma_0(x,M) \right)_{x\in E,M>0}$. By induction on $l \geq 0$, we
define $\vec{\Gamma}_l = \left(\Gamma_l(x,M) \right)_{x \in E, M>0}$; to do
so, we start with our given $\vec{\Gamma}_0$, and define $\vec{\Gamma}_{l+1}$
to be the first refinement of $\vec{\Gamma}_l$, for each $l \geq 0$. Thus,
each $\vec{\Gamma}_l$ is a shape field.

\begin{lemma}
\label{lemma-wsf4}

\begin{itemize}
\item[(A)] Let $x,y\in E$, $l\geq 1$, $M>0$, and $P\in \Gamma _{l}(x,M)$.
Then there exists $P^{\prime }\in \Gamma _{l-1}(y,M)$ such that 
\begin{equation*}
\left\vert \partial ^{\beta }\left( P-P^{\prime }\right) \left( x\right)
\right\vert \leq M\left\vert x-y\right\vert ^{m-\left\vert \beta \right\vert
}\text{ for }\left\vert \beta \right\vert \leq m-1\text{.}
\end{equation*}

\item[(B)] If $\vec{\Gamma}_{0}$ is $\left( C_{w},\delta _{\max }\right) $%
-convex, then for each $l\geq 0$, $\vec{\Gamma}_{l}$ is $\left( C_{l},\delta
_{\max }\right) $-convex, where $C_{l}$ is determined by $C_{w}$, $l$, $m$, $%
n$.
\end{itemize}
\end{lemma}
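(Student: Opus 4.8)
The plan is to treat the two parts separately; each is a short consequence of the definition of refinement together with the results already established (Lemmas~\ref{lemma-wsf1}--\ref{lemma-wsf3}), so I do not expect a genuine obstacle here.

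For part (A), I would simply unwind definitions. Since $l\geq 1$, the shape field $\vec{\Gamma}_{l}$ is by construction the first refinement of $\vec{\Gamma}_{l-1}$. Hence, by the definition of the first refinement, the statement $P\in\Gamma_{l}(x,M)=\Gamma_{l-1}^{\#}(x,M)$ says precisely that for every $y\in E$ there exists $P^{\prime}\in\Gamma_{l-1}(y,M)$ with $|\partial^{\beta}(P-P^{\prime})(x)|\leq M|x-y|^{m-|\beta|}$ for $|\beta|\leq m-1$. This is exactly the assertion of (A), so nothing further is required.

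For part (B), I would argue by induction on $l$. The base case $l=0$ is the hypothesis that $\vec{\Gamma}_{0}$ is $(C_{w},\delta_{\max})$-convex, so we may take $C_{0}=C_{w}$. For the inductive step, assume $\vec{\Gamma}_{l}$ is $(C_{l},\delta_{\max})$-convex with $C_{l}$ depending only on $C_{w}$, $l$, $m$, $n$. By definition $\vec{\Gamma}_{l+1}$ is the first refinement of $\vec{\Gamma}_{l}$, so Lemma~\ref{lemma-wsf3}, applied with $\vec{\Gamma}_{l}$ in place of $\vec{\Gamma}$, shows that $\vec{\Gamma}_{l+1}$ is $(C^{\prime},\delta_{\max})$-convex with $C^{\prime}$ determined by the convexity constant $C_{l}$ of $\vec{\Gamma}_{l}$ and by $m$, $n$; hence $C^{\prime}$ is determined by $C_{w}$, $l$, $m$, $n$. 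Setting $C_{l+1}=C^{\prime}$ completes the induction.

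The only point deserving a word of care is that Lemma~\ref{lemma-wsf3} is stated with the symbol $C_{w}$ for the convexity constant of the input field, but its proof uses only that the input field is $(\,\cdot\,,\delta_{\max})$-convex for \emph{some} constant, and produces an output constant depending only on $m$, $n$, and that input constant; so the lemma applies verbatim with $C_{l}$ in the role of $C_{w}$. All the real content lives in Lemmas~\ref{lemma-wsf1}--\ref{lemma-wsf3}, and the present lemma merely packages their consequences for the iterated refinements $\vec{\Gamma}_{l}$.
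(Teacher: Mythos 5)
Your proposal is correct and follows exactly the paper's route: part (A) is immediate from the definition of the first refinement, and part (B) is a straightforward induction on $l$ using Lemma~\ref{lemma-wsf3}. Your extra remark about the convexity constant in Lemma~\ref{lemma-wsf3} being a placeholder is a sensible observation but does not change the argument.
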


\begin{proof}
(A) is immediate from the definition of the first refinement, since $\vec{%
\Gamma}_{l}$ is the first refinement of $\vec{\Gamma}_{l-1}$.

(B) follows trivially from Lemma \ref{lemma-wsf3} and induction on $l$.
\end{proof}

We call $\vec{\Gamma}_{l}$ the $l$-th refinement of $\vec{\Gamma}_{0}$.
(This is consistent with our previous definition of the first refinement.)

\section{Polynomial Bases}

\label{polynomial-bases}

Let $\vec{\Gamma}=\left( \Gamma \left( x,M\right) \right) _{x\in E,M>0}$ be
a shape field. Let $x_{0}\in E$, $M_{0}>0$, $P^{0}\in \mathcal{P}$, $%
\mathcal{A}\subseteq \mathcal{M}$, $P_{\alpha }\in \mathcal{P}$ for $\alpha
\in \mathcal{A}$, $C_{B}>0$, $\delta >0$ be given. Then we say that $\left(
P_{\alpha }\right) _{\alpha \in \mathcal{A}}$ forms \underline{an $\left( 
\mathcal{A},\delta ,C_{B}\right) $-basis for $\vec{\Gamma}$ at $\left(
x_{0},M_{0},P^{0}\right) $} if the following conditions are satisfied:

\begin{itemize}
\item[\refstepcounter{equation}\text{(\theequation)}\label{pb1}] $P^{0}\in
\Gamma \left( x_{0},C_{B}M_{0}\right) $.
\end{itemize}

\begin{itemize}
\item[\refstepcounter{equation}\text{(\theequation)}\label{pb2}] $P^{0}+%
\frac{M_{0}\delta ^{m-\left\vert \alpha \right\vert }}{C_{B}}P_{\alpha }$, $%
P^{0}-\frac{M_{0}\delta ^{m-\left\vert \alpha \right\vert }}{C_{B}}P_{\alpha
}\in \Gamma \left( x_{0},C_{B}M_{0}\right) $ for all $\alpha \in \mathcal{A}$%
.
\end{itemize}

\begin{itemize}
\item[\refstepcounter{equation}\text{(\theequation)}\label{pb3}] $\partial
^{\beta }P_{\alpha }\left( x_{0}\right) =\delta _{\alpha \beta }$ (Kronecker
delta) for $\beta ,\alpha \in \mathcal{A}$.
\end{itemize}

\begin{itemize}
\item[\refstepcounter{equation}\text{(\theequation)}\label{pb4}] $\left\vert
\partial ^{\beta }P_{\alpha }\left( x_{0}\right) \right\vert \leq
C_{B}\delta ^{\left\vert \alpha \right\vert -\left\vert \beta \right\vert }$
for all $\alpha \in \mathcal{A}$, $\beta \in \mathcal{M}$.
\end{itemize}

We say that $(P_{\alpha })_{\alpha \in \mathcal{A}}$ forms a \underline{weak 
$(\mathcal{A},\delta ,C_{B})$-basis for $\vec{\Gamma}$ at $%
(x_{0},M_{0},P^{0})$} if conditions \eqref{pb1}, \eqref{pb2}, \eqref{pb3}
hold as stated, and condition \eqref{pb4} holds for $\alpha \in \mathcal{A}%
,\beta \in \mathcal{M},\beta \geq \alpha $.

We make a few obvious remarks.

\begin{itemize}
\item[\refstepcounter{equation}\text{(\theequation)}\label{pb5}] Any $(%
\mathcal{A}, \delta, C_B)$-basis for $\vec{\Gamma}$ at $(x_0,M_0,P^0)$ is
also an $(\mathcal{A}, \delta, C_B^{\prime })$-basis for $\vec{\Gamma}$ at $%
(x_0,M_0,P^0)$, whenever $C^{\prime }_B \geq C_B$.
\end{itemize}

\begin{itemize}
\item[\refstepcounter{equation}\text{(\theequation)}\label{pb6}] Any $(%
\mathcal{A}, \delta, C_B)$-basis for $\vec{\Gamma}$ at $(x_0,M_0,P^0)$ is
also an $(\mathcal{A}, \delta^{\prime }, C_B\cdot[\max\{\frac{\delta^{\prime
}}{\delta},\frac{\delta}{\delta^{\prime }} \}]^m)$-basis for $\vec{\Gamma}$
at $(x_0,M_0,P^0)$, for any $\delta^{\prime }>0$.
\end{itemize}

\begin{itemize}
\item[\refstepcounter{equation}\text{(\theequation)}\label{pb7}] {Any weak $(%
\mathcal{A}, \delta, C_B)$-basis for $\vec{\Gamma}$ at $(x_0,M_0,P^0)$ is
also a weak $(\mathcal{A}, \delta^{\prime }, C_B^{\prime })$-basis for $\vec{%
\Gamma}$ at $(x_0,M_0,P^0)$, whenever $0<\delta^{\prime }\leq \delta$ and $%
C^{\prime }_B \geq C_B$. }
\end{itemize}

Note that \eqref{pb1} need not follow from \eqref{pb2}, since $\mathcal{A}$
may be empty.

\begin{itemize}
\item[\refstepcounter{equation}\text{(\theequation)}\label{pb7a}] If $%
\mathcal{A}=\emptyset$, then the existence of an $(\mathcal{A},\delta,C_B)$%
-basis for $\vec{\Gamma}$ at $(x_0,M_0,P^0)$ is equivalent to the assertion
that $P^0 \in \Gamma(x_0, C_BM_0)$.
\end{itemize}

The main result of this section is Lemma \ref{lemma-pb2} below. To prove it,
we first establish the following result.

\begin{lemma}
\label{lemma-pb1}Let $\vec{\Gamma}=\left( \Gamma \left( x,M\right) \right)
_{x\in E,M>0}$be a $\left( C_{w},\delta _{\max }\right) $-convex shape
field. Fix $x_{0}\in E$, $M_{0}>0$, $0<\delta \leq \delta _{\max }$, $C_{1}>0
$, and let $P^{0}$, $\hat{P}$, $\hat{S}\in \mathcal{P}$.

Assume that

\begin{itemize}
\item[\refstepcounter{equation}\text{(\theequation)}\label{pb8}] $P^{0}+%
\frac{1}{C_{1}}\hat{P}$, $P^{0}-\frac{1}{C_{1}}\hat{P}\in \Gamma \left(
x_{0},C_{1}M\right) $;
\end{itemize}

\begin{itemize}
\item[\refstepcounter{equation}\text{(\theequation)}\label{pb9}] $\left\vert
\partial ^{\beta }\hat{P}\left( x_{0}\right) \right\vert \leq
C_{1}M_{0}\delta ^{m-\left\vert \beta \right\vert }$ for $\left\vert \beta
\right\vert \leq m-1$; and
\end{itemize}

\begin{itemize}
\item[\refstepcounter{equation}\text{(\theequation)}\label{pb10}] $%
\left\vert \partial ^{\beta }\hat{S}\left( x_{0}\right) \right\vert \leq
C_{1}\delta ^{-\left\vert \beta \right\vert }$ for $\left\vert \beta
\right\vert \leq m-1$.
\end{itemize}

Then

\begin{itemize}
\item[\refstepcounter{equation}\text{(\theequation)}\label{pb11}] $P^{0}+%
\frac{1}{C_2}\hat{S}\odot _{x_{0}}\hat{P}$, $P^{0}-\frac{1}{C_2}\hat{S}\odot
_{x_{0}}\hat{P}\in \Gamma \left( x_{0},C_{2}M\right) $, with $C_{2}$
determined by $C_{1}$, $C_{w}$, $m$, $n$.
\end{itemize}
\end{lemma}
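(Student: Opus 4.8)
The plan is to deduce \eqref{pb11} from Lemma~\ref{lemma-wsf1}, applied to the two polynomials $P_1 = P^0 + \tfrac{1}{C_1}\hat P$ and $P_2 = P^0 - \tfrac{1}{C_1}\hat P$ (which lie in $\Gamma(x_0, C_1 M_0)$ by \eqref{pb8}), together with a well-chosen pair $Q_1, Q_2 \in \mathcal{P}$. The motivation is the identity: for \emph{any} $Q_1, Q_2$ with $Q_1 \odot_{x_0} Q_1 + Q_2 \odot_{x_0} Q_2 = 1$, bilinearity of $\odot_{x_0}$ gives
\begin{equation*}
Q_1 \odot_{x_0} Q_1 \odot_{x_0} P_1 + Q_2 \odot_{x_0} Q_2 \odot_{x_0} P_2 = P^0 + \tfrac{1}{C_1}\bigl(Q_1 \odot_{x_0} Q_1 - Q_2 \odot_{x_0} Q_2\bigr) \odot_{x_0} \hat P .
\end{equation*}
So it suffices to arrange that $Q_1 \odot_{x_0} Q_1 - Q_2 \odot_{x_0} Q_2$ is a small constant multiple of $\hat S$, while keeping the jets under control in the sense $|\partial^\beta Q_i(x_0)| \le C\,\delta^{-|\beta|}$ for $|\beta| \le m-1$, so that Lemma~\ref{lemma-wsf1} applies. (We assume here, harmlessly, that $C_1 \ge 1$, enlarging $C_1$ if necessary and using convexity of $\Gamma(x_0,C_1M_0)$ to see $P^0 \pm \tfrac{1}{C_1'}\hat P \in \Gamma(x_0,C_1'M_0)$ for $C_1' \ge C_1$; and we read $M_0$ for $M$ in \eqref{pb8}.)

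To construct $Q_1,Q_2$, I would set $\varepsilon = \tfrac{1}{2C_1}$ and let $g_1 = \tfrac{1+\varepsilon \hat S}{2}$, $g_2 = \tfrac{1-\varepsilon \hat S}{2}$, which are polynomials in $\mathcal{P}$. Since \eqref{pb10} gives $|\hat S(x_0)| \le C_1$, we have $g_i(x_0) \in [\tfrac14, \tfrac34]$, so each $g_i$ is positive and $C^\infty$ on a neighborhood of $x_0$; there I set $\phi_i = g_i^{1/2}$ and define $Q_i = J_{x_0}(\phi_i) \in \mathcal{P}$. Because $\phi_i^2 = g_i$ near $x_0$ and $g_i$ is itself a polynomial of degree $\le m-1$, we get $Q_i \odot_{x_0} Q_i = J_{x_0}(\phi_i^2) = J_{x_0}(g_i) = g_i$. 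Hence $Q_1 \odot_{x_0} Q_1 + Q_2 \odot_{x_0} Q_2 = g_1 + g_2 = 1$, verifying \eqref{10}, and $Q_1 \odot_{x_0} Q_1 - Q_2 \odot_{x_0} Q_2 = g_1 - g_2 = \varepsilon \hat S$, exactly as wanted.

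The one point requiring work — and the step I expect to be the main (if modest) obstacle — is the derivative bound $|\partial^\beta Q_i(x_0)| \le C(m,n)\,\delta^{-|\beta|}$ for $|\beta| \le m-1$. For $\beta = 0$ this is clear, since $|Q_i(x_0)| = g_i(x_0)^{1/2} \le 1$. For $1 \le |\beta| \le m-1$, I would expand $\partial^\beta\phi_i$ by the Fa\`a di Bruno formula: each resulting term is a bounded combinatorial multiple of $g_i(x_0)^{1/2 - j}\prod_{s=1}^{j}\partial^{\gamma_s}g_i(x_0)$ with $|\gamma_s| \ge 1$ and $\sum_s |\gamma_s| = |\beta|$. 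Now $g_i(x_0) \ge \tfrac14$ bounds the first factor by $C(m)$, while for $|\gamma| \ge 1$ one has $\partial^\gamma g_i = \pm\tfrac{\varepsilon}{2}\partial^\gamma \hat S$, so \eqref{pb10} gives $|\partial^\gamma g_i(x_0)| \le \tfrac{\varepsilon C_1}{2}\delta^{-|\gamma|} = \tfrac14\delta^{-|\gamma|}$; the powers of $\delta$ then multiply to $\delta^{-|\beta|}$, and since only finitely many terms (indexed by $m,n$) occur, the claimed bound follows. The essential feature is that the choice $\varepsilon \sim 1/C_1$ keeps $g_i(x_0)$ bounded away from $0$, so no uncontrolled negative powers of $g_i(x_0)$ intrude.

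With these estimates in hand, I would apply Lemma~\ref{lemma-wsf1} to $P_1, P_2, Q_1, Q_2$: hypothesis \eqref{7} is \eqref{pb8} with $A' = C_1$; \eqref{8} follows from $P_1 - P_2 = \tfrac{2}{C_1}\hat P$ and \eqref{pb9}; \eqref{9} is the bound just proved; and \eqref{10} holds by construction. Since $0 < \delta \le \delta_{\max}$, Lemma~\ref{lemma-wsf1} yields $P^0 + \tfrac{\varepsilon}{C_1}\hat S \odot_{x_0}\hat P = P^0 + \tfrac{1}{2C_1^2}\hat S \odot_{x_0}\hat P \in \Gamma(x_0, CM_0)$ with $C$ determined by $C_1, C_w, m, n$. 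Replacing $\hat S$ by $-\hat S$ (the hypotheses \eqref{pb8}--\eqref{pb10} are unchanged) gives the same membership with a minus sign. Finally, to pass from the coefficient $\tfrac{1}{2C_1^2}$ to $\tfrac{1}{C_2}$, I would put $C_2 = \max\{C,\, 2C_1^2,\, C_1\}$, observe that $P^0 = \tfrac12\bigl[(P^0 + \tfrac{1}{C_1}\hat P) + (P^0 - \tfrac{1}{C_1}\hat P)\bigr] \in \Gamma(x_0, C_1 M_0) \subseteq \Gamma(x_0, C_2 M_0)$ by \eqref{pb8} and convexity, and invoke the Trivial Remark on Convex Sets with the single vector $\tfrac{1}{2C_1^2}\hat S \odot_{x_0}\hat P$ and scalars $t = \pm\tfrac{2C_1^2}{C_2}$ (which satisfy $|t| \le 1$). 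This gives $P^0 \pm \tfrac{1}{C_2}\hat S \odot_{x_0}\hat P \in \Gamma(x_0, C_2 M_0)$, which is \eqref{pb11}.
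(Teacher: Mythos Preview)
Your proof is correct and follows essentially the same approach as the paper's: both set $Q_i = J_{x_0}\bigl([\tfrac{1}{2}\pm c_0\hat S]^{1/2}\bigr)$ for a small $c_0$ (your explicit choice $c_0 = \varepsilon/2 = \tfrac{1}{4C_1}$ is the paper's unspecified ``small enough $c_0$''), apply Lemma~\ref{lemma-wsf1} to $P_1 = P^0 + \tfrac{1}{C_1}\hat P$, $P_2 = P^0 - \tfrac{1}{C_1}\hat P$, replace $\hat S$ by $-\hat S$ for the other sign, and finish by convexity. Your Fa\`a di Bruno justification of the derivative bounds on $Q_i$ and your use of the Trivial Remark at the end are just more explicit versions of steps the paper leaves to the reader.
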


\begin{proof}
We write $c$, $C$, $C^{\prime }$, etc., to denote constants determined by $%
C_1$, $C_w$, $m$, $n$. These symbols may denote distinct constants in
different occurrences. For a small enough $c_0$, \eqref{pb10} guarantees
that the polynomials $Q_1:=J_{x_0}([\frac{1}{2}+c_0\hat{S}]^{1/2})$ and $%
Q_2:=J_{x_0}([\frac{1}{2}-c_0\hat{S}]^{1/2})$ are well-defined and satisfy

\begin{itemize}
\item[\refstepcounter{equation}\text{(\theequation)}\label{pb12}] $%
|\partial^\beta Q_i(x_0)|\leq C\delta^{-|\beta|}$ for $|\beta|\leq m-1$, $%
i=1,2$;
\end{itemize}

\begin{itemize}
\item[\refstepcounter{equation}\text{(\theequation)}\label{pb13}] $%
Q_{1}\odot _{x_{0}}Q_{1}=\frac{1}{2}+c_{0}\hat{S}$, $Q_{2}\odot
_{x_{0}}Q_{2}=\frac{1}{2}-c_{0}\hat{S}$, $Q_{1}\odot
_{x_{0}}Q_{1}+Q_{2}\odot _{x_{0}}Q_{2}=1$.
\end{itemize}

Define $P^{1}=P^{0}+\frac{1}{C_1}\hat{P}$, $P^{2}=P^{0}-\frac{1}{C_1}\hat{P}$%
. Then

\begin{itemize}
\item[\refstepcounter{equation}\text{(\theequation)}\label{pb14}] $%
P^{1},P^{2}\in \Gamma \left( x_{0},C_{1}M_{0}\right) $ and
\end{itemize}

\begin{itemize}
\item[\refstepcounter{equation}\text{(\theequation)}\label{pb15}] $%
\left\vert \partial ^{\beta }\left( P^{1}-P^{2}\right) \left( x_{0}\right)
\right\vert \leq CM_{0}\delta ^{m-\left\vert \beta \right\vert }$ for $%
\left\vert \beta \right\vert \leq m-1$.
\end{itemize}

Recall that $0<\delta \leq \delta _{\max }$. Hence, \eqref{pb12}$\cdots$%
\eqref{pb15} and Lemma \ref{lemma-wsf1} together imply that 
\begin{equation*}
P:=Q_{1}\odot _{x_{0}}Q_{1}\odot _{x_{0}}P^{1}+Q_{2}\odot _{x_{0}}Q_{2}\odot
_{x_{0}}P^{2}\in \Gamma \left( x_{0},CM_{0}\right) \text{.}
\end{equation*}%
However, \eqref{pb13} and the definitions of $P^{1}$, $P^{2}$, $P$ yield 
\begin{eqnarray*}
P &=&\left[ \frac{1}{2}+c_{0}\hat{S}\right] \odot _{x_{0}}\left[ P^{0}+\frac{%
\hat{P}}{C_{1}}\right] +\left[ \frac{1}{2}-c_{0}\hat{S}\right] \odot _{x_{0}}%
\left[ P^{0}-\frac{\hat{P}}{C_{1}}\right] \\
&=&P^0+\frac{2c_{0}}{C_{1}}\hat{S}\odot _{x_{0}}\hat{P}\text{.}
\end{eqnarray*}%
Thus, 
\begin{equation}
P^{0}+\frac{2c_{0}}{C_{1}}\hat{S}\odot _{x_{0}}\hat{P}\in \Gamma \left(
x_{0},CM_{0}\right) \text{.}  \label{pb16}
\end{equation}%
Replacing $\hat{S}$ by $-\hat{S}$ in the proof of \eqref{pb16}, we learn
that also 
\begin{equation}
P^{0}-\frac{2c_{0}}{C_{1}}\hat{S}\odot _{x_{0}}\hat{P}\in \Gamma \left(
x_{0},CM_{0}\right) \text{.}  \label{pb17}
\end{equation}%
Taking $C_{2}>\max \left\{ C,\frac{C_{1}}{2c_{0}}\right\} $ with $C$ as in %
\eqref{pb16} and \eqref{pb17}, we obtain from \eqref{pb16}, \eqref{pb17} and
convexity of $\Gamma \left( x_{0},CM_{0}\right) $ that $P^{0}\pm \frac{\hat{S%
}\odot _{x_{0}}\hat{P}}{C_{2}}\in \Gamma \left( x_{0},CM\right) \subseteq
\Gamma \left( x_{0},C_{2}M\right) $, where the last inclusion holds because $%
\vec{\Gamma}$ is a shape field.

The proof of Lemma \ref{lemma-pb1} is complete.
\end{proof}

We also need the following result, which is immediate\footnote{%
Lemma 16.1 in \cite{f-2005} involves also real numbers $F_{\alpha ,\beta }$
with $\left\vert \beta \right\vert =m$. Setting those $F_{\alpha ,\beta }=0$%
, we recover the Rescaling Lemma stated here.} from Lemma 16.1 in \cite%
{f-2005}.

\begin{lemma}[Rescaling Lemma]
\label{rescaling-lemma} Let $\mathcal{A}\subseteq \mathcal{M}$, and let $C,a$
be positive real numbers. Suppose we are given real numbers $F_{\alpha
,\beta }$, indexed by $\alpha \in \mathcal{A}$, $\beta \in \mathcal{M}$.
Assume that the following conditions are satisfied.

\begin{itemize}
\item[\refstepcounter{equation}\text{(\theequation)}\label{pb18}] $%
F_{\alpha,\alpha}\not=0$ for all $\alpha \in \mathcal{A}$.
\end{itemize}

\begin{itemize}
\item[\refstepcounter{equation}\text{(\theequation)}\label{pb19}] $%
|F_{\alpha,\beta}|\leq C \cdot |F_{\alpha,\alpha}|$ for all $\alpha \in 
\mathcal{A}$, $\beta\in \mathcal{M}$ with $\beta \geq \alpha$.
\end{itemize}

\begin{itemize}
\item[\refstepcounter{equation}\text{(\theequation)}\label{pb20}] $%
F_{\alpha,\beta}=0$ for all $\alpha,\beta\in \mathcal{A}$ with $\alpha
\not=\beta$.
\end{itemize}

Then there exist positive numbers $\lambda_1, \cdots, \lambda_n$ and a map $%
\phi: \mathcal{A} \rightarrow \mathcal{M}$, with the following properties:

\begin{itemize}
\item[\refstepcounter{equation}\text{(\theequation)}\label{pb21}] $c(a) \leq
\lambda_i \leq 1$ for each $i$, where $c(a)$ is determined by $C$, $a$, $m$, 
$n$;
\end{itemize}

\begin{itemize}
\item[\refstepcounter{equation}\text{(\theequation)}\label{pb22}] $%
\phi(\alpha) \leq \alpha$ for each $\alpha \in \mathcal{A}$;
\end{itemize}

\begin{itemize}
\item[\refstepcounter{equation}\text{(\theequation)}\label{pb23}] For each $%
\alpha \in \mathcal{A}$, either $\phi(\alpha) = \alpha$ or $\phi(\alpha)
\not\in \mathcal{A}$.
\end{itemize}

Suppose we define $\hat{F}_{\alpha,\beta}=\lambda^\beta F_{\alpha,\beta}$
for $\alpha \in \mathcal{A}$, $\beta \in \mathcal{M}$, where we recall that $%
\lambda^\beta$ denotes $\lambda^{\beta_1}_1\cdots \lambda^{\beta_n}_n$ for $%
\beta=(\beta_1,\cdots, \beta_n)$. Then

\begin{itemize}
\item[\refstepcounter{equation}\text{(\theequation)}\label{pb24}] $|\hat{F}%
_{\alpha,\beta}|\leq a \cdot |\hat{F}_{\alpha,\phi(\alpha)}|$ for $\alpha
\in \mathcal{A}$, $\beta \in \mathcal{M}\setminus \{ \phi(\alpha)\}$.
\end{itemize}
\end{lemma}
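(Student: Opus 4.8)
The Rescaling Lemma is a purely combinatorial statement about the real matrix $(F_{\alpha,\beta})_{\alpha\in\mathcal{A},\beta\in\mathcal{M}}$; no analysis enters. It is exactly Lemma~16.1 of \cite{f-2005} with the auxiliary numbers indexed by $|\beta|=m$ set to zero, so one route is simply to invoke that result, as the footnote indicates. I sketch instead a self-contained argument.

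First note that, whatever weights $\lambda=(\lambda_1,\dots,\lambda_n)$ and map $\phi$ we produce, conditions \eqref{pb22} and \eqref{pb23} are automatic once $\phi(\alpha)$ is chosen to label a column on which $|\hat F_{\alpha,\cdot}|$ attains its maximum and that maximum exceeds $\alpha$: indeed, by \eqref{pb20} we have $\hat F_{\alpha,\beta}=\lambda^{\beta}F_{\alpha,\beta}=0$ for every $\beta\in\mathcal{A}\setminus\{\alpha\}$, while $\hat F_{\alpha,\alpha}=\lambda^{\alpha}F_{\alpha,\alpha}\neq 0$ by \eqref{pb18} and $\lambda_i>0$; hence a maximizing column cannot lie in $\mathcal{A}\setminus\{\alpha\}$, which is precisely \eqref{pb23}. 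So the real content is the \emph{simultaneous} construction of $\lambda$, with the uniform lower bound \eqref{pb21}, and of an assignment $\phi$ with $\phi(\alpha)\le\alpha$ for which the maximizing column $\phi(\alpha)$ dominates all others by the factor $a$, i.e. \eqref{pb24}. Observe also that one-sided control \eqref{pb19} is available for exactly the columns $\beta\ge\alpha$: a column $\beta<\alpha$ with $F_{\alpha,\beta}\neq 0$ is uncontrolled in size, but by \eqref{pb20} such a $\beta$ automatically lies outside $\mathcal{A}$ — so putting $\phi(\alpha)$ there does not endanger \eqref{pb22} or \eqref{pb23}.

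The weights are built by a finite induction. Enumerate $\mathcal{M}=\{\gamma_1<\gamma_2<\dots<\gamma_N\}$ in increasing $<$-order, $N=\dim\mathcal{P}$ and $\gamma_1=0$, and fix a small $\epsilon=\epsilon(C,a,m,n)\in(0,1)$. We proceed in at most $N$ stages; at each stage we either lock in the value $\phi(\alpha)$ for the rows $\alpha$ whose (current) row-maximum is attained at the column under consideration, or we multiply $\lambda_i$ by $\epsilon$ for all $i$ in an initial block $\{1,\dots,k\}$. The choice of block exploits the structure of $<$: it is the lexicographic order on the partial-sum vectors $\bigl(\sum_{i\le n}\beta_i,\sum_{i\le n-1}\beta_i,\dots,\sum_{i\le 1}\beta_i\bigr)$, whose most significant entry is the total degree $|\beta|$; thus shrinking $\lambda_1,\dots,\lambda_k$ uniformly multiplies each ratio $\lambda^{\nu}/\lambda^{\mu}$ by $\epsilon^{\sum_{i\le k}(\nu_i-\mu_i)}$, which suppresses exactly those columns whose $k$-th partial sum exceeds that of the target while leaving columns with equal $k$-th partial sum unaffected. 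Running through the stages in increasing $<$-order of the target columns, one amplifies each designated column over all $<$-larger columns (using \eqref{pb19} to know how much amplification is needed, since those ratios are bounded by $C$) without disturbing rows already settled. Each stage multiplies each $\lambda_i$ by at most $\epsilon$, and there are at most $N$ stages, so $\lambda_i\ge\epsilon^{N}=:c(a)$, giving \eqref{pb21}; the construction makes $\phi(\alpha)$ the $<$-least column realizing the final row-maximum, which is $\le\alpha$ because columns $>\alpha$ have been suppressed below column $\alpha$ via \eqref{pb19}, yielding \eqref{pb22}, and \eqref{pb24} holds with the prescribed slack $a$ provided $\epsilon$ was chosen small enough in terms of $C$, $a$, $m$, $n$.

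The main obstacle is the bookkeeping in the induction step. One must verify that (i) an uncontrollably large column — which can only occur at some $\beta<\alpha$, hence $\beta\notin\mathcal{A}$ — is never asked to be suppressed below a column of strictly smaller total degree, which is impossible since shrinking $\lambda_i<1$ never amplifies a higher-degree monomial relative to a lower-degree one; this is why $\phi(\alpha)$ must be taken as low as possible in $<$-order among near-maximizers. And (ii) the finitely many amplification stages must not cascade, i.e. suppressing column $\gamma_j$-relative-to-$\gamma_{j'}$ in one stage must not undo a domination arranged in an earlier stage; this is guaranteed by processing the stages in increasing $<$-order of the targets and by the observation, above, that shrinking an initial coordinate block $\{1,\dots,k\}$ leaves untouched all ratios between columns agreeing in their first $k$ partial sums. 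Checking that these choices are mutually consistent and do produce \eqref{pb24} is the technical heart of the lemma, carried out exactly as in \cite[Lemma~16.1]{f-2005}.
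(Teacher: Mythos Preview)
Your proposal is correct and matches the paper's treatment: the paper does not prove the Rescaling Lemma either, but simply records that it is immediate from Lemma~16.1 of \cite{f-2005} once the auxiliary numbers $F_{\alpha,\beta}$ with $|\beta|=m$ are set to zero---exactly as you note in your opening paragraph. Your added sketch of the inductive rescaling mechanism (exploiting that $<$ is lexicographic in the partial sums $\sum_{i\le k}\beta_i$, so that shrinking an initial coordinate block suppresses precisely the columns with larger $k$-th partial sum) goes beyond what the paper provides and is a helpful outline of how the cited argument actually runs; you are right to defer the bookkeeping verification back to \cite{f-2005}.
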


Lemma 3.3 in \cite{f-2005} tells us that any map $\phi: \mathcal{A}
\rightarrow \mathcal{M}$ satisfying \eqref{pb22}, \eqref{pb23} satisfies also

\begin{itemize}
\item[\refstepcounter{equation}\text{(\theequation)}\label{pb25}] $\phi(%
\mathcal{A}) \leq \mathcal{A}$, with equality only if $\phi$ is the identity
map.
\end{itemize}

We are ready to state the main result of this section.

\begin{lemma}[Relabeling Lemma]
\label{lemma-pb2} Let $\vec{\Gamma}=\left( \Gamma \left( x,M\right) \right)
_{x\in E,M>0}$ be a $(C_w,\delta_{\max})$-convex shape field. Let $x_{0}\in E
$, $M_{0}>0$, $0<\delta \leq \delta _{\max }$, $C_{B}>0$, $P^{0}\in \Gamma
\left( x_{0},M_{0}\right) $, $\mathcal{A}\subseteq \mathcal{M}$. Suppose $%
\left( P_{\alpha }^{00}\right) _{\alpha \in \mathcal{A}}$ is a weak $\left( 
\mathcal{A},\delta ,C_{B}\right) $-basis for $\vec{\Gamma}$ at $\left(
x_{0},M_{0},P^{o}\right) $. Then, for some monotonic $\hat{\mathcal{A}}\leq 
\mathcal{A}$, $\vec{\Gamma}$ has an $(\hat{\mathcal{A}},\delta
,C_{B}^{\prime })$-basis at $(x_{0},M_{0},P^{0})$, with $C_{B}^{\prime }$
determined by $C_{B}$, $C_{w}$, $m$, $n$. Moreover, if $\max_{\alpha \in 
\mathcal{A},\beta \in \mathcal{M}}\delta ^{|\beta |-|\alpha |}|\partial
^{\beta }P_{\alpha }^{00}(x_{0})|$ exceeds a large enough constant
determined by $C_{B}$, $C_{w}$, $m$, $n$, then we can take $\hat{\mathcal{A}}%
<\mathcal{A}$ (strict inequality).
\end{lemma}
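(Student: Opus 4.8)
The plan is to prove the Relabeling Lemma (Lemma \ref{lemma-pb2}) by combining the Rescaling Lemma with repeated application of Lemma \ref{lemma-pb1}, and then checking a nondegeneracy alternative. First I would set up the matrix of coefficients $F_{\alpha,\beta} := \delta^{|\beta|-|\alpha|}\partial^\beta P^{00}_\alpha(x_0)$ for $\alpha\in\mathcal A$, $\beta\in\mathcal M$. By property \eqref{pb3} of the weak basis, $F_{\alpha,\alpha}=1$ and $F_{\alpha,\beta}=0$ for $\alpha\neq\beta$ in $\mathcal A$, so hypotheses \eqref{pb18} and \eqref{pb20} of the Rescaling Lemma hold; hypothesis \eqref{pb19} holds with $C=C_B$ by the weak-basis estimate \eqref{pb4} (which is assumed only for $\beta\geq\alpha$, exactly matching what the Rescaling Lemma needs). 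Applying the Rescaling Lemma with a small constant $a$ to be chosen, I get positive scalars $\lambda_i\in[c(a),1]$ and a map $\phi:\mathcal A\to\mathcal M$ satisfying \eqref{pb21}--\eqref{pb24}, and by \eqref{pb25} we have $\phi(\mathcal A)\leq\mathcal A$.

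Next I would define $\hat{\mathcal A}:=\phi(\mathcal A)$; I should check it is monotonic — this follows because $\phi$ arises from the standard rescaling construction in \cite{f-2005}, or one argues directly that the image of a monotonic set under the order-preserving-in-the-appropriate-sense $\phi$ is monotonic; in any case \eqref{pb25} gives $\hat{\mathcal A}\leq\mathcal A$. The candidate new basis vectors are obtained by a linear change of the $P^{00}_\alpha$: roughly, for each $\hat\alpha\in\hat{\mathcal A}$ pick the $\alpha$ with $\phi(\alpha)=\hat\alpha$ and set $P_{\hat\alpha}$ proportional to $\delta^{|\alpha|-|\hat\alpha|}P^{00}_\alpha$, then correct by Gaussian elimination so that $\partial^\beta P_{\hat\alpha}(x_0)=\delta_{\hat\alpha\beta}$ for $\beta,\hat\alpha\in\hat{\mathcal A}$; estimate \eqref{pb24} is precisely what guarantees this correction costs only a constant factor, yielding the full bound \eqref{pb4} for the new basis (now for all $\beta\in\mathcal M$, not just $\beta\geq\hat\alpha$). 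The membership conditions \eqref{pb1}, \eqref{pb2} for the new basis are where Lemma \ref{lemma-pb1} enters: starting from $P^0\pm\frac{M_0\delta^{m-|\alpha|}}{C_B}P^{00}_\alpha\in\Gamma(x_0,C_BM_0)$, I multiply (jet-multiply at $x_0$) by the appropriate rescaling jet $\hat S$ built from the $\lambda_i$ and the elimination coefficients — each such $\hat S$ has the bound \eqref{pb10} because $\lambda_i$ is bounded below and the elimination coefficients are controlled — and Lemma \ref{lemma-pb1} converts this into $P^0\pm\frac{1}{C'_B}\hat S\odot_{x_0}\hat P\in\Gamma(x_0,C'_BM_0)$, which after unwinding is exactly \eqref{pb2} for $P_{\hat\alpha}$. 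One handles the coefficients of $\phi(\alpha)=\hat\alpha$ by composing finitely many such steps, the number being bounded by $\#\mathcal M$.

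Finally, the ``moreover'' clause: suppose $\max_{\alpha,\beta}\delta^{|\beta|-|\alpha|}|\partial^\beta P^{00}_\alpha(x_0)| = \max_{\alpha,\beta}|F_{\alpha,\beta}|$ exceeds a large constant $K$. If $\hat{\mathcal A}=\mathcal A$, then by \eqref{pb25} $\phi$ is the identity map, so $\phi(\alpha)=\alpha$ for every $\alpha$, and then \eqref{pb24} reads $|\hat F_{\alpha,\beta}|\leq a|\hat F_{\alpha,\alpha}| = a|F_{\alpha,\alpha}|=a$ for all $\beta\neq\alpha$ (using $\hat F_{\alpha,\alpha}=\lambda^\alpha F_{\alpha,\alpha}=\lambda^\alpha$, wait — more carefully, $|\hat F_{\alpha,\beta}|\le a|\hat F_{\alpha,\phi(\alpha)}|=a|\hat F_{\alpha,\alpha}|=a\lambda^\alpha\le a$). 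Since $\lambda_i\geq c(a)>0$, this forces $|F_{\alpha,\beta}|=\lambda^{-\beta}|\hat F_{\alpha,\beta}|\leq c(a)^{-(m-1)}a =: K_0$ for all $\alpha\neq\beta$, while $|F_{\alpha,\alpha}|=1$; so $\max|F_{\alpha,\beta}|\leq\max(1,K_0)$, a constant determined by $C_B,C_w,m,n$ (through the choice of $a$). Choosing $K$ strictly larger than this constant gives a contradiction, so $\hat{\mathcal A}<\mathcal A$ strictly. The main obstacle is bookkeeping: verifying that the rescaling jets $\hat S$ used in the iterated application of Lemma \ref{lemma-pb1} genuinely satisfy the estimate \eqref{pb10} with a constant depending only on $C_B,C_w,m,n$ (not on the possibly large $\max|F_{\alpha,\beta}|$) — this works precisely because the Rescaling Lemma's output bound \eqref{pb24} is scale-invariant and the $\lambda_i$ have a lower bound depending only on the allowed data, but threading this through the Gaussian-elimination step while keeping $C'_B$ uniform requires care.
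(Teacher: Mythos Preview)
Your overall strategy is right, and the setup with $F_{\alpha,\beta}=\delta^{|\beta|-|\alpha|}\partial^\beta P^{00}_\alpha(x_0)$ together with the Rescaling Lemma and the strictness argument at the end are essentially what the paper does. But there is a genuine gap in the middle.

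You set $\hat{\mathcal A}:=\phi(\mathcal A)$ and then assert it is monotonic. This is not justified, and in fact it is false in general. The Rescaling Lemma only guarantees \eqref{pb22} and \eqref{pb23}; nothing about those conditions forces the image to be monotonic (for instance, with $n=1$, $m=3$, $\mathcal A=\{1,2\}$, one can have $\phi(1)=0$, $\phi(2)=2$, giving $\phi(\mathcal A)=\{0,2\}$, which is not monotonic). Note also that the lemma does not assume $\mathcal A$ itself is monotonic, so the aside about ``the image of a monotonic set'' does not apply.

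The paper deals with this by a two-step construction. First it sets $\bar{\mathcal A}=\phi(\mathcal A)$ and builds rescaled polynomials $\bar P_{\bar\alpha}$ indexed by $\bar{\mathcal A}$, obtaining the near-Kronecker estimate \eqref{pb43}. Then it defines $\hat{\mathcal A}$ to be the \emph{monotonic hull} of $\bar{\mathcal A}$, namely $\hat{\mathcal A}=\{\bar\alpha+\bar\gamma:\bar\alpha\in\bar{\mathcal A},\ \bar\gamma\in\mathcal M,\ \bar\alpha+\bar\gamma\in\mathcal M\}$, which is automatically monotonic and satisfies $\hat{\mathcal A}\leq\bar{\mathcal A}\leq\mathcal A$. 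The key point you are missing is that to produce basis vectors for the \emph{extra} indices $\hat\alpha\in\hat{\mathcal A}\setminus\bar{\mathcal A}$, one writes $\hat\alpha=\chi(\hat\alpha)+\omega(\hat\alpha)$ with $\chi(\hat\alpha)\in\bar{\mathcal A}$ and jet-multiplies $\bar P_{\chi(\hat\alpha)}$ by the monomial $S_{\hat\alpha}(x)=\frac{\chi(\hat\alpha)!}{\hat\alpha!}\lambda^{-\omega(\hat\alpha)}x^{\omega(\hat\alpha)}$. This is precisely where Lemma \ref{lemma-pb1} is applied: the pair $\hat S=\delta^{-|\omega(\hat\alpha)|}S_{\hat\alpha}$, $\hat P=M_0\delta^{m-|\chi(\hat\alpha)|}\bar P_{\chi(\hat\alpha)}$ satisfies its hypotheses, and the conclusion gives $P^0\pm c(a)M_0\delta^{m-|\hat\alpha|}S_{\hat\alpha}\odot_{x_0}\bar P_{\chi(\hat\alpha)}\in\Gamma(x_0,C(a)M_0)$. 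Only after this does one perform the matrix inversion (your ``Gaussian elimination'') to pass from the near-Kronecker condition \eqref{pb56} to the exact one \eqref{pb68}. Your description places Lemma \ref{lemma-pb1} inside the elimination step with a vague ``rescaling jet built from the $\lambda_i$ and the elimination coefficients''; that is not its role, and without the monomial-multiplication step you have no candidate polynomials for indices in $\hat{\mathcal A}\setminus\phi(\mathcal A)$ at all.
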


\begin{proof} If $\A$ is empty, then we can take $\hat{\A}$ empty; note that $$\max_{\alpha \in \A,\beta \in \M} \delta ^{|\beta|-|\alpha|} |\partial^{\beta}P^{00}_\alpha(x_0)|$$
is defined to be zero for $\A$ empty. Thus, Lemma \ref{lemma-pb2} holds trivially for $\A = \emptyset$. We suppose that $\A \not= \emptyset$.

Without loss of generality, we may take $x_0 =0$. We introduce a constant $%
a>0$ to be picked later, satisfying the

\emph{Small $a$ condition: $a$ is less than a small enough constant
determined by $C_B$, $C_w$, $m$, $n$.}

We write $c$, $C$, $C^{\prime }$, etc., to denote constants determined by $%
C_B$, $C_w$, $m$, $n$; and we write $c(a)$, $C(a)$, $C^{\prime }(a)$, etc., to
denote constants determined by $a$, $C_B$, $C_w$, $m$, $n$. These symbols
may denote different constants in different occurrences.

Since $(P_\alpha^{00})_{\alpha \in \mathcal{A}}$ is a weak $(\mathcal{A},
\delta, C_B)$-basis for $\vec{\Gamma}$ at $(0, M_0, P^0)$, we have the
following.

\begin{itemize}
\item[\refstepcounter{equation}\text{(\theequation)}\label{pb26}] $%
P^{0},P^{0}\pm cM_{0}\delta ^{m-\left\vert \alpha \right\vert }P_{\alpha
}^{00}\in \Gamma \left( 0,CM_{0}\right) $ for $\alpha \in \mathcal{A}$.
\end{itemize}

\begin{itemize}
\item[\refstepcounter{equation}\text{(\theequation)}\label{pb27}] $\partial
^{\beta }P_{\alpha }^{00}\left( 0\right) =\delta _{\beta \alpha }$ for $%
\beta ,\alpha \in \mathcal{A}$.
\end{itemize}

\begin{itemize}
\item[\refstepcounter{equation}\text{(\theequation)}\label{pb28}] $%
\left\vert \partial ^{\beta }P_{\alpha }^{00}\left( 0\right) \right\vert
\leq C\delta ^{\left\vert \alpha \right\vert -\left\vert \beta \right\vert }$
for $\alpha \in \mathcal{A}$, $\beta \in \mathcal{M}$, $\beta \geq \alpha $.
\end{itemize}

Thanks to \eqref{pb27}, \eqref{pb28}, the numbers $F_{\alpha,\beta} =
\delta^{|\beta|-|\alpha|}\partial^\beta P_\alpha^{00}(0)$ satisfy %
\eqref{pb18}, \eqref{pb19}, \eqref{pb20}. Applying Lemma \ref%
{rescaling-lemma}, we obtain real numbers $\lambda_1, \cdots, \lambda_n$ and
a map $\phi: \mathcal{A} \rightarrow \mathcal{M}$ satisfying \eqref{pb21}, $%
\cdots$, \eqref{pb24}. We define a linear map $T: \mathbb{R}^n \rightarrow 
\mathbb{R}^n$ by setting

\begin{itemize}
\item[\refstepcounter{equation}\text{(\theequation)}\label{pb29}] $T\left(
x_{1},\cdots ,x_{n}\right) =\left( \lambda _{1}x_{1},\cdots ,\lambda
_{n}x_{n}\right) $ for $\left( x_{1},\cdots ,x_{n}\right) \in \mathbb{R}^{n}$%
.
\end{itemize}

From \eqref{pb21} $\cdots $\eqref{pb24} for our $F_{\alpha ,\beta }$, we
obtain the following.

\begin{itemize}
\item[\refstepcounter{equation}\text{(\theequation)}\label{pb30}] $c\left(
a\right) \leq \lambda _{i}\leq 1$ for $i=1,\cdots ,n$.
\end{itemize}

\begin{itemize}
\item[\refstepcounter{equation}\text{(\theequation)}\label{pb31}] $\phi
\left( \alpha \right) \leq \alpha $ for all $\alpha \in \mathcal{A}$.
\end{itemize}

\begin{itemize}
\item[\refstepcounter{equation}\text{(\theequation)}\label{pb32}] For each $%
\alpha \in \mathcal{A}$, either $\phi \left( \alpha \right) =\alpha $ or $%
\phi \left( \alpha \right) \not\in \mathcal{A}$.
\end{itemize}

\begin{itemize}
\item[\refstepcounter{equation}\text{(\theequation)}\label{pb33}] $\delta
^{\left\vert \beta \right\vert -\left\vert \alpha \right\vert }\left\vert
\partial ^{\beta }\left( P_{\alpha }^{00}\circ T\right) \left( 0\right)
\right\vert \leq a\cdot \delta ^{\left\vert \phi \left( \alpha \right)
\right\vert -\left\vert \alpha \right\vert }\left\vert \partial ^{\phi
\left( \alpha \right) }\left( P_{\alpha }^{00}\circ T\right) \left( 0\right)
\right\vert $ for all $\alpha \in \mathcal{A}$, $\beta \in \mathcal{%
M\setminus }\phi \left( \alpha \right) $.
\end{itemize}

Since the left-hand side of \eqref{pb33} is equal to $\lambda^\beta \geq
c(a) $ for $\beta=\alpha$ (by \eqref{pb27} and \eqref{pb30}), it follows
from \eqref{pb33} that

\begin{itemize}
\item[\refstepcounter{equation}\text{(\theequation)}\label{pb34}] $\delta
^{\left\vert \phi \left( \alpha \right) \right\vert -\left\vert \alpha
\right\vert }\left\vert \partial ^{\phi \left( \alpha \right) }\left(
P_{\alpha }^{00}\circ T\right) \left( 0\right) \right\vert \geq c\left(
a\right) $ for $\alpha \in \mathcal{A}$.
\end{itemize}

We define

\begin{itemize}
\item[\refstepcounter{equation}\text{(\theequation)}\label{pb35}] $\bar{%
\mathcal{A}}=\phi(\mathcal{A})$
\end{itemize}
and introduce a map
\begin{itemize}
\item[\refstepcounter{equation}\text{(\theequation)}\label{pb36}] $\psi: 
\bar{\mathcal{A}} \rightarrow \mathcal{A}$
\end{itemize}
such that
\begin{itemize}
\item[\refstepcounter{equation}\text{(\theequation)}\label{pb37}] $\phi
(\psi (\bar{\alpha}))=\bar{\alpha}$ for all $\bar{\alpha}\in \bar{\mathcal{A}%
}$.
\end{itemize}

Thanks to \eqref{pb31}, \eqref{pb32}, and Lemma 3.3 in \cite{f-2005}
(mentioned above), we have 
\begin{equation*}
\bar{\mathcal{A}}\leq \mathcal{A},
\end{equation*}%
with equality only when $\phi =$identity. Moreover, suppose $\phi =$
identity. Then \eqref{pb27} and \eqref{pb33} show that%
\begin{eqnarray*}
\left\vert \lambda ^{\beta }\delta ^{\left\vert \beta \right\vert
-\left\vert \alpha \right\vert }\partial ^{\beta }P_{\alpha }^{00}\left(
0\right) \right\vert  &=&\delta ^{\left\vert \beta \right\vert -\left\vert
\alpha \right\vert }\left\vert \partial ^{\beta }\left( P_{\alpha
}^{00}\circ T\right) \left( 0\right) \right\vert  \\
&\leq &\left\vert \partial ^{\alpha }\left( P_{\alpha }^{00}\circ T\right)
\left( 0\right) \right\vert  \\
&=&\lambda ^{\alpha }
\end{eqnarray*}%
for $\alpha \in \mathcal{A}$ and $\beta \in \mathcal{M}$. Hence, \eqref{pb30}
yields

\begin{itemize}
\item[\refstepcounter{equation}\text{(\theequation)}\label{pb38}] $\delta
^{\left\vert \beta \right\vert -\left\vert \alpha \right\vert }\left\vert
\partial ^{\beta }P_{\alpha }^{00}\left( 0\right) \right\vert \leq C\left(
a\right) $ for all $\alpha \in \mathcal{A}$ and $\beta \in \mathcal{M}$;
\end{itemize}
this holds provided $\phi =$identity.

If $\max_{\alpha \in \mathcal{A},\beta \in \mathcal{M}}\delta ^{\left\vert
\beta \right\vert -\left\vert \alpha \right\vert }\left\vert \partial
^{\beta }P_{\alpha }^{00}\left( 0\right) \right\vert >C(a) $ with $C\left(
a\right) $ as in \eqref{pb38}, then $\phi $ cannot be the identity map, and
therefore $\bar{\mathcal{A}} < \mathcal{A} $ (strict inequality).

Thus,

\begin{itemize}
\item[\refstepcounter{equation}\text{(\theequation)}\label{pb39}] $\bar{%
\mathcal{A}} \leq \mathcal{A}$, with strict inequality if $\max_{\alpha \in 
\mathcal{A},\beta \in \mathcal{M}}\delta ^{\left\vert \beta \right\vert
-\left\vert \alpha \right\vert }\left\vert \partial ^{\beta }P_{\alpha
}^{00}\left( 0\right) \right\vert $ exceeds a large enough constant
determined by $a$, $C_{w}$, $C_{B}$, $m$, $n$.
\end{itemize}

For $\bar{\alpha} \in \bar{\mathcal{A}}$, we define

\begin{itemize}
\item[\refstepcounter{equation}\text{(\theequation)}\label{pb40}] $b_{\bar{%
\alpha}}=\left[ \delta ^{\left\vert \bar{\alpha}\right\vert -\left\vert \psi
\left( \bar{\alpha}\right) \right\vert }\partial ^{\bar{\alpha}}\left(
P_{\psi \left( \bar{\alpha}\right) }^{00}\circ T\right) \left( 0\right) %
\right] ^{-1}$;
\end{itemize}

estimate \eqref{pb34} with $\alpha =\psi \left( \bar{\alpha}\right) $ gives

\begin{itemize}
\item[\refstepcounter{equation}\text{(\theequation)}\label{pb41}] $%
\left\vert b_{\bar{\alpha}}\right\vert \leq C\left( a\right) $ for all $\bar{%
\alpha}\in \bar{\mathcal{A}}$.
\end{itemize}

For $\bar{\alpha}\in \bar{\mathcal{A}}$, we also define

\begin{itemize}
\item[\refstepcounter{equation}\text{(\theequation)}\label{pb42}] $\bar{P}_{%
\bar{\alpha}}=b_{\bar{\alpha}}\delta ^{\left\vert \bar{\alpha}\right\vert
-\left\vert \psi \left( \bar{\alpha}\right) \right\vert }\cdot P_{\psi
\left( \bar{\alpha}\right) }^{00}$.
\end{itemize}

From \eqref{pb33}, \eqref{pb40}, \eqref{pb42}, we find that

\begin{itemize}
\item[\refstepcounter{equation}\text{(\theequation)}\label{pb43}] $%
\left\vert \delta ^{\left\vert \beta \right\vert -\left\vert \bar{\alpha}%
\right\vert }\partial ^{\beta }\left( \bar{P}_{\bar{\alpha}}\circ T\right)
\left( 0\right) -\delta _{\beta \bar{\alpha }}\right\vert \leq a$ for $\bar{\alpha}%
\in \bar{\mathcal{A}}$, $\beta \in \mathcal{M}$.
\end{itemize}

Note that $\delta ^{m-\left\vert \bar{\alpha}\right\vert }\bar{P}_{\bar{%
\alpha}}=b_{\bar{\alpha}}\delta ^{m-\left\vert \psi \left( \bar{\alpha}%
\right) \right\vert }P_{\psi \left( \bar{\alpha}\right) }^{00}$. Hence, %
\eqref{pb41} and \eqref{pb26} (with $\alpha =\psi \left( \bar{\alpha}\right) 
$) tell us that

\begin{itemize}
\item[\refstepcounter{equation}\text{(\theequation)}\label{pb44}] $P^{0}\pm
c\left( a\right) M_{0}\delta ^{m-\left\vert \bar{\alpha}\right\vert }\bar{P}%
_{\bar{\alpha}}\in \Gamma \left( 0,CM_{0}\right) $,
\end{itemize}

since $\Gamma(0,CM_0)$ is convex.

Next, define

\begin{itemize}
\item[\refstepcounter{equation}\text{(\theequation)}\label{pb45}] $\hat{%
\mathcal{A}}=\left\{ \gamma \in \mathcal{M}:\gamma =\bar{\alpha}+\bar{\gamma}%
\text{ for some }\bar{\alpha}\in \bar{\mathcal{A}}\text{, }\bar{\gamma}\in 
\mathcal{M}\right\} $,
\end{itemize}

and introduce maps $\chi :\hat{\mathcal{A}}\rightarrow \bar{\mathcal{A}}$, $%
\omega :\hat{\mathcal{A}}\rightarrow \mathcal{M}$, such that

\begin{itemize}
\item[\refstepcounter{equation}\text{(\theequation)}\label{pb46}] $\hat{%
\alpha}=\chi \left( \hat{\alpha}\right) +\omega \left( \hat{\alpha}\right) $
for all $\hat{\alpha}\in \hat{\mathcal{A}}$.
\end{itemize}

By definition \eqref{pb45},

\begin{itemize}
\item[\refstepcounter{equation}\text{(\theequation)}\label{pb47}] $\hat{%
\mathcal{A}}$ is monotonic,
\end{itemize}

and $\bar{\mathcal{A}} \subseteq \hat{\mathcal{A}}$, hence

\begin{itemize}
\item[\refstepcounter{equation}\text{(\theequation)}\label{pb48}] $\hat{%
\mathcal{A}}\leq \bar{\mathcal{A}}$.
\end{itemize}

From \eqref{pb39} and \eqref{pb48}, we have

\begin{itemize}
\item[\refstepcounter{equation}\text{(\theequation)}\label{pb49}] $\hat{%
\mathcal{A}} \leq \mathcal{A}$, with strict inequality if $\max_{\alpha \in 
\mathcal{A},\beta \in \mathcal{M}}\delta ^{\left\vert \beta \right\vert
-\left\vert \alpha \right\vert }\left\vert \partial ^{\beta }P_{\alpha
}^{00}\left( 0\right) \right\vert $ exceeds a large enough constant
determined by $a$, $C_{w}$, $C_{B}$, $m$, $n$.
\end{itemize}

For $\hat{\alpha} \in \hat{\mathcal{A}}$, we introduce the monomial

\begin{itemize}
\item[\refstepcounter{equation}\text{(\theequation)}\label{pb50}] $S_{\hat{%
\alpha}}\left( x\right) =\frac{\chi \left( \hat{\alpha}\right) !}{\hat{\alpha%
}!}\lambda ^{-\omega \left( \hat{\alpha}\right) }x^{\omega \left( \hat{\alpha%
}\right) }$ $\left( x\in \mathbb{R}^{n}\right) $.
\end{itemize}

We have

\begin{itemize}
\item[\refstepcounter{equation}\text{(\theequation)}\label{pb51}] $S_{\hat{%
\alpha}}\circ T\left( x\right) =\frac{\chi \left( \hat{\alpha}\right) !}{%
\hat{\alpha}!}x^{\omega \left( \hat{\alpha}\right) }$,
\end{itemize}
hence
\begin{itemize}
\item[\refstepcounter{equation}\text{(\theequation)}\label{pb52}] $\partial
^{\beta }\left( S_{\hat{\alpha}}\circ T\right) \left( 0\right) =\frac{\chi
\left( \hat{\alpha}\right) !\omega \left( \hat{\alpha}\right) !}{\hat{\alpha}%
!}\delta _{\beta \omega \left( \hat{\alpha}\right) }$ for $\hat{\alpha}\in 
\hat{\mathcal{A}},\beta \in \mathcal{M}$.
\end{itemize}

We study the derivatives $\partial ^{\beta }\left( \left[ \left( S_{\hat{%
\alpha}}\bar{P}_{\chi \left( \hat{\alpha}\right) }\right) \circ T\right]
\left( 0\right) \right) $ for $\hat{\alpha}\in \hat{\mathcal{A}},\beta \in 
\mathcal{M}$.

Case 1: If $\beta$ is not of the form $\beta = \omega(\hat{\alpha})+\tilde{%
\beta}$ for some $\tilde{\beta}\in \mathcal{M}$, then \eqref{pb52} gives

\begin{itemize}
\item[\refstepcounter{equation}\text{(\theequation)}\label{pb53}] $\partial
^{\beta }\left[ \left( S_{\hat{\alpha}}\bar{P}_{\chi \left( \hat{\alpha}%
\right) }\right) \circ T\right] \left( 0\right) =0$.
\end{itemize}

Case 2: Suppose $\beta =\omega (\hat{\alpha})+\tilde{\beta}$ for some $%
\tilde{\beta}\in \mathcal{M}$. Then \eqref{pb52} gives%
\begin{eqnarray*}
&&\partial ^{\beta }\left[ \left( S_{\hat{\alpha}}\bar{P}_{\chi \left( \hat{%
\alpha}\right) }\right) \circ T\right] \left( 0\right)  \\
&=&\frac{\beta !}{\omega \left( \hat{\alpha}\right) !\tilde{\beta}!}\left[
\partial ^{\omega \left( \hat{\alpha}\right) }\left( S_{\hat{\alpha}}\circ
T\right) \left( 0\right) \right] \cdot \left[ \partial ^{\bar{\beta}}\left( 
\bar{P}_{\chi \left( \hat{\alpha}\right) }\circ T\right) \left( 0\right) %
\right]  \\
&=&\frac{\beta !}{\omega \left( \hat{\alpha}\right) !\tilde{\beta}!}\frac{%
\chi \left( \hat{\alpha}\right) !\omega \left( \hat{\alpha}\right) !}{\hat{%
\alpha}!}\cdot \left[ \partial ^{\bar{\beta}}\left( \bar{P}_{\chi \left( 
\hat{\alpha}\right) }\circ T\right) \left( 0\right) \right]  \\
&=&\frac{\beta !\chi \left( \hat{\alpha}\right) !}{\hat{\alpha}!\tilde{\beta}%
!}\left[ \partial ^{\bar{\beta}}\left( \bar{P}_{\chi \left( \hat{\alpha}%
\right) }\circ T\right) \left( 0\right) \right] \text{.}
\end{eqnarray*}%
Hence, by \eqref{pb43}, we have 
\begin{equation}
\left\vert \frac{\hat{\alpha}!\tilde{\beta}!}{\beta !\chi \left( \hat{\alpha}%
\right) !}\delta ^{\left\vert \tilde{\beta}\right\vert -\left\vert \chi
\left( \hat{\alpha}\right) \right\vert }\partial ^{\beta }\left[ \left( S_{%
\hat{\alpha}}\cdot \bar{P}_{\chi \left( \hat{\alpha}\right) }\right) \circ T%
\right] \left( 0\right) -\delta _{\tilde{\beta}\chi \left( \hat{\alpha}%
\right) }\right\vert \leq a\text{.}  \label{pb54}
\end{equation}%
Since in this case $\beta =\omega \left( \hat{\alpha}\right) +\tilde{\beta}$
and $\hat{\alpha}=\omega \left( \hat{\alpha}\right) +\chi \left( \hat{\alpha}%
\right) $ (see \eqref{pb46}), we have $\delta _{\tilde{\beta}\chi \left( 
\hat{\alpha}\right) }=\delta _{\beta \hat{\alpha}}$, $|\tilde{\beta}%
|-\left\vert \chi \left( \hat{\alpha}\right) \right\vert =\left\vert \beta
\right\vert -\left\vert \hat{\alpha}\right\vert $, and $\frac{\hat{\alpha}!%
\tilde{\beta}!}{\beta !\chi \left( \hat{\alpha}\right) !}=1$ if $\beta =\hat{%
\alpha}$.

Hence, \eqref{pb54} implies that 
\begin{equation}
\left\vert \delta ^{\left\vert \beta \right\vert -\left\vert \hat{\alpha}%
\right\vert }\partial ^{\beta }\left[ \left( S_{\hat{\alpha}}\cdot \bar{P}%
_{\chi \left( \hat{\alpha}\right) }\right) \circ T\right] \left( 0\right)
-\delta _{\beta \hat{\alpha}}\right\vert \leq Ca  \label{pb55}
\end{equation}%
in Case 2.

Thanks to \eqref{pb46} and \eqref{pb53}, estimate \eqref{pb55} holds also in
Case 1.

Thus, \eqref{pb55} holds for all $\hat{\alpha}\in \hat{\mathcal{A}}$, $\beta
\in \mathcal{M}$. Consequently, 
\begin{equation}
\left\vert \delta ^{\left\vert \beta \right\vert -\left\vert \hat{\alpha}%
\right\vert }\partial ^{\beta }\left( \left[ S_{\hat{\alpha}}\odot _{0}\bar{P%
}_{\chi \left( \hat{\alpha}\right) }\right] \circ T\left( 0\right) \right)
-\delta _{\beta \hat{\alpha}}\right\vert \leq Ca  \label{pb56}
\end{equation}%
for all $\hat{\alpha}\in \hat{\mathcal{A}}$, $\beta \in \mathcal{M}$.

We prepare to apply Lemma \ref{lemma-pb1}, with $\hat{S}=\delta
^{-\left\vert \omega \left( \hat{\alpha}\right) \right\vert }S_{\hat{\alpha}%
} $ and $\hat{P}=M_{0}\delta ^{m-\left\vert \chi \left( \hat{\alpha}\right)
\right\vert }\bar{P}_{\chi \left( \hat{\alpha}\right) }$. From \eqref{pb30}
and \eqref{pb50}, we have 
\begin{equation}
\left\vert \partial ^{\beta }\left( \delta ^{-\left\vert \omega \left( \hat{%
\alpha}\right) \right\vert }S_{\hat{\alpha}}\right) \left( 0\right)
\right\vert \leq C\left( a\right) \delta ^{-\left\vert \beta \right\vert }%
\text{ for }\hat{\alpha}\in \hat{\mathcal{A}}\text{, }\beta \in \mathcal{M}.
\label{pb57}
\end{equation}

From \eqref{pb43} we have 
\begin{equation*}
\left\vert \partial ^{\beta }\left( \bar{P}_{\chi \left( \hat{\alpha}\right)
}\circ T\right) \left( 0\right) \right\vert \leq C\delta ^{\left\vert \chi
\left( \hat{\alpha}\right) \right\vert -\left\vert \beta \right\vert }\text{
for }\hat{\alpha}\in \hat{\mathcal{A}}\text{, }\beta \in \mathcal{M}\text{;}
\end{equation*}%
hence, by \eqref{pb30}, 
\begin{equation}
\left\vert \partial ^{\beta }\left( M_{0}\delta ^{m-\left\vert \chi \left( 
\hat{\alpha}\right) \right\vert }\bar{P}_{\chi \left( \hat{\alpha}\right)
}\right) \left( 0\right) \right\vert \leq C\left( a\right) M_{0}\delta
^{m-\left\vert \beta \right\vert }\text{ for }\hat{\alpha}\in \hat{\mathcal{A%
}}\text{, }\beta \in \mathcal{M}\text{.}  \label{pb58}
\end{equation}%
Also, \eqref{pb44} gives 
\begin{equation}
P^{0}\pm c\left( a\right) M_{0}\delta ^{m-\left\vert \chi \left( \hat{\alpha}%
\right) \right\vert }\bar{P}_{\chi \left( \hat{\alpha}\right) }\in \Gamma
\left( 0,CM_{0}\right) \text{ for }\hat{\alpha}\in \hat{\mathcal{A}}\text{.}
\label{pb59}
\end{equation}

Our results \eqref{pb57}, \eqref{pb58}, \eqref{pb59} are the hypotheses of
Lemma \ref{lemma-pb1} for $\hat{S}$, $\hat{P}$ as given above. Applying that
lemma, we learn that 
\begin{equation*}
P^{0}\pm c\left( a\right) \left( \delta ^{-\left\vert \omega \left( \hat{%
\alpha}\right) \right\vert }S_{\hat{\alpha}}\right) \odot _{0}\left(
M_{0}\delta ^{m-\left\vert \chi \left( \hat{\alpha}\right) \right\vert }\bar{%
P}_{\chi \left( \hat{\alpha}\right) }\right) \in \Gamma \left( 0,C\left(
a\right) M_{0}\right) \text{ for }\hat{\alpha}\in \hat{\mathcal{A}}\text{.}
\end{equation*}%
Recalling \eqref{pb46}, we conclude that 
\begin{equation}
P^{0}\pm c\left( a\right) M_{0}\delta ^{m-\left\vert \hat{\alpha}\right\vert
}S_{\hat{\alpha}}\odot _{0}\bar{P}_{\chi \left( \hat{\alpha}\right) }\in
\Gamma \left( 0,C\left( a\right) M_{0}\right) \text{ for }\hat{\alpha}\in 
\hat{\mathcal{A}}\text{.}  \label{pb60}
\end{equation}

Next, \eqref{pb56} and the \emph{small }$a$ \emph{condition} tell us that
there exists a matrix of real numbers $\left( b_{\gamma \hat{\alpha}}\right)
_{\gamma ,\hat{\alpha}\in \hat{\mathcal{A}}}$ satisfying 
\begin{equation}
\sum_{\hat{\alpha}\in \hat{\mathcal{A}}}b_{\gamma \hat{\alpha}}\partial
^{\beta }\left( \left[ S_{\hat{\alpha}}\odot _{0}\bar{P}_{\chi \left( \hat{%
\alpha}\right) }\right] \circ T\right) \left( 0\right) \cdot \delta
^{\left\vert \beta \right\vert -\left\vert \hat{\alpha}\right\vert }=\delta
_{\gamma \beta }\text{ for }\gamma ,\beta \in \hat{\mathcal{A}}  \label{pb61}
\end{equation}%
and 
\begin{equation}
\left\vert b_{\gamma \hat{\alpha}}\right\vert \leq 2\text{, for all }\gamma ,%
\hat{\alpha}\in \hat{\mathcal{A}}\text{.}  \label{pb62}
\end{equation}%
From \eqref{pb61} we have 
\begin{equation}
\partial ^{\beta }\left\{ \sum_{\hat{\alpha}\in \hat{\mathcal{A}}}b_{\gamma 
\hat{\alpha}}\delta ^{\left\vert \gamma \right\vert -\left\vert \hat{\alpha}%
\right\vert }\left( \left[ S_{\hat{\alpha}}\odot _{0}\bar{P}_{\chi \left( 
\hat{\alpha}\right) }\right] \circ T\right) \right\} \left( 0\right) =\delta
_{\gamma \beta }\text{ for }\gamma ,\beta \in \hat{\mathcal{A}}\text{.}
\label{pb63}
\end{equation}%
Also, \eqref{pb56} and \eqref{pb62} imply that 
\begin{equation}
\left\vert \partial ^{\beta }\left\{ \sum_{\hat{\alpha}\in \hat{\mathcal{A}}%
}b_{\gamma \hat{\alpha}}\delta ^{\left\vert \gamma \right\vert -\left\vert 
\hat{\alpha}\right\vert }\left( \left[ S_{\hat{\alpha}}\odot _{0}\bar{P}%
_{\chi \left( \hat{\alpha}\right) }\right] \circ T\right) \right\} \left(
0\right) \right\vert \leq C\delta ^{\left\vert \gamma \right\vert
-\left\vert \beta \right\vert }\text{ for }\gamma \in \hat{\mathcal{A}}%
,\beta \in \mathcal{M}\text{.}  \label{pb64}
\end{equation}%
Since 
\begin{eqnarray*}
&&M_{0}\delta ^{m-\left\vert \gamma \right\vert }\left\{ \sum_{\hat{\alpha}%
\in \hat{\mathcal{A}}}b_{\gamma \hat{\alpha}}\delta ^{\left\vert \gamma
\right\vert -\left\vert \hat{\alpha}\right\vert }\left( \left[ S_{\hat{\alpha%
}}\odot _{0}\bar{P}_{\chi \left( \hat{\alpha}\right) }\right] \right)
\right\}  \\
&=&\sum_{\hat{\alpha}\in \hat{\mathcal{A}}}b_{\gamma \hat{\alpha}}\cdot
\left\{ M_{0}\delta ^{m-\left\vert \hat{\alpha}\right\vert }\cdot \left[ S_{%
\hat{\alpha}}\odot _{0}\bar{P}_{\chi \left( \hat{\alpha}\right) }\right]
\right\} \text{,}
\end{eqnarray*}%
we learn from \eqref{pb60}, \eqref{pb62}, and the Trivial Remark on Convex
Sets in Section \ref{notation-and-preliminaries}, that 
\begin{equation}
P^{0}\pm c\left( a\right) M_{0}\delta ^{m-\left\vert \gamma \right\vert
}\left\{ \sum_{\hat{\alpha}\in \hat{\mathcal{A}}}b_{\gamma \hat{\alpha}%
}\delta ^{\left\vert \gamma \right\vert -\left\vert \hat{\alpha}\right\vert
}\left( \left[ S_{\hat{\alpha}}\odot _{0}\bar{P}_{\chi \left( \hat{\alpha}%
\right) }\right] \right) \right\} \in \Gamma \left( 0,C\left( a\right)
M_{0}\right)   \label{pb65}
\end{equation}%
for $\gamma \in \hat{\mathcal{A}}$.

We define%
\begin{equation}
P_{\gamma }=\lambda ^{\gamma }\sum_{\hat{\alpha}\in \hat{\mathcal{A}}%
}b_{\gamma \hat{\alpha}}\delta ^{\left\vert \gamma \right\vert -\left\vert 
\hat{\alpha}\right\vert }\left[ S_{\hat{\alpha}}\odot _{0}\bar{P}_{\chi
\left( \hat{\alpha}\right) }\right] \text{ for }\gamma \in \hat{\mathcal{A}}%
\text{.}  \label{pb66}
\end{equation}

From \eqref{pb30}, we have $\left\vert \lambda ^{\gamma }\right\vert \leq
C\left( a\right) $. Recall from \eqref{pb26} that $P^{0}\in \Gamma \left(
0,CM_{0}\right) $. Hence, we deduce from \eqref{pb65} (and from the
convexity of $\Gamma \left( 0,C\left( a\right) M_{0}\right) $) that

\begin{itemize}
\item[\refstepcounter{equation}\text{(\theequation)}\label{pb67}] $%
P^{0},P^{0}+ c\left( a\right) M_{0}\delta ^{m-\left\vert \gamma
\right\vert }P_{\gamma },P^{0}-c\left( a\right) M_{0}\delta ^{m-\left\vert
\gamma \right\vert }P_{\gamma }\in \Gamma \left( 0,C\left( a\right)
M_{0}\right) $ for $\gamma \in \hat{\mathcal{A}}$.
\end{itemize}

Also, \eqref{pb63} and \eqref{pb66} give 
\begin{equation}
\partial ^{\beta }P_{\gamma }\left( 0\right) =\delta _{\beta \gamma }\text{
for }\beta ,\gamma \in \hat{\mathcal{A}}\text{.}  \label{pb68}
\end{equation}

From \eqref{pb30}, \eqref{pb64}, \eqref{pb66}, we have 
\begin{equation}
\left\vert \partial ^{\beta }P_{\gamma }\left( 0\right) \right\vert \leq
C\left( a\right) \delta ^{\left\vert \gamma \right\vert -\left\vert \beta
\right\vert }\text{ for }\gamma \in \hat{\mathcal{A}}\text{, }\beta \in 
\mathcal{M}\text{.}  \label{pb69}
\end{equation}

Our results \eqref{pb67}, \eqref{pb68}, \eqref{pb69} show that

\begin{itemize}
\item[\refstepcounter{equation}\text{(\theequation)}\label{pb70}] $\left(
P_{\gamma }\right) _{\gamma \in \hat{\mathcal{A}}}$ is an $\left( \hat{%
\mathcal{A}},\delta ,C\left( a\right) \right) $-basis for $\vec{\Gamma}$ at $%
\left( 0,M_{0},P^{0}\right) $.
\end{itemize}

We now pick $a$ to to be a constant determined by $C_{B}$, $C_{w}$, $m$, $n$%
, small enough to satisfy our \emph{small }$a$ \emph{condition}. Then %
\eqref{pb47}, \eqref{pb49}, and \eqref{pb70} immediately imply the
conclusions of Lemma \ref{lemma-pb2}.

The proof of that lemma is complete.
\end{proof}

The next result is a consequence of the Relabeling Lemma (Lemma \ref%
{lemma-pb2}).

\begin{lemma}[Control $\Gamma $ Using Basis]
\label{lemma-pb3} Let $\vec{\Gamma}=\left( \Gamma \left( x,M\right) \right)
_{x\in E,M>0}$ be a $\left( C_{w},\delta _{\max }\right) $-convex shape
field. Let $x_{0}\in E$, $M_{0}>0$, $0<\delta \leq \delta _{\max }$, $C_{B}>0
$, $\mathcal{A\subseteq M}$, and let $P$, $P^{0}\in \mathcal{P}$. Suppose $\vec{%
\Gamma}$ has an $\left( \mathcal{A},\delta ,C_{B}\right) $-basis at $\left(
x_{0},M_{0},P^{0}\right) $. Suppose also that

\begin{itemize}
\item[\refstepcounter{equation}\text{(\theequation)}\label{pb71}] $P\in
\Gamma \left( x_{0},C_{B}M_{0}\right) $,
\end{itemize}

\begin{itemize}
\item[\refstepcounter{equation}\text{(\theequation)}\label{pb72}] $\partial
^{\beta }\left( P-P^{0}\right) \left( x_{0}\right) =0$ for all $\beta \in 
\mathcal{A}$, and
\end{itemize}

\begin{itemize}
\item[\refstepcounter{equation}\text{(\theequation)}\label{pb73}] $%
\max_{\beta \in \mathcal{M}}\delta ^{\left\vert \beta \right\vert
}\left\vert \partial ^{\beta }\left( P-P^{0}\right) \left( x_{0}\right)
\right\vert \geq M_{0}\delta ^{m}$.
\end{itemize}

Then there exist $\hat{\mathcal{A}} \subseteq \mathcal{M}$ and $\hat{P}^0
\in \mathcal{P}$ with the following properties.

\begin{itemize}
\item[\refstepcounter{equation}\text{(\theequation)}\label{pb74}] $\hat{%
\mathcal{A}}$ is monotonic.
\end{itemize}

\begin{itemize}
\item[\refstepcounter{equation}\text{(\theequation)}\label{pb75}] $\hat{%
\mathcal{A}} < \mathcal{A}$ (strict inequality).
\end{itemize}

\begin{itemize}
\item[\refstepcounter{equation}\text{(\theequation)}\label{pb76}] $\vec{%
\Gamma}$ has an $(\hat{\mathcal{A}},\delta,C_B^{\prime })$-basis at $%
(x_0,M_0,\hat{P}^0)$, with $C_B^{\prime }$ determined by $C_B$, $C_w$, $m$, $%
n$.
\end{itemize}

\begin{itemize}
\item[\refstepcounter{equation}\text{(\theequation)}\label{pb77}] $\partial
^{\beta }\left( \hat{P}^{0}-P^{0}\right) \left( x_{0}\right) =0$ for all $%
\beta \in \mathcal{A}$.
\end{itemize}

\begin{itemize}
\item[\refstepcounter{equation}\text{(\theequation)}\label{pb78}] $%
\left\vert \partial ^{\beta }\left( \hat{P}^{0}-P^{0}\right) \left(
x_{0}\right) \right\vert \leq M_{0}\delta ^{m-\left\vert \beta \right\vert }$
for all $\beta \in \mathcal{M}$.
\end{itemize}
\end{lemma}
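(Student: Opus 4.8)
The plan is to feed the given $(\mathcal{A},\delta,C_B)$-basis at $(x_0,M_0,P^0)$ into the Relabeling Lemma (Lemma \ref{lemma-pb2}) after first replacing $P^0$ by a suitably chosen new base point $\hat P^0$ and enlarging the basis list with one extra ``bad'' direction built from $P-P^0$. Here is how I would carry it out. Without loss of generality take $x_0=0$. By \eqref{pb73} there is a multiindex $\beta_0\in\mathcal M$ with $\delta^{|\beta_0|}|\partial^{\beta_0}(P-P^0)(0)|\geq M_0\delta^m$, i.e. $|\partial^{\beta_0}(P-P^0)(0)|\geq M_0\delta^{m-|\beta_0|}$; and by \eqref{pb72} we may take $\beta_0\in\mathcal M\setminus\mathcal A$. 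Set $R=P-P^0$. Since $\partial^\beta R(0)=0$ for all $\beta\in\mathcal A$, the set $\mathcal A$ is ``disjoint'' from the support of $R$ in the relevant sense, so the augmented family $\{P^{00}_\alpha\}_{\alpha\in\mathcal A}\cup\{R/(M_0\delta^{m-|\beta_0|})\}$ (indexed by $\mathcal A\cup\{\beta_0\}$) still satisfies the Kronecker-delta condition \eqref{pb3} for the index set $\mathcal A\cup\{\beta_0\}$ — the new basis vector has vanishing $\partial^\alpha$ for $\alpha\in\mathcal A$, and it has $\partial^{\beta_0}$-value normalized, but note the old vectors $P^{00}_\alpha$ need not vanish at $\beta_0$, which is why we only get a \emph{weak} basis for the enlarged index set after a relabeling bookkeeping step.

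The cleaner route, which I would actually follow, is this. First observe that $\{P^{00}_\alpha\}_{\alpha\in\mathcal A}$ together with $P^0+tR$ (for small $t$) being in $\Gamma(0,C_BM_0)$: indeed $P\in\Gamma(0,C_BM_0)$ by \eqref{pb71}, $P^0\in\Gamma(0,C_BM_0)$ by the basis property \eqref{pb1}, so by convexity $P^0+t(P-P^0)=P^0+tR\in\Gamma(0,C_BM_0)$ for $0\leq t\leq 1$. Rescaling, $P^0\pm \tau M_0\delta^{m-|\beta_0|}\cdot(R/(M_0\delta^{m-|\beta_0|}))\in\Gamma(0,C_BM_0)$ holds only for the $+$ sign with $\tau\leq\|R\|$-dependent range; to get both signs one must enlarge the constant, but since $\|R\|\geq M_0\delta^{m-|\beta_0|}$ (from the choice of $\beta_0$) we get $P^0\pm c M_0\delta^{m-|\beta_0|}\tilde R\in\Gamma(0,CM_0)$ with $\tilde R=R/\|\partial^{\beta_0}R(0)\|$ and $c,C$ controlled — here we use that $R$ is a single polynomial so $P^0-cM_0\delta^{m-|\beta_0|}\tilde R$ lies on the segment from $P^0$ back past $P^0$, which needs $\Gamma$ convex and the fact that $P^0$ is interior-ish. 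Actually the honest fix: apply Lemma \ref{lemma-pb2} to the weak basis $\{P^{00}_\alpha\}_{\alpha\in\mathcal A}$ to first obtain a genuine $(\hat{\mathcal A}_1,\delta,C')$-basis; then if the hypotheses \eqref{pb72},\eqref{pb73} on $P-P^0$ force a genuinely new direction not captured by $\hat{\mathcal A}_1$, append $\beta_0$ and reorganize.

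Concretely: run Lemma \ref{lemma-pb2} on the weak $(\mathcal A,\delta,C_B)$-basis to get a monotonic $\mathcal A'\leq\mathcal A$ and an $(\mathcal A',\delta,C'')$-basis $(P'_\alpha)_{\alpha\in\mathcal A'}$ at $(0,M_0,P^0)$. Now form the weak $(\mathcal A'\cup\{\beta_0\}$-ish$)$ family by adjoining $\tilde R$ as above after Gram–Schmidt-style correcting the $P'_\alpha$ so they vanish at $\beta_0$ (subtract off $(\partial^{\beta_0}P'_\alpha(0))\cdot\tilde R$, legal since $\mathcal A'$ is monotonic and, by \eqref{pb22}-type control, $\beta_0$ won't collide with any $\alpha\in\mathcal A'$ once $\mathcal A'\leq\mathcal A$ and $\beta_0\notin\mathcal A$; use the ``simple observation'' in Section \ref{notation-and-preliminaries} about monotonic sets to keep the Kronecker structure). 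This yields a weak $(\mathcal B,\delta,C''')$-basis for some index set $\mathcal B$; its \texttt{max} quantity $\max_{\alpha,\beta}\delta^{|\beta|-|\alpha|}|\partial^\beta(\cdot)(0)|$ is then \emph{large} — forced to exceed any fixed constant because of the normalization of $\tilde R$ combined with the hypothesis $\|R\|\geq M_0\delta^{m-|\beta_0|}$ competing against $\mathcal A$'s vanishing at $\beta_0$ — wait, I must instead arrange largeness directly: choose the new base point $\hat P^0$ to be $P^0$ itself and note $\hat P^0$ works for \eqref{pb77},\eqref{pb78} trivially with $\hat P^0=P^0$ (both zero). Then apply Lemma \ref{lemma-pb2} to the weak basis $\mathcal B$; since the \texttt{max} is large, its ``Moreover'' clause gives $\hat{\mathcal A}<\mathcal B\leq\mathcal A$, delivering \eqref{pb74},\eqref{pb75},\eqref{pb76} with $\hat P^0=P^0$, hence \eqref{pb77},\eqref{pb78} hold trivially.

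The main obstacle, and the step I'd spend real care on, is the second one: producing the weak basis whose associated maximum genuinely exceeds the threshold in Lemma \ref{lemma-pb2}'s ``Moreover'' clause. The subtlety is that $\partial^{\beta_0}(P-P^0)(0)$ being large relative to $M_0\delta^{m-|\beta_0|}$ must translate, after the Gram–Schmidt correction and the normalization that puts the new vector into basis form, into a large entry of the basis matrix $\delta^{|\beta|-|\alpha|}|\partial^\beta P^{00}_\alpha(0)|$ — the catch being that normalizing $\partial^{\beta_0}\tilde R(0)$ to $1$ sends the \emph{other} derivatives of $\tilde R$, divided by a \emph{small} quantity $\|\partial^{\beta_0}R(0)\|/\delta^{\cdots}$, to large values, which is exactly what we want. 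One must check that the Gram–Schmidt correction of the $P'_\alpha$ (subtracting a bounded multiple of $\tilde R$) does not then shrink things back, and that $\beta_0$ can be chosen maximal among indices achieving the max in \eqref{pb73} so that monotonicity is preserved. Everything else — convexity manipulations for \eqref{pb71}, invoking \eqref{pb1},\eqref{pb2}, and the trivial verification of \eqref{pb77},\eqref{pb78} with $\hat P^0=P^0$ — is routine given Lemmas \ref{lemma-pb1} and \ref{lemma-pb2}.
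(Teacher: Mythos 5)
You correctly identify the two structural moves — adjoin a direction built from $P-P^0$ to the given basis and invoke the Relabeling Lemma (Lemma \ref{lemma-pb2}) — but the route you finally settle on has a genuine gap at the central step.

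To turn $(P_\alpha)_{\alpha\in\mathcal{A}}$ together with a normalized copy of $P-P^0$ into even a \emph{weak} basis at a base point $\hat P^0$, condition \eqref{pb2} demands that \emph{both} $\hat P^0 + c M_0\delta^{m-|\gamma|}\tilde R$ \emph{and} $\hat P^0 - c M_0\delta^{m-|\gamma|}\tilde R$ lie in $\Gamma(x_0,C' M_0)$. You observe, correctly, that taking $\hat P^0 = P^0$ gives only the $+$ direction from convexity (the segment $\{P^0+t(P-P^0):t\in[0,1]\}\subset\Gamma(x_0,C_BM_0)$, using \eqref{pb1} and \eqref{pb71}), and you flag this as a problem, but there is no fix by "enlarging the constant": $P^0 - t(P-P^0)$ for $t>0$ is not a convex combination of $P^0$ and $P$, and nothing in the hypotheses places it in any $\Gamma(x_0,\cdot)$. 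Your final plan then silently drops the difficulty and declares $\hat P^0 = P^0$, so \eqref{pb2} is never established for the new direction, and Lemma \ref{lemma-pb2} cannot be invoked. The missing idea is to take $\hat P^0 = \tfrac12(P^0+P)$. Then $\hat P^0 + \tfrac12(P-P^0)=P$ and $\hat P^0 - \tfrac12(P-P^0)=P^0$, both in $\Gamma(x_0,C_BM_0)$ by \eqref{pb1} and \eqref{pb71}, which gives the two-sided condition for the new direction essentially for free; one also needs to shift the old directions $P_\alpha$ to $\hat P^0$, which convexity of $\Gamma(x_0,C_BM_0)$ again handles with a controlled loss. This same choice of $\hat P^0$ is what delivers \eqref{pb77} and \eqref{pb78}: $\hat P^0-P^0=\tfrac12(P-P^0)$, so \eqref{pb77} is \eqref{pb72}, and \eqref{pb78} follows once $P$ has been replaced by a convex combination of $P^0$ and $P$ chosen so that the maximum in \eqref{pb73} \emph{equals} $M_0\delta^m$. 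That normalization step is also missing from your outline and is needed for the derivative bounds \eqref{pb4} on $P_\gamma^\# := [\partial^\gamma(P-P^0)(x_0)]^{-1}(P-P^0)$.

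Two further points on the final part of your plan. You propose to reach the strict inequality $\hat{\mathcal{A}}<\mathcal{A}$ via the ``Moreover'' clause of Lemma \ref{lemma-pb2}, arguing that normalization makes the entries $\delta^{|\beta|-|\alpha|}|\partial^\beta(\cdot)(x_0)|$ large. This is backwards: after the convex-combination normalization, $|\partial^\gamma(P-P^0)(x_0)|=M_0\delta^{m-|\gamma|}$ exactly, so dividing by it produces \emph{bounded} (not large) rescaled derivatives for $P_\gamma^\#$ — precisely what is needed for the controlled basis constant in \eqref{pb4}. And the ``Moreover'' clause is not needed anyway: since $\gamma\notin\mathcal{A}$ (by \eqref{pb72} together with the choice of $\gamma$), the set $\mathcal{A}\cup\{\gamma\}$ is strictly smaller than $\mathcal{A}$ in the order on subsets of $\mathcal{M}$, so the plain conclusion $\hat{\mathcal{A}}\leq\mathcal{A}\cup\{\gamma\}<\mathcal{A}$ of the Relabeling Lemma already yields \eqref{pb75}.
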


\begin{proof}
We write $c$, $C$, $C^{\prime }$, etc., to denote constants determined by $%
C_B$, $C_w$, $m$, $n$. These symbols may denote different constants in
different occurrences.

Let $(P_\alpha)_{\alpha \in \mathcal{A}}$ be an $(\mathcal{A},\delta,C_B)$%
-basis for $\vec{\Gamma}$ at $(x_0,M_0,P^0)$. By definition,

\begin{itemize}
\item[\refstepcounter{equation}\text{(\theequation)}\label{pb79}] $P^{0}\in
\Gamma (x_{0},C_{B}M_{0})$,
\end{itemize}

\begin{itemize}
\item[\refstepcounter{equation}\text{(\theequation)}\label{pb80}] $P^{0}\pm
cM_{0}\delta ^{m-\left\vert \alpha \right\vert }P_{\alpha }\in \Gamma \left(
x_{0},C_{B}M\right) $ for all $\alpha \in \mathcal{A}$,
\end{itemize}

\begin{itemize}
\item[\refstepcounter{equation}\text{(\theequation)}\label{pb81}] $\partial
^{\beta }P_{\alpha }\left( x_{0}\right) =\delta _{\beta \alpha }$ for $\beta
,\alpha \in \mathcal{A}$,
\end{itemize}

\begin{itemize}
\item[\refstepcounter{equation}\text{(\theequation)}\label{pb82}] $%
\left\vert \partial ^{\beta }P_{\alpha }\left( x_{0}\right) \right\vert \leq
C\delta ^{\left\vert \alpha \right\vert -\left\vert \beta \right\vert }$ for
all $\alpha \in A,\beta \in \mathcal{M}$.
\end{itemize}

Thanks to \eqref{pb79}, we may replace $P$ by a convex combination of $P^0$
and $P$ in \eqref{pb71}, \eqref{pb72}, \eqref{pb73} to retain \eqref{pb71}, %
\eqref{pb72} but replace \eqref{pb73} by the stronger assertion

\begin{itemize}
\item[\refstepcounter{equation}\text{(\theequation)}\label{pb83}] $%
\max_{\beta \in \mathcal{M}}\delta ^{\left\vert \beta \right\vert
}\left\vert \partial ^{\beta }\left( P-P^{0}\right) \left( x_{0}\right)
\right\vert =M_{0}\delta ^{m}$.
\end{itemize}

We pick $\gamma \in \mathcal{M}$ to achieve the above $\max$. Thanks to %
\eqref{pb72}, we have

\begin{itemize}
\item[\refstepcounter{equation}\text{(\theequation)}\label{pb84}] $\gamma
\not\in \mathcal{A}$,
\end{itemize}

hence

\begin{itemize}
\item[\refstepcounter{equation}\text{(\theequation)}\label{pb85}] $\mathcal{A%
} \cup \{ \gamma \} < \mathcal{A}$ (strict inequality).
\end{itemize}

We set

\begin{itemize}
\item[\refstepcounter{equation}\text{(\theequation)}\label{pb86}] $\hat{P}^0=%
\frac{1}{2}(P^0+P)$.
\end{itemize}

From \eqref{pb71}, \eqref{pb79}, \eqref{pb80}, we have

\begin{itemize}
\item[\refstepcounter{equation}\text{(\theequation)}\label{pb87}] $\hat{P}%
^{0},\hat{P}^{0}\pm c^{\prime }M_{0}\delta ^{m-\left\vert \alpha \right\vert
}P_{\alpha }\in \Gamma \left( x_{0},CM_{0}\right) $ for $\alpha \in \mathcal{%
A}$.
\end{itemize}

Also,

\begin{itemize}
\item[\refstepcounter{equation}\text{(\theequation)}\label{pb88}] $\hat{P}%
^{0}\pm \frac{1}{2}\left( P-P^{0}\right) \in \Gamma \left( x_{0},CM\right) $
\end{itemize}

since $\hat{P}^{0}+\frac{1}{2}\left( P-P^{0}\right) =P$ and $\hat{P}^{0}-%
\frac{1}{2}\left( P-P^{0}\right) =P^{0}$.

From \eqref{pb72} and \eqref{pb83}, we have

\begin{itemize}
\item[\refstepcounter{equation}\text{(\theequation)}\label{pb89}] $\partial
^{\beta }\left( \hat{P}^{0}-P^{0}\right) \left( x_{0}\right) =0$ for all $%
\beta \in \mathcal{A}$,
\end{itemize}

and

\begin{itemize}
\item[\refstepcounter{equation}\text{(\theequation)}\label{pb90}] $%
\left\vert \partial ^{\beta }\left( \hat{P}^{0}-P^{0}\right) \left(
x_{0}\right) \right\vert \leq M_{0}\delta ^{m-\left\vert \beta \right\vert }$
for all $\beta \in \mathcal{M}$.
\end{itemize}

We define

\begin{itemize}
\item[\refstepcounter{equation}\text{(\theequation)}\label{pb91}] $P_{\gamma
}^{\#}=\left[ \partial ^{\gamma }\left( P-P^{0}\right) \left( x_{0}\right) %
\right] ^{-1}\cdot \left( P-P^{0}\right) $.
\end{itemize}

We are not dividing by zero here; by \eqref{pb83} and the definition of $%
\gamma$, we have

\begin{itemize}
\item[\refstepcounter{equation}\text{(\theequation)}\label{pb92}] $%
\left\vert \partial ^{\gamma }\left( P-P^{0}\right) \left( x_{0}\right)
\right\vert ^{-1}=M_{0}^{-1}\delta ^{\left\vert \gamma \right\vert -m}$.
\end{itemize}

From \eqref{pb72}, \eqref{pb84}, \eqref{pb91}, we have

\begin{itemize}
\item[\refstepcounter{equation}\text{(\theequation)}\label{pb93}] $\partial
^{\beta }P_{\gamma }^{\#}\left( x_{0}\right) =\delta _{\beta \alpha }$ for
all $\beta \in \mathcal{A}\cup \{\gamma \}$.
\end{itemize}

Also, \eqref{pb83}, \eqref{pb91}, \eqref{pb92} give

\begin{itemize}
\item[\refstepcounter{equation}\text{(\theequation)}\label{pb94}] $%
\left\vert \partial ^{\beta }P_{\gamma }^{\#}\left( x_{0}\right) \right\vert
\leq M_{0}^{-1}\delta ^{\left\vert \gamma \right\vert -m}\cdot M_{0}\delta
^{m-\left\vert \beta \right\vert }=\delta ^{\left\vert \gamma \right\vert
-\left\vert \beta \right\vert }$ for all $\beta \in \mathcal{M}$.
\end{itemize}

From \eqref{pb91}, \eqref{pb92}, we have $P-P^0 = \sigma M_0
\delta^{m-|\gamma|}P_\gamma^\#$ for $\sigma=1$ or $\sigma =-1$. Therefore, %
\eqref{pb88} implies that

\begin{itemize}
\item[\refstepcounter{equation}\text{(\theequation)}\label{pb95}] $\hat{P}%
^{0}\pm cM_{0}\delta ^{m-\left\vert \gamma \right\vert }P_{\gamma }^{\#}\in
\Gamma \left( x_{0},CM_{0}\right) $.
\end{itemize}

From \eqref{pb87}, \eqref{pb95} and the Trivial Remark on Convex Sets in
Section \ref{notation-and-preliminaries}, we conclude that

\begin{itemize}
\item[\refstepcounter{equation}\text{(\theequation)}\label{pb96}] $\hat{P}%
^{0}+sM_{0}\delta ^{m-\left\vert \gamma \right\vert }P_{\gamma
}^{\#}+\sum_{\alpha \in \mathcal{A}}t_{\alpha }\cdot M_{0}\delta
^{m-\left\vert \alpha \right\vert }P_{\alpha }\in \Gamma \left(
x_{0},CM_{0}\right) $, 
\end{itemize}
whenever $\left\vert s\right\vert $, $\left\vert
t_{\alpha }\right\vert \leq c$ (all $\alpha \in \mathcal{A}$) for a small
enough $c$.

For $\alpha \in \mathcal{A}$, we define

\begin{itemize}
\item[\refstepcounter{equation}\text{(\theequation)}\label{pb97}] $P_{\alpha
}^{\#}=P_{\alpha }-\left[ \partial ^{\gamma }P_{\alpha }\left( x_{0}\right) %
\right] \cdot P_{\gamma }^{\#}$.
\end{itemize}

Fix $\alpha \in \mathcal{A}$. If $\beta \in \mathcal{A}$, then \eqref{pb81}, %
\eqref{pb84}, \eqref{pb93} imply%
\begin{equation*}
\partial ^{\beta }P_{\alpha }^{\#}\left( x_{0}\right) =\partial ^{\beta
}P_{\alpha }\left( x_{0}\right) -\left[ \partial ^{\gamma }P_{\alpha }\left(
x_{0}\right) \right] \cdot \partial ^{\beta }P_{\gamma }^{\#}\left(
x_{0}\right) =\delta _{\beta \alpha }\text{.}
\end{equation*}

On the other hand, \eqref{pb84} and \eqref{pb93} yield%
\begin{equation*}
\partial ^{\gamma }P_{\alpha }^{\#}\left( x_{0}\right) =\partial ^{\gamma
}P_{\alpha }\left( x_{0}\right) -\left[ \partial ^{\gamma }P_{\alpha }\left(
x_{0}\right) \right] \cdot \partial ^{\gamma }P_{\gamma }^{\#}\left(
x_{0}\right) =0=\delta _{\gamma \alpha }\text{.}
\end{equation*}
Thus, 
\begin{equation*}
\partial ^{\beta }P_{\alpha }^{\#}\left( x_{0}\right) =\delta _{\beta \alpha
}\text{ for }\beta \in \mathcal{A\cup }\left\{ \gamma \right\} \text{, }%
\alpha \in \mathcal{A}\text{.}
\end{equation*}%
Together with \eqref{pb93}, this tells us that

\begin{itemize}
\item[\refstepcounter{equation}\text{(\theequation)}\label{pb98}] $\partial
^{\beta }P_{\alpha }^{\#}\left( x_{0}\right) =\delta _{\beta \alpha }$ for $%
\beta ,\alpha \in \mathcal{A}\cup \{\gamma \}$.
\end{itemize}

Next, we learn from \eqref{pb82}, \eqref{pb94}, \eqref{pb97} that 
\begin{eqnarray*}
\left\vert \partial ^{\beta }P_{\alpha }^{\#}\left( x_{0}\right) \right\vert
&\leq &\left\vert \partial ^{\beta }P_{\alpha }\left( x_{0}\right)
\right\vert +\left\vert \partial ^{\gamma }P_{\alpha }\left( x_{0}\right)
\right\vert \cdot \left\vert \partial ^{\beta }P_{\gamma }^{\#}\left(
x_{0}\right) \right\vert  \\
&\leq &C\delta ^{\left\vert \alpha \right\vert -\left\vert \beta \right\vert
}+C\delta ^{\left\vert \alpha \right\vert -\left\vert \gamma \right\vert
}\cdot \delta ^{\left\vert \gamma \right\vert -\left\vert \beta \right\vert }
\\
&\leq &C^{\prime }\delta ^{\left\vert \alpha \right\vert -\left\vert \beta
\right\vert }\text{ for }\alpha \in \mathcal{A},\beta \in \mathcal{M}\text{.}
\end{eqnarray*}

Together with \eqref{pb94}, this tells us that

\begin{itemize}
\item[\refstepcounter{equation}\text{(\theequation)}\label{pb99}] $%
\left\vert \partial ^{\beta }P_{\alpha }^{\#}\left( x_{0}\right) \right\vert
\leq C\delta ^{\left\vert \alpha \right\vert -\left\vert \beta \right\vert }$
for all $\alpha \in \mathcal{A}\cup \{\gamma \},\beta \in \mathcal{M}$.
\end{itemize}

Next, note that for $\alpha \in \mathcal{A}$, we have%
\begin{equation*}
M_{0}\delta ^{m-\left\vert \alpha \right\vert }P_{\alpha }^{\#}=M_{0}\delta
^{m-\left\vert \alpha \right\vert }P_{\alpha }-\left[ \delta ^{\left\vert
\gamma \right\vert -\left\vert \alpha \right\vert }\partial ^{\gamma
}P_{\alpha }\left( x_{0}\right) \right] \cdot M_{0}\delta ^{m-\left\vert
\gamma \right\vert }P_{\gamma }^{\#}\text{,}
\end{equation*}%
with $\left\vert \left[ \delta ^{\left\vert \gamma \right\vert -\left\vert
\alpha \right\vert }\partial ^{\gamma }P_{\alpha }\left( x_{0}\right) \right]
\right\vert \leq C$ by \eqref{pb82}.

Therefore, \eqref{pb96} shows that 
\begin{equation*}
\hat{P}^{0}\pm cM_{0}\delta ^{m-\left\vert \alpha \right\vert }P_{\alpha
}^{\#}\in \Gamma \left( x_{0},CM_{0}\right) \text{ for }\alpha \in \mathcal{A%
}\text{,}
\end{equation*}%
provided we take $c$ small enough. Together with \eqref{pb95}, this yields

\begin{itemize}
\item[\refstepcounter{equation}\text{(\theequation)}\label{pb100}] $\hat{P}%
^{0}\pm cM_{0}\delta ^{m-\left\vert \alpha \right\vert }P_{\alpha }^{\#}\in
\Gamma \left( x_{0},CM_{0}\right) $ for all $\alpha \in \mathcal{A}\cup
\{\gamma \}$.
\end{itemize}

Our results \eqref{pb98}, \eqref{pb99}, \eqref{pb100} tell us
that $\left( P_{\alpha }^{\#}\right) _{\alpha \in \mathcal{A\cup }\left\{
\gamma \right\} }$ is an $\left( \mathcal{A\cup }\left\{ \gamma \right\}
,\delta ,C\right) $-basis \ for $\vec{\Gamma}$ at $\left( x_{0},M_{0},\hat{P}%
^{0}\right) $.

Consequently, the Relabeling Lemma (Lemma \ref{lemma-pb2}) produces a set $%
\hat{\mathcal{A}}\subseteq \mathcal{M}$ with the following properties.

\begin{itemize}
\item[\refstepcounter{equation}\text{(\theequation)}\label{pb101}] $\hat{%
\mathcal{A}}$ is monotonic.
\end{itemize}

\begin{itemize}
\item[\refstepcounter{equation}\text{(\theequation)}\label{pb102}] $\hat{%
\mathcal{A}} \leq \mathcal{A} \cup \{ \gamma\}< \mathcal{A}$, see %
\eqref{pb85}.
\end{itemize}

\begin{itemize}
\item[\refstepcounter{equation}\text{(\theequation)}\label{pb103}] $\vec{%
\Gamma}$ has an $(\hat{\mathcal{A}},\delta,C^{\prime })$-basis at $(x_0,M_0,%
\hat{P}^0)$.
\end{itemize}

Our results \eqref{pb101}, \eqref{pb102}, \eqref{pb103}, \eqref{pb89}, %
\eqref{pb90} are the conclusions \eqref{pb74}$\cdots $\eqref{pb78} of Lemma %
\ref{lemma-pb3}.

The proof of that lemma is complete.
\end{proof}

\section{The Transport Lemma}

\label{transport-lemma}

In this section, we prove the following result.

\begin{lemma}[Transport Lemma]
\label{lemma-transport}Let $\vec{\Gamma}_{0}=\left( \Gamma _{0}\left(
x,M\right) \right) _{x\in E,M>0}$ be a shape field. For $l\geq 1$, let $\vec{%
\Gamma}_{l}=\left( \Gamma _{l}\left( x,M\right) \right) _{x\in E,M>0}$ be
the $l$-th refinement of $\vec{\Gamma}_{0}$.

\begin{itemize}
\item[\refstepcounter{equation}\text{(\theequation)}\label{t1}] Suppose $%
\mathcal{A\subseteq M}$ is monotonic and $\hat{\mathcal{A}} \subseteq 
\mathcal{M}$ (not necessarily monotonic).
\end{itemize}

Let $x_{0}\in E$, $M_{0}>0$, $l_{0}\geq 1$, $\delta >0$, $C_{B}$, $\hat{C}%
_{B}$, $C_{DIFF}>0$. Let $P^{0}$, $\hat{P}^{0}\in \mathcal{P}$.
Assume that the following hold.

\begin{itemize}
\item[\refstepcounter{equation}\text{(\theequation)}\label{t2}] $\vec{\Gamma}%
_{l_{0}}$ has an $\left( \mathcal{A},\delta ,C_{B}\right) $-basis at $\left(
x_{0},M_{0},P^{0}\right) $, and an $\left(\hat{\mathcal{A}} ,\delta ,\hat{C}%
_{B}\right) $-basis at $\left(x_{0}, M_{0},\hat{P}^{0}\right) $.
\end{itemize}

\begin{itemize}
\item[\refstepcounter{equation}\text{(\theequation)}\label{t3}] $%
\partial^\beta(P^0-\hat{P}^0)\equiv 0$ for $\beta \in \mathcal{A}$.
\end{itemize}

\begin{itemize}
\item[\refstepcounter{equation}\text{(\theequation)}\label{t4}] $%
|\partial^\beta (P^0 - \hat{P}^0)(x_0)|\leq C_{DIFF}M_0\delta^{m-|\beta|}$
for $\beta \in \mathcal{M}$.
\end{itemize}

Let $y_0 \in E$, and suppose that

\begin{itemize}
\item[\refstepcounter{equation}\text{(\theequation)}\label{t5}] $%
|x_0-y_0|\leq \epsilon_0\delta $,
\end{itemize}
where $\epsilon_0$ is a a small enough constant determined by $C_B$, $\hat{C}%
_B$, $C_{DIFF}$, $m$, $n$. Then there exists $\hat{P}^\# \in \mathcal{P}$
with the following properties.

\begin{itemize}
\item[\refstepcounter{equation}\text{(\theequation)}\label{t6}] $\vec{\Gamma}%
_{l_0-1}$ has both an $(\mathcal{A},\delta,C_B^{\prime })$-basis and an $(%
\hat{\mathcal{A}},\delta,C_B^{\prime })$-basis at $(y_0,M_0,\hat{P}^\#)$,
with $C_B^{\prime }$ determined by $C_B$, $\hat{C}_B$, $C_{DIFF}$, $m$, $n$.
\end{itemize}

\begin{itemize}
\item[\refstepcounter{equation}\text{(\theequation)}\label{t7}] $%
\partial^\beta(\hat{P}^\#-P^0)\equiv0$ for $\beta \in \mathcal{A}$.
\end{itemize}

\begin{itemize}
\item[\refstepcounter{equation}\text{(\theequation)}\label{t8}] $%
|\partial^\beta(\hat{P}^\#-P^0)(x_0)|\leq C^{\prime }M_0\delta^{m-|\beta|}$
for $\beta \in \mathcal{M}$, with $C^{\prime }$ determined by $C_B$, $\hat{C}%
_B$, $C_{DIFF}$, $m$, $n$.
\end{itemize}
\end{lemma}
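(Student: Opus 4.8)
The plan is to build $\hat{P}^\#$ by starting from $P^0$ and correcting it so that it lands in the right refinement at $y_0$, using the two given bases at $x_0$ together with the defining property of refinements. First I would observe that since $\vec{\Gamma}_{l_0}$ has an $(\mathcal{A},\delta,C_B)$-basis at $(x_0,M_0,P^0)$, in particular $P^0 \in \Gamma_{l_0}(x_0, C_B M_0)$. Because $\vec{\Gamma}_{l_0}$ is the first refinement of $\vec{\Gamma}_{l_0-1}$, the definition of the first refinement (display \eqref{33}) applied with $x = x_0$, $y = y_0$ gives a polynomial, call it $P^\#$, with $P^\# \in \Gamma_{l_0-1}(y_0, C_B M_0)$ and $|\partial^\beta(P^0 - P^\#)(x_0)| \le C_B M_0 |x_0 - y_0|^{m-|\beta|}$ for $|\beta| \le m-1$; by \eqref{t5} this is $\le C_B \epsilon_0^{m-|\beta|} M_0 \delta^{m-|\beta|}$, which (taking $\epsilon_0$ small) makes $P^\#$ extremely close to $P^0$ near $x_0$ at scale $\delta$. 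The same argument applied to each basis polynomial $P_\alpha$ (or rather to the points $P^0 \pm c M_0 \delta^{m-|\alpha|} P_\alpha \in \Gamma_{l_0}(x_0, C_B M_0)$) produces transported basis elements in $\Gamma_{l_0-1}(y_0, C_B M_0)$. I would then correct $P^\#$ by a polynomial vanishing to order $\mathcal{A}$ at $x_0$ so that the corrected polynomial $\hat{P}^\#$ agrees with $P^0$ in all $\mathcal{A}$-derivatives at $x_0$ (this is possible because $\mathcal{A}$ is monotonic, so prescribing $\partial^\alpha$ for $\alpha \in \mathcal{A}$ determines those derivatives everywhere, and the correction is small by the closeness estimate); the correction can be absorbed using the transported basis polynomials for $\mathcal{A}$ together with the Trivial Remark on Convex Sets, keeping $\hat{P}^\#$ in $\Gamma_{l_0-1}(y_0, C' M_0)$. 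This gives \eqref{t7}, and \eqref{t8} follows from the closeness estimate and \eqref{t4}.

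To get the two bases claimed in \eqref{t6}, I would transport both the $\mathcal{A}$-basis $(P_\alpha)_{\alpha\in\mathcal{A}}$ at $(x_0,M_0,P^0)$ and the $\hat{\mathcal{A}}$-basis $(\hat{P}_{\hat\alpha})_{\hat\alpha\in\hat{\mathcal{A}}}$ at $(x_0,M_0,\hat{P}^0)$ to $y_0$. For the $\mathcal{A}$-basis: the transported polynomials $P_\alpha^\#$ satisfy $|\partial^\beta(P_\alpha - P_\alpha^\#)(x_0)| \le C_B M_0 (\text{small})\,\delta^{-|\beta|}\cdot(\text{correct powers})$, so evaluating derivatives at $y_0$ instead of $x_0$ (via Taylor expansion and the bound $|x_0-y_0|\le\epsilon_0\delta$) keeps the normalization and bounds \eqref{pb3}, \eqref{pb4} up to constants; the slight loss that $\partial^\beta P_\alpha^\#(y_0)$ is only approximately $\delta_{\beta\alpha}$ (not exactly) is fixed by inverting a matrix that is close to the identity (as in the proof of Lemma \ref{lemma-pb2}, using \eqref{pb61}–\eqref{pb62}), choosing $\epsilon_0$ small enough that this matrix is within, say, $1/2$ of the identity. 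The containment property \eqref{pb2} for the corrected, re-normalized basis follows from the transported containments $P^0 \pm c M_0\delta^{m-|\alpha|}P_\alpha \in \Gamma_{l_0-1}(y_0, C_B M_0)$, the Trivial Remark on Convex Sets, and the fact that $\hat{P}^\#$ differs from the transport of $P^0$ by a small amount. The same procedure applied to $(\hat{P}_{\hat\alpha})$ yields the $\hat{\mathcal{A}}$-basis at $(y_0, M_0, \hat{P}^\#)$; here I would use \eqref{t3} and \eqref{t4} to see that the transport of $\hat{P}^0$ and $\hat{P}^\#$ are close, so the containments \eqref{pb1}, \eqref{pb2} transfer from the $\hat{\mathcal{A}}$-basis at $(x_0, M_0, \hat{P}^0)$ to the new base point.

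I expect the main obstacle to be bookkeeping the constants and the scales correctly while performing the matrix-inversion step simultaneously for both bases at the new base point $y_0$ with a \emph{single} corrected polynomial $\hat{P}^\#$: one must verify that the correction forcing \eqref{t7} does not destroy the $\hat{\mathcal{A}}$-basis property (it will not, because the correction is $O(\epsilon_0 M_0 \delta^{m-|\beta|})$ and $\hat{\mathcal{A}}$-basis containments have slack $c$, independent of $\epsilon_0$), and that the two "close to identity" matrices can both be inverted for the same choice of $\epsilon_0$. All of the needed closeness estimates come from Taylor's theorem applied to degree-$(m-1)$ polynomials over a ball of radius $\epsilon_0\delta$, combined with $c(a)\le\lambda_i\le 1$-type control already present; once $\epsilon_0$ is fixed small (depending on $C_B,\hat{C}_B,C_{DIFF},m,n$), everything is a routine, if lengthy, verification of the conditions \eqref{pb1}–\eqref{pb4} for each of the two bases and of \eqref{t7}, \eqref{t8}.
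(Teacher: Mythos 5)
Your outline correctly identifies the main tools---transport of basis points via the refinement definition, solving a small linear system to force the $\mathcal{A}$-derivatives of $\hat P^\#-P^0$ to vanish, and renormalizing the transported basis polynomials by inverting a near-identity matrix with $\epsilon_0$ small---and in that sense it matches the paper's proof. But there is a genuine gap in how you build the base point $\hat P^\#$. You propose to transport $P^0$ alone to get $P^\#\in\Gamma_{l_0-1}(y_0,C_BM_0)$ and then assert that ``the containment property \eqref{pb2} for the corrected, re-normalized basis follows from the transported containments $P^0\pm cM_0\delta^{m-|\alpha|}P_\alpha\in\Gamma_{l_0-1}(y_0,C_BM_0)$.'' Those containments are not what the refinement gives you: the refinement applied to each basis point $P^0+c_0\sigma M_0\delta^{m-|\alpha|}P_\alpha$ produces its own transported polynomial $Q_{\alpha,\sigma}\in\Gamma_{l_0-1}(y_0,CM_0)$, with a separate error $E_{\alpha,\sigma}$, and there is no reason $P^\#\pm cM_0\delta^{m-|\alpha|}P'_\alpha$ should lie in $\Gamma_{l_0-1}(y_0,CM_0)$ when $P^\#$ is transported independently of the $Q_{\alpha,\sigma}$. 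Knowing that $P^\#$ is merely \emph{close} to the midpoint $\tfrac12(Q_{\alpha,1}+Q_{\alpha,-1})$ does not help, because membership in a convex set does not transfer under small perturbation, and the Trivial Remark on Convex Sets requires that the center itself already has $P_0\pm P_\nu\in\Gamma$.

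The paper fixes exactly this by never transporting $P^0$ alone: it defines $\hat P'$ in \eqref{t21} as the \emph{arithmetic mean} of all the transported points $\{P^0+c_0\sigma M_0\delta^{m-|\alpha|}(P_\alpha+E_{\alpha,\sigma})\}_{\alpha\in\mathcal A,\sigma=\pm1}$ together with $\{\hat P^0+\hat c_0\sigma M_0\delta^{m-|\alpha|}(\hat P_\alpha+\hat E_{\alpha,\sigma})\}_{\alpha\in\hat{\mathcal A},\sigma=\pm1}$. Since $\hat P'$ is an average of the $2(\#\mathcal A+\#\hat{\mathcal A})$ transported points, each deviation $\hat P'\pm c_1M_0\delta^{m-|\alpha|}P'_\alpha$ (and the analogous $\hat P'_\alpha$ deviations) is genuinely a convex combination of those points, giving \eqref{t24}--\eqref{t25} and hence \eqref{t26}. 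Only then does the Trivial Remark apply to absorb the $O(\epsilon_0)$ correction that makes the $\mathcal A$-derivatives of $\hat P^\#-P^0$ vanish. So your plan needs that one combinatorial adjustment: replace the direct transport of $P^0$ by this symmetric average over both bases and both signs, and the rest of your argument (including the simultaneous near-identity matrix inversions for the two bases) goes through as you describe.
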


\begin{remark}
Note that $\mathcal{A}$ and $\hat{\mathcal{A}}$ play different r\^oles here;
see \eqref{t1}, \eqref{t3}, and \eqref{t7}.
\end{remark}

\begin{proof}[Proof of the Transport Lemma]
In the trivial case $\mathcal{A}=\hat{\mathcal{A}}=\emptyset $, the
Transport Lemma holds simply because (by definition of the $l$-th
refinement) there exists $\hat{P}^{\#}\in \Gamma _{l_{0}-1}\left(
y_{0},C_{B}M_{0}\right) $ such that 
\begin{eqnarray*}
\left\vert \partial ^{\beta }\left( \hat{P}^{\#}-P^{0}\right) \left(
x_{0}\right) \right\vert &\leq &C_{B}M_{0}\left\vert x_{0}-y_{0}\right\vert
^{m-\left\vert \beta \right\vert } \\
&\leq &C_{B}M_{0}\delta ^{m-\left\vert \beta \right\vert }\text{ for }\beta
\in \mathcal{M}\text{.}
\end{eqnarray*}%
(Recall that $P^{0}\in \Gamma _{l_{0}}\left( x_{0},C_{B}M_{0}\right) $ since 
$\vec{\Gamma}_{l_{0}}$ has an $\left( \mathcal{A},\delta ,C_{B}\right) $%
-basis at $\left( x_{0},M_{0},P^{0}\right) $.)

From now on, we suppose that 
\begin{equation}
\#\left( \mathcal{A}\right) +\#\left( {\hat{\mathcal{A}}}\right) \not=0\text{%
.}  \label{t9}
\end{equation}

In proving the Transport Lemma, we do not yet take $\epsilon _{0}$ to be a
constant determined by $C_{B}$, $\hat{C}_{B}$, $C_{DIFF}$, $m$, $n$. Rather,
we make the following

\begin{itemize}
\item[\refstepcounter{equation}\text{(\theequation)}\label{t10}] \emph{Small 
$\epsilon_0$ assumption: $\epsilon_0$ is less than a small enough constant
determined by $C_{B}$, $\hat{C}_{B}$, $C_{DIFF}$, $m$, $n$. }
\end{itemize}

Assuming \eqref{t1}$\cdots $\eqref{t5} and \eqref{t9}, \eqref{t10}, we will
prove that there exists $\hat{P}^{\#}\in \mathcal{P}$ satisfying \eqref{t6}, %
\eqref{t7}, \eqref{t8}. Once we do so, we may then pick $\epsilon _{0}$ to
be a constant determined by $C_{B}$, $\hat{C}_{B}$, $C_{DIFF}$, $m$, $n$,
small enough to satisfy \eqref{t10}. That will complete the proof of the
Transport Lemma.

Thus, assume \eqref{t1}$\cdots$\eqref{t5} and \eqref{t9}, \eqref{t10}.

We write $c$, $C$, $C^{\prime }$, etc. to denote ``controlled constants'',
i.e., constants determined by $C_{B}$, $\hat{C}_{B}$, $C_{DIFF}$, $m$, $n$.
These symbols may denote different controlled constants in different
occurrences.

Let $(P_\alpha)_{\alpha \in \mathcal{A}}$ be an $(\mathcal{A},\delta,C_B)$%
-basis for $\vec{\Gamma}_{l_0}$ at $(x_0,M_0,P^0)$, and let $(\hat{P}%
_\alpha)_{\alpha \in \hat{\mathcal{A}}}$ be an $(\hat{\mathcal{A}},\delta,%
\hat{C}_B)$-basis for $\vec{\Gamma}_{l_0}$ at $(x_0,M_0,\hat{P}^0)$.

By definition, the following hold.

\begin{itemize}
\item[\refstepcounter{equation}\text{(\theequation)}\label{t11}] $%
P^{0}+c_{0}\sigma M_{0}\delta ^{m-\left\vert \alpha \right\vert }P_{\alpha
}\in \Gamma _{l_{0}}\left( x_{0},CM_{0}\right) $ for $\alpha \in \mathcal{A}$%
, $\sigma \in \left\{ 1,-1\right\} $.
\end{itemize}

\begin{itemize}
\item[\refstepcounter{equation}\text{(\theequation)}\label{t12}] $\hat{P}%
^{0}+\hat{c}_{0}\sigma M_{0}\delta ^{m-\left\vert \alpha \right\vert }\hat{P}%
_{\alpha }\in \Gamma _{l_{0}}\left( x_{0},CM_{0}\right) $ for $\alpha \in 
\hat{\mathcal{A}}$, $\sigma \in \left\{ 1,-1\right\} $.
\end{itemize}

\begin{itemize}
\item[\refstepcounter{equation}\text{(\theequation)}\label{t13}] $\partial
^{\beta }P_{\alpha }\left( x_{0}\right) =\delta _{\beta \alpha }$ for $%
\alpha ,\beta \in \mathcal{A}$.
\end{itemize}

\begin{itemize}
\item[\refstepcounter{equation}\text{(\theequation)}\label{t14}] $\partial
^{\beta }\hat{P}_{\alpha }\left( x_{0}\right) =\delta _{\beta \alpha }$ for $%
\alpha ,\beta \in \hat{\mathcal{A}}$.
\end{itemize}

\begin{itemize}
\item[\refstepcounter{equation}\text{(\theequation)}\label{t15}] $\left\vert
\partial ^{\beta }P_{\alpha }\left( x_{0}\right) \right\vert \leq C\delta
^{\left\vert \alpha \right\vert -\left\vert \beta \right\vert }$ for $\alpha
\in \mathcal{A}$, $\beta \in \mathcal{M}$.
\end{itemize}

\begin{itemize}
\item[\refstepcounter{equation}\text{(\theequation)}\label{t16}] $\left\vert
\partial ^{\beta }\hat{P}_{\alpha }\left( x_{0}\right) \right\vert \leq
C\delta ^{\left\vert \alpha \right\vert -\left\vert \beta \right\vert }$ for 
$\alpha \in \hat{\mathcal{A}}$, $\beta \in \mathcal{M}$.
\end{itemize}

We fix controlled constants $c_{0},\hat{c}_{0}$ as in \eqref{t11}, %
\eqref{t12}. Recall that $\vec{\Gamma}_{l_{0}}$ is the first refinement of $%
\vec{\Gamma}_{l_{0}-1}$. Therefore, by \eqref{t10}, \eqref{t11}, there
exists $\tilde{P}_{\alpha ,\sigma }\in \Gamma _{l_{0}-1}(y_{0},CM_{0})$ ($%
\alpha \in \mathcal{A},\sigma \in \{1,-1\}$) such that 
\begin{eqnarray*}
&&\left\vert \partial ^{\beta }\left( \tilde{P}_{\alpha ,\sigma }-\left[
P^{0}+c_{0}\sigma M_{0}\delta ^{m-\left\vert \alpha \right\vert }P_{\alpha }%
\right] \right) \left( x_{0}\right) \right\vert \\
&\leq &CM_{0}\left\vert x_{0}-y_{0}\right\vert ^{m-\left\vert \beta
\right\vert }\leq C\epsilon _{0}M_{0}\delta ^{m-\left\vert \beta \right\vert
}\text{, for }\beta \in \mathcal{M}\text{.}
\end{eqnarray*}

Writing 
\begin{equation*}
E_{\alpha ,\sigma }=\frac{\tilde{P}_{\alpha ,\sigma }-\left[
P^{0}+c_{0}\sigma M_{0}\delta ^{m-\left\vert \alpha \right\vert }P_{\alpha }%
\right] }{c_{0}\sigma M_{0}\delta ^{m-\left\vert \alpha \right\vert }},
\end{equation*}
we have

\begin{itemize}
\item[\refstepcounter{equation}\text{(\theequation)}\label{t17}] $%
P^{0}+c_{0}\sigma M_{0}\delta ^{m-\left\vert \alpha \right\vert }\left(
P_{\alpha }+E_{\alpha ,\sigma }\right) \in \Gamma _{l_{0}-1}\left(
y_{0},CM_{0}\right) $ for $\alpha \in \mathcal{A},\sigma \in \left\{
1,-1\right\} $,
\end{itemize}
and
\begin{itemize}
\item[\refstepcounter{equation}\text{(\theequation)}\label{t18}] $\left\vert
\partial ^{\beta }E_{\alpha ,\sigma }\left( x_{0}\right) \right\vert \leq
C\epsilon _{0}\delta ^{\left\vert \alpha \right\vert -\left\vert \beta
\right\vert }$ for $\alpha \in \mathcal{A},\beta \in \mathcal{M}$, $\sigma
\in \left\{ 1,-1\right\} $.
\end{itemize}

Similarly, we obtain $\hat{E}_{\alpha ,\sigma }\in \mathcal{P}$ $\left( \alpha \in 
\hat{\mathcal{A}},\sigma \in \left\{ 1,-1\right\} \right) $, satisfying

\begin{itemize}
\item[\refstepcounter{equation}\text{(\theequation)}\label{t19}] $\hat{P}%
^{0}+\hat{c}_{0}\sigma M_{0}\delta ^{m-\left\vert \alpha \right\vert }\left( 
\hat{P}_{\alpha }+\hat{E}_{\alpha ,\sigma }\right) \in \Gamma
_{l_{0}-1}\left( y_{0},CM_{0}\right) $ for $\alpha \in \hat{\mathcal{A}}$, $%
\sigma \in \left\{ 1,-1\right\} $,
\end{itemize}
and
\begin{itemize}
\item[\refstepcounter{equation}\text{(\theequation)}\label{t20}] $\left\vert
\partial ^{\beta }\hat{E}_{\alpha ,\sigma }\left( x_{0}\right) \right\vert
\leq C\epsilon _{0}\delta ^{\left\vert \alpha \right\vert -\left\vert \beta
\right\vert }$ for $\alpha \in \hat{\mathcal{A}},\beta \in \mathcal{M}$, $%
\sigma \in \left\{ 1,-1\right\} $.
\end{itemize}

We introduce the following polynomials:

\begin{itemize}
\item[\refstepcounter{equation}\text{(\theequation)}\label{t21}] 
\begin{equation*}
\hat{P}^{\prime }=\frac{1}{2}\left[ \#\left( \mathcal{A}\right) +\#\left( 
\hat{\mathcal{A}}\right) \right] ^{-1}\left( 
\begin{array}{c}
\sum_{\alpha \in \mathcal{A}\text{,}\sigma =\pm 1}\left\{ P^{0}+c_{0}\sigma
M_{0}\delta ^{m-\left\vert \alpha \right\vert }\left( P_{\alpha }+E_{\alpha
,\sigma }\right) \right\}  \\ 
+\sum_{\alpha \in \hat{\mathcal{A}}\text{,}\sigma =\pm 1}\left\{ \hat{P}^{0}+%
\hat{c}_{0}\sigma M_{0}\delta ^{m-\left\vert \alpha \right\vert }\left( \hat{%
P}_{\alpha }+\hat{E}_{\alpha ,\sigma }\right) \right\} 
\end{array}%
\right) 
\end{equation*}%
(see \eqref{t9});
\end{itemize}

\begin{itemize}
\item[\refstepcounter{equation}\text{(\theequation)}\label{t22}] 
\begin{eqnarray*}
P_{\alpha }^{\prime } &=&\frac{1}{2c_{0}M_{0}\delta ^{m-\left\vert \alpha
\right\vert }}\left( 
\begin{array}{c}
\left\{ P^{0}+c_{0}M_{0}\delta ^{m-\left\vert \alpha \right\vert }\left(
P_{\alpha }+E_{\alpha ,1}\right) \right\}  \\ 
-\left\{ P^{0}-c_{0}M_{0}\delta ^{m-\left\vert \alpha \right\vert }\left(
P_{\alpha }+E_{\alpha ,-1}\right) \right\} 
\end{array}%
\right)  \\
&=&P_{\alpha }+\frac{1}{2}\left( E_{\alpha ,1}+E_{\alpha ,-1}\right) \text{
for }\alpha \in \mathcal{A}\text{;}
\end{eqnarray*}
\end{itemize}

\begin{itemize}
\item[\refstepcounter{equation}\text{(\theequation)}\label{t23}] 
\begin{eqnarray*}
\hat{P}_{\alpha }^{\prime } &=&\frac{1}{2\hat{c}_{0}M_{0}\delta
^{m-\left\vert \alpha \right\vert }}\left( 
\begin{array}{c}
\left\{ \hat{P}^{0}+\hat{c}_{0}M_{0}\delta ^{m-\left\vert \alpha \right\vert
}\left( \hat{P}_{\alpha }+\hat{E}_{\alpha ,1}\right) \right\}  \\ 
-\left\{ \hat{P}^{0}-\hat{c}_{0}M_{0}\delta ^{m-\left\vert \alpha
\right\vert }\left( \hat{P}_{\alpha }+\hat{E}_{\alpha ,-1}\right) \right\} 
\end{array}%
\right)  \\
&=&\hat{P}_{\alpha }+\frac{1}{2}\left( \hat{E}_{\alpha ,1}+\hat{E}_{\alpha
,-1}\right) \text{ for }\alpha \in \hat{\mathcal{A}}.
\end{eqnarray*}
\end{itemize}

For a small enough controlled constant $c_1$, we have

\begin{itemize}
\item[\refstepcounter{equation}\text{(\theequation)}\label{t24}] $\hat{P}%
^{\prime }+c_{1}M_{0}\delta ^{m-\left\vert \alpha \right\vert }P_{\alpha
}^{\prime },\hat{P}^{\prime }-c_{1}M_{0}\delta ^{m-\left\vert \alpha
\right\vert }P_{\alpha }^{\prime }\in \Gamma _{l_{0}-1}\left(
y_{0},CM_{0}\right) $ for $\alpha \in \mathcal{A}$,
\end{itemize}

and

\begin{itemize}
\item[\refstepcounter{equation}\text{(\theequation)}\label{t25}] $\hat{P}%
^{\prime }+c_{1}M_{0}\delta ^{m-\left\vert \alpha \right\vert }\hat{P}%
_{\alpha }^{\prime },\hat{P}^{\prime }-c_{1}M_{0}\delta ^{m-\left\vert
\alpha \right\vert }\hat{P}_{\alpha }^{\prime }\in \Gamma _{l_{0}-1}\left(
y_{0},CM_{0}\right) $ for $\alpha \in \hat{\mathcal{A}}$,
\end{itemize}

because each of the polynomials in \eqref{t24}, \eqref{t25} is a convex
combination of the polynomials in \eqref{t17}, \eqref{t19}.

From \eqref{t24}, \eqref{25} and the Trivial Remark on Convex Sets in
Section \ref{notation-and-preliminaries}, we obtain the following, for a
small enough controlled constant $c_2$.

\begin{itemize}
\item[\refstepcounter{equation}\text{(\theequation)}\label{t26}] $\hat{P}%
^{\prime }+\sum_{\alpha \in \mathcal{A}}s_{\alpha }M_{0}\delta
^{m-\left\vert \alpha \right\vert }P_{\alpha }^{\prime }+\sum_{\alpha \in 
\hat{\mathcal{A}}}t_{\alpha }M_{0}\delta ^{m-\left\vert \alpha \right\vert }%
\hat{P}_{\alpha }^{\prime }\in \Gamma _{l_{0}-1}\left( y_{0},CM_{0}\right) $%
, whenever $\left\vert s_{\alpha }\right\vert \leq c_{2}$ for all $\alpha
\in \mathcal{A}$ and $\left\vert t_{\alpha }\right\vert \leq c_{2}$ for all $%
\alpha \in \hat{\mathcal{A}}$.
\end{itemize}

Note also that \eqref{t21} may be written in the equivalent form

\begin{itemize}
\item[\refstepcounter{equation}\text{(\theequation)}\label{t27}] 
\begin{eqnarray*}
&&\hat{P}^{\prime }=P^{0}+\left[ \frac{\#\left( \hat{\mathcal{A}}\right) }{%
\#\left( \mathcal{A}\right) +\#\hat{\left( \mathcal{A}\right) }}\right]
\left( \hat{P}^{0}-P^{0}\right)  \\
&&+\frac{1}{2\left[ \#\left( \mathcal{A}\right) +\#\left( \hat{\mathcal{A}}%
\right) \right] }\left\{ 
\begin{array}{c}
\sum_{\alpha \in \mathcal{A}\text{,}\sigma =\pm 1}c_{0}\sigma M_{0}\delta
^{m-\left\vert \alpha \right\vert }E_{\alpha ,\sigma } \\ 
+\sum_{\alpha \in \hat{\mathcal{A}}\text{,}\sigma =\pm 1}\hat{c}_{0}\sigma
M_{0}\delta ^{m-\left\vert \alpha \right\vert }\hat{E}_{\alpha ,\sigma }%
\end{array}%
\right\} \text{.}
\end{eqnarray*}
\end{itemize}

Consequently, \eqref{t3}, \eqref{t4}, \eqref{t18}, \eqref{t20} tell us that

\begin{itemize}
\item[\refstepcounter{equation}\text{(\theequation)}\label{t28}] $\left\vert
\partial ^{\beta }\left( \hat{P}^{\prime }-P^{0}\right) \left( x_{0}\right)
\right\vert \leq C\epsilon _{0}M_{0}\delta ^{m-\left\vert \beta \right\vert }
$ for $\beta \in \mathcal{A}$;
\end{itemize}

\begin{itemize}
\item[\refstepcounter{equation}\text{(\theequation)}\label{t29}] $\left\vert
\partial ^{\beta }\left( \hat{P}^{\prime}-P^{0}\right) \left( x_{0}\right)
\right\vert \leq CM_{0}\delta ^{m-\left\vert \beta \right\vert }$ for $\beta
\in \mathcal{M}$.
\end{itemize}

Similarly, \eqref{t13}$\cdots$\eqref{t16}, \eqref{t18}, \eqref{t20}, and %
\eqref{t22}, \eqref{t23} together imply the estimates

\begin{itemize}
\item[\refstepcounter{equation}\text{(\theequation)}\label{t30}] $\left\vert
\partial ^{\beta }P_{\alpha }^{\prime }\left( x_{0}\right) -\delta _{\beta
\alpha }\right\vert \leq C\epsilon _{0}\delta ^{\left\vert \alpha
\right\vert -\left\vert \beta \right\vert }$ for $\alpha ,\beta \in \mathcal{%
A}$;
\end{itemize}

\begin{itemize}
\item[\refstepcounter{equation}\text{(\theequation)}\label{t31}] $\left\vert
\partial ^{\beta }\hat{P}_{\alpha }^{\prime }\left( x_{0}\right) -\delta
_{\beta \alpha }\right\vert \leq C\epsilon _{0}\delta ^{\left\vert \alpha
\right\vert -\left\vert \beta \right\vert }$ for $\alpha ,\beta \in \hat{%
\mathcal{A}}$;
\end{itemize}

\begin{itemize}
\item[\refstepcounter{equation}\text{(\theequation)}\label{t32}] $\left\vert
\partial ^{\beta }P_{\alpha }^{\prime }\left( x_{0}\right) \right\vert \leq
C\delta ^{\left\vert \alpha \right\vert -\left\vert \beta \right\vert }$ for 
$\alpha \in \mathcal{A}$, $\beta \in \mathcal{M}$; and
\end{itemize}

\begin{itemize}
\item[\refstepcounter{equation}\text{(\theequation)}\label{t33}] $\left\vert
\partial ^{\beta }\hat{P}_{\alpha }^{\prime }\left( x_{0}\right) \right\vert
\leq C\delta ^{\left\vert \alpha \right\vert -\left\vert \beta \right\vert }$
for $\alpha \in \hat{\mathcal{A}}$, $\beta \in \mathcal{M}$.
\end{itemize}

From \eqref{t30}$\cdots$\eqref{t33} and \eqref{t5}, we have also

\begin{itemize}
\item[\refstepcounter{equation}\text{(\theequation)}\label{t34}] $\left\vert
\partial ^{\beta }P_{\alpha }^{\prime }\left( y_{0}\right) -\delta _{\beta
\alpha }\right\vert \leq C\epsilon _{0}\delta ^{\left\vert \alpha
\right\vert -\left\vert \beta \right\vert }$ for $\beta ,\alpha \in \mathcal{%
A}$;
\end{itemize}

\begin{itemize}
\item[\refstepcounter{equation}\text{(\theequation)}\label{t35}] $\left\vert
\partial ^{\beta }\hat{P}_{\alpha }^{\prime }\left( y_{0}\right) -\delta
_{\beta \alpha }\right\vert \leq C\epsilon _{0}\delta ^{\left\vert \alpha
\right\vert -\left\vert \beta \right\vert }$ for $\beta ,\alpha \in \hat{%
\mathcal{A}}$;
\end{itemize}

\begin{itemize}
\item[\refstepcounter{equation}\text{(\theequation)}\label{t36}] $\left\vert
\partial ^{\beta }P_{\alpha }^{\prime }\left( y_{0}\right) \right\vert \leq
C\delta ^{\left\vert \alpha \right\vert -\left\vert \beta \right\vert }$ for 
$\alpha \in \mathcal{A}$, $\beta \in \mathcal{M}$; and
\end{itemize}

\begin{itemize}
\item[\refstepcounter{equation}\text{(\theequation)}\label{t37}] $\left\vert
\partial ^{\beta }\hat{P}_{\alpha }^{\prime }\left( y_{0}\right) \right\vert
\leq C\delta ^{\left\vert \alpha \right\vert -\left\vert \beta \right\vert }$
for $\alpha \in \hat{\mathcal{A}},\beta \in \mathcal{M}$.
\end{itemize}

Next, we prove that there exists $\hat{P}^{\#}\in \mathcal{P}$ with the
following properties:

\begin{itemize}
\item[\refstepcounter{equation}\text{(\theequation)}\label{t38}] $\partial
^{\beta }\left( \hat{P}^{\#}-P^{0}\right) \left( x_{0}\right) =0$ for $\beta
\in \mathcal{A}$;
\end{itemize}

\begin{itemize}
\item[\refstepcounter{equation}\text{(\theequation)}\label{t39}] $\left\vert
\partial ^{\beta }\left( \hat{P}^{\#}-P^{0}\right) \left( x_{0}\right)
\right\vert \leq CM_{0}\delta ^{m-\left\vert \beta \right\vert }$ for $\beta
\in \mathcal{M}$;
\end{itemize}

for a small enough controlled constant $c_{3}$, we have

\begin{itemize}
\item[\refstepcounter{equation}\text{(\theequation)}\label{t40}] $\hat{P}%
^{\#}+\sum_{\alpha \in \mathcal{A}}s_{\alpha }M_{0}\delta ^{m-\left\vert
\alpha \right\vert }P_{\alpha }^{\prime }+\sum_{\alpha \in \hat{\mathcal{A}}%
}t_{\alpha }M_{0}\delta ^{m-\left\vert \alpha \right\vert }\hat{P}_{\alpha
}^{\prime }\in \Gamma _{l_{0}-1}\left( y_{0},CM_{0}\right) $, whenever all $%
\left\vert s_{\alpha }\right\vert $, $\left\vert t_{\alpha }\right\vert $
are less than $c_{3}$.
\end{itemize}

Indeed, if $\mathcal{A}=\emptyset $, then set $\hat{P}^{\#}=\hat{P}^{\prime }
$; then \eqref{t38} holds vacuously, and \eqref{t39}, \eqref{t40} simply
restate \eqref{t29}, \eqref{t26}. Suppose $\mathcal{A}\not=\emptyset $. We
will pick coefficients $s_{\alpha }^{\#}$ ($\alpha \in \mathcal{A}$) for
which

\begin{itemize}
\item[\refstepcounter{equation}\text{(\theequation)}\label{t41}] $\hat{P}%
^{\#}:=\hat{P}^{\prime }+\sum_{\alpha \in \mathcal{A}}s_{\alpha
}^{\#}M_{0}\delta ^{m-\left\vert \alpha \right\vert }P_{\alpha }^{\prime }$
satisfies \eqref{t38}, \eqref{t39}, \eqref{t40}.
\end{itemize}

In fact, with $\hat{P}^\#$ given by \eqref{t41}, equation \eqref{t38} is
equivalent to the system of the linear equations

\begin{itemize}
\item[\refstepcounter{equation}\text{(\theequation)}\label{t42}] $%
\sum_{\alpha \in \mathcal{A}}\left[ \delta ^{\left\vert \beta \right\vert
-\left\vert \alpha \right\vert }\partial ^{\beta }P_{\alpha }^{\prime
}\left( x_{0}\right) \right] s_{\alpha }^{\#}=-M_{0}^{-1}\delta ^{\left\vert
\beta \right\vert -m}\partial ^{\beta }\left( \hat{P}^{\prime }-P^{0}\right)
\left( x_{0}\right) $ $\left( \beta \in \mathcal{A}\right) $.
\end{itemize}

By \eqref{t28}, the right-hand side of \eqref{t42} has absolute
value at most $C\epsilon _{0}$. Hence, by \eqref{t30} and the small $%
\epsilon _{0}$ assumption \eqref{t10}, we can solve \eqref{t42} for the $%
s_{\alpha }^{\#}$, and we have

\begin{itemize}
\item[\refstepcounter{equation}\text{(\theequation)}\label{t43}] $\left\vert
s_{\alpha }^{\#}\right\vert \leq C\epsilon _{0}$ for all $\alpha \in 
\mathcal{A}$.
\end{itemize}

The resulting $\hat{P}^{\#}$ given by \eqref{t41} then satisfies \eqref{t38}%
. Moreover, for $\beta \in \mathcal{M}$, we have%
\begin{eqnarray*}
\left\vert \partial ^{\beta }\left( \hat{P}^{\#}-P^{0}\right) \left(
x_{0}\right) \right\vert  &\leq &\left\vert \partial ^{\beta }\left( \hat{P}%
^{\prime }-P^{0}\right) \left( x_{0}\right) \right\vert +\sum_{\alpha \in 
\mathcal{A}}\left\vert s_{\alpha }^{\#}\right\vert \cdot M_{0}\delta
^{m-\left\vert \alpha \right\vert }\left\vert \partial ^{\beta }P_{\alpha
}^{\prime }\left( x_{0}\right) \right\vert  \\
&\leq &\left\vert \partial ^{\beta }\left( \hat{P}^{\prime }-P^{0}\right)
\left( x_{0}\right) \right\vert +C\epsilon _{0}\delta ^{m-\left\vert \beta
\right\vert }M_{0}\text{,}
\end{eqnarray*}%
thanks to \eqref{t32}, \eqref{t41}, \eqref{t43}.

Therefore, \eqref{t29} gives%
\begin{equation*}
\left\vert \partial ^{\beta }\left( \hat{P}^{\#}-P^{0}\right) \left(
x_{0}\right) \right\vert \leq CM_{0}\delta ^{m-\left\vert \beta \right\vert }%
\text{ for }\beta \in \mathcal{M}\text{,}
\end{equation*}
proving \eqref{t39}.

Finally, \eqref{t40} follows at once from \eqref{t26}, \eqref{t41}, %
\eqref{t43} and the small $\epsilon_0$ assumption \eqref{t10}.

Thus, in all cases, there exists $\hat{P}^{\#}$ satisfying \eqref{t38}, %
\eqref{t39}, \eqref{t40}. We fix such a $\hat{P}^{\#}$.

Next, we produce an $(\mathcal{A},\delta ,C)$-basis for $\vec{\Gamma}%
_{l_{0}-1}$ at $(y_{0},M_{0},\hat{P}^{\#})$. To do so, we first suppose that 
$\mathcal{A}\not=\emptyset $, and set

\begin{itemize}
\item[\refstepcounter{equation}\text{(\theequation)}\label{t44}] $P_{\gamma
}^{\#}=\sum_{\alpha \in \mathcal{A}}b_{\gamma \alpha }\delta ^{\left\vert
\gamma \right\vert -\left\vert \alpha \right\vert }P_{\alpha }^{\prime }$
\end{itemize}

for real coefficients $(b_{\gamma \alpha })_{\gamma ,\alpha \in \mathcal{A}}$
to be picked below. For $\beta ,\gamma \in \mathcal{A}$, we have 
\begin{equation*}
\partial ^{\beta }P_{\gamma }^{\#}\left( y_{0}\right) =\delta ^{\left\vert
\gamma \right\vert -\left\vert \beta \right\vert }\cdot \sum_{\alpha \in 
\mathcal{A}}b_{\gamma \alpha }\left[ \delta ^{\left\vert \beta \right\vert
-\left\vert \alpha \right\vert }\partial ^{\beta }P_{\alpha }^{\prime
}\left( y_{0}\right) \right] \text{.}
\end{equation*}

Thanks to \eqref{t34} and the small $\epsilon_0$ assumption \eqref{t10}, we
may define $(b_{\gamma\alpha})_{\gamma,\alpha \in \mathcal{A}}$ as the
inverse matrix of $\left(\delta^{|\beta|-|\alpha|}\partial^\beta
P_\alpha^{\prime }(y_0)\right)_{\alpha,\beta \in \mathcal{A}}$, and we then
have

\begin{itemize}
\item[\refstepcounter{equation}\text{(\theequation)}\label{t45}] $\partial
^{\beta }P_{\gamma }^{\#}\left( y_{0}\right) =\delta _{\beta \gamma }$ $%
\left( \beta ,\gamma \in \mathcal{A}\right) $
\end{itemize}
and
\begin{itemize}
\item[\refstepcounter{equation}\text{(\theequation)}\label{t46}] $\left\vert
b_{\gamma \alpha }-\delta _{\gamma \alpha }\right\vert \leq C\epsilon _{0} $
for $\gamma ,\alpha \in \mathcal{A}$.
\end{itemize}

In particular,

\begin{itemize}
\item[\refstepcounter{equation}\text{(\theequation)}\label{t47}] $\left\vert
b_{\gamma \alpha }\right\vert \leq C$, 
\end{itemize}
and therefore for $\gamma \in 
\mathcal{A}$, $\beta \in \mathcal{M}$ we have
\begin{itemize}
\item[\refstepcounter{equation}\text{(\theequation)}\label{t48}] 
\begin{eqnarray*}
\left\vert \partial ^{\beta }P_{\gamma }^{\#}\left( y_{0}\right) \right\vert
&\leq &\sum_{\alpha \in \mathcal{A}}\left\vert b_{\gamma \alpha }\right\vert
\delta ^{\left\vert \gamma \right\vert -\left\vert \alpha \right\vert
}\left\vert \partial ^{\beta }P_{\alpha }^{\prime }\left( y_{0}\right)
\right\vert  \\
&\leq &C\sum_{\alpha \in \mathcal{A}}\delta ^{\left\vert \gamma \right\vert
-\left\vert \alpha \right\vert }\delta ^{\left\vert \alpha \right\vert
-\left\vert \beta \right\vert }\leq C^{\prime }\delta ^{\left\vert \gamma
\right\vert -\left\vert \beta \right\vert }\text{,}
\end{eqnarray*}
\end{itemize}
thanks to \eqref{t36}.

Also, for $\gamma \in \mathcal{A}$, we have%
\begin{equation*}
\left[ M_{0}\delta ^{m-\left\vert \gamma \right\vert }P_{\gamma }^{\#}\right]
=\sum_{\alpha \in \mathcal{A}}b_{\gamma \alpha }\cdot \left[ M_{0}\delta
^{m-\left\vert \alpha \right\vert }P_{\alpha }^{\prime }\right] \text{.}
\end{equation*}

Therefore, for a small enough controlled constant $c_4$, we have

\begin{itemize}
\item[\refstepcounter{equation}\text{(\theequation)}\label{t49}] $\hat{P}%
^{\#}+c_{4}M_{0}\delta ^{m-\left\vert \gamma \right\vert }P_{\gamma }^{\#}$, 
$\hat{P}^{\#}-c_{4}M_{0}\delta ^{m-\left\vert \gamma \right\vert }P_{\gamma
}^{\#}\in \Gamma _{l_{0}-1}\left( y_{0},CM_{0}\right) $ for $\gamma \in 
\mathcal{A}$,
\end{itemize}

thanks to \eqref{t40}. Since we are assuming that $\mathcal{A}
\not=\emptyset $, \eqref{t49} implies that also

\begin{itemize}
\item[\refstepcounter{equation}\text{(\theequation)}\label{t50}] $\hat{P}%
^{\#}\in \Gamma _{l_{0}-1}\left( y_{0},CM_{0}\right) $.
\end{itemize}

Our results \eqref{t45}, \eqref{t48}, \eqref{t49}, \eqref{t50} tell us that $%
\left( P_{\gamma }^{\#}\right) _{\gamma \in \mathcal{A}}$ is an $(\mathcal{A}%
,\delta ,C)$-basis for $\vec{\Gamma}_{l_{0}-1}$ at $\left( y_{0},M_{0},\hat{P%
}^{\#}\right) $.

Thus, we have produced the desired $(\mathcal{A},\delta ,C)$-basis, provided
that $\mathcal{A}\not=\emptyset $. On the other hand, if $\mathcal{A}%
=\emptyset $, then the existence of an $(\mathcal{A},\delta ,C)$-basis for $%
\vec{\Gamma}_{l_{0}-1}$ at $\left( y_{0},M_{0},\hat{P}^{\#}\right) $ is
equivalent to the assertion that

\begin{itemize}
\item[\refstepcounter{equation}\text{(\theequation)}\label{t51}] $\hat{P}%
^{\#}\in \Gamma _{l_{0}-1}\left( y_{0},CM_{0}\right) $,
\end{itemize}

and \eqref{t51} follows at once from \eqref{t40}. Thus, in all cases,

\begin{itemize}
\item[\refstepcounter{equation}\text{(\theequation)}\label{t52}] $\vec{\Gamma%
}_{l_{0}-1}$ has an $\left( \mathcal{A},\delta ,C\right) $-basis at $\left(
y_{0},M_{0},\hat{P}^{\#}\right) $.
\end{itemize}

Similarly, we can produce an $(\hat{\mathcal{A}},\delta ,C)$-basis for $\vec{%
\Gamma}_{l_{0}-1}$ at $\left( y_{0},M_{0},\hat{P}^{\#}\right) $. We suppose
first that $\hat{\mathcal{A}}\not=\emptyset $, and set

\begin{itemize}
\item[\refstepcounter{equation}\text{(\theequation)}\label{t53}] $\hat{P}%
_{\gamma }^{\#}=\sum_{\alpha \in \hat{\mathcal{A}}}\hat{b}_{\beta \alpha
}\delta ^{\left\vert \gamma \right\vert -\left\vert \alpha \right\vert }\hat{%
P}_{\alpha }^{\prime }$ for $\gamma \in \hat{\mathcal{A}}$, with
coefficients $\hat{b}_{\gamma \alpha }$ to be picked below.
\end{itemize}

Thanks to \eqref{t35} and the \emph{small $\epsilon_0$ assumption} %
\eqref{t10}, we can pick the coefficients $\hat{b}_{\gamma\alpha}$ so that

\begin{itemize}
\item[\refstepcounter{equation}\text{(\theequation)}\label{t54}] $\partial
^{\beta }\hat{P}_{\gamma }^{\#}\left( y_{0}\right) =\delta _{\beta ,\gamma }$
for $\beta ,\gamma \in \hat{\mathcal{A}}$
\end{itemize}

and

\begin{itemize}
\item[\refstepcounter{equation}\text{(\theequation)}\label{t55}] $\left\vert 
\hat{b}_{\gamma \alpha }-\delta _{\gamma \alpha }\right\vert \leq C\epsilon
_{0}$ for $\gamma ,\alpha \in \hat{\mathcal{A}}$,
\end{itemize}

hence

\begin{itemize}
\item[\refstepcounter{equation}\text{(\theequation)}\label{t56}] $\left\vert 
\hat{b}_{\gamma \alpha }\right\vert \leq C$ for $\gamma ,\alpha \in \hat{%
\mathcal{A}}$.
\end{itemize}

From \eqref{t37}, \eqref{t53}, \eqref{t56}, we obtain the estimate

\begin{itemize}
\item[\refstepcounter{equation}\text{(\theequation)}\label{t57}] $\left\vert
\partial ^{\beta }\hat{P}_{\gamma }^{\#}\left( y_{0}\right) \right\vert \leq
C\delta ^{\left\vert \gamma \right\vert -\left\vert \beta \right\vert }$ for 
$\gamma \in \hat{\mathcal{A}}$ and $\beta \in \mathcal{M}$,
\end{itemize}

in analogy with \eqref{t48}. Also for $\gamma \in \hat{\mathcal{A}}$, we have%
\begin{equation*}
M_{0}\delta ^{m-\left\vert \gamma \right\vert }\hat{P}_{\gamma
}^{\#}=\sum_{\alpha \in \hat{\mathcal{A}}}\hat{b}_{\gamma \alpha }\left[
M_{0}\delta ^{m-\left\vert \alpha \right\vert }\hat{P}_{\alpha }^{\prime }%
\right] \text{.}
\end{equation*}

Together with \eqref{t40} and \eqref{t56}, this tells us that

\begin{itemize}
\item[\refstepcounter{equation}\text{(\theequation)}\label{t58}] $\hat{P}%
^{\#}+c_{5}M_{0}\delta ^{m-\left\vert \gamma \right\vert }\hat{P}_{\gamma
}^{\#}$, $\hat{P}^{\#}-c_{5}M_{0}\delta ^{m-\left\vert \gamma \right\vert }%
\hat{P}_{\gamma }^{\#}\in \Gamma _{l_{0}-1}\left( y_{0},CM_{0}\right) $ for $%
\gamma \in \hat{\mathcal{A}}$
\end{itemize}

in analogy with \eqref{t49}. Since we are assuming that $\hat{\mathcal{A}}%
\not=\emptyset $, \eqref{t58} implies that

\begin{itemize}
\item[\refstepcounter{equation}\text{(\theequation)}\label{t59}] $\hat{P}%
^{\#}\in \Gamma _{l_{0}-1}\left( y_{0},CM_{0}\right) $.
\end{itemize}

Our results \eqref{t54},\eqref{t57}, \eqref{t58}, \eqref{t59} tell us that $%
\left( \hat{P}_{\gamma }^{\#}\right) _{\gamma \in \hat{\mathcal{A}}}$ is an $%
(\hat{\mathcal{A}},\delta ,C)$-basis for $\vec{\Gamma}_{l_{0}-1}$ at $\left(
y_{0},M_{0},\hat{P}^{\#}\right) $.

Thus, we have produced the desired $(\hat{\mathcal{A}},\delta ,C)$-basis,
provided $\hat{\mathcal{A}}\not=\emptyset $. On the other hand, if $\hat{%
\mathcal{A}}=\emptyset $, then the existence of an $(\hat{\mathcal{A}}%
,\delta ,C)$-basis for $\vec{\Gamma}_{l_{0}-1}$ at $\left( y_{0},M_{0},\hat{P%
}^{\#}\right) $ is equivalent to the assertion that

\begin{itemize}
\item[\refstepcounter{equation}\text{(\theequation)}\label{t60}] $\hat{P}%
^{\#}\in \Gamma _{l_{0}-1}\left( y_{0},CM_{0}\right) $,
\end{itemize}

and \eqref{t60} follows at once from \eqref{t40}. Thus, in all cases,

\begin{itemize}
\item[\refstepcounter{equation}\text{(\theequation)}\label{t61}] $\vec{\Gamma%
}_{l_{0}-1}$ has an $\left( \hat{\mathcal{A}},\delta ,C\right) $-basis at $%
\left( y_{0},M_{0},\hat{P}^{\#}\right) $.
\end{itemize}

Our results \eqref{t52} and \eqref{t61} together yield conclusion \eqref{t6}
of the Transport Lemma (Lemma \ref{lemma-transport}). Also, our results %
\eqref{t38} and \eqref{t39} imply conclusions \eqref{t7} and \eqref{t8},
since $\mathcal{A}$ is monotonic. (See \eqref{t1}.)

Thus, starting from assumptions \eqref{t1}$\cdots$\eqref{t5} and \eqref{t9}, %
\eqref{t10}, we have proven conclusions \eqref{t6}, \eqref{t7}, \eqref{t8}
for our $\hat{P}^\#$.

The proof of the Transport Lemma (Lemma \ref{lemma-transport}) is complete.
\end{proof}

For future reference, we state the special case of the Transport Lemma in
which we take $\hat{\mathcal{A}} = \mathcal{A}$, $\hat{P}^0=P^0$.

\begin{corollary}
\label{cor-to-transport} Let $\vec{\Gamma}_{0}=\left( \Gamma _{0}\left(
x,M\right) \right) _{x\in E,M>0}$ be a shape field. For $l\geq 1$, let $\vec{%
\Gamma}_{l}=\left( \Gamma _{l}\left( x,M\right) \right) _{x\in E,M>0}$ be
the $l$-th refinement of $\vec{\Gamma}_{0}$. Suppose

\begin{itemize}
\item[\refstepcounter{equation}\text{(\theequation)}\label{t62}] $\mathcal{A}
\subseteq \mathcal{M}$ is monotonic.
\end{itemize}

Let $x_{0}\in E$, $M_{0}>0$, $l_{0}\geq 1$, $\delta >0,C_{B}>0$; and let $%
P^{0}\in \mathcal{P}$. Assume that

\begin{itemize}
\item[\refstepcounter{equation}\text{(\theequation)}\label{t63}] $\vec{\Gamma%
}_{l_{0}}$ has an $\left( \mathcal{A},\delta ,C_{B}\right) $-basis at $%
\left( x_{0},M_{0},P^{0}\right) $.
\end{itemize}

Let $y_0 \in E$, and suppose that

\begin{itemize}
\item[\refstepcounter{equation}\text{(\theequation)}\label{t64}] $\left\vert
x_{0}-y_{0}\right\vert \leq \epsilon _{0}\delta $, where $\epsilon _{0} $ is
a small enough constant determined by $C_{B}$, $m$, $n$.
\end{itemize}

Then there exists $\hat{P}^{\#}\in \mathcal{P}$ with the following
properties.

\begin{itemize}
\item[\refstepcounter{equation}\text{(\theequation)}\label{t65}] $\vec{\Gamma%
}_{l_{0}-1}$ has an $\left( \mathcal{A},\delta ,C_{B}^{\prime }\right) $%
-basis at $\left( y_{0},M_{0},\hat{P}^{\#}\right) $, with $C_{B}^{\prime }$
determined by $C_{B}$, $m$, $n$.
\end{itemize}

\begin{itemize}
\item[\refstepcounter{equation}\text{(\theequation)}\label{t66}] $\partial
^{\beta }\left( \hat{P}^{\#}-P^{0}\right) \equiv 0$ for $\beta \in \mathcal{A%
}$.
\end{itemize}

\begin{itemize}
\item[\refstepcounter{equation}\text{(\theequation)}\label{t67}] $\left\vert
\partial ^{\beta }\left( \hat{P}^{\#}-P^{0}\right) \left( x_{0}\right)
\right\vert \leq C^{\prime }M_{0}\delta ^{m-\left\vert \beta \right\vert }$
for all $\beta \in \mathcal{M}$, with $C^{\prime }$ determined by $C_{B}$, $m
$, $n$.
\end{itemize}
\end{corollary}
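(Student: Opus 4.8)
The plan is to obtain Corollary \ref{cor-to-transport} as an immediate specialization of the Transport Lemma (Lemma \ref{lemma-transport}). I would instantiate that lemma with $\hat{\mathcal{A}} = \mathcal{A}$ and $\hat{P}^{0} = P^{0}$, and with $\hat{C}_{B} = C_{B}$ together with any convenient fixed choice of $C_{DIFF}$, say $C_{DIFF} = 1$. Since the hypotheses and conclusions of the Transport Lemma treat $\mathcal{A}$ and $\hat{\mathcal{A}}$ as a priori different, the whole point of the corollary is simply to record what the lemma says when they coincide and the two base polynomials agree.

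Next I would check the hypotheses \eqref{t1}$\cdots$\eqref{t5} of the Transport Lemma under these choices. Condition \eqref{t1} holds because $\mathcal{A}$ is monotonic by \eqref{t62} and $\hat{\mathcal{A}} = \mathcal{A} \subseteq \mathcal{M}$. Condition \eqref{t2} is exactly \eqref{t63}, used for both the $\mathcal{A}$-basis and the $\hat{\mathcal{A}}$-basis (which here are one and the same). Conditions \eqref{t3} and \eqref{t4} hold trivially since $P^{0} - \hat{P}^{0} = 0$, so $\partial^{\beta}(P^{0} - \hat{P}^{0}) \equiv 0$ for every $\beta \in \mathcal{M}$, and in particular $\le C_{DIFF}M_{0}\delta^{m-|\beta|}$. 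Condition \eqref{t5} is \eqref{t64}: the threshold $\epsilon_{0}$ in the Transport Lemma is determined by $C_{B}$, $\hat{C}_{B}$, $C_{DIFF}$, $m$, $n$, which with $\hat{C}_{B} = C_{B}$ and $C_{DIFF} = 1$ collapses to a constant determined by $C_{B}$, $m$, $n$, exactly as in \eqref{t64}.

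Applying the Transport Lemma then produces $\hat{P}^{\#} \in \mathcal{P}$ satisfying \eqref{t6}, \eqref{t7}, \eqref{t8}. Because $\hat{\mathcal{A}} = \mathcal{A}$, the two basis assertions in \eqref{t6} are identical, and each gives precisely \eqref{t65}; the dependence of $C_{B}^{\prime}$ on $C_{B}$, $\hat{C}_{B}$, $C_{DIFF}$, $m$, $n$ collapses to a dependence on $C_{B}$, $m$, $n$. Conclusion \eqref{t7} is conclusion \eqref{t66}, and conclusion \eqref{t8} is conclusion \eqref{t67}, again with the controlled constant $C^{\prime}$ now depending only on $C_{B}$, $m$, $n$.

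The only real work is bookkeeping: one must confirm that every controlled constant in the Transport Lemma, which nominally depends on $C_{B}$, $\hat{C}_{B}$, $C_{DIFF}$, $m$, $n$, becomes a constant depending only on $C_{B}$, $m$, $n$ once $\hat{C}_{B} = C_{B}$ and $C_{DIFF} = 1$ are fixed. There is no new analytic content, and this is the ``main obstacle'' only in the trivial sense of being the only thing that needs checking. This completes the proof of the corollary.
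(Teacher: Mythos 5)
Your proposal is correct and matches the paper exactly: the paper introduces Corollary \ref{cor-to-transport} precisely as the special case of the Transport Lemma with $\hat{\mathcal{A}}=\mathcal{A}$ and $\hat{P}^0=P^0$, so the verification you perform (fixing $\hat{C}_B=C_B$, $C_{DIFF}=1$, noting \eqref{t3}, \eqref{t4} hold trivially, and collapsing the constant dependencies) is the intended, and only, content of the proof.
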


\part{The Main Lemma}

\section{Statement of the Main Lemma}

\label{statement-of-the-main-lemma}

For $\mathcal{A}\subseteq \mathcal{M}$ monotonic, we define

\begin{itemize}
\item[\refstepcounter{equation}\text{(\theequation)}\label{m1}] $l\left( 
\mathcal{A}\right) =1+3\cdot \#\left\{ \mathcal{A}^{\prime }\subseteq 
\mathcal{M}:\mathcal{A}^{\prime }\text{ monotonic, }\mathcal{A}^{\prime }<%
\mathcal{A}\right\} $.
\end{itemize}

Thus,

\begin{itemize}
\item[\refstepcounter{equation}\text{(\theequation)}\label{m2}] $l\left( 
\mathcal{A}\right) -3\geq l\left( \mathcal{A}^{\prime }\right) $ for $%
\mathcal{A}^{\prime },\mathcal{A}\subseteq \mathcal{M}$ monotonic with $%
\mathcal{A}^{\prime }<\mathcal{A}$.
\end{itemize}

By induction on $\mathcal{A}$ (with respect to the order relation $<$), we
will prove the following result.

{\textbf{Main Lemma for $\mathcal{A}$}}\thinspace \thinspace {\textit{Let $%
\vec{\Gamma}_{0}=\left( \Gamma _{0}\left( x,M\right) \right) _{x\in E,M>0}$
be a $\left( C_{w},\delta _{\max }\right) $-convex shape field, and for $%
l\geq 1$, let $\vec{\Gamma}_{l}=\left( \Gamma _{l}\left( x,M\right) \right)
_{x\in E,M>0}$ be the $l$-th refinement of $\vec{\Gamma}_{0}$. Fix a dyadic
cube $Q_{0}\subset \mathbb{R}^{n}$, a point $x_{0}\in E\cap 5\left(
Q_{0}^{+}\right) $ and a polynomial $P^{0}\in \mathcal{P}$, as well as
positive real numbers $M_{0}$, $\epsilon $, $C_{B}$. We make the following
assumptions.}}

\begin{itemize}
\item[(A1)] \textit{$\vec{\Gamma}_{l\left( \mathcal{A}\right) }$ has an $%
\left( \mathcal{A},\epsilon ^{-1}\delta _{Q_{0}},C_{B}\right) $-basis at $%
\left( x_{0},M_{0},P^{0}\right) $.}

\item[(A2)] $\epsilon ^{-1}\delta _{Q_{0}}\leq \delta _{\max }$.

\item[(A3)] (\textquotedblleft Small $\epsilon $ Assumption")\textit{\ $%
\epsilon $ is less than a small enough constant determined by $C_{B}$, $C_{w}
$, $m$, $n$.}
\end{itemize}

\textit{Then there exists $F\in C^{m}\left( \frac{65}{64}Q_{0}\right) $
satisfying the following conditions.}

\begin{itemize}
\item[(C1)] \textit{$\left\vert \partial ^{\beta }\left( F-P^{0}\right)
\right\vert \leq C\left( \epsilon \right) M_{0}\delta _{Q_{0}}^{m-\left\vert
\beta \right\vert }$ on $\frac{65}{64}Q_{0}$ for $\left\vert \beta
\right\vert \leq m$, where $C\left( \epsilon \right) $ is determined by $%
\epsilon $, $C_{B}$, $C_{w}$, $m$, $n$.}

\item[(C2)] \textit{$J_{z}\left( F\right) \in \Gamma _{0}\left( z,C^{\prime
}\left( \epsilon \right) M_{0}\right) $ for all $z\in E\cap \frac{65}{64}%
Q_{0}$, where $C^{\prime }\left( \epsilon \right) $ is determined by $%
\epsilon $, $C_{B}$, $C_{w}$, $m$, $n$.}
\end{itemize}

\begin{remark}
We state the Main Lemma only for monotonic $\mathcal{A}$.
\end{remark}

Note that $x_0$ may fail to belong to $\frac{65}{64}Q_0$, hence the
assertion $J_{x_0}(F)=P^0$ may be meaningless. Even if $x_0 \in \frac{65}{64}%
Q_0$, we do not assert that $J_{x_0}(F)=P^0$.

\section{The Base Case}

\label{the-base-case} The base case of our induction on $\mathcal{A}$ is the
case $\mathcal{A}= \mathcal{M}$.

In this section, we prove the Main Lemma for $\mathcal{M}$. The hypotheses
of the lemma are as follows:

\begin{itemize}
\item[\refstepcounter{equation}\text{(\theequation)}\label{b1}] $\vec{\Gamma}%
_{0}=\left( \Gamma _{0}\left( x,M\right) \right) _{x\in E,M>0}$ is a $\left(
C_{w},\delta _{\max }\right) $-convex shape field.
\end{itemize}

\begin{itemize}
\item[\refstepcounter{equation}\text{(\theequation)}\label{b2}] $\vec{\Gamma}%
_{1}=\left( \Gamma _{1}\left( x,M\right) \right) _{x\in E,M>0}$ is the first
refinement of $\vec{\Gamma}_{0}$.
\end{itemize}

\begin{itemize}
\item[\refstepcounter{equation}\text{(\theequation)}\label{b3}] $\vec{\Gamma}%
_{1}$ has an $\left( \mathcal{M},\epsilon ^{-1}\delta _{Q_{0}},C_{B}\right) $%
-basis at $\left( x_{0},M_{0},P^{0}\right) $.
\end{itemize}

\begin{itemize}
\item[\refstepcounter{equation}\text{(\theequation)}\label{b4}] $\epsilon
^{-1}\delta _{Q_{0}}\leq \delta _{\max }$.
\end{itemize}

\begin{itemize}
\item[\refstepcounter{equation}\text{(\theequation)}\label{b5}] $\epsilon $
is less than a small enough constant determined by $C_{B}$, $C_{w}$, $m$, $n$%
.
\end{itemize}

\begin{itemize}
\item[\refstepcounter{equation}\text{(\theequation)}\label{b6}] $x_{0}\in
5\left( Q_{0}\right) ^{+}\cap E$.
\end{itemize}

We write $c$, $C$, $C^{\prime }$, etc., to denote constants determined by $%
C_{B}$, $C_{W}$, $m$, $n$. These symbols may denote different constants in
different occurrences.

\begin{itemize}
\item[\refstepcounter{equation}\text{(\theequation)}\label{b7}] Let $z\in
E\cap \frac{65}{64}Q_{0}$.
\end{itemize}

Then \eqref{b6}, \eqref{b7} imply that

\begin{itemize}
\item[\refstepcounter{equation}\text{(\theequation)}\label{b8}] $\left\vert
z-x_{0}\right\vert \leq C\delta _{Q_{0}}=C\epsilon \cdot \left( \epsilon
^{-1}\delta _{Q_{0}}\right) $.
\end{itemize}

From \eqref{b1}, \eqref{b2}, \eqref{b3}, \eqref{b5}, \eqref{b8}, and
Corollary \ref{cor-to-transport} in Section \ref{transport-lemma}, we obtain
a polynomial $\hat{P}^{\#}\in \mathcal{P}$ such that

\begin{itemize}
\item[\refstepcounter{equation}\text{(\theequation)}\label{b9}] $\vec{\Gamma}%
_{0}$ has an $\left( \mathcal{M},\epsilon ^{-1}\delta _{Q_{0}},C^{\prime
}\right) $-basis at $\left( z,M_{0},\hat{P}^{\#}\right) $, and
\end{itemize}

\begin{itemize}
\item[\refstepcounter{equation}\text{(\theequation)}\label{b10}] $\partial
^{\beta }\left( \hat{P}^{\#}-P^{0}\right) =0$ for $\beta \in \mathcal{M}$.
\end{itemize}

From \eqref{b9}, we have $\hat{P}^\# \in \Gamma_0(z,C^{\prime }M_0)$, while %
\eqref{b10} tells us that $\hat{P}^\# = P^0$. Thus,

\begin{itemize}
\item[\refstepcounter{equation}\text{(\theequation)}\label{b11}] $P^{0}\in
\Gamma _{0}\left( z,C^{\prime }M_{0}\right) $ for all $z\in \frac{65}{64}%
Q_{0}\cap E$.
\end{itemize}

Consequently, the function $F:=P^0$ on $\frac{65}{64}Q_0$ satisfies the
conclusions (C1), (C2) of the Main Lemma for $\mathcal{M}$.

This completes the proof of the Main Lemma for $\mathcal{M}$.   $ \blacksquare $

\section{Setup for the Induction Step}

\label{setup-for-the-induction-step}

Fix a monotonic set $\mathcal{A}$ strictly contained in $\mathcal{M}$, and
assume the following

\begin{itemize}
\item[\refstepcounter{equation}\text{(\theequation)}\label{s1}] \underline{%
Induction Hypothesis}: The Main Lemma for $\mathcal{A}^{\prime }$ holds for
all monotonic $\mathcal{A}^{\prime }< \mathcal{A}$.
\end{itemize}

Under this assumption, we will prove the Main Lemma for $\mathcal{A}$. Thus,
let $\vec{\Gamma}_{0}$, $\vec{\Gamma}_{l}$ $(l\geq 1)$, $C_{w}$, $\delta
_{\max }$, $Q_{0}$, $x_{0}$, $P^{0}$, $M_{0}$, $\epsilon $, $C_{B}$ be as in
the hypotheses of the Main Lemma for $\mathcal{A}$. Our goal is to prove the
existence of $F\in C^{m}(\frac{65}{64}Q_{0})$ satisfying conditions (C1) and
(C2). To do so, we introduce a constant $A\geq 1$, and make the following
additional assumptions.

\begin{itemize}
\item[\refstepcounter{equation}\text{(\theequation)}\label{s2}] \underline{%
Large $A$ assumption:} $A$ exceeds a large enough constant determined by $C_B
$, $C_w$, $m$, $n$.
\end{itemize}

\begin{itemize}
\item[\refstepcounter{equation}\text{(\theequation)}\label{s3}] \underline{%
Small $\epsilon$ assumption:} $\epsilon$ is less than a small enough
constant determined by $A$, $C_B$, $C_w$, $m$, $n$.
\end{itemize}

We write $c$, $C$, $C^{\prime }$, etc., to denote constants determined by $%
C_{B}$, $C_{w}$, $m$, $n$. Also we write $c(A)$, $C(A)$, $C^{\prime }(A)$,
etc., to denote constants determined by $A$, $C_{B}$, $C_{W}$, $m$, $n$.
Similarly, we write $C\left( \epsilon \right) $, $c\left( \epsilon \right) $%
, $C^{\prime }\left( \epsilon \right) $, etc., to denote constants
determined by $\epsilon $, $A$, $C_{B}$, $C_{w}$, $m$, $n$. These symbols
may denote different constants in different occurrences.

In place of (C1), (C2), we will prove the existence of a function $F\in
C^{m}\left( \frac{65}{64}Q_{0}\right) $ satisfying

\begin{itemize}
\item[(C*1)] $\left\vert \partial ^{\beta }\left( F-P^{0}\right) \right\vert
\leq C\left( \epsilon \right) M_{0}\delta _{Q_{0}}^{m-\left\vert \beta
\right\vert }$ on $\frac{65}{64}Q_{0}$ for $|\beta |\leq \mathcal{M}$; and

\item[(C*2)] $J_{z}\left( F\right) \in \Gamma _{0}\left( z,C\left( \epsilon
\right) M_{0}\right) $ for all $z\in E\cap \frac{65}{64}Q_{0}$.
\end{itemize}

Conditions (C*1), (C*2) differ from (C1), (C2) in that the constants in
(C*1), (C*2) may depend on $A$.

Once we establish (C*1) and (C*2), we may fix $A$ to be a constant
determined by $C_{B}$, $C_{w}$, $m$, $n$, large enough to satisfy the Large $%
A$ Assumption \eqref{s2}. The Small $\epsilon $ Assumption \eqref{s3} will
then follow from the Small $\epsilon $ Assumption (A3) in the Main Lemma for 
$\mathcal{A}$; and the desired conclusions (C1), (C2) will then follow from
(C*1), (C*2).

Thus, our goal is to prove the existence of $F\in C^{m}\left( \frac{65}{64}%
Q_{0}\right) $ satisfying (C*1) and (C*2), assuming \eqref{s1}, \eqref{s2}, %
\eqref{s3} above, along with hypotheses of the Main Lemma for $\mathcal{A}$.
This will complete our induction on $\mathcal{A}$ and establish the Main
Lemma for all monotonic subsets of $\mathcal{M}$.

\section{Calder\'on-Zygmund Decomposition}

\label{cz-decomposition}

We place ourselves in the setting of Section \ref%
{setup-for-the-induction-step}. Let $Q$ be a dyadic cube. We say that $Q$ is
``OK'' if \eqref{cz1} and \eqref{cz2} below are satisfied.

\begin{itemize}
\item[\refstepcounter{equation}\text{(\theequation)}\label{cz1}] $%
5Q\subseteq 5Q_{0}$.
\end{itemize}

\begin{itemize}
\item[\refstepcounter{equation}\text{(\theequation)}\label{cz2}] Either $%
\#(E\cap 5Q)\leq 1$ or there exists $\hat{A}<\mathcal{A}$ (strict
inequality) for which the following holds:
\end{itemize}

\begin{itemize}
\item[\refstepcounter{equation}\text{(\theequation)}\label{cz3}] For each $%
y\in E\cap 5Q$ there exists $\hat{P}^{y}\in \mathcal{P}$ satisfying

\begin{itemize}
\item[(3a)] $\vec{\Gamma}_{l\left( \mathcal{A}\right) -3}$ has a weak $%
\left( \hat{\mathcal{A}},\epsilon ^{-1}\delta _{Q},A\right) $-basis at $%
\left( y,M_{0},\hat{P}^{y}\right) $.

\item[(3b)] $\left\vert \partial ^{\beta }\left( \hat{P}^{y}-P^{0}\right)
\left( x_{0}\right) \right\vert \leq AM_{0}\left( \epsilon ^{-1}\delta
_{Q_{0}}\right) ^{m-\left\vert \beta \right\vert }$ for all $\beta \in 
\mathcal{M}$.

\item[(3c)] $\partial ^{\beta }\left( \hat{P}^{y}-P^{0}\right) \equiv 0$ for 
$\beta \in \mathcal{A}$.
\end{itemize}
\end{itemize}

\begin{remark}
The argument in this section and the next will depend sensitively on several
details of the above definition. Note that (3a) involves $\vec{\Gamma}_{l(%
\mathcal{A})-3}$ rather than $\vec{\Gamma}_{l(\hat{\mathcal{A}})}$, and that
(3b) involves $x_0$, $\delta_{Q_0}$ rather than $y$, $\delta_Q$. Note also
that the set $\hat{\mathcal{A}}$ in \eqref{cz2}, \eqref{cz3} needn't be
monotonic.
\end{remark}

A dyadic cube $Q$ will be called a \underline{Calder\'on-Zygmund cube} (or a 
\underline{CZ} cube) if it is OK, but no dyadic cube strictly containing $Q$
is OK.

Recall that given any two distinct dyadic cubes $Q$, $Q^{\prime }$, either $Q
$ is strictly contained in $Q^{\prime }$, or $Q^{\prime }$ is strictly
contained in $Q$, or $Q \cap Q^{\prime }=\emptyset$. The first two
alternatives here are ruled out if $Q$, $Q^{\prime }$ are CZ cubes. Hence,
the Calder\'on-Zygmund cubes are pairwise disjoint.

Any CZ cube $Q$ satisfies \eqref{cz1} and is therefore contained in the
interior of $5Q_0$. On the other hand, let $x$ be an interior point of $5Q_0$%
. Then any sufficiently small dyadic cube $Q$ containing $x$ satisfies $5Q
\subset 5Q_0$ and $\#(E \cap 5Q) \leq 1$; hence, $Q$ is OK. However, any
sufficiently large dyadic cube $Q$ containing $x$ will fail to satisfy $5Q
\subseteq 5Q_0$; hence $Q$ is not OK. It follows that $x$ is contained in a
maximal OK dyadic cube. Thus, we have proven

\begin{lemma}
\label{lemma-cz1} The CZ cubes form a partition of the interior of $5Q_0$.
\end{lemma}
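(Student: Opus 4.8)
The plan is to establish the two defining properties of a partition separately: pairwise disjointness of the CZ cubes, and the equality of their union with $\operatorname{int}(5Q_0)$. Disjointness is immediate from the dyadic structure: any two distinct dyadic cubes are either nested or disjoint, and nesting is impossible for two CZ cubes, since a CZ cube is by definition a \emph{maximal} OK dyadic cube and hence cannot be strictly contained in another OK cube. So distinct CZ cubes are disjoint.

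For the union, I would first verify $\bigcup\{Q : Q \text{ a CZ cube}\} \subseteq \operatorname{int}(5Q_0)$: if $Q$ is a CZ cube then $Q$ is OK, so $5Q \subseteq 5Q_0$ by \eqref{cz1}, and since $Q$ occupies the concentric middle fifth of $5Q$ it lies in the interior of $5Q$, hence in $\operatorname{int}(5Q_0)$. For the reverse inclusion, fix $x \in \operatorname{int}(5Q_0)$ and consider the increasing chain $Q^{(0)} \subsetneq Q^{(1)} \subsetneq \cdots$ of dyadic cubes containing $x$, where $\delta_{Q^{(j)}} = 2^{j}\delta_{Q^{(0)}} \to \infty$. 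For $j$ large we have $5Q^{(j)} \not\subseteq 5Q_0$ (the sidelength is too big), so $Q^{(j)}$ is not OK; thus at most finitely many cubes in the chain are OK. On the other hand, because $x$ is an \emph{interior} point of $5Q_0$ and $E$ is finite, every sufficiently small dyadic cube $Q$ containing $x$ satisfies $5Q \subseteq 5Q_0$ and $\#(E \cap 5Q) \leq 1$; such a $Q$ obeys \eqref{cz1} and the first alternative of \eqref{cz2}, so it is OK. Hence the chain contains at least one and only finitely many OK cubes, so it has a (necessarily unique) maximal OK member $Q^{\ast}$; by definition $Q^{\ast}$ is a CZ cube, and $x \in Q^{\ast}$. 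This yields $\operatorname{int}(5Q_0) \subseteq \bigcup\{Q : Q \text{ a CZ cube}\}$, and combined with disjointness it proves the lemma.

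The one step that warrants genuine care — the place I would slow down — is the claim that small dyadic cubes about an interior point are OK. It uses both that $x \in \operatorname{int}(5Q_0)$ (so that $5Q \subseteq 5Q_0$ as soon as $\operatorname{diam}(5Q)$ is less than the distance from $x$ to $\mathbb{R}^n \setminus 5Q_0$) and the finiteness of $E$ (so that $5Q$ meets $E$ in at most one point as soon as $\operatorname{diam}(5Q)$ is small). Once the first alternative of \eqref{cz2} is triggered, the elaborate condition \eqref{cz3}, including the weak basis hypothesis (3a) and the estimates (3b), (3c), plays no role here; it will enter only later, when the CZ cubes are put to use.
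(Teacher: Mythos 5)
Your proof is correct and follows essentially the same route as the paper's: disjointness from the nested-or-disjoint dichotomy of dyadic cubes plus maximality, and coverage by observing that small dyadic cubes about an interior point of $5Q_0$ are OK (via finiteness of $E$) while large ones fail \eqref{cz1}, so each interior point lies in a maximal OK cube. The only cosmetic difference is that you spell out why $Q \subset \operatorname{int}(5Q_0)$ via the middle-fifth observation, where the paper states it directly from \eqref{cz1}.
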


Next, we establish

\begin{lemma}
\label{lemma-cz2} Let $Q$, $Q^{\prime }$ be CZ cubes. If $\frac{65}{64}Q
\cap \frac{65}{64}Q^{\prime }\not= \emptyset$, then $\frac{1}{2}\delta_Q
\leq \delta_{Q^{\prime }}\leq 2\delta_Q$.
\end{lemma}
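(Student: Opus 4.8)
The plan is to argue by contradiction, using the maximality built into the notion of a CZ cube. Since $\frac{65}{64}Q\cap\frac{65}{64}Q'$ is symmetric in $Q$ and $Q'$, and since all sidelengths are dyadic, it suffices to show that the assumption $\delta_{Q'}\le\frac14\delta_Q$ is untenable: this yields $\delta_{Q'}\ge\frac12\delta_Q$, and interchanging the roles of $Q$ and $Q'$ then gives $\delta_{Q'}\le 2\delta_Q$. So assume $Q,Q'$ are CZ cubes with $\frac{65}{64}Q\cap\frac{65}{64}Q'\neq\emptyset$ and $\delta_{Q'}\le\frac14\delta_Q$, and fix a common point $w\in\frac{65}{64}Q\cap\frac{65}{64}Q'$.

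The first step is purely geometric: I claim that $5(Q')^{+}\subseteq 5Q$. Working in the $\ell^{\infty}$ norm (all cubes are axis-parallel), the point $w$ lies within distance $\frac{65}{128}\delta_Q$ of the center of $Q$ and within $\frac{65}{128}\delta_{Q'}$ of the center of $Q'$, and the center of $(Q')^{+}$ lies within $\frac12\delta_{Q'}$ of the center of $Q'$; since $\delta_{(Q')^{+}}=2\delta_{Q'}\le\frac12\delta_Q$, the center of $(Q')^{+}$ lies within $\bigl(\frac{65}{128}+\frac{129}{128}\cdot\frac14\bigr)\delta_Q=\frac{389}{512}\delta_Q$ of the center of $Q$, while $5(Q')^{+}$ has $\ell^{\infty}$-radius $5\delta_{Q'}\le\frac{640}{512}\delta_Q$. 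The total, $\frac{1029}{512}\delta_Q$, is less than $\frac52\delta_Q$, the $\ell^{\infty}$-radius of $5Q$; hence $5(Q')^{+}\subseteq 5Q$. Since $Q$ is OK, \eqref{cz1} gives $5Q\subseteq 5Q_0$, so $(Q')^{+}$ satisfies \eqref{cz1} as well, and moreover $E\cap 5(Q')^{+}\subseteq E\cap 5Q$.

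It remains to check that $(Q')^{+}$ satisfies \eqref{cz2}, for then $(Q')^{+}$ is OK, contradicting the fact that $Q'$ is a CZ cube (as $(Q')^{+}$ is a dyadic cube strictly containing $Q'$). We use that $Q$ is OK. If $\#(E\cap 5Q)\le 1$, then $\#(E\cap 5(Q')^{+})\le 1$ and \eqref{cz2} holds trivially. Otherwise there are $\hat{\mathcal{A}}<\mathcal{A}$ and polynomials $(\hat P^{y})_{y\in E\cap 5Q}$ satisfying (3a), (3b), (3c) of \eqref{cz3}; I keep the same $\hat{\mathcal{A}}$ and the same $\hat P^{y}$ for $y\in E\cap 5(Q')^{+}$. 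Conditions (3b) and (3c) are stated in terms of $x_0$, $P^0$ and $\delta_{Q_0}$ and do not mention $Q$, so they hold verbatim. For (3a) we have a weak $(\hat{\mathcal{A}},\epsilon^{-1}\delta_Q,A)$-basis of $\vec{\Gamma}_{l(\mathcal{A})-3}$ at $(y,M_0,\hat P^{y})$; since $\epsilon^{-1}\delta_{(Q')^{+}}=2\epsilon^{-1}\delta_{Q'}\le\epsilon^{-1}\delta_Q$, remark \eqref{pb7} (monotonicity of weak bases in the scale parameter) turns this into a weak $(\hat{\mathcal{A}},\epsilon^{-1}\delta_{(Q')^{+}},A)$-basis at the same point. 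Thus $(Q')^{+}$ satisfies \eqref{cz2} with the same $\hat{\mathcal{A}}<\mathcal{A}$, and is therefore OK — the desired contradiction.

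The only delicate point is the geometric inclusion $5(Q')^{+}\subseteq 5Q$ of the second paragraph; the dilation factor $\frac{65}{64}$ is calibrated exactly so that this survives passing to the dyadic parent and then dilating by $5$, and one must keep track of the $\ell^{\infty}$ bookkeeping carefully. Everything else follows immediately from the definitions: the transfer of (3a) uses only \eqref{pb7}, and (3b)--(3c) are anchored at $x_0$ and $Q_0$ rather than at $Q$, so they require no change.
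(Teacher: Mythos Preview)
Your proof is correct and follows essentially the same approach as the paper's: argue by contradiction, showing that the dyadic parent of the smaller cube has its $5$-fold dilation contained in the $5$-fold dilation of the larger (OK) cube, hence inherits condition \eqref{cz2} via the monotonicity of weak bases \eqref{pb7}, making the parent OK and contradicting maximality. The only cosmetic differences are that the paper assumes $\delta_Q\le\frac14\delta_{Q'}$ (swapping the roles of $Q$ and $Q'$) and asserts the inclusion $5Q^{+}\subset 5Q'$ without the explicit $\ell^{\infty}$ bookkeeping you supply.
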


\begin{proof}
Suppose not. Without loss of generality, we may suppose that $\delta_Q \leq 
\frac{1}{4}\delta_{Q^{\prime }}$. Then $\delta_{Q^+} \leq \frac{1}{2}%
\delta_{Q^{\prime }}$, and $\frac{65}{64}Q^+ \cap \frac{65}{64}Q^{\prime
}\not= \emptyset$; hence, $5Q^+ \subset 5Q^{\prime }$. The cube $Q^{\prime }$
is OK. Therefore,

\begin{itemize}
\item[\refstepcounter{equation}\text{(\theequation)}\label{cz4}] $%
5Q^{+}\subset 5Q^{\prime }\subseteq 5Q_{0}$.
\end{itemize}

If $\#\left( E\cap 5Q^{\prime }\right) \leq 1$, then also $\#\left( E\cap
5Q^{+}\right) \leq 1$. Otherwise, there exists $\hat{\mathcal{A}}<\mathcal{A}
$ such that for each $y\in E\cap 5Q^{\prime }$ there exists $\hat{P}^{y}\in 
\mathcal{P}$ satisfying

\begin{itemize}
\item[\refstepcounter{equation}\text{(\theequation)}\label{cz5}] $\vec{\Gamma%
}_{l\left( \mathcal{A}\right) -3}$ has a weak $\left( \hat{\mathcal{A}}%
,\epsilon ^{-1}\delta _{Q^{\prime }},A\right) $-basis at $\left( y,M_{0},%
\hat{P}^{y}\right) $,
\end{itemize}

\begin{itemize}
\item[\refstepcounter{equation}\text{(\theequation)}\label{cz6}] $\left\vert
\partial ^{\beta }\left( \hat{P}^{y}-P^{0}\right) \left( x_{0}\right)
\right\vert \leq AM_{0}\left( \epsilon ^{-1}\delta _{Q_{0}}\right)
^{m-\left\vert \beta \right\vert }$ for $\beta \in \mathcal{M}$, and
\end{itemize}

\begin{itemize}
\item[\refstepcounter{equation}\text{(\theequation)}\label{cz7}] $\partial
^{\beta }\left( \hat{P}^{y}-P^{0}\right) \equiv 0$ for $\beta \in \mathcal{A}
$.
\end{itemize}

For each $y \in E \cap 5Q^+\subseteq E \cap 5Q^{\prime }$, the above $\hat{P}%
^y$ satisfies \eqref{cz6}, \eqref{cz7}; and \eqref{cz5} implies

\begin{itemize}
\item[\refstepcounter{equation}\text{(\theequation)}\label{cz8}] $\vec{\Gamma%
}_{l\left( \mathcal{A}\right) -3}$ has a weak $\left( \hat{\mathcal{A}}%
,\epsilon ^{-1}\delta _{Q^{+}},A\right) $-basis at $\left( y,M_{0},\hat{P}%
^{y}\right) $
\end{itemize}

because $\epsilon ^{-1}\delta _{Q^{+}}<\epsilon ^{-1}\delta _{Q^{\prime }}$,
and because \eqref{cz5}, \eqref{cz8} deal with weak bases. Thus, \eqref{cz4}
holds, and either $\#\left( E\cap 5Q^{+}\right) \leq 1$ or else our $\hat{%
\mathcal{A}}<\mathcal{A}$ and $\hat{P}^{y}$ $\left( y\in E\cap 5Q^{+}\right) 
$ satisfy \eqref{6}, \eqref{7}, \eqref{8}. This tells us that $Q^{+}$ is OK.
However, $Q^{+}$ strictly contains the CZ cube $Q$; therefore, $Q^{+}$
cannot be OK. This contradiction completes the proof of Lemma \ref{lemma-cz2}%
.
\end{proof}

Note that the proof of Lemma \ref{lemma-cz2} made use of our decision to
involve $x_0$, $\delta_{Q_0}$ rather than $y$, $\delta_Q$ in (3b), as well
as our decision to use weak bases in (3a).

\begin{lemma}
\label{lemma-cz3} Only finitely many CZ cubes $Q$ satisfy the condition

\begin{itemize}
\item[\refstepcounter{equation}\text{(\theequation)}\label{cz9}] $\frac{65}{%
64}Q \cap \frac{65}{64} Q_0 \not= \emptyset$.
\end{itemize}
\end{lemma}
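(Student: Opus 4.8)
The plan is to show that every CZ cube $Q$ satisfying \eqref{cz9} has sidelength $\delta_Q$ bounded below by a positive number $\delta_\ast$ depending only on $\delta_{Q_0}$ and on the finite set $E$. Finiteness of the family then follows at once: the CZ cubes are pairwise disjoint, and each is contained in $5Q_0$ by \eqref{cz1}, so a volume comparison (the total volume of the relevant cubes is at most $(5\delta_{Q_0})^n$, while each has volume at least $\delta_\ast^n$) bounds their number by $(5\delta_{Q_0}/\delta_\ast)^n<\infty$.

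To obtain the lower bound on $\delta_Q$, I would use that, since $Q$ is a CZ cube, its dyadic parent $Q^+$ is not OK: either $Q^+$ violates \eqref{cz1} or it violates \eqref{cz2}. In the first case $5Q^+\not\subseteq 5Q_0$; but \eqref{cz9} supplies a point $p\in\frac{65}{64}Q\cap\frac{65}{64}Q_0$, and $p$ sits well inside $5Q_0$ — at $\ell^\infty$-distance $\tfrac{255}{128}\delta_{Q_0}$ from the boundary of $5Q_0$ — while $5Q^+$ lies within $\ell^\infty$-distance $\tfrac{769}{128}\delta_Q$ of $p$ (tracking the offsets between the centers of $p$, $Q$, $Q^+$). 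Hence if $\delta_Q\le\tfrac14\delta_{Q_0}$ we would get $5Q^+\subseteq 5Q_0$, a contradiction; so $\delta_Q>\tfrac14\delta_{Q_0}$. In the second case $Q^+$ satisfies \eqref{cz1} but fails \eqref{cz2}, which forces $\#(E\cap 5Q^+)\ge 2$; the two distinct points of $E$ lying in $5Q^+$ are at distance at most $\operatorname{diam}(5Q^+)=10\sqrt n\,\delta_Q$, so $\delta_Q\ge\tfrac{1}{10\sqrt n}r_0$, where $r_0:=\min\{|z-z'|:z,z'\in E,\ z\neq z'\}>0$ (positive because $E$ is finite). Taking $\delta_\ast:=\min\{\tfrac14\delta_{Q_0},\ \tfrac{1}{10\sqrt n}r_0\}$ (with $r_0=+\infty$ when $\#(E)\le 1$, in which case \eqref{cz2} is automatic and only the first case arises) yields the uniform lower bound.

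The estimates in the first case are purely elementary sup-norm bookkeeping on concentric cubes, and I would not belabor them; the only thing to verify is that the numerology closes (and $\tfrac14\delta_{Q_0}$ comfortably suffices). There is no genuine obstacle: this is the familiar principle that Calderón–Zygmund cubes cannot shrink where there is no reason to keep splitting, and the reasons to split here are exactly proximity to the boundary of $5Q_0$ (irrelevant near $\frac{65}{64}Q_0$) or the presence of two points of the finite set $E$ in a small dilate — the same mechanism already used in the proof of Lemma \ref{lemma-cz2}. The mildest point of care is the degenerate cases $\#(E)\le 1$, handled by the convention above.
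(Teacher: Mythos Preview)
Your proof is correct and follows essentially the same approach as the paper: bound $\delta_Q$ below by a positive constant for CZ cubes satisfying \eqref{cz9}, then deduce finiteness. The paper argues the lower bound by noting that any sufficiently small dyadic cube satisfying \eqref{cz9} is automatically OK (hence so is its parent, contradicting maximality), while you make the contrapositive explicit by splitting on which clause of ``OK'' fails for $Q^{+}$; and for the final count the paper uses that only finitely many dyadic cubes of sidelength in $[\delta_\ast,\delta_{Q_0}]$ meet $\frac{65}{64}Q_0$, whereas you use disjointness of CZ cubes inside $5Q_0$ and compare volumes --- both are equally valid and the differences are purely cosmetic.
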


\begin{proof}
There exists some small positive number $\delta _{\ast }$ such that any
dyadic cube $Q$ satisfying \eqref{cz9} and $\delta _{Q}\leq \delta _{\ast }$
must satisfy also $5Q\subset 5Q_{0}$ and $\#(E\cap 5Q)\leq 1$. (Here we use
the finiteness of $E$.)

Consequently, any CZ cube $Q$ satisfying \eqref{cz9} must have sidelength $\delta
_{Q}\geq \delta _{\ast }$ (and also $\delta _{Q}\leq \delta
_{Q_{0}}$ since $5Q\subset 5Q_{0}$ because $Q$ is OK). There are only
finitely many dyadic cubes $Q$ satisfying both \eqref{cz9} and $\delta
_{\ast }\leq \delta _{Q}\leq \delta _{Q_{0}}$.

The proof of Lemma \ref{lemma-cz3} is complete.
\end{proof}

\section{Auxiliary Polynomials}

\label{auxiliary-polynomials}

We again place ourselves in the setting of Section \ref%
{setup-for-the-induction-step} and we make use of the Calder\'on-Zygmund
decomposition defined in Section \ref{cz-decomposition}.

Recall that $x_0 \in E \cap 5Q_0^+$, and that $\vec{\Gamma}_{l(\mathcal{A})}$
has an $(\mathcal{A}, \epsilon^{-1}\delta_{Q_0}, C_B)$-basis at $%
(x_0,M_0,P^0) $; moreover, $\mathcal{A} \subseteq \mathcal{M}$ is monotonic,
and $\epsilon$ is less than a small enough constant determined by $C_B$, $C_w
$, $m$, $n$.

Let $y\in E\cap 5Q_{0}$. Then $|x_{0}-y|\leq C\delta _{Q_{0}}=(C\epsilon
)(\epsilon ^{-1}\delta _{Q_{0}})$. Hence, by Corollary \ref{cor-to-transport}
in Section \ref{transport-lemma}, there exists $P^{y}\in \mathcal{P}$ with
the following properties.

\begin{itemize}
\item[\refstepcounter{equation}\text{(\theequation)}\label{ap1}] $\vec{\Gamma%
}_{l\left( \mathcal{A}\right) -1}$ has an $\left( \mathcal{A},\epsilon
^{-1}\delta _{Q_{0}},C\right) $-basis $\left( P_{\alpha }^{y}\right)
_{\alpha \in \mathcal{A}}$ at $\left( y,M_{0},P^{y}\right) $,
\end{itemize}

\begin{itemize}
\item[\refstepcounter{equation}\text{(\theequation)}\label{ap2}] $\partial
^{\beta }\left( P^{y}-P^{0}\right) \equiv 0$ for $\beta \in \mathcal{A}$,
\end{itemize}

\begin{itemize}
\item[\refstepcounter{equation}\text{(\theequation)}\label{ap3}] $\left\vert
\partial ^{\beta }\left( P^{y}-P^{0}\right) \left( x_{0}\right) \right\vert
\leq CM_{0}\left( \epsilon ^{-1}\delta _{Q_{0}}\right) ^{m-\left\vert \beta
\right\vert }$ for $\beta \in \mathcal{M}$.
\end{itemize}

We fix $P^{y},P_{\alpha }^{y}$ $\left( \alpha \in \mathcal{A}\right) $ as
above for each $y\in E\cap 5Q_{0}$. We study the relationship between the
polynomials $P^{y},P_{\alpha }^{y}$ $(\alpha \in \mathcal{A})$ and the Calder%
\'{o}n-Zygmund decomposition.

\begin{lemma}[``Controlled Auxiliary Polynomials'']
\label{lemma-ap1} Let $Q \in$ CZ, and suppose that

\begin{itemize}
\item[\refstepcounter{equation}\text{(\theequation)}\label{ap4}] $\frac{65}{%
64}Q\cap \frac{65}{64}Q_{0}\not=\emptyset $.
\end{itemize}

Let

\begin{itemize}
\item[\refstepcounter{equation}\text{(\theequation)}\label{ap5}] $y\in E\cap
5Q_{0}\cap 5Q^{+}$.
\end{itemize}

Then

\begin{itemize}
\item[\refstepcounter{equation}\text{(\theequation)}\label{ap6}] $\left\vert
\partial ^{\beta }P_{\alpha }^{y}\left( y\right) \right\vert \leq C\cdot
\left( \epsilon ^{-1}\delta _{Q}\right) ^{\left\vert \alpha \right\vert
-\left\vert \beta \right\vert }$ for $\alpha \in \mathcal{A}$, $\beta \in 
\mathcal{M}$.
\end{itemize}
\end{lemma}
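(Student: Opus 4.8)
The plan is to argue by contradiction with the maximality in the Calder\'on--Zygmund stopping rule. Fix $y,Q$ as in the statement. For a pair $\alpha\in\mathcal{A}$, $\beta\in\mathcal{M}$ with $|\beta|\geq|\alpha|$ the bound \eqref{ap6} is immediate from \eqref{ap1}, since $\epsilon^{-1}\delta_Q\leq\epsilon^{-1}\delta_{Q_0}$ and the exponent $|\alpha|-|\beta|$ is $\leq0$. So suppose, toward a contradiction, that \eqref{ap6} fails for some $\alpha\in\mathcal{A}$, $\beta\in\mathcal{M}$ with $|\beta|<|\alpha|$ and a constant larger than a threshold $C^{\sharp}$ (controlled, i.e. determined by $C_B,C_w,m,n$) to be fixed at the end. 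First I dispose of the case $\delta_Q\geq\tfrac12\delta_{Q_0}$: then $\epsilon^{-1}\delta_Q$ and $\epsilon^{-1}\delta_{Q_0}$ agree up to a factor $\leq2$, so \eqref{ap1} already gives \eqref{ap6} with a controlled constant. Hence assume $\delta_Q\leq\tfrac14\delta_{Q_0}$. An elementary $\ell^{\infty}$ estimate on dyadic cubes, using \eqref{ap4} and $5Q\subseteq5Q_0$ (which holds because the CZ cube $Q$ satisfies \eqref{cz1}), shows $5Q^{+}\subseteq5Q_0$. Since $Q^{+}$ strictly contains the CZ cube $Q$, $Q^{+}$ is not OK; as \eqref{cz1} holds for $Q^{+}$, this forces \eqref{cz2} to fail for $Q^{+}$, so $\#(E\cap5Q^{+})\geq2$ and there is \emph{no} $\hat{\mathcal{A}}<\mathcal{A}$ with the property in \eqref{cz3}. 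I will contradict this by producing such an $\hat{\mathcal{A}}$.

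By \eqref{ap1} together with remark \eqref{pb7} (a weak basis survives shrinking the scale, and $\epsilon^{-1}\delta_Q\leq\epsilon^{-1}\delta_{Q_0}$), the family $(P^{y}_{\alpha})_{\alpha\in\mathcal{A}}$ is a weak $(\mathcal{A},\epsilon^{-1}\delta_Q,C)$-basis for $\vec{\Gamma}_{l(\mathcal{A})-1}$ at $(y,M_0,P^{y})$, with $C$ controlled (independent of $A$). The negation of \eqref{ap6} says precisely that $\max_{\alpha\in\mathcal{A},\beta\in\mathcal{M}}(\epsilon^{-1}\delta_Q)^{|\beta|-|\alpha|}|\partial^{\beta}P^{y}_{\alpha}(y)|>C^{\sharp}$; choosing $C^{\sharp}$ large enough (determined by $C,C_w,m,n$), the Relabeling Lemma (Lemma \ref{lemma-pb2}), applied to $\vec{\Gamma}_{l(\mathcal{A})-1}$ — which is $(C_{l(\mathcal{A})-1},\delta_{\max})$-convex by Lemma \ref{lemma-wsf4} — yields a monotonic $\hat{\mathcal{A}}<\mathcal{A}$ (strict) and an $(\hat{\mathcal{A}},\epsilon^{-1}\delta_Q,C')$-basis for $\vec{\Gamma}_{l(\mathcal{A})-1}$ at $(y,M_0,P^{y})$, with $C'$ controlled. (A harmless rescaling of $M_0$ by a controlled factor matches the exact hypotheses of Lemma \ref{lemma-pb2}.)

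Now I transport this to every $y'\in E\cap5Q^{+}$. Since $|y-y'|\leq C\delta_Q=(C\epsilon)(\epsilon^{-1}\delta_Q)$ and $\epsilon$ is small (by \eqref{s3}), Corollary \ref{cor-to-transport} — applied to the shape field $\vec{\Gamma}_{l(\mathcal{A})-2}$ with first refinement $\vec{\Gamma}_{l(\mathcal{A})-1}$, and to the monotonic set $\hat{\mathcal{A}}$ — produces $\hat{P}^{y'}$ with an $(\hat{\mathcal{A}},\epsilon^{-1}\delta_Q,C'')$-basis for $\vec{\Gamma}_{l(\mathcal{A})-2}$ at $(y',M_0,\hat{P}^{y'})$, together with $\partial^{\beta}(\hat{P}^{y'}-P^{y})\equiv0$ for $\beta\in\hat{\mathcal{A}}$ and $|\partial^{\beta}(\hat{P}^{y'}-P^{y})(y)|\leq C''M_0(\epsilon^{-1}\delta_Q)^{m-|\beta|}$ for $\beta\in\mathcal{M}$. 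Because refinements decrease ($\Gamma_{l(\mathcal{A})-2}\subseteq\Gamma_{l(\mathcal{A})-3}$) and a weak basis tolerates doubling the scale with only a factor $2^{m}$ loss, the Large-$A$ assumption \eqref{s2} gives a weak $(\hat{\mathcal{A}},\epsilon^{-1}\delta_{Q^{+}},A)$-basis for $\vec{\Gamma}_{l(\mathcal{A})-3}$ at $(y',M_0,\hat{P}^{y'})$ — condition (3a) of \eqref{cz3}. For (3b), write $\hat{P}^{y'}-P^{0}=(\hat{P}^{y'}-P^{y})+(P^{y}-P^{0})$, Taylor-expand the first term from $y$ to $x_0$ using $|x_0-y|\leq C\delta_{Q_0}$ and $\delta_Q\leq\delta_{Q_0}$ (the factors $(\epsilon\,\delta_{Q_0}/\delta_Q)^{|\gamma|}(\delta_Q/\delta_{Q_0})^{m-|\beta|}$ that appear are $\leq1$ for small $\epsilon$, since $m-|\beta|-|\gamma|\geq1$), and add \eqref{ap3}; this gives $|\partial^{\beta}(\hat{P}^{y'}-P^{0})(x_0)|\leq AM_0(\epsilon^{-1}\delta_{Q_0})^{m-|\beta|}$ (again using \eqref{s2}). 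Finally, to arrange (3c) one replaces $\hat{P}^{y'}$ by $\tilde{P}^{y'}:=\hat{P}^{y'}-\sum_{\beta\in\mathcal{A}}\tfrac1{\beta!}\partial^{\beta}(\hat{P}^{y'}-P^{y})(y)\,(x-y)^{\beta}$; the subtracted polynomial has controlled coefficients, so (3b) and the weak-basis property (3a) survive (absorbing the correction into the centering polynomial via the Trivial Remark on Convex Sets and the bounds on the $\hat{\mathcal{A}}$-basis polynomials), while $\partial^{\beta}(\tilde{P}^{y'}-P^{y})\equiv0$ on $\mathcal{A}$ by construction, hence $\partial^{\beta}(\tilde{P}^{y'}-P^{0})\equiv0$ on $\mathcal{A}$ by \eqref{ap2} and monotonicity of $\mathcal{A}$. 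Thus the $\tilde{P}^{y'}$ and $\hat{\mathcal{A}}<\mathcal{A}$ verify \eqref{cz3} for $Q^{+}$, the desired contradiction; so \eqref{ap6} holds with $C=C^{\sharp}$.

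The main obstacle is the bookkeeping that makes this work: one must (i) spend refinement levels only within the budget $l(\mathcal{A})\to l(\mathcal{A})-3$ dictated by \eqref{m2} and the definition of OK; (ii) work throughout at the scale $\epsilon^{-1}\delta_Q$, which can be \emph{much} smaller than $\epsilon^{-1}\delta_{Q_0}$, so that every scale change passes through the ``weak'' notion — where shrinking \eqref{pb7} and doubling are cheap — and never through a full basis, where they are not; and (iii) keep the centering polynomial tied to the original $P^{0}$ so that (3b), (3c) come out with $x_0,\delta_{Q_0}$ and with exact vanishing of the $\mathcal{A}$-derivatives. Verifying in detail that the correction in the last step does not destroy the $(\hat{\mathcal{A}},\cdot,A)$-basis property — i.e. that the subtracted polynomial is genuinely absorbable — is the delicate computation; everything else is routine tracking of controlled constants, including the rescalings of $M_0$ needed to match the hypotheses of Lemma \ref{lemma-pb2} and Corollary \ref{cor-to-transport}.
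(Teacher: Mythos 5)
Your approach captures part of the paper's strategy (contradict maximality of the CZ stopping rule by producing an OK cube strictly containing $Q$, using the Relabeling Lemma to extract a strictly smaller $\hat{\mathcal{A}}$) but misses the key technical device and has a fatal gap in the final patch.

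\textbf{The missing idea: the intermediate scale $\tilde{Q}$.} You work directly at the scale $\epsilon^{-1}\delta_Q$ and attempt to show that $Q^{+}$ is OK. The paper instead produces a dyadic cube $\tilde{Q}$ strictly between $Q$ and (roughly) $2^{-10}Q_0$, chosen so that the quantity $X_{\tilde{\nu}}:=\max_{\alpha\in\mathcal{A},\beta\in\mathcal{M}}(\epsilon^{-1}\delta_{\tilde{Q}})^{|\beta|-|\alpha|}|\partial^{\beta}P^{y}_{\alpha}(y)|$ is \emph{pinned between} $2^{-m}K$ and $K$ for a fixed controlled constant $K$. The lower bound triggers strictness in the Relabeling Lemma; the upper bound guarantees that $(P^{y}_{\alpha})_{\alpha\in\mathcal{A}}$ is a \emph{full} $(\mathcal{A},\epsilon^{-1}\delta_{\tilde Q},CK)$-basis (not merely weak) with a \emph{controlled} constant $CK$ --- see \eqref{ap34} in the paper. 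At your scale $\epsilon^{-1}\delta_Q$ the corresponding max $X_0$ is by hypothesis $>C^{\sharp}$ but has no upper bound, so the full $\mathcal{A}$-basis constant $\sim X_0$ is uncontrolled; shrinking the scale $\epsilon^{-1}\delta_{Q_0}\to\epsilon^{-1}\delta_Q$ via \eqref{pb6} also costs a factor $(\delta_{Q_0}/\delta_Q)^{m}$, which is uncontrolled when $\delta_Q\ll\delta_{Q_0}$. (Note that Corollary \ref{cor-to-lemma-ap1}, which would give the controlled $\mathcal{A}$-basis at scale $\epsilon^{-1}\delta_Q$, is a \emph{consequence} of the lemma you are proving --- using it here would be circular.)

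\textbf{The gap in the final patch.} Because you lack the controlled $\mathcal{A}$-basis, you use Corollary \ref{cor-to-transport} with the single set $\hat{\mathcal{A}}$, which yields $\partial^{\beta}(\hat{P}^{y'}-P^{y})\equiv0$ on $\hat{\mathcal{A}}$, not on $\mathcal{A}$. To repair (3c), you subtract $h=\sum_{\beta\in\mathcal{A}}\tfrac1{\beta!}\partial^{\beta}(\hat{P}^{y'}-P^{y})(y)(x-y)^{\beta}$. This does enforce $\partial^{\beta}(\tilde{P}^{y'}-P^{y})\equiv0$ on $\mathcal{A}$, but it does \emph{not} preserve (3a): the Trivial Remark on Convex Sets only lets you perturb by controlled multiples of the basis directions $P_{\gamma}$, $\gamma\in\hat{\mathcal{A}}$, in which directions you have verified $\hat{P}^{y'}\pm cM_0\delta^{m-|\gamma|}P_{\gamma}\in\Gamma$. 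The correction $h$ is a combination of monomials $(x-y)^{\beta}$ for $\beta\in\mathcal{A}$, which need not lie in the span of those $P_{\gamma}$ (there is no inclusion relation between $\mathcal{A}$ and $\hat{\mathcal{A}}$). So there is no justification that $\tilde{P}^{y'}\in\Gamma_{l(\mathcal{A})-2}(y',CM_0)$ or that the centered basis inclusions survive. The paper avoids this entirely by invoking the full Transport Lemma (Lemma \ref{lemma-transport}) with \emph{both} $\mathcal{A}$ and $\hat{\mathcal{A}}$: its conclusion \eqref{t7} delivers the $\mathcal{A}$-vanishing automatically, and the internal construction (the coefficients $s^{\#}_{\alpha}$ chosen in \eqref{t41}--\eqref{t43}) guarantees $\Gamma$-membership by design. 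But to invoke it one needs the controlled $\mathcal{A}$-basis at the working scale, which is exactly what the intermediate scale $\tilde{Q}$ provides. The two gaps are thus one and the same missing idea.
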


\begin{proof}
Let $K\geq 1$ be a large enough constant to be picked below and assume that

\begin{itemize}
\item[\refstepcounter{equation}\text{(\theequation)}\label{ap7}] $%
\max_{\alpha \in \mathcal{A}\text{,}\beta \in \mathcal{M}}\left( \epsilon
^{-1}\delta _{Q}\right) ^{\left\vert \beta \right\vert -\left\vert \alpha
\right\vert }\left\vert \partial ^{\beta }P_{\alpha }^{y}\left( y\right)
\right\vert >K\text{.} $
\end{itemize}

We will derive a contradiction.

Thanks to \eqref{ap1}, we have

\begin{itemize}
\item[\refstepcounter{equation}\text{(\theequation)}\label{ap8}] $%
P^{y},P^{y}\pm CM_{0}\cdot \left( \epsilon ^{-1}\delta _{Q_{0}}\right)
^{m-\left\vert \alpha \right\vert }P_{\alpha }^{y}\in \Gamma _{l\left( 
\mathcal{A}\right) -1}\left( y,CM_{0}\right) $ for $\alpha \in \mathcal{A}$,
\end{itemize}

\begin{itemize}
\item[\refstepcounter{equation}\text{(\theequation)}\label{ap9}] $\partial
^{\beta }P_{\alpha }^{y}\left( y\right) =\delta _{\beta \alpha }$ for $\beta
,\alpha \in \mathcal{A}$,
\end{itemize}

and

\begin{itemize}
\item[\refstepcounter{equation}\text{(\theequation)}\label{ap10}] $%
\left\vert \partial ^{\beta }P_{\alpha }^{y}\left( y\right) \right\vert \leq
C\left( \epsilon ^{-1}\delta _{Q_{0}}\right) ^{\left\vert \alpha \right\vert
-\left\vert \beta \right\vert }$ for $\alpha \in \mathcal{A}$, $\beta \in 
\mathcal{M}$.
\end{itemize}

Also,

\begin{itemize}
\item[\refstepcounter{equation}\text{(\theequation)}\label{ap11}] $5Q\subset
5Q_{0}$ since $Q$ is OK.
\end{itemize}

If $\delta _{Q}\geq 2^{-12}\delta _{Q_{0}}$, then from \eqref{ap10}, %
\eqref{ap11}, we would have

\begin{itemize}
\item[\refstepcounter{equation}\text{(\theequation)}\label{ap14}] $%
\max_{\alpha \in \mathcal{A}\text{,}\beta \in \mathcal{M}}\left( \epsilon
^{-1}\delta _{Q}\right) ^{\left\vert \beta \right\vert -\left\vert \alpha
\right\vert }\left\vert \partial ^{\beta }P_{\alpha }^{y}\left( y\right)
\right\vert \leq C^{\prime }$.
\end{itemize}

We will pick

\begin{itemize}
\item[\refstepcounter{equation}\text{(\theequation)}\label{ap15}] $%
K>C^{\prime }$, with $C^{\prime }$ as in \eqref{ap14}.
\end{itemize}

Then \eqref{ap14} contradicts our assumption \eqref{ap7}.

Thus, we must have

\begin{itemize}
\item[\refstepcounter{equation}\text{(\theequation)}\label{ap16}] $\delta
_{Q}<2^{-12}\delta _{Q_{0}}$.
\end{itemize}

Let

\begin{itemize}
\item[\refstepcounter{equation}\text{(\theequation)}\label{ap17}] $Q=\hat{Q}%
_{0}\subset \hat{Q}_{1}\subset \cdots \subset \hat{Q}_{\nu _{\max }}$ be all
the dyadic cubes containing $Q$ and having sidelength at most $2^{-10}\delta
_{Q_{0}}$.
\end{itemize}

Then

\begin{itemize}
\item[\refstepcounter{equation}\text{(\theequation)}\label{ap18}] $\hat{Q}%
_{0}=Q$, $\delta _{\hat{Q}_{\nu _{\max }}}=2^{-10}\delta _{Q_{0}}$, $\hat{Q}%
_{\nu +1}=\left( \hat{Q}_{\nu }\right) ^{+}$ for $0\leq \nu \leq \nu _{\max
}-1$, and $\nu _{\max }\geq 2$.
\end{itemize}

For $1\leq \nu \leq \nu _{\max }$, we define

\begin{itemize}
\item[\refstepcounter{equation}\text{(\theequation)}\label{ap19}] $X_{\nu
}=\max_{\alpha \in \mathcal{A}\text{,}\beta \in \mathcal{M}}\left( \epsilon
^{-1}\delta _{\hat{Q}_{\nu }}\right) ^{\left\vert \beta \right\vert
-\left\vert \alpha \right\vert }\left\vert \partial ^{\beta }P_{\alpha
}^{y}\left( y\right) \right\vert $.
\end{itemize}

From \eqref{ap7} and \eqref{ap10}, we have

\begin{itemize}
\item[\refstepcounter{equation}\text{(\theequation)}\label{ap20}] $X_{0}>K$, 
$X_{\nu _{\max }}\leq C^{\prime }$,
\end{itemize}

and from \eqref{ap18}, \eqref{ap19}, we have

\begin{itemize}
\item[\refstepcounter{equation}\text{(\theequation)}\label{ap21}] $%
2^{-m}X_{\nu }\leq X_{\nu +1}\leq 2^{m}X_{\nu }$, for $0\leq \nu \leq \nu
_{\max }$.
\end{itemize}

We will pick

\begin{itemize}
\item[\refstepcounter{equation}\text{(\theequation)}\label{ap22}] $%
K>C^{\prime }$ with $C^{\prime }$ as in \eqref{ap20}.
\end{itemize}

Then $\tilde{\nu}:=\min \left\{ \nu :X_{\nu }\leq K\right\} $ and $\tilde{Q}=%
\hat{Q}_{\tilde{\nu}}$ satisfy the following, thanks to \eqref{ap20}, %
\eqref{ap21}, \eqref{ap22}: $\tilde{\nu}\not=0$, hence

\begin{itemize}
\item[\refstepcounter{equation}\text{(\theequation)}\label{ap23}] $\tilde{Q}$
is a dyadic cube strictly containing $Q$; also $2^{-m}K\leq X_{\tilde{\nu}%
}\leq K$,
\end{itemize}

hence

\begin{itemize}
\item[\refstepcounter{equation}\text{(\theequation)}\label{ap24}] $%
2^{-m}K\leq \max_{\alpha \in \mathcal{A}\text{,}\beta \in \mathcal{M}}\left(
\epsilon ^{-1}\delta _{\tilde{Q}}\right) ^{\left\vert \beta \right\vert
-\left\vert \alpha \right\vert }\left\vert \partial ^{\beta }P_{\alpha
}^{y}\left( y\right) \right\vert \leq K$.
\end{itemize}

Also, since $Q\subset \tilde{Q}$, we have $\frac{65}{64}\tilde{Q}\cap \frac{%
65}{64}Q_{0}\not=\emptyset $ by \eqref{ap4}; and since $\delta _{\tilde{Q}%
}\leq 2^{-10}\delta _{Q_{0}}$, we conclude that

\begin{itemize}
\item[\refstepcounter{equation}\text{(\theequation)}\label{ap25}] $5\tilde{Q}%
\subset 5Q_{0}$.
\end{itemize}

From \eqref{ap8}, \eqref{ap10}, and \eqref{ap25}, we have

\begin{itemize}
\item[\refstepcounter{equation}\text{(\theequation)}\label{ap26}] $%
P^{y},P^{y}\pm cM_{0}\left( \epsilon ^{-1}\delta _{\tilde{Q}}\right)
^{m-\left\vert \alpha \right\vert }P_{\alpha }^{y}\in \Gamma _{l\left( 
\mathcal{A}\right) -1}\left( y,CM_{0}\right) \subset \Gamma _{l\left( 
\mathcal{A}\right) -2}\left( y,CM_{0}\right) $ for $\alpha \in \mathcal{A}$;
\end{itemize}

and

\begin{itemize}
\item[\refstepcounter{equation}\text{(\theequation)}\label{ap27}] $%
\left\vert \partial ^{\beta }P_{\alpha }^{y}\left( y\right) \right\vert \leq
C\left( \epsilon ^{-1}\delta _{\tilde{Q}}\right) ^{\left\vert \alpha
\right\vert -\left\vert \beta \right\vert }$ for $\alpha \in \mathcal{A}$, $%
\beta \in \mathcal{M}$, $\beta \geq \alpha $.
\end{itemize}

Our results \eqref{ap9}, \eqref{ap26}, \eqref{ap27} tell us that

\begin{itemize}
\item[\refstepcounter{equation}\text{(\theequation)}\label{ap28}] $\left(
P_{\alpha }^{y}\right) _{\alpha \in \mathcal{A}}$ is a weak $\left( \mathcal{%
A},\epsilon ^{-1}\delta _{\tilde{Q}},C\right) $-basis for $\vec{\Gamma}%
_{l\left( \mathcal{A}\right) -2}$ at $\left( y,M_{0},P^{y}\right) $.
\end{itemize}

Note also that

\begin{itemize}
\item[\refstepcounter{equation}\text{(\theequation)}\label{ap29}] $\epsilon
^{-1}\delta _{\tilde{Q}}\leq \epsilon ^{-1}\delta _{Q_{0}}\leq \delta _{\max
}$, by \eqref{ap25} and hypothesis (A2) of the Main Lemma for $\mathcal{A}$.
\end{itemize}

Moreover,

\begin{itemize}
\item[\refstepcounter{equation}\text{(\theequation)}\label{ap30}] $\vec{%
\Gamma}_{l\left( \mathcal{A}\right) -2}$ is $\left( C,\delta _{\max }\right) 
$-convex, thanks to Lemma \ref{lemma-wsf4} (B).
\end{itemize}

If we take

\begin{itemize}
\item[\refstepcounter{equation}\text{(\theequation)}\label{ap31}] $K\geq
C^{\ast }$ for a large enough $C^{\ast }$,
\end{itemize}

then \eqref{ap24}, \eqref{ap28}$\cdots $\eqref{ap31} and the Relabeling
Lemma (Lemma \ref{lemma-pb2}) produce a monotonic set $\hat{\mathcal{A}}%
\subset \mathcal{M}$, such that

\begin{itemize}
\item[\refstepcounter{equation}\text{(\theequation)}\label{ap32}] $\hat{%
\mathcal{A}}<\mathcal{A}$ (strict inequality)
\end{itemize}

and

\begin{itemize}
\item[\refstepcounter{equation}\text{(\theequation)}\label{ap33}] $\vec{%
\Gamma}_{l\left( \mathcal{A}\right) -2}$ has an $\left( \hat{\mathcal{A}}%
,\epsilon ^{-1}\delta _{\tilde{Q}},C\right) $-basis at $\left(
y,M_{0},P^{y}\right) $.
\end{itemize}

Also, from \eqref{ap9}, \eqref{ap24}, \eqref{ap26}, we see that

\begin{itemize}
\item[\refstepcounter{equation}\text{(\theequation)}\label{ap34}] $\left(
P_{\alpha }^{y}\right) _{\alpha \in \mathcal{A}}$ is an $\left( \mathcal{A}%
,\epsilon ^{-1}\delta _{\tilde{Q}},CK\right) $-basis for $\vec{\Gamma}%
_{l\left( \mathcal{A}\right) -2}$ at $\left( y,M_{0},P^{y}\right) $.
\end{itemize}

We now pick

\begin{itemize}
\item[\refstepcounter{equation}\text{(\theequation)}\label{ap35}] $K=\hat{C}$
(a constant determined by $C_{B}$, $C_{w}$, $m$, $n$), with $\hat{C}\geq 1 $
large enough to satisfy \eqref{ap15}, \eqref{ap22}, \eqref{ap31}.
\end{itemize}

Then \eqref{ap33} and \eqref{ap34} tell us that

\begin{itemize}
\item[\refstepcounter{equation}\text{(\theequation)}\label{ap36}] $\vec{%
\Gamma}_{l\left( \mathcal{A}\right) -2}$ has both an $\left( \hat{\mathcal{A}%
},\epsilon ^{-1}\delta _{\tilde{Q}},C\right) $-basis and an $\left( \mathcal{%
A},\epsilon ^{-1}\delta _{\tilde{Q}},C\right) $-basis at $\left(
y,M_{0},P^{y}\right) $.
\end{itemize}

Let $z\in E\cap 5\tilde{Q}$. Then $z,y\in 5\tilde{Q}^{+}$, hence

\begin{itemize}
\item[\refstepcounter{equation}\text{(\theequation)}\label{ap37}] $%
\left\vert z-y\right\vert \leq C\delta _{\tilde{Q}}=C\epsilon \cdot \left(
\epsilon ^{-1}\delta _{\tilde{Q}}\right) $.
\end{itemize}

From \eqref{ap36}, \eqref{ap37}, the Small $\epsilon $ Assumption and Lemma %
\ref{lemma-transport} (and our hypothesis that $\mathcal{A}$ is monotonic;
see Section \ref{setup-for-the-induction-step}), we obtain a polynomial $%
\check{P}^{z}\in \mathcal{P}$, such that

\begin{itemize}
\item[\refstepcounter{equation}\text{(\theequation)}\label{ap38}] $\vec{\Gamma}
_{l\left( \mathcal{A}\right) -3}$ has an $\left( \hat{\mathcal{A}},\epsilon
^{-1}\delta _{\tilde{Q}},C\right) $-basis at $\left( z,M_{0},\check{P}%
^{z}\right) $,
\end{itemize}

\begin{itemize}
\item[\refstepcounter{equation}\text{(\theequation)}\label{ap39}] $\partial
^{\beta }\left( \check{P}^{z}-P^{y}\right) \equiv 0$ for $\beta \in \mathcal{%
A}$,
\end{itemize}

and

\begin{itemize}
\item[\refstepcounter{equation}\text{(\theequation)}\label{ap40}] $%
\left\vert \partial ^{\beta }\left( \check{P}^{z}-P^{y}\right) \left(
y\right) \right\vert \leq CM_{0}\left( \epsilon ^{-1}\delta _{\tilde{Q}%
}\right) ^{m-\left\vert \beta \right\vert }$ for $\beta \in \mathcal{M}$.
\end{itemize}

From \eqref{ap25} and \eqref{ap40}, we have

\begin{itemize}
\item[\refstepcounter{equation}\text{(\theequation)}\label{ap41}] $%
\left\vert \partial ^{\beta }\left( \check{P}^{z}-P^{y}\right) \left(
y\right) \right\vert \leq CM_{0}\left( \epsilon ^{-1}\delta _{Q_{0}}\right)
^{m-\left\vert \beta \right\vert }$ for $\beta \in \mathcal{M}$.
\end{itemize}

Since $y\in 5Q_{0}$ by hypothesis of Lemma \ref{lemma-ap1}, while $x_{0}\in
5Q_{0}^{+}$ by hypothesis of the Main Lemma for $\mathcal{A}$, we have $%
\left\vert x_{0}-y\right\vert \leq C\delta _{Q_{0}}$, and therefore %
\eqref{ap41} implies that

\begin{itemize}
\item[\refstepcounter{equation}\text{(\theequation)}\label{ap42}] $%
\left\vert \partial ^{\beta }\left( \check{P}^{z}-P^{y}\right) \left(
x_{0}\right) \right\vert \leq CM_{0}\left( \epsilon ^{-1}\delta
_{Q_{0}}\right) ^{m-\left\vert \beta \right\vert }$ for $\beta \in \mathcal{M%
}$.
\end{itemize}

From \eqref{ap2}, \eqref{ap3}, \eqref{ap39}, \eqref{ap42}, we now have

\begin{itemize}
\item[\refstepcounter{equation}\text{(\theequation)}\label{ap43}] $\partial
^{\beta }\left( \check{P}^{z}-P^{0}\right) \equiv 0$ for $\beta \in \mathcal{%
A}$
\end{itemize}

and

\begin{itemize}
\item[\refstepcounter{equation}\text{(\theequation)}\label{ap44}] $%
\left\vert \partial^{\beta }\left( \check{P}^{z}-P^{0}\right) \left(
x_{0}\right) \right\vert \leq CM_{0}\left( \epsilon ^{-1}\delta
_{Q_{0}}\right) ^{m-\left\vert \beta \right\vert }$ for $\beta \in \mathcal{M%
}$.
\end{itemize}

Our results \eqref{ap38}, \eqref{ap43}, \eqref{ap44} hold for every $z\in
E\cap 5\tilde{Q}$.

We recall the Large $A$ Assumption in the Section \ref%
{setup-for-the-induction-step}. Then \eqref{ap25}, \eqref{ap32}, \eqref{ap38}%
, \eqref{ap43}, \eqref{ap44} yield the following results: $5\tilde{Q}\subset
5Q_{0}$, $\hat{\mathcal{A}}<\mathcal{A}$ (strict inequality).

For every $z\in E\cap 5\tilde{Q}$, there exists $\check{P}^{z}\in \mathcal{P}
$ such that

\begin{itemize}
\item $\vec{\Gamma}_{l\left( \mathcal{A}\right) -3}$ has an $\left( \hat{%
\mathcal{A}},\epsilon ^{-1}\delta _{\tilde{Q}},A\right) $-basis at $\left(
z,M_{0},\check{P}^{z}\right) $.

\item $\partial ^{\beta }\left( \check{P}^{z}-P^{0}\right) \equiv 0$ for $%
\beta \in \mathcal{A}$.

\item $\left\vert \partial ^{\beta }\left( \check{P}^{z}-P^{0}\right) \left(
x_{0}\right) \right\vert \leq AM_{0}\left( \epsilon ^{-1}\delta
_{Q_{0}}\right) ^{m-\left\vert \beta \right\vert }$ for $\beta \in \mathcal{M%
}$.
\end{itemize}

Comparing the above results with the definition of an OK cube, we conclude
that $\tilde{Q}$ is OK.

However, since $\tilde{Q}$ properly contains the CZ cube $Q$, (see %
\eqref{ap23}), $\tilde{Q}$ cannot be OK.

This contradiction proves that our assumption \eqref{ap7} must be false.

Thus, $\left\vert \partial ^{\beta }P_{\alpha }^{y}\left( y\right)
\right\vert \leq K\left( \epsilon ^{-1}\delta _{Q}\right) ^{\left\vert
\alpha \right\vert -\left\vert \beta \right\vert }$ for $\alpha \in \mathcal{%
A}$, $\beta \in \mathcal{M}$.

Since we picked $K=\hat{C}$ in \eqref{ap35}, this implies the estimate %
\eqref{ap6}, completing the proof of Lemma \ref{lemma-ap1}.
\end{proof}

\begin{corollary}
\label{cor-to-lemma-ap1}

Let $Q\in $ CZ, and suppose $\frac{65}{64}Q\cap \frac{65}{64}%
Q_{0}\not=\emptyset $. Let $y\in E\cap 5Q_{0}\cap 5Q^{+}$. Then $\left(
P_{\alpha }^{y}\right) _{\alpha \in \mathcal{A}}$ is an $\left( \mathcal{A}%
,\epsilon ^{-1}\delta _{Q},C\right) $-basis for $\vec{\Gamma}_{l\left( 
\mathcal{A}\right) -1}$ at $\left( y,M_{0},P^{y}\right) $.
\end{corollary}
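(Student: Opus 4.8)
The plan is to verify, one by one, the four conditions \eqref{pb1}--\eqref{pb4} defining an $(\mathcal{A},\epsilon^{-1}\delta_{Q},C)$-basis for $\vec{\Gamma}_{l(\mathcal{A})-1}$ at $(y,M_{0},P^{y})$, using as input the fact \eqref{ap1} that $(P_{\alpha}^{y})_{\alpha\in\mathcal{A}}$ is already an $(\mathcal{A},\epsilon^{-1}\delta_{Q_{0}},C)$-basis for $\vec{\Gamma}_{l(\mathcal{A})-1}$ at the \emph{same} point $(y,M_{0},P^{y})$; only the scale parameter changes, from $\epsilon^{-1}\delta_{Q_{0}}$ down to the smaller $\epsilon^{-1}\delta_{Q}$. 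The one thing to watch is that lowering the scale would, by the naive rescaling remark \eqref{pb6}, degrade the constant in condition \eqref{pb4} by an uncontrolled factor $(\delta_{Q_{0}}/\delta_{Q})^{m}$; this is precisely the difficulty that Lemma \ref{lemma-ap1} was proved to overcome.

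Conditions \eqref{pb1} and \eqref{pb3} make no reference to the scale parameter, so they hold for the $\epsilon^{-1}\delta_{Q}$-data with the same constant $C$, directly from \eqref{ap1}: namely $P^{y}\in\Gamma_{l(\mathcal{A})-1}(y,CM_{0})$ and $\partial^{\beta}P_{\alpha}^{y}(y)=\delta_{\alpha\beta}$ for $\alpha,\beta\in\mathcal{A}$. Condition \eqref{pb4} at scale $\delta=\epsilon^{-1}\delta_{Q}$ says precisely that $|\partial^{\beta}P_{\alpha}^{y}(y)|\le C(\epsilon^{-1}\delta_{Q})^{|\alpha|-|\beta|}$ for $\alpha\in\mathcal{A}$, $\beta\in\mathcal{M}$, which is exactly the conclusion \eqref{ap6} of Lemma \ref{lemma-ap1}. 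This is the only step in which the hypotheses $\tfrac{65}{64}Q\cap\tfrac{65}{64}Q_{0}\neq\emptyset$, $Q\in\mathrm{CZ}$, and $y\in E\cap 5Q_{0}\cap 5Q^{+}$ play a role, entering solely through the applicability of that lemma.

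It remains to check condition \eqref{pb2} at scale $\delta=\epsilon^{-1}\delta_{Q}$, namely $P^{y}\pm\tfrac{1}{C}M_{0}(\epsilon^{-1}\delta_{Q})^{m-|\alpha|}P_{\alpha}^{y}\in\Gamma_{l(\mathcal{A})-1}(y,CM_{0})$ for all $\alpha\in\mathcal{A}$. Since $Q$ is OK we have $5Q\subseteq 5Q_{0}$ by \eqref{cz1}, hence $\delta_{Q}\le\delta_{Q_{0}}$; and since $|\alpha|\le m-1$, the exponent $m-|\alpha|$ is positive, so $\lambda_{\alpha}:=(\delta_{Q}/\delta_{Q_{0}})^{m-|\alpha|}\in(0,1]$. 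The identity
\[
P^{y}\pm\tfrac{1}{C}M_{0}(\epsilon^{-1}\delta_{Q})^{m-|\alpha|}P_{\alpha}^{y}
=\lambda_{\alpha}\Bigl(P^{y}\pm\tfrac{1}{C}M_{0}(\epsilon^{-1}\delta_{Q_{0}})^{m-|\alpha|}P_{\alpha}^{y}\Bigr)+(1-\lambda_{\alpha})P^{y}
\]
exhibits the left side as a convex combination of polynomials lying in $\Gamma_{l(\mathcal{A})-1}(y,CM_{0})$ — the parenthesized term by \eqref{pb2} for the $\epsilon^{-1}\delta_{Q_{0}}$-basis of \eqref{ap1}, and $P^{y}$ by \eqref{pb1} for that basis — so convexity of $\Gamma_{l(\mathcal{A})-1}(y,CM_{0})$ yields \eqref{pb2} at scale $\epsilon^{-1}\delta_{Q}$ with the same constant $C$.

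Combining these four verifications, $(P_{\alpha}^{y})_{\alpha\in\mathcal{A}}$ satisfies \eqref{pb1}--\eqref{pb4} for $\vec{\Gamma}_{l(\mathcal{A})-1}$ at $(y,M_{0},P^{y})$ with $\delta=\epsilon^{-1}\delta_{Q}$ and a constant $C$ determined by $C_{B}$, $C_{w}$, $m$, $n$, which is the assertion of the Corollary. There is no genuine obstacle beyond the downward-rescaling bookkeeping above; all the real work lies in Lemma \ref{lemma-ap1}, which supplies the single estimate that the naive rescaling would have spoiled.
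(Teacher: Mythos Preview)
Your proof is correct and follows essentially the same approach as the paper's. The only difference is cosmetic: where the paper simply asserts that the scale-$\epsilon^{-1}\delta_{Q_{0}}$ inclusion \eqref{pb2} ``implies'' the scale-$\epsilon^{-1}\delta_{Q}$ version (since $\delta_{Q}\le\delta_{Q_{0}}$), you spell out the convex-combination identity with $\lambda_{\alpha}$ that underlies this implication.
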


\begin{proof}
From \eqref{ap1} we have

\begin{itemize}
\item[\refstepcounter{equation}\text{(\theequation)}\label{ap45}] $%
P^{y},P^{y}\pm cM_{0}\left( \epsilon ^{-1}\delta _{Q_{0}}\right)
^{m-\left\vert \alpha \right\vert }P_{\alpha }\in \Gamma _{l\left( \mathcal{A%
}\right) -1}\left( y,CM_{0}\right) $ for $\alpha \in \mathcal{A}$;
\end{itemize}

and

\begin{itemize}
\item[\refstepcounter{equation}\text{(\theequation)}\label{ap46}] $\partial
^{\beta }P_{\alpha }^{y}\left( y\right) =\delta _{\beta \alpha }$ for $\beta
,\alpha \in \mathcal{A}$.
\end{itemize}

Since $5Q \subseteq 5Q_0$ (because $Q$ is OK), we have $\delta_Q \leq
\delta_{Q_0}$, and \eqref{ap45} implies

\begin{itemize}
\item[\refstepcounter{equation}\text{(\theequation)}\label{ap47}] $%
P^{y},P^{y}\pm cM_{0}\left( \epsilon ^{-1}\delta _{Q}\right) ^{m-\left\vert
\alpha \right\vert }P_{\alpha }\in \Gamma _{l\left( \mathcal{A}\right)
-1}\left( y,CM_{0}\right) $ for $\alpha \in \mathcal{A}$.
\end{itemize}

Lemma \ref{lemma-ap1} tells us that

\begin{itemize}
\item[\refstepcounter{equation}\text{(\theequation)}\label{ap48}] $%
\left\vert \partial ^{\beta }P_{\alpha }^{y}\left( y\right) \right\vert \leq
C\left( \epsilon ^{-1}\delta _{Q}\right) ^{\left\vert \alpha \right\vert
-\left\vert \beta \right\vert }$ for $\alpha \in \mathcal{A}$, $\beta \in 
\mathcal{M}$.
\end{itemize}

From \eqref{ap46}, \eqref{ap47}, \eqref{ap48}, we conclude that $%
(P^y_\alpha)_{\alpha \in \mathcal{A}}$ is an $(\mathcal{A}%
,\epsilon^{-1}\delta_Q,C)$-basis for $\vec{\Gamma}_{l(\mathcal{A})-1}$ at $%
(y,M_0,P^y)$, completing the proof of Corollary \ref{cor-to-lemma-ap1}.
\end{proof}

\begin{lemma}[\textquotedblleft Consistency of Auxiliary
Polynomials\textquotedblright ]
\label{lemma-ap2} Let $Q,Q^{\prime }\in $ CZ, with

\begin{itemize}
\item[\refstepcounter{equation}\text{(\theequation)}\label{ap49}] $\frac{65}{%
64}Q\cap \frac{65}{64}Q_{0}\not=\emptyset $, $\frac{65}{64}Q^{\prime }\cap 
\frac{65}{64}Q_{0}\not=\emptyset $
\end{itemize}

and

\begin{itemize}
\item[\refstepcounter{equation}\text{(\theequation)}\label{ap50}] $\frac{65}{%
64}Q\cap \frac{65}{64}Q^{\prime }\not=\emptyset $.
\end{itemize}

Let

\begin{itemize}
\item[\refstepcounter{equation}\text{(\theequation)}\label{ap51}] $y\in
E\cap 5Q_{0}\cap 5Q^{+}$, $y^{\prime }\in E\cap 5Q_{0}\cap 5\left( Q^{\prime
}\right) ^{+}$.
\end{itemize}

Then

\begin{itemize}
\item[\refstepcounter{equation}\text{(\theequation)}\label{ap52}] $%
\left\vert \partial ^{\beta }\left( P^{y}-P^{y^{\prime }}\right) \left(
y^{\prime }\right) \right\vert \leq CM_{0}\left( \epsilon ^{-1}\delta
_{Q}\right) ^{m-\left\vert \beta \right\vert }$ for $\beta \in \mathcal{M}$.
\end{itemize}
\end{lemma}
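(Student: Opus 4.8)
The plan is to argue by contradiction, in close parallel with the proof of Lemma~\ref{lemma-ap1}: assuming \eqref{ap52} fails, we produce a dyadic cube $\tilde{Q}$ that strictly contains $Q$, satisfies $5\tilde{Q}\subseteq 5Q_{0}$, and is OK --- contradicting the maximality of the CZ cube $Q$. Throughout, $c,C,\dots$ denote controlled constants (depending only on $C_{B},C_{w},m,n$), and $K$ is a large controlled constant fixed at the very end. First one reduces to a single scale. By Lemma~\ref{lemma-cz2}, $\tfrac12\delta_{Q}\le\delta_{Q'}\le 2\delta_{Q}$; since $\tfrac{65}{64}Q\cap\tfrac{65}{64}Q'\ne\emptyset$ and $y\in 5Q^{+}$, $y'\in 5(Q')^{+}$, we get $|y-y'|\le C\delta_{Q}$, and also $|x_{0}-y|,|x_{0}-y'|\le C\delta_{Q_{0}}$ (since $y,y'\in 5Q_{0}$, $x_{0}\in 5Q_{0}^{+}$). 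From \eqref{ap3} applied at $y$ and at $y'$, together with the fact that $P^{y}-P^{y'}$ has degree $\le m-1$, one gets $|\partial^{\beta}(P^{y}-P^{y'})(y')|\le CM_{0}(\epsilon^{-1}\delta_{Q_{0}})^{m-|\beta|}$ for all $\beta\in\mathcal{M}$. If $\delta_{Q}\ge c\delta_{Q_{0}}$ this already yields \eqref{ap52}, so we may assume $\delta_{Q}\ll\delta_{Q_{0}}$; in particular $5(Q)^{+}\subseteq 5Q_{0}$, and (using $\tfrac{65}{64}Q\cap\tfrac{65}{64}Q_{0}\ne\emptyset$ and the finiteness threshold as in Lemma~\ref{lemma-ap1}) there is a chain $Q=\hat{Q}_{0}\subset\hat{Q}_{1}\subset\cdots\subset\hat{Q}_{\nu_{\max}}$ of dyadic cubes with $\hat{Q}_{\nu+1}=(\hat{Q}_{\nu})^{+}$, $\delta_{\hat{Q}_{\nu_{\max}}}=2^{-10}\delta_{Q_{0}}$, and $5\hat{Q}_{\nu}\subseteq 5Q_{0}$ for all $\nu$.

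Next one locates the critical scale. For each $\nu$ put
\[
Z_{\nu}=\max\Big\{\max_{\beta\in\mathcal{M}}(\epsilon^{-1}\delta_{\hat{Q}_{\nu}})^{|\beta|-m}\tfrac{|\partial^{\beta}(P^{y}-P^{y'})(y')|}{M_{0}},\ \max_{\alpha\in\mathcal{A},\beta\in\mathcal{M}}(\epsilon^{-1}\delta_{\hat{Q}_{\nu}})^{|\beta|-|\alpha|}\big(|\partial^{\beta}P^{y}_{\alpha}(y)|+|\partial^{\beta}P^{y'}_{\alpha}(y')|\big)\Big\}.
\]
By Lemma~\ref{lemma-ap1} (applied to $(Q,y)$ and to $(Q',y')$) and \eqref{ap10}, $Z_{\nu_{\max}}\le C'$; by the assumed failure of \eqref{ap52}, $Z_{0}>K$; and $Z_{\nu+1}\ge 2^{-m}Z_{\nu}$ for every $\nu$, since each term changes by a factor $\ge 2^{-m}$ when the sidelength doubles. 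Hence $\tilde{\nu}:=\min\{\nu:Z_{\nu}\le K\}$ satisfies $1\le\tilde{\nu}\le\nu_{\max}$ and $2^{-m}K<Z_{\tilde{\nu}}\le K$. Set $\tilde{Q}:=\hat{Q}_{\tilde{\nu}}$ and $\tilde{\delta}:=\epsilon^{-1}\delta_{\tilde{Q}}$, so $\epsilon^{-1}\delta_{Q}\le\tilde{\delta}\le\epsilon^{-1}\delta_{Q_{0}}\le\delta_{\max}$ by (A2). From $Z_{\tilde{\nu}}\le K$, Corollary~\ref{cor-to-lemma-ap1}, \eqref{ap8}, \eqref{ap45} and convexity, $(P^{y}_{\alpha})_{\alpha\in\mathcal{A}}$ and $(P^{y'}_{\alpha})_{\alpha\in\mathcal{A}}$ are $(\mathcal{A},\tilde{\delta},CK)$-bases for $\vec{\Gamma}_{l(\mathcal{A})-1}$ at $(y,M_{0},P^{y})$ and $(y',M_{0},P^{y'})$ respectively (hence also for $\vec{\Gamma}_{l(\mathcal{A})-2}$, which is $(C,\delta_{\max})$-convex by Lemma~\ref{lemma-wsf4}(B)). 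If $\#(E\cap 5\tilde{Q})\le 1$, then $\tilde{Q}$ is OK and we are done, so assume $\#(E\cap 5\tilde{Q})\ge 2$.

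Finally one derives the contradiction. Since $Z_{\tilde{\nu}}>2^{-m}K$, one of the two terms defining $Z_{\tilde{\nu}}$ exceeds $2^{-m}K$. If it is the second, the Relabeling Lemma (Lemma~\ref{lemma-pb2}) applied at scale $\tilde{\delta}$ --- its strict-inequality clause applying once $K$ is large enough --- yields a monotonic $\hat{\mathcal{A}}<\mathcal{A}$ together with both an $(\mathcal{A},\tilde{\delta},C)$-basis and an $(\hat{\mathcal{A}},\tilde{\delta},C)$-basis for $\vec{\Gamma}_{l(\mathcal{A})-2}$ at $(y,M_{0},P^{y})$ (take $R^{0}:=P^{y}$ below). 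If it is the first, then $|\partial^{\beta_{0}}(P^{y}-P^{y'})(y')|>2^{-m}KM_{0}\tilde{\delta}^{m-|\beta_{0}|}$ for some $\beta_{0}$; transporting the $(\mathcal{A},\tilde{\delta},CK)$-basis at $(y,M_{0},P^{y})$ to $y'$ by Corollary~\ref{cor-to-transport} gives $\hat{P}^{\#}$ with an $(\mathcal{A},\tilde{\delta},C)$-basis for $\vec{\Gamma}_{l(\mathcal{A})-2}$ at $(y',M_{0},\hat{P}^{\#})$, $\partial^{\beta}(\hat{P}^{\#}-P^{y})\equiv 0$ on $\mathcal{A}$, and $|\partial^{\beta}(\hat{P}^{\#}-P^{y})(y')|\le CM_{0}\tilde{\delta}^{m-|\beta|}$, whence for $K$ large $|\partial^{\beta_{0}}(\hat{P}^{\#}-P^{y'})(y')|\ge M_{0}\tilde{\delta}^{m-|\beta_{0}|}$; Lemma~\ref{lemma-pb3}, applied to $\hat{P}^{\#}$ and the $(\mathcal{A},\tilde{\delta},C)$-basis at $(y',M_{0},P^{y'})$, then produces a monotonic $\hat{\mathcal{A}}<\mathcal{A}$, a polynomial $R^{0}$ with $\partial^{\beta}(R^{0}-P^{y'})\equiv 0$ on $\mathcal{A}$ and $|\partial^{\beta}(R^{0}-P^{y'})(y')|\le M_{0}\tilde{\delta}^{m-|\beta|}$, and an $(\hat{\mathcal{A}},\tilde{\delta},C)$-basis for $\vec{\Gamma}_{l(\mathcal{A})-2}$ at $(y',M_{0},R^{0})$. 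In both cases we have a point $w\in\{y,y'\}$, a monotonic $\hat{\mathcal{A}}<\mathcal{A}$, an $(\mathcal{A},\tilde{\delta},C)$-basis at $(w,M_{0},P^{w})$ and an $(\hat{\mathcal{A}},\tilde{\delta},C)$-basis at $(w,M_{0},R^{0})$ for $\vec{\Gamma}_{l(\mathcal{A})-2}$, with $\partial^{\beta}(R^{0}-P^{w})\equiv 0$ on $\mathcal{A}$ and $|\partial^{\beta}(R^{0}-P^{w})(w)|\le CM_{0}\tilde{\delta}^{m-|\beta|}$. For each $z\in E\cap 5\tilde{Q}$ we have $|z-w|\le C\delta_{\tilde{Q}}=C\epsilon\tilde{\delta}\le\epsilon_{0}\tilde{\delta}$ by the Small $\epsilon$ Assumption, so the Transport Lemma (Lemma~\ref{lemma-transport}) produces $\check{P}^{z}$ for which $\vec{\Gamma}_{l(\mathcal{A})-3}$ has an $(\hat{\mathcal{A}},\tilde{\delta},C)$-basis at $(z,M_{0},\check{P}^{z})$, $\partial^{\beta}(\check{P}^{z}-P^{w})\equiv 0$ on $\mathcal{A}$, and $|\partial^{\beta}(\check{P}^{z}-P^{w})(w)|\le CM_{0}\tilde{\delta}^{m-|\beta|}$. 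Combining this with \eqref{ap2}, \eqref{ap3}, the bounds on $|x_{0}-w|$, and the fact that every polynomial in play has degree $\le m-1$, one checks that the single set $\hat{\mathcal{A}}<\mathcal{A}$ and the polynomials $\check{P}^{z}$ verify (3a) (weakening the $(\hat{\mathcal{A}},\tilde{\delta},C)$-basis and using the Large $A$ Assumption to absorb $C\le A$), (3b) and (3c) in the definition of an OK cube; hence $\tilde{Q}$ is OK, contradicting $\tilde{Q}\supsetneq Q\in\mathrm{CZ}$. The main obstacle is precisely the choice of $\tilde{Q}$ via $Z_{\nu}$: it must simultaneously keep the auxiliary bases controlled at scale $\tilde{\delta}$ (to feed the Relabeling and Transport Lemmas) and keep the discrepancy $P^{y}-P^{y'}$ at the critical size $\sim M_{0}\tilde{\delta}^{m-|\beta|}$ (to trigger Lemma~\ref{lemma-pb3}) --- the same balancing act carried out by the Calder\'on--Zygmund chain in the proof of Lemma~\ref{lemma-ap1}.
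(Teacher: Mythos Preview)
Your argument is correct, but it takes a noticeably more elaborate route than the paper's. The paper works directly at the single scale $\epsilon^{-1}\delta_{Q}$: it invokes Corollary~\ref{cor-to-lemma-ap1} (which rests on Lemma~\ref{lemma-ap1}) to know that $(P^{y}_{\alpha})_{\alpha\in\mathcal{A}}$ is already an $(\mathcal{A},\epsilon^{-1}\delta_{Q},C)$-basis at $(y,M_{0},P^{y})$, transports the basis from $y'$ to $y$ via Corollary~\ref{cor-to-transport} to obtain a polynomial $P'$, and then observes that if the discrepancy $|\partial^{\beta}(P^{y}-P')(y)|$ is too large, Lemma~\ref{lemma-pb3} applied at scale $\epsilon^{-1}\delta_{Q}$ together with one application of the Transport Lemma shows that $Q^{+}$ itself is OK --- no dyadic chain, no search for a critical scale $\tilde{Q}$.

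Your approach re-runs the Calder\'on--Zygmund chain from the proof of Lemma~\ref{lemma-ap1} with an augmented quantity $Z_{\nu}$ that tracks both the basis size and the discrepancy $P^{y}-P^{y'}$. This works, but the basis portion of $Z_{\nu}$ is redundant: Lemma~\ref{lemma-ap1} (applied to $(Q,y)$ and to $(Q',y')$) already forces the basis term to be $\le C$ at scale $\epsilon^{-1}\delta_{Q}$, and \eqref{ap10} bounds it at scale $\epsilon^{-1}\delta_{Q_{0}}$; since each individual term $(\epsilon^{-1}\delta)^{|\beta|-|\alpha|}|\partial^{\beta}P^{y}_{\alpha}(y)|$ is monotone in $\delta$, it stays $\le C$ at every intermediate scale. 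Consequently, once $K$ is chosen large enough, your Case~1 (basis term exceeds $2^{-m}K$) never occurs, and only Case~2 is live. What you gain from the chain is nothing beyond what Corollary~\ref{cor-to-lemma-ap1} already hands you; the paper's proof is shorter precisely because it cashes in that corollary instead of rediscovering it.
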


\begin{proof}
Suppose first that $\delta _{Q}\geq 2^{-20}\delta _{Q_{0}}$. Then \eqref{ap3}
(applied to $y$ and to $y^{\prime }$) tells us that 
\begin{equation*}
\left\vert \partial ^{\beta }\left( P^{y}-P^{y^{\prime }}\right) \left(
x_{0}\right) \right\vert \leq CM_{0}\left( \epsilon ^{-1}\delta
_{Q_{0}}\right) ^{m-\left\vert \beta \right\vert }\text{ for }\beta \in 
\mathcal{M}\text{.}
\end{equation*}%
Hence, $\left\vert \partial ^{\beta }\left( P^{y}-P^{y^{\prime }}\right)
\left( y^{\prime }\right) \right\vert \leq C^{\prime }M_{0}\left( \epsilon
^{-1}\delta _{Q_{0}}\right) ^{m-\left\vert \beta \right\vert }\leq C^{\prime
\prime }M_{0}\left( \epsilon ^{-1}\delta _{Q}\right) ^{m-\left\vert \beta
\right\vert }$ for $\beta \in \mathcal{M}$, since $x_{0}$, $y^{\prime }\in
5Q_{0}^{+}$. Thus, \eqref{ap52} holds if $\delta _{Q}\geq 2^{-20}\delta
_{Q_{0}}$. Suppose

\begin{itemize}
\item[\refstepcounter{equation}\text{(\theequation)}\label{ap53}] $\delta
_{Q}<2^{-20}\delta _{Q_{0}}$.
\end{itemize}

By \eqref{ap50} and Lemma \ref{lemma-cz2}, we have

\begin{itemize}
\item[\refstepcounter{equation}\text{(\theequation)}\label{ap54}] $\delta
_{Q},\delta _{Q^{\prime }}\leq 2^{-20}\delta _{Q_{0}}$ and $\frac{1}{2}%
\delta _{Q}\leq \delta _{Q^{\prime }}\leq 2\delta _{Q}$.
\end{itemize}

Together with \eqref{ap49}, this implies that

\begin{itemize}
\item[\refstepcounter{equation}\text{(\theequation)}\label{ap55}] $%
5Q^{+},5\left( Q^{\prime }\right) ^{+}\subseteq 5Q_{0}$.
\end{itemize}

From Corollary \ref{cor-to-lemma-ap1}, we have

\begin{itemize}
\item[\refstepcounter{equation}\text{(\theequation)}\label{ap56}] $\vec{%
\Gamma}_{l\left( \mathcal{A}\right) -1}$ has an $\left( \mathcal{A},\epsilon
^{-1}\delta _{Q^{\prime }},C\right) $-basis at $\left( y^{\prime
},M_{0},P^{y^{\prime }}\right) $.
\end{itemize}

From \eqref{ap50}, \eqref{ap51}, \eqref{ap54}, we have

\begin{itemize}
\item[\refstepcounter{equation}\text{(\theequation)}\label{ap57}] $%
\left\vert y-y^{\prime }\right\vert \leq C\delta _{Q^{\prime }}=C\epsilon
\left( \epsilon ^{-1}\delta _{Q^{\prime }}\right) $.
\end{itemize}

We recall from \eqref{ap54} and the hypotheses of the Main Lemma for $%
\mathcal{A}$ that

\begin{itemize}
\item[\refstepcounter{equation}\text{(\theequation)}\label{ap58}] $\epsilon
^{-1}\delta _{Q^{\prime }}\leq \epsilon ^{-1}\delta _{Q_{0}}\leq \delta
_{\max }$,
\end{itemize}

and we recall from Section \ref{setup-for-the-induction-step} that

\begin{itemize}
\item[\refstepcounter{equation}\text{(\theequation)}\label{ap59}] $\mathcal{A%
}$ is monotonic.
\end{itemize}

Thanks to \eqref{ap56}$\cdots $\eqref{ap59}, Corollary \ref{cor-to-transport}
in Section \ref{transport-lemma} produces a polynomial $P^{\prime }\in 
\mathcal{P}$ such that

\begin{itemize}
\item[\refstepcounter{equation}\text{(\theequation)}\label{ap60}] $\vec{%
\Gamma}_{l\left( \mathcal{A}\right) -2}$ has an $\left( \mathcal{A},\epsilon
^{-1}\delta _{Q^{\prime }},C\right) $-basis at $\left( y,M_{0},P^{\prime
}\right) $;
\end{itemize}

\begin{itemize}
\item[\refstepcounter{equation}\text{(\theequation)}\label{ap61}] $\partial
^{\beta }\left( P^{\prime }-P^{y^{\prime }}\right) \equiv 0$ for $\beta \in 
\mathcal{A}$;
\end{itemize}

and

\begin{itemize}
\item[\refstepcounter{equation}\text{(\theequation)}\label{ap62}] $%
\left\vert \partial ^{\beta }\left( P^{\prime }-P^{y^{\prime }}\right)
\left( y^{\prime }\right) \right\vert \leq CM_{0}\left( \epsilon ^{-1}\delta
_{Q^{\prime }}\right) ^{m-\left\vert \beta \right\vert }$ for $\beta \in 
\mathcal{M}$.
\end{itemize}

From \eqref{ap60} we have in particular that

\begin{itemize}
\item[\refstepcounter{equation}\text{(\theequation)}\label{ap63}] $P^{\prime
}\in \Gamma _{l\left( \mathcal{A}\right) -2}\left( y,CM_{0}\right) $,
\end{itemize}

and from \eqref{ap62} and \eqref{ap54} we obtain

\begin{itemize}
\item[\refstepcounter{equation}\text{(\theequation)}\label{ap64}] $%
\left\vert \partial ^{\beta }\left( P^{y^{\prime }}-P^{\prime }\right)
\left( y^{\prime }\right) \right\vert \leq CM_{0}\left( \epsilon ^{-1}\delta
_{Q}\right) ^{m-\left\vert \beta \right\vert }$ for $\beta \in \mathcal{M}$.
\end{itemize}

If we knew that

\begin{itemize}
\item[\refstepcounter{equation}\text{(\theequation)}\label{ap65}] $%
\left\vert \partial ^{\beta }\left( P^{y}-P^{\prime }\right) \left( y\right)
\right\vert \leq M_{0}\left( \epsilon ^{-1}\delta _{Q}\right) ^{m-\left\vert
\beta \right\vert }$ for $\beta \in \mathcal{M}$,
\end{itemize}

then also $\left\vert \partial ^{\beta }\left( P^{y}-P^{\prime }\right)
\left( y^{\prime }\right) \right\vert \leq C^{\prime }M_{0}\left( \epsilon
^{-1}\delta _{Q}\right) ^{m-\left\vert \beta \right\vert }$ for $\beta \in 
\mathcal{M}$ since $\left\vert y-y^{\prime }\right\vert \leq C\delta _{Q}$
thanks to \eqref{ap50}, \eqref{ap51}, \eqref{ap54}. Consequently, by %
\eqref{ap64}, we would have $\left\vert \partial ^{\beta }\left(
P^{y^{\prime }}-P^{y}\right) \left( y^{\prime }\right) \right\vert \leq
CM_{0}\left( \epsilon ^{-1}\delta _{Q}\right) ^{m-\left\vert \beta
\right\vert }$ for $\beta \in \mathcal{M}$, which is our desired inequality %
\eqref{ap52}. Thus, Lemma \ref{lemma-ap2} will follow if we can prove %
\eqref{ap65}.

Suppose \eqref{ap65} fails.

Corollary \ref{cor-to-lemma-ap1} shows that $\vec{\Gamma}_{l\left( \mathcal{A%
}\right) -1}$ has an $\left( \mathcal{A},\epsilon ^{-1}\delta _{Q},C\right) $%
-basis at $\left( y,M_{0},P^{y}\right) $. Since $\Gamma _{l\left( \mathcal{A}%
\right) -1}\left( x,M\right) \subset \Gamma _{l\left( \mathcal{A}\right)
-2}\left( x,M\right) $ for all $x\in E$, $M>0$, it follows that

\begin{itemize}
\item[\refstepcounter{equation}\text{(\theequation)}\label{ap66}] $\vec{%
\Gamma}_{l\left( \mathcal{A}\right) -2}$ has an $\left( \mathcal{A},\epsilon
^{-1}\delta _{Q},C\right) $-basis at $\left( y,M_{0},P^{y}\right) $.
\end{itemize}

From \eqref{ap61} and \eqref{ap2} (applied to $y$ and $y^{\prime }$), we see
that

\begin{itemize}
\item[\refstepcounter{equation}\text{(\theequation)}\label{ap67}] $\partial
^{\beta }\left( P^{y}-P^{\prime }\right) \equiv 0$ for $\beta \in \mathcal{A}
$.
\end{itemize}

Since we are assuming that \eqref{ap65} fails, we have

\begin{itemize}
\item[\refstepcounter{equation}\text{(\theequation)}\label{ap68}] $%
\max_{\beta \in \mathcal{M}}\left( \epsilon ^{-1}\delta _{Q}\right)
^{\left\vert \beta \right\vert }\left\vert \partial ^{\beta }\left(
P^{y}-P^{\prime }\right) \left( y\right) \right\vert \geq M_{0}\left(
\epsilon ^{-1}\delta _{Q}\right) ^{m}$.
\end{itemize}

Also, from \eqref{ap54} and the hypotheses of the Main Lemma for $\mathcal{A}
$, we have

\begin{itemize}
\item[\refstepcounter{equation}\text{(\theequation)}\label{ap69}] $\epsilon
^{-1}\delta _{Q}<\epsilon ^{-1}\delta _{Q_{0}}\leq \delta _{\max }$.
\end{itemize}

From Lemma \ref{lemma-wsf4} (B), we know that

\begin{itemize}
\item[\refstepcounter{equation}\text{(\theequation)}\label{ap70}] $\vec{%
\Gamma}_{l\left( \mathcal{A}\right) -2}$ is $\left( C,\delta _{\max }\right) 
$-convex.
\end{itemize}

Our results \eqref{ap63}, \eqref{ap66}$\cdots$\eqref{ap70} and Lemma \ref%
{lemma-pb3} produce a set $\hat{\mathcal{A}}\subseteq \mathcal{M}$ and a
polynomial $\hat{P}\in \mathcal{P}$, with the following properties:

\begin{itemize}
\item[\refstepcounter{equation}\text{(\theequation)}\label{ap71}] $\hat{%
\mathcal{A}}$ is monotonic;
\end{itemize}

\begin{itemize}
\item[\refstepcounter{equation}\text{(\theequation)}\label{ap72}] $\hat{%
\mathcal{A}}<\mathcal{A}$ (strict inequality);
\end{itemize}

\begin{itemize}
\item[\refstepcounter{equation}\text{(\theequation)}\label{ap73}] $\vec{%
\Gamma}_{l\left( \mathcal{A}\right) -2}$ has an $\left( \hat{\mathcal{A}}%
,\epsilon ^{-1}\delta _{Q},C\right) $-basis at $\left( y,M_{0},\hat{P}%
\right) $;
\end{itemize}

\begin{itemize}
\item[\refstepcounter{equation}\text{(\theequation)}\label{ap74}] $\partial
^{\beta }\left( \hat{P}-P^{y}\right) \equiv 0$ for $\beta \in \mathcal{A}$
(recall, $\mathcal{A}$ is monotonic);
\end{itemize}

and

\begin{itemize}
\item[\refstepcounter{equation}\text{(\theequation)}\label{ap75}] $%
\left\vert \partial ^{\beta }\left( \hat{P}-P^{y}\right) \left( y\right)
\right\vert \leq CM\left( \epsilon ^{-1}\delta _{Q}\right) ^{m-\left\vert
\beta \right\vert }$ for $\beta \in \mathcal{M}$.
\end{itemize}

Now let $z\in E\cap 5Q^{+}$. We recall that $\mathcal{A}$ is monotonic, and
that \eqref{ap66}, \eqref{ap67}, \eqref{ap73}, \eqref{ap74}, \eqref{ap75} hold. Moreover,
since $y,z\in 5Q^{+}$, we have $\left\vert y-z\right\vert \leq C\delta
_{Q}=C\epsilon \left( \epsilon ^{-1}\delta _{Q}\right) $. Thanks to the
above remarks and the Small $\epsilon $ Assumption, we may apply Lemma \ref%
{lemma-transport} to produce $\check{P}^{z}\in \mathcal{P}$ satisfying the
following conditions.

\begin{itemize}
\item[\refstepcounter{equation}\text{(\theequation)}\label{ap76}] $\vec{%
\Gamma}_{l\left( \mathcal{A}\right) -3}$ has an $\left( \hat{\mathcal{A}}%
,\epsilon ^{-1}\delta _{Q},C\right) $-basis at $\left( z,M_{0},\check{P}%
^{z}\right) $.
\end{itemize}

\begin{itemize}
\item[\refstepcounter{equation}\text{(\theequation)}\label{ap77}] $\partial
^{\beta }\left( \check{P}^{z}-P^{y}\right) \equiv 0$ for $\beta \in \mathcal{%
A}$.
\end{itemize}

\begin{itemize}
\item[\refstepcounter{equation}\text{(\theequation)}\label{ap78}] $%
\left\vert \partial ^{\beta }\left( \check{P}^{z}-P^{y}\right) \left(
y\right) \right\vert \leq CM_{0}\left( \epsilon ^{-1}\delta _{Q}\right)
^{m-\left\vert \beta \right\vert }$ for $\beta \in \mathcal{M}$.
\end{itemize}

By \eqref{ap76}, and the Large $A$ Assumption,

\begin{itemize}
\item[\refstepcounter{equation}\text{(\theequation)}\label{ap79}] $\vec{%
\Gamma}_{l\left( \mathcal{A}\right) -3}$ has an $\left( \hat{\mathcal{A}}%
,\epsilon ^{-1}\delta _{Q^{+}},A\right) $-basis at $\left( z,M_{0},\check{P}%
^{z}\right) $.
\end{itemize}

By \eqref{ap2} and \eqref{ap77}, we have

\begin{itemize}
\item[\refstepcounter{equation}\text{(\theequation)}\label{ap80}] $\partial
^{\beta }\left( \check{P}^{z}-P^{0}\right) \equiv 0$ for $\beta \in \mathcal{%
A}$.
\end{itemize}

By \eqref{ap54} and \eqref{ap78}, we have $\left\vert \partial ^{\beta
}\left( \check{P}^{z}-P^{y}\right) \left( y\right) \right\vert \leq
CM_{0}\left( \epsilon ^{-1}\delta _{Q_{0}}\right) ^{m-\left\vert \beta
\right\vert }$ for $\beta \in \mathcal{M}$, hence $\left\vert \partial
^{\beta }\left( \check{P}^{z}-P^{y}\right) \left( x_{0}\right) \right\vert
\leq CM_{0}\left( \epsilon ^{-1}\delta _{Q_{0}}\right) ^{m-\left\vert \beta
\right\vert }$ for $\beta \in \mathcal{M}$, since $x,y\in 5Q_{0}^{+}$.
Together with \eqref{ap3} and the Large $A$ Assumption, this yields the
estimate

\begin{itemize}
\item[\refstepcounter{equation}\text{(\theequation)}\label{ap81}] $%
\left\vert \partial ^{\beta }\left( \check{P}^{z}-P^{0}\right) \left(
x_{0}\right) \right\vert \leq AM_{0}\left( \epsilon ^{-1}\delta
_{Q_{0}}\right) ^{m-\left\vert \beta \right\vert }$ for $\beta \in \mathcal{M%
}$.
\end{itemize}

We have proven \eqref{ap79}, \eqref{ap80}, \eqref{ap81} for each $z\in E\cap
5Q^{+}$. Thus, $5Q^{+}\subset 5Q_{0}$ (see \eqref{ap55}), $\hat{\mathcal{A}}<%
\mathcal{A}$ (strict inequality; see \eqref{ap72}), and for each $z\in E\cap
5Q^{+}$ there exists $\check{P}^{z}\in \mathcal{P}$ such that

\begin{itemize}
\item $\vec{\Gamma}_{l\left( \mathcal{A}\right) -3}$ has an $\left( \hat{%
\mathcal{A}},\epsilon ^{-1}\delta _{Q^{+}},A\right) $-basis at $\left(
z,M_{0},\check{P}^{z}\right) $;

\item $\partial ^{\beta }\left( \check{P}^{z}-P^{0}\right) \equiv 0$ for $%
\beta \in \mathcal{A}$; and

\item $\left\vert \partial ^{\beta }\left( \check{P}^{z}-P^{0}\right) \left(
x_{0}\right) \right\vert \leq AM_{0}\left( \epsilon ^{-1}\delta
_{Q_{0}}\right) ^{m-\left\vert \beta \right\vert }$ for $\beta \in \mathcal{M%
}$. (See \eqref{ap79}, \eqref{ap80}, \eqref{ap81}.)
\end{itemize}

Comparing the above results with the definition of an OK cube, we see that $%
Q^{+}$ is OK. On the other hand $Q^{+}$ cannot be OK, since it properly
contains the CZ cube $Q$. Assuming that \eqref{ap65} fails, we have derived
a contradiction. Thus, \eqref{ap65} holds, completing the proof of Lemma \ref%
{lemma-ap2}.
\end{proof}

\section{Good News About CZ Cubes}

\label{gn}

In this section we again place ourselves in the setting of Section \ref%
{setup-for-the-induction-step}, and we make use of the auxiliary polynomials 
$P^{y}$ and the CZ cubes $Q$ defined above.

\begin{lemma}
\label{lemma-gn1} Let $Q \in$ CZ, with

\begin{itemize}
\item[\refstepcounter{equation}\text{(\theequation)}\label{gn1}] $\frac{65}{%
64}Q\cap \frac{65}{64}Q_{0}\not=\emptyset $
\end{itemize}

and

\begin{itemize}
\item[\refstepcounter{equation}\text{(\theequation)}\label{gn2}] $\#\left(
E\cap 5Q\right) \geq 2$.
\end{itemize}

Let

\begin{itemize}
\item[\refstepcounter{equation}\text{(\theequation)}\label{gn3}] $y\in E\cap
5Q$.
\end{itemize}

Then there exist a set $\mathcal{A}^\# \subseteq \mathcal{M}$ and a
polynomial $P^\# \in \mathcal{P}$ with the following properties.

\begin{itemize}
\item[\refstepcounter{equation}\text{(\theequation)}\label{gn4}] $\mathcal{A}%
^\#$ is monotonic.
\end{itemize}

\begin{itemize}
\item[\refstepcounter{equation}\text{(\theequation)}\label{gn5}] $\mathcal{A}%
^\# < \mathcal{A}$ (strict inequality).
\end{itemize}

\begin{itemize}
\item[\refstepcounter{equation}\text{(\theequation)}\label{gn6}] $\vec{\Gamma%
}_{l\left( \mathcal{A}\right) -3}$ has an $\left( \mathcal{A}^{\#},\epsilon
^{-1}\delta _{Q},C\left( A\right) \right) $-basis at $\left(
y,M_{0},P^{\#}\right) $.
\end{itemize}

\begin{itemize}
\item[\refstepcounter{equation}\text{(\theequation)}\label{gn7}] $\left\vert
\partial ^{\beta }\left( P^{\#}-P^{y}\right) \left( y\right) \right\vert
\leq C\left( A\right) M_{0}\left( \epsilon ^{-1}\delta _{Q}\right)
^{m-\left\vert \beta \right\vert }$ for $\beta \in \mathcal{M}$.
\end{itemize}
\end{lemma}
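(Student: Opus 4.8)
The plan is to read off the pair $(\mathcal{A}^{\#},P^{\#})$ from the data certifying that $Q$ is OK, and then — if that first choice happens to violate \eqref{gn7} — to correct it by a single application of Lemma~\ref{lemma-pb3}. Note first that, since $Q\in\mathrm{CZ}$ is in particular OK and $\#(E\cap 5Q)\ge 2$, the definition of an OK cube already forces a set strictly below $\mathcal{A}$, so $\mathcal{A}\neq\emptyset$ and there is something to work with. Concretely, OK-ness of $Q$ together with $\#(E\cap 5Q)\ge 2$ furnishes a (not necessarily monotonic) set $\hat{\mathcal{A}}<\mathcal{A}$ and, applying (3c) of that definition at our point $y\in E\cap 5Q$, a polynomial $\hat P^{y}$ for which $\vec{\Gamma}_{l(\mathcal{A})-3}$ has a weak $(\hat{\mathcal{A}},\epsilon^{-1}\delta_{Q},A)$-basis at $(y,M_{0},\hat P^{y})$, with $\partial^{\beta}(\hat P^{y}-P^{0})\equiv 0$ for $\beta\in\mathcal{A}$ and $|\partial^{\beta}(\hat P^{y}-P^{0})(x_{0})|\le AM_{0}(\epsilon^{-1}\delta_{Q_{0}})^{m-|\beta|}$ for $\beta\in\mathcal{M}$. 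Since $Q$ is OK, $5Q\subseteq 5Q_{0}$, hence $\epsilon^{-1}\delta_{Q}\le\epsilon^{-1}\delta_{Q_{0}}\le\delta_{\max}$; and $\vec{\Gamma}_{l(\mathcal{A})-3}$ is $(C,\delta_{\max})$-convex by Lemma~\ref{lemma-wsf4}(B). Applying the Relabeling Lemma (Lemma~\ref{lemma-pb2}) to this weak basis produces a \emph{monotonic} set $\mathcal{A}^{\#}\le\hat{\mathcal{A}}$ and an honest $(\mathcal{A}^{\#},\epsilon^{-1}\delta_{Q},C(A))$-basis for $\vec{\Gamma}_{l(\mathcal{A})-3}$ at $(y,M_{0},\hat P^{y})$. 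As $\hat{\mathcal{A}}<\mathcal{A}$, this gives \eqref{gn4} and \eqref{gn5}, and with $P^{\#}:=\hat P^{y}$ it gives \eqref{gn6}.

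It then remains to examine \eqref{gn7} for $R:=\hat P^{y}-P^{y}$. From \eqref{ap2}, \eqref{ap3} and the above one gets $\partial^{\beta}R\equiv 0$ for $\beta\in\mathcal{A}$ and a bound on $|\partial^{\beta}R(x_{0})|$ on the scale $\epsilon^{-1}\delta_{Q_{0}}$; but $\delta_{Q}$ can be far smaller than $\delta_{Q_{0}}$, so this need not pass to the scale $\epsilon^{-1}\delta_{Q}$. If $R$ does satisfy \eqref{gn7}, we stop with the choice above. Otherwise $\max_{\beta\in\mathcal{M}}(\epsilon^{-1}\delta_{Q})^{|\beta|}|\partial^{\beta}R(y)|\ge M_{0}(\epsilon^{-1}\delta_{Q})^{m}$, and here I would invoke Corollary~\ref{cor-to-lemma-ap1} — this is exactly where the maximality of the CZ cube $Q$ enters, supplying a scale-$\delta_{Q}$ basis with a \emph{controlled} constant rather than the hopeless constant $C(\delta_{Q_{0}}/\delta_{Q})^{m}$ one would get by rescaling \eqref{ap1}. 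Thus $(P^{y}_{\alpha})_{\alpha\in\mathcal{A}}$ is an honest $(\mathcal{A},\epsilon^{-1}\delta_{Q},C)$-basis for $\vec{\Gamma}_{l(\mathcal{A})-1}$, hence for $\vec{\Gamma}_{l(\mathcal{A})-3}$, at $(y,M_{0},P^{y})$, while the weak basis of the previous paragraph gives $\hat P^{y}\in\Gamma_{l(\mathcal{A})-3}(y,AM_{0})$. These are precisely the hypotheses of Lemma~\ref{lemma-pb3} for $\vec{\Gamma}_{l(\mathcal{A})-3}$ at $(y,M_{0},P^{y})$ with ``$P$'' $=\hat P^{y}$ (enlarging the basis constant to $A$ via \eqref{pb5}, using $\partial^{\beta}R\equiv0$ on $\mathcal{A}$ for \eqref{pb72}, and the failure of \eqref{gn7} for \eqref{pb73}). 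Lemma~\ref{lemma-pb3} then returns a monotonic $\mathcal{A}^{\#}<\mathcal{A}$ and a polynomial $P^{\#}$ such that $\vec{\Gamma}_{l(\mathcal{A})-3}$ has an $(\mathcal{A}^{\#},\epsilon^{-1}\delta_{Q},C(A))$-basis at $(y,M_{0},P^{\#})$, $\partial^{\beta}(P^{\#}-P^{y})\equiv 0$ for $\beta\in\mathcal{A}$, and $|\partial^{\beta}(P^{\#}-P^{y})(y)|\le M_{0}(\epsilon^{-1}\delta_{Q})^{m-|\beta|}$ for all $\beta\in\mathcal{M}$ — i.e. \eqref{gn4}--\eqref{gn7}.

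The delicate point — ``Case~2'' in the introduction's remark about a lucky accident — is exactly this last correction step: it works only because the OK-cube data above \emph{and} the auxiliary basis of Corollary~\ref{cor-to-lemma-ap1} both live at the fixed refinement level $l(\mathcal{A})-3$ (not at $l(\hat{\mathcal{A}})$), so Lemma~\ref{lemma-pb3} returns a basis at that same level $l(\mathcal{A})-3$, which by \eqref{m2} is still a basis for $\vec{\Gamma}_{l(\mathcal{A}^{\#})}$ — exactly what the later induction step for $\mathcal{A}^{\#}$ will require. I expect the only genuine labour to be the bookkeeping of refinement levels and of the $A$-dependent constants, including the harmless inflations of $M_{0}$ by a factor $A$ needed to align $\hat P^{y}\in\Gamma(y,AM_{0})$ with the normalizations in Lemmas~\ref{lemma-pb2} and~\ref{lemma-pb3}; the structural skeleton is the three steps above (extract weak basis from OK-ness, relabel to a monotonic honest basis, and correct via Lemma~\ref{lemma-pb3}).
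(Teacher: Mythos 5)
Your proposal is correct and follows essentially the same route as the paper: extract the weak basis and the quantities $\hat{\mathcal{A}}<\mathcal{A}$, $\hat{P}^{y}$ from the OK-cube data, split into the case where $\hat{P}^{y}-P^{y}$ is already small at scale $\epsilon^{-1}\delta_{Q}$ (Relabeling Lemma) and the case where it is not (Lemma~\ref{lemma-pb3}, fed the honest $(\mathcal{A},\epsilon^{-1}\delta_{Q},C)$-basis from Corollary~\ref{cor-to-lemma-ap1} at level $l(\mathcal{A})-3$). The only difference is cosmetic: you relabel unconditionally and then test \eqref{gn7}, whereas the paper first splits into cases and only invokes the Relabeling Lemma in Case~1; you also correctly identify the crucial role of the fixed refinement level $l(\mathcal{A})-3$.
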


\begin{proof}
Recall that

\begin{itemize}
\item[\refstepcounter{equation}\text{(\theequation)}\label{gn8}] $\partial
^{\beta }\left( P^{y}-P^{0}\right) \equiv 0$ for $\beta \in \mathcal{A}$
(see \eqref{ap2} in Section \ref{auxiliary-polynomials})
\end{itemize}

and that

\begin{itemize}
\item[\refstepcounter{equation}\text{(\theequation)}\label{gn9}] $5Q
\subseteq 5Q_0$, since $Q$ is OK.
\end{itemize}

Thanks to \eqref{gn3} and \eqref{gn9}, Corollary \ref{cor-to-lemma-ap1} in
Section \ref{auxiliary-polynomials} applies, and it tells us that

\begin{itemize}
\item[\refstepcounter{equation}\text{(\theequation)}\label{gn10}] $\vec{%
\Gamma}_{l\left( \mathcal{A}\right) -1}$ has an $\left( \mathcal{A},\epsilon
^{-1}\delta _{Q},C\right) $-basis at $\left( y,M_{0},P^{y}\right) $.
\end{itemize}

On the other hand, $Q$ is OK and $\#(E\cap 5Q) \geq 2$; hence, there exist $%
\hat{\mathcal{A}} \subseteq \mathcal{M}$ and $\hat{P} \in \mathcal{P}$ with
the following properties

\begin{itemize}
\item[\refstepcounter{equation}\text{(\theequation)}\label{gn11}] $\vec{%
\Gamma}_{l\left( \mathcal{A}\right) -3}$ has a weak $\left( \hat{\mathcal{A}}%
,\epsilon ^{-1}\delta _{Q},A\right) $-basis at $\left( y,M_{0},\hat{P}%
\right) $.
\end{itemize}

\begin{itemize}
\item[\refstepcounter{equation}\text{(\theequation)}\label{gn12}] $%
\left\vert \partial ^{\beta }\left( \hat{P}-P^{0}\right) \left( x_{0}\right)
\right\vert \leq AM_{0}\left( \epsilon ^{-1}\delta _{Q_{0}}\right)
^{m-\left\vert \beta \right\vert }$ for $\beta \in \mathcal{M}$.
\end{itemize}

\begin{itemize}
\item[\refstepcounter{equation}\text{(\theequation)}\label{gn13}] $\partial
^{\beta }\left( \hat{P}-P^{0}\right) \equiv 0$ for $\beta \in \mathcal{A}$.
\end{itemize}

\begin{itemize}
\item[\refstepcounter{equation}\text{(\theequation)}\label{gn14}] $\hat{%
\mathcal{A}} < \mathcal{A}$ (strict inequality).
\end{itemize}

We consider separately two cases.

\underline{Case 1:} Suppose that

\begin{itemize}
\item[\refstepcounter{equation}\text{(\theequation)}\label{gn15}] $%
\left\vert \partial ^{\beta }\left( \hat{P}-P^{y}\right) \left( y\right)
\right\vert \leq M_{0}\left( \epsilon ^{-1}\delta _{Q}\right) ^{m-\left\vert
\beta \right\vert }$ for $\beta \in \mathcal{M}$.
\end{itemize}

By Lemma \ref{lemma-wsf4} (B),

\begin{itemize}
\item[\refstepcounter{equation}\text{(\theequation)}\label{gn16}] $\vec{%
\Gamma}_{l\left( \mathcal{A}\right) -3}$ is $\left( C,\delta _{\max }\right) 
$-convex.
\end{itemize}

Also, \eqref{gn9} and hypothesis (A2) of the Main Lemma for $\mathcal{A}$
give

\begin{itemize}
\item[\refstepcounter{equation}\text{(\theequation)}\label{gn17}] $\epsilon
^{-1}\delta _{Q}\leq \epsilon ^{-1}\delta _{Q_{0}}\leq \delta _{\max }$.
\end{itemize}

Applying \eqref{gn11}, \eqref{gn16}, \eqref{17}, and Lemma \ref{lemma-pb2},
we obtain a set $\mathcal{A}^\# \subseteq \mathcal{M}$ such that

\begin{itemize}
\item[\refstepcounter{equation}\text{(\theequation)}\label{gn18}] $\mathcal{A%
}^{\#}\leq \hat{\mathcal{A}}$,
\end{itemize}

\begin{itemize}
\item[\refstepcounter{equation}\text{(\theequation)}\label{gn19}] $
\mathcal{A}^\#$ is monotonic,
\end{itemize}

and

\begin{itemize}
\item[\refstepcounter{equation}\text{(\theequation)}\label{gn20}] $\vec{%
\Gamma}_{l\left( \mathcal{A}\right) -3}$ has an $\left( \mathcal{A}%
^{\#},\epsilon ^{-1}\delta _{Q},C\left( A\right) \right) $-basis at $\left(
y,M_{0},\hat{P}\right) $.
\end{itemize}

Setting $P^\# =\hat{P}$, we obtain the desired conclusions \eqref{gn4}$\cdots
$\eqref{gn7} at once from \eqref{gn14}, \eqref{gn15}, \eqref{gn18}, %
\eqref{gn19}, and \eqref{gn20}.

Thus, Lemma \ref{lemma-gn1} holds in Case 1.

\underline{Case 2:} Suppose that $\left\vert \partial ^{\beta }\left( \hat{P}%
-P^{y}\right) \left( y\right) \right\vert >M_{0}\left( \epsilon ^{-1}\delta
_{Q}\right) ^{m-\left\vert \beta \right\vert }$ for some $\beta \in \mathcal{%
M}$, i.e.,

\begin{itemize}
\item[\refstepcounter{equation}\text{(\theequation)}\label{gn21}] $%
\max_{\beta \in \mathcal{M}}\left( \epsilon ^{-1}\delta _{Q}\right)
^{\left\vert \beta \right\vert }\left\vert \partial ^{\beta }\left( \hat{P}%
-P^{y}\right) \left( y\right) \right\vert >M_{0}\left( \epsilon ^{-1}\delta
_{Q}\right) ^{m}$.
\end{itemize}

From \eqref{gn11} we have

\begin{itemize}
\item[\refstepcounter{equation}\text{(\theequation)}\label{gn22}] $\hat{P}%
\in \Gamma _{l\left( \mathcal{A}\right) -3}\left( y,AM_{0}\right) $
\end{itemize}

Since $\Gamma _{l(\mathcal{A})-1}(x,M)\subseteq \Gamma _{l\left( \mathcal{A}%
\right) -3}\left( x,M\right) $ for all $x\in E,M>0$, \eqref{gn10} implies
that

\begin{itemize}
\item[\refstepcounter{equation}\text{(\theequation)}\label{gn23}] $\vec{%
\Gamma}_{l\left( \mathcal{A}\right) -3}$ has an $\left( \mathcal{A},\epsilon
^{-1}\delta _{Q},C\right) $-basis at $\left( y,M_{0},P^{y}\right) $.
\end{itemize}

As in Case 1,

\begin{itemize}
\item[\refstepcounter{equation}\text{(\theequation)}\label{gn24}] $\vec{%
\Gamma}_{l\left( \mathcal{A}\right) -3}$ is $\left( C,\delta _{\max }\right) 
$-convex,
\end{itemize}

and

\begin{itemize}
\item[\refstepcounter{equation}\text{(\theequation)}\label{gn25}] $\epsilon
^{-1}\delta _{Q}\leq \epsilon ^{-1}\delta _{Q_{0}}\leq \delta _{\max }$.
\end{itemize}

From \eqref{gn8} and \eqref{gn13} we have

\begin{itemize}
\item[\refstepcounter{equation}\text{(\theequation)}\label{gn26}] $\partial
^{\beta }\left( \hat{P}-P^{y}\right) \equiv 0$ for $\beta \in \mathcal{A}$.
\end{itemize}

Thanks to \eqref{21}$\cdots$\eqref{26} and Lemma \ref{lemma-pb3} there exist 
$\mathcal{A}^\# \subseteq \mathcal{M}$ and $P^\# \in \mathcal{P}$ with the
following properties: $\mathcal{A}^\#$ is monotonic; $\mathcal{A}^\# <%
\mathcal{A}$ (strict inequality); $\vec{\Gamma}_{l(\mathcal{A})-3}$ has an $(%
\mathcal{A}^\#,\epsilon^{-1}\delta_{Q},C(A))$-basis at $(y,M_0,P^\#)$; $%
\partial^\beta(P^\#-P^y)\equiv 0$ for $\beta \in \mathcal{A}$; $%
|\partial^\beta (P^\# -P^y)(y)|\leq M_0 (\epsilon^{-1}\delta_Q)^{m-|\beta|}$
for $\beta \in \mathcal{M}$.

Thus, $\mathcal{A}^\#$ and $P^\#$ satisfy \eqref{gn4}$\cdots$\eqref{gn7},
proving Lemma \ref{lemma-gn1} in Case 2.

We have seen that Lemma \ref{lemma-gn1} holds in all cases.
\end{proof}

\begin{remarks}
\begin{itemize} \item The analysis of Case 2 in the proof of Lemma \ref{lemma-gn1} is a new
ingredient, with no analogue in our previous work on Whitney problems.

\item The proof of Lemma \ref{lemma-gn1} gives a $\hat{P}$ that satisfies
also $\partial^\beta(\hat{P}-P^0)\equiv 0$ for $\beta \in \mathcal{A}$, but
we make no use of that.

\item Note that $x_0$ and $\delta_{Q_0}$ appear in \eqref{gn12}, rather than
the desired $y, \delta_Q$. Consequently, \eqref{gn12} is of no help in the
proof of Lemma \ref{lemma-gn1}.
\end{itemize}
\end{remarks}

In the proof of our next result, we use our Induction Hypothesis that the
Main Lemma for $\mathcal{A}^{\prime }$ holds whenever $\mathcal{A}^{\prime }<%
\mathcal{A}$ and $\mathcal{A}^{\prime }$ is monotonic. (See Section \ref%
{setup-for-the-induction-step}.)

\begin{lemma}
\label{lemma-gn2}

Let $Q\in $ CZ. Suppose that

\begin{itemize}
\item[\refstepcounter{equation}\text{(\theequation)}\label{gn27}] $\frac{65}{%
64}Q\cap \frac{65}{64}Q_{0}\not=\emptyset $
\end{itemize}

and

\begin{itemize}
\item[\refstepcounter{equation}\text{(\theequation)}\label{gn28}] $\#\left(
E\cap 5Q\right) \geq 2$.
\end{itemize}

Let

\begin{itemize}
\item[\refstepcounter{equation}\text{(\theequation)}\label{gn29}] $y\in
E\cap 5Q$.
\end{itemize}

Then there exists $F^{y,Q} \in C^m(\frac{65}{64}Q)$ such that

\begin{itemize}
\item[(*1)] $\left\vert \partial ^{\beta }\left( F^{y,Q}-P^{y}\right)
\right\vert \leq C\left( \epsilon \right) M_{0}\delta _{Q}^{m-\left\vert
\beta \right\vert }$ on $\frac{65}{64}Q$, for $\left\vert \beta \right\vert
\leq m$; and

\item[(*2)] $J_{z}\left( F^{y,Q}\right) \in \Gamma _{0}\left( z,C\left(
\epsilon \right) M_{0}\right) $ for all $z\in E\cap \frac{65}{64}Q$.
\end{itemize}
\end{lemma}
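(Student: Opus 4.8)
The plan is to reduce Lemma \ref{lemma-gn2} to the Induction Hypothesis \eqref{s1} by way of Lemma \ref{lemma-gn1}. First I would apply Lemma \ref{lemma-gn1} to the cube $Q$ and the point $y \in E \cap 5Q$; this is legitimate since \eqref{gn27}, \eqref{gn28}, \eqref{gn29} are exactly the hypotheses \eqref{gn1}, \eqref{gn2}, \eqref{gn3}. We obtain a monotonic set $\mathcal{A}^{\#} \subseteq \mathcal{M}$ with $\mathcal{A}^{\#} < \mathcal{A}$ (strict inequality) and a polynomial $P^{\#} \in \mathcal{P}$ such that $\vec{\Gamma}_{l(\mathcal{A})-3}$ has an $(\mathcal{A}^{\#}, \epsilon^{-1}\delta_Q, C(A))$-basis at $(y, M_0, P^{\#})$ and
\[
|\partial^{\beta}(P^{\#}-P^{y})(y)| \leq C(A)\, M_0\, (\epsilon^{-1}\delta_Q)^{m-|\beta|} \quad \text{for all } \beta \in \mathcal{M}.
\]

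Next I would invoke the Main Lemma for $\mathcal{A}^{\#}$, which is available by the Induction Hypothesis \eqref{s1} since $\mathcal{A}^{\#}$ is monotonic and $\mathcal{A}^{\#} < \mathcal{A}$. I apply it with the dyadic cube $Q$ in the role of $Q_0$, the point $y$ in the role of $x_0$ (note $y \in E \cap 5Q \subseteq E \cap 5Q^{+}$), the polynomial $P^{\#}$ in the role of $P^0$, the same $M_0$ and $\epsilon$, and $C_B := C(A)$. Hypothesis (A1) holds because $\mathcal{A}^{\#} < \mathcal{A}$ gives $l(\mathcal{A})-3 \geq l(\mathcal{A}^{\#})$ by \eqref{m2}, whence $\Gamma_{l(\mathcal{A})-3}(x,M) \subseteq \Gamma_{l(\mathcal{A}^{\#})}(x,M)$ for all $x, M$, so the $(\mathcal{A}^{\#}, \epsilon^{-1}\delta_Q, C(A))$-basis for $\vec{\Gamma}_{l(\mathcal{A})-3}$ at $(y,M_0,P^{\#})$ is in particular such a basis for $\vec{\Gamma}_{l(\mathcal{A}^{\#})}$. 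Hypothesis (A2) holds because $Q$ is OK, so $5Q \subseteq 5Q_0$ and hence $\delta_Q \leq \delta_{Q_0}$, giving $\epsilon^{-1}\delta_Q \leq \epsilon^{-1}\delta_{Q_0} \leq \delta_{\max}$ by hypothesis (A2) of the Main Lemma for $\mathcal{A}$. Hypothesis (A3) for $\mathcal{A}^{\#}$ — smallness of $\epsilon$ relative to a constant determined by $C(A)$, $C_w$, $m$, $n$ — follows from the Small $\epsilon$ Assumption \eqref{s3} in Section \ref{setup-for-the-induction-step}, since a constant determined by $C(A), C_w, m, n$ is one determined by $A, C_B, C_w, m, n$. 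The Main Lemma for $\mathcal{A}^{\#}$ then produces $F^{y,Q} \in C^{m}(\tfrac{65}{64}Q)$ with $|\partial^{\beta}(F^{y,Q}-P^{\#})| \leq C(\epsilon) M_0 \delta_Q^{m-|\beta|}$ on $\tfrac{65}{64}Q$ for $|\beta| \leq m$, and $J_z(F^{y,Q}) \in \Gamma_0(z, C(\epsilon) M_0)$ for all $z \in E \cap \tfrac{65}{64}Q$. The second conclusion is exactly (*2).

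Finally, for (*1) I would write $F^{y,Q} - P^{y} = (F^{y,Q} - P^{\#}) + (P^{\#} - P^{y})$. The first summand is controlled by the estimate just obtained. For the second, $P^{\#} - P^{y}$ is a polynomial of degree $\leq m-1$, and $|x-y| \leq C\delta_Q$ for every $x \in \tfrac{65}{64}Q$ since $y$ and $\tfrac{65}{64}Q$ both lie in $5Q$; so Taylor's theorem centered at $y$ together with the displayed bound from Lemma \ref{lemma-gn1} gives, for $|\beta| \leq m-1$,
\[
|\partial^{\beta}(P^{\#}-P^{y})(x)| \leq \sum_{|\gamma| \leq m-1-|\beta|} \tfrac{1}{\gamma!}\, C(A) M_0 (\epsilon^{-1}\delta_Q)^{m-|\beta|-|\gamma|}(C\delta_Q)^{|\gamma|} \leq C(\epsilon) M_0 \delta_Q^{m-|\beta|},
\]
using $\epsilon < 1$; for $|\beta| = m$ the left side vanishes. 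Adding the two estimates yields (*1). The substance of the argument — constructing $\mathcal{A}^{\#}$ and $P^{\#}$ with the required properties, in particular dealing with the delicate Case 2 — has already been carried out in Lemma \ref{lemma-gn1}; the remaining work here is only the bookkeeping of the hypotheses of the Main Lemma for $\mathcal{A}^{\#}$ and the routine Taylor propagation, and I expect no genuine obstacle beyond keeping the constant dependencies straight (in particular that $C(\epsilon)$ is permitted to depend on $A$).
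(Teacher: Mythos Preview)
Your proof is correct and follows essentially the same route as the paper: apply Lemma \ref{lemma-gn1} to obtain $\mathcal{A}^{\#}$ and $P^{\#}$, use $l(\mathcal{A}^{\#})\leq l(\mathcal{A})-3$ to feed the resulting basis into the Main Lemma for $\mathcal{A}^{\#}$ (available by the Induction Hypothesis), and then combine the output with a Taylor-propagated bound on $P^{\#}-P^{y}$ to deduce (*1). Your verification of hypotheses (A1)--(A3) and the constant bookkeeping are slightly more explicit than the paper's, but the argument is the same.
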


\begin{proof}
Our hypotheses \eqref{gn27}, \eqref{gn28}, \eqref{gn29} are precisely the
hypotheses of Lemma \ref{lemma-gn1}. Let $\mathcal{A}^{\#}$, $P^{\#}$
satisfy the conclusions \eqref{gn4}$\cdots $\eqref{gn7} of that Lemma.
Recall the definition of $l(\mathcal{A})$; see \eqref{gn1}, \eqref{gn2} in
Section \ref{statement-of-the-main-lemma}. We have $l(\mathcal{A}^{\#})\leq
l(\mathcal{A})-3$ since $\mathcal{A}^{\#}<\mathcal{A}$; hence \eqref{gn6}
implies that

\begin{itemize}
\item[\refstepcounter{equation}\text{(\theequation)}\label{gn30}] $\vec{%
\Gamma}_{l\left( \mathcal{A}^{\#}\right) }$ has an $\left( \mathcal{A}%
^{\#},\epsilon ^{-1}\delta _{Q},C\left( A\right) \right) $-basis at $\left(
y,M_{0},P^{\#}\right) $.
\end{itemize}

Also, since $Q$ is OK, we have $5Q \subseteq 5Q_0$, hence $\delta_Q \leq
\delta_{Q_0}$. Hence, hypothesis (A2) of the Main Lemma for $\mathcal{A}$
implies that

\begin{itemize}
\item[\refstepcounter{equation}\text{(\theequation)}\label{gn31}] $\epsilon
^{-1}\delta _{Q}\leq \delta _{\max }$.
\end{itemize}

By \eqref{gn4}, \eqref{gn5}, and our Inductive Hypothesis, the Main Lemma
holds for $\mathcal{A}^{\#}$. Thanks to \eqref{gn29}, \eqref{gn30}, %
\eqref{gn31} and the Small $\epsilon $ Assumption in Section \ref%
{setup-for-the-induction-step}, the Main Lemma for $\mathcal{A}^{\#}$ now
yields a function $F\in C^{m}\left( \frac{65}{64}Q\right) $, such that

\begin{itemize}
\item[\refstepcounter{equation}\text{(\theequation)}\label{gn32}] $%
\left\vert \partial ^{\beta }\left( F-P^{\#}\right) \right\vert \leq C\left(
\epsilon \right) M_{0}\delta ^{m-\left\vert \beta \right\vert }$ on $\frac{65%
}{64}Q$, for $\left\vert \beta \right\vert \leq m$; and
\end{itemize}

\begin{itemize}
\item[\refstepcounter{equation}\text{(\theequation)}\label{gn33}] $%
J_{z}\left( F\right) \in \Gamma _{0}\left( z,C\left( \epsilon \right)
M_{0}\right) $ for all $z\in E\cap \frac{65}{64}Q$.
\end{itemize}

Thanks to conclusion \eqref{gn7} of Lemma \ref{lemma-gn1}; (together with %
\eqref{gn29}), we have also

\begin{itemize}
\item[\refstepcounter{equation}\text{(\theequation)}\label{gn34}] $%
\left\vert \partial ^{\beta }\left( P^{\#}-P^{y}\right) \right\vert \leq
C\left( \epsilon \right) M_{0}\delta _{Q}^{m-\left\vert \beta \right\vert }$
on $\frac{65}{64}Q$ for $\left\vert \beta \right\vert \leq m$.
\end{itemize}

(Recall that $P^\#-P^y$ is a polynomial of degree at most $m-1$.) Taking $%
F^{y,Q}=F$, we may read off the desired conclusions (*1) and (*2) from %
\eqref{gn32}, \eqref{gn33}, \eqref{gn34}.

The proof of Lemma \ref{lemma-gn2} is complete.
\end{proof}

\section{Local Interpolants}

In this section, we again place ourselves in the setting of Section \ref%
{setup-for-the-induction-step}. We make use of the Calder\'{o}n-Zygmund
cubes $Q$ and the auxiliary polynomials $P^{y}$ defined above. Let

\begin{itemize}
\item[\refstepcounter{equation}\text{(\theequation)}\label{li0}] $\mathcal{Q}%
=\left\{ Q\in CZ:\frac{65}{64}Q\cap \frac{65}{64}Q_{0}\not=\emptyset
\right\} $.
\end{itemize}

For each $Q\in \mathcal{Q}$, we define a function $F^{Q}\in C^{m}\left( 
\frac{65}{64}Q\right) $ and a polynomial $P^{Q}\in \mathcal{P}$. We proceed
by cases. We say that $Q \in \mathcal{Q}$ is

\begin{description}
\item[Type 1] if $\# (E \cap 5Q) \geq 2$,

\item[Type 2] if $\# (E \cap 5Q) = 1$,

\item[Type 3] if $\#(E\cap 5Q)=0$ and $\delta _{Q}\leq \frac{1}{1024}\delta
_{Q_{0}}$, and

\item[Type 4] if $\#(E\cap 5Q)=0$ and $\delta _{Q}>\frac{1}{1024}\delta
_{Q_{0}}$.
\end{description}

\underline{If $Q$ is of Type 1}, then we pick a point $y_{Q}\in E\cap 5Q$,
and set $P^{Q}=P^{y_{Q}}$. Applying Lemma \ref{lemma-gn2}, we obtain a
function $F^{Q}\in C^{m}\left( \frac{65}{64}Q\right) $ such that

\begin{itemize}
\item[\refstepcounter{equation}\text{(\theequation)}\label{li1}] $\left\vert
\partial ^{\beta }\left( F^{Q}-P^{Q}\right) \right\vert \leq C\left(
\epsilon \right) M_{0}\delta _{Q}^{m-\left\vert \beta \right\vert }$ on $%
\frac{65}{64}Q$, for $\left\vert \beta \right\vert \leq m$; and
\end{itemize}

\begin{itemize}
\item[\refstepcounter{equation}\text{(\theequation)}\label{li2}] $%
J_{z}\left( F^{Q}\right) \in \Gamma _{0}\left( z,C\left( \epsilon \right)
M_{0}\right) $ for all $z\in E\cap \frac{65}{64}Q$.
\end{itemize}

\underline{If $Q$ is of Type 2}, then we let $y_{Q}$ be the one and only
point of $E\cap 5Q$, and define $F^{Q}=P^{Q}=P^{y_{Q}}$. Then \eqref{li1}
holds trivially. If $y_{Q}\not\in \frac{65}{64}Q$ then \eqref{li2} holds
vacuously.

If $y_{Q}\in \frac{65}{64}Q$, then \eqref{li2} asserts that $P^{y_{Q}}\in
\Gamma _{0}\left( y_{Q},C\left( \epsilon \right) M_{0}\right) $. Thanks to %
\eqref{li1} in Section \ref{auxiliary-polynomials}, we know that $%
P^{y_{Q}}\in \Gamma _{l\left( \mathcal{A}\right) -1}\left(
y_{Q},CM_{0}\right) \subset \Gamma _{0}\left( y_{Q},C\left( \epsilon \right)
M_{0}\right) $. Thus, \eqref{li1} and \eqref{li2} hold also when $Q$ is of
Type 2.

\underline{If $Q$ is of Type 3}, then $5Q^{+}\subset 5Q_{0}$, since $\frac{65%
}{64}Q\cap \frac{65}{64}Q_{0}\not=\emptyset $ and $\delta _{Q}\leq \frac{1}{%
1024}\delta _{Q_{0}}$. However, $Q^{+}$ cannot be OK, since $Q$ is a CZ
cube. Therefore $\#\left( E\cap 5Q^{+}\right) \geq 2$. We pick $y_{Q}\in
E\cap 5Q^{+}$, and set $F^{Q}=P^{Q}=P^{y_{Q}}$. Then \eqref{li1} holds
trivially, and \eqref{li2} holds vacuously.

\underline{If $Q$ is of Type 4}, then we set $F^{Q}=P^{Q}=P^{0}$, and again %
\eqref{li1} holds trivially, and \eqref{li2} holds vacuously.

Note that if $Q$ is of Type 1, 2, or 3, then we have defined a point $y_{Q}$%
, and we have $P^Q=P^{y_Q}$ and 

\begin{itemize}
\item[\refstepcounter{equation}\text{(\theequation)}\label{li3}] $y_{Q}\in
E\cap 5Q^{+}\cap 5Q_{0}$.
\end{itemize}

(If $Q$ is of Type 1 or 2, then $y_{Q}\in E\cap 5Q$ and $5Q\subseteq 5Q_{0}$
since $Q$ is OK. If $Q$ is of Type 3, then $y_{Q}\in E\cap 5Q^{+}$ and $%
5Q^{+}\subset 5Q_{0}$). We have picked $F^{Q}$ and $P^{Q}$ for all $Q\in 
\mathcal{Q}$, and \eqref{li1}, \eqref{li2} hold in all cases.

\begin{lemma}[\textquotedblleft Consistency of the $P^{Q}$\textquotedblright 
]
\label{lemma-li1} Let $Q,Q^{\prime }\in \mathcal{Q}$, and suppose $\frac{65}{%
64}Q\cap \frac{65}{64}Q^{\prime }\not=\emptyset $. Then

\begin{itemize}
\item[\refstepcounter{equation}\text{(\theequation)}\label{li4}] $\left\vert
\partial ^{\beta }\left( P^{Q}-P^{Q^{\prime }}\right) \right\vert \leq
C\left( \epsilon \right) M_{0}\delta _{Q}^{m-\left\vert \beta \right\vert }$
on $\frac{65}{64}Q\cap \frac{65}{64}Q^{\prime }$, for $\left\vert \beta
\right\vert \leq m$.
\end{itemize}
\end{lemma}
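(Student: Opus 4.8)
\textbf{Proof proposal for Lemma~\ref{lemma-li1}.} The plan is to reduce \eqref{li4} to the already-established ``Consistency of Auxiliary Polynomials'' (Lemma~\ref{lemma-ap2}), by chasing through the four types of cubes in $\mathcal{Q}$. First I would dispose of the large cubes: if $\delta_Q \geq 2^{-20}\delta_{Q_0}$, then since $\frac{65}{64}Q \cap \frac{65}{64}Q'\neq\emptyset$ and $Q,Q'\in\mathcal{Q}$, all of $Q$, $Q'$, $Q_0$ have comparable sidelengths (and by Lemma~\ref{lemma-cz2}, $\frac{1}{2}\delta_Q\le\delta_{Q'}\le 2\delta_Q$ in any case), and everything lives in $5Q_0^+$; so each $P^Q$ is either $P^0$ (Type~4) or $P^{y_Q}$ with $y_Q\in E\cap 5Q_0$, and \eqref{ap3} (applied to $y_Q$ and/or $y_{Q'}$) together with $|x_0 - y|\le C\delta_{Q_0}$ gives $|\partial^\beta(P^Q - P^{Q'})(x_0)|\le C M_0(\epsilon^{-1}\delta_{Q_0})^{m-|\beta|}$; since $\delta_Q$ is comparable to $\delta_{Q_0}$ and $P^Q - P^{Q'}$ is a polynomial of degree $\le m-1$, Taylor expansion about $x_0$ yields \eqref{li4} on all of $\frac{65}{64}Q\cap\frac{65}{64}Q'$. (One must check this also covers the case where one or both cubes are Type~4, using $P^0\in\Gamma_{l(\mathcal{A})}(x_0,C_BM_0)$ and \eqref{ap2}, \eqref{ap3}, which compare $P^{y}$ to $P^0$ at $x_0$.)

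For the remaining (main) case I would assume $\delta_Q < 2^{-20}\delta_{Q_0}$, hence by Lemma~\ref{lemma-cz2} also $\delta_{Q'}<2^{-19}\delta_{Q_0}$ and the sidelengths of $Q,Q'$ are within a factor of $2$. Now I split on the types of $Q$ and $Q'$. If both are Type~1, 2, or 3, then $P^Q = P^{y_Q}$, $P^{Q'}=P^{y_{Q'}}$ with $y_Q\in E\cap 5Q^+\cap 5Q_0$ and $y_{Q'}\in E\cap 5(Q')^+\cap 5Q_0$ (this is exactly \eqref{li3}); and $\frac{65}{64}Q\cap\frac{65}{64}Q'\ne\emptyset$, $\frac{65}{64}Q\cap\frac{65}{64}Q_0\ne\emptyset$, $\frac{65}{64}Q'\cap\frac{65}{64}Q_0\ne\emptyset$ are precisely the hypotheses \eqref{ap49}, \eqref{ap50} of Lemma~\ref{lemma-ap2}. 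That lemma gives $|\partial^\beta(P^{y_Q}-P^{y_{Q'}})(y_{Q'})|\le C M_0(\epsilon^{-1}\delta_Q)^{m-|\beta|}$ for $\beta\in\mathcal{M}$. Since $P^Q - P^{Q'}$ has degree $\le m-1$ and $y_{Q'}\in \frac{65}{64}Q_0 \cap 5(Q')^+$ lies within distance $C\delta_Q$ of every point of $\frac{65}{64}Q\cap\frac{65}{64}Q'$ (using $\delta_{Q'}\le 2\delta_Q$), Taylor's theorem about $y_{Q'}$ upgrades this pointwise bound at $y_{Q'}$ to the estimate \eqref{li4} on the whole overlap region, absorbing the constants into $C(\epsilon)$. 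Note $C(\epsilon)$ may depend on $\epsilon$ and $A$ since we only need (C*1), (C*2).

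It remains to handle the case where at least one of $Q,Q'$ is of Type~4. But a Type~4 cube has $\delta_Q > \frac{1}{1024}\delta_{Q_0}$, so in the small-cube regime $\delta_Q<2^{-20}\delta_{Q_0}$ this cannot happen; thus Type~4 cubes occur only in the large-cube case already treated. Symmetrically, if exactly one of the two is Type~4 it forces $\delta_Q$ comparable to $\delta_{Q_0}$ (via Lemma~\ref{lemma-cz2}), again landing us in the large-cube case. So the only genuinely new work is the Type~$\{1,2,3\}$--Type~$\{1,2,3\}$ small-cube case handled in the previous paragraph.

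The step I expect to be the crux is not any single estimate but the bookkeeping: verifying that in every admissible combination of cube types the points $y_Q$, $y_{Q'}$ exist and satisfy the membership hypotheses of Lemma~\ref{lemma-ap2}, and that the reference point ($x_0$ in the large case, $y_{Q'}$ in the small case) is close enough — within a controlled multiple of $\min\{\delta_Q,\delta_{Q'}\}$ or of $\delta_{Q_0}$ as appropriate — to every point of the overlap $\frac{65}{64}Q\cap\frac{65}{64}Q'$ so that the degree-$(m-1)$ Taylor comparison costs only a controlled constant. The geometric facts needed for this are all supplied by Lemma~\ref{lemma-cz2} (comparable sidelengths of touching CZ cubes) and the observation that $\frac{65}{64}Q$, $\frac{65}{64}Q'$, and the chosen base point all lie in a ball of radius $C\max\{\delta_Q,\delta_{Q'}\}$; once these are in hand the proof is routine.
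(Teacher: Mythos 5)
Your proposal is correct and follows essentially the same route as the paper, resting on Lemma~\ref{lemma-ap2} for touching CZ cubes of the same (small) scale, the estimate \eqref{ap3} for comparing any $P^{y}$ to $P^{0}$ at $x_{0}$, and the degree-$(m-1)$ Taylor expansion to move the base point around, all backed by the comparable-sidelength fact of Lemma~\ref{lemma-cz2}. The only cosmetic difference is the case organization: you split on cube size first (dispatching $\delta_Q\ge 2^{-20}\delta_{Q_0}$ directly via \eqref{ap3} before invoking Lemma~\ref{lemma-ap2} for the small-scale case), whereas the paper splits by Type $4$ vs.\ not and lets Lemma~\ref{lemma-ap2} absorb the large-cube case internally — both arrangements are correct and cover the same ground.
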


\begin{proof}
Suppose first that neither $Q$ nor $Q^{\prime }$ is Type 4. Then $%
P^{Q}=P^{y_{Q}}$ and $P^{Q^{\prime }}=P^{y_{Q^{\prime }}}$ with $y_{Q}\in
E\cap 5Q^{+}\cap 5Q_{0}$, $y_{Q^{\prime }}\in E\cap 5\left( Q^{\prime
}\right) ^{+}\cap 5Q_{0}$. Thanks to Lemma \ref{lemma-ap2}, we have%
\begin{equation*}
\left\vert \partial ^{\beta }\left( P^{Q}-P^{Q^{\prime }}\right) \left(
y_{Q}\right) \right\vert \leq C\left( \epsilon \right) M_{0}\delta
_{Q}^{m-\left\vert \beta \right\vert }\text{ for }\beta \in \mathcal{M}\text{%
,}
\end{equation*}
which implies \eqref{li4}, since $y_{Q}\in 5Q^{+}$ and $P^{Q}-P^{Q^{\prime
}} $ is an $(m-1)^{rst}$ degree polynomial.

Next, suppose that $Q$ and $Q^{\prime }$ are both Type 4.

Then by definition $P^{Q}=P^{Q^{\prime }}=P^{0}$, and consequently %
\eqref{li4} holds trivially.

Finally, suppose that exactly one of $Q$, $Q^{\prime }$ is of Type 4.

Since $\delta _{Q}$ and $\delta _{Q^{\prime }}$, differ by at most a factor
of $2$, the cubes $Q$ and $Q^{\prime }$ may be interchanged without loss of
generality. Hence, we may assume that $Q^{\prime }$ is of Type 4 and $Q$ is
not. By definition of Type 4,

\begin{itemize}
\item[\refstepcounter{equation}\text{(\theequation)}\label{li5}] $\delta
_{Q^{\prime }}>\frac{1}{1024}\delta _{Q_{0}}$; hence also $\delta _{Q}\geq 
\frac{1}{1024}\delta _{Q_{0}}$,
\end{itemize}

since $\delta _{Q}$, $\delta _{Q^{\prime }}$, are powers of $2$ that differ
by at most a factor of $2$.

Since $Q^{\prime }$ is of Type 4 and $Q$ is not, we have $P^Q= P^{y_Q}$ and $%
P^{Q^{\prime }} = P^0$, with

\begin{itemize}
\item[\refstepcounter{equation}\text{(\theequation)}\label{li6}] $y_{Q}\in
E\cap 5Q^{+}\cap 5Q_{0}$.
\end{itemize}

Thus, in this case, \eqref{li4} asserts that

\begin{itemize}
\item[\refstepcounter{equation}\text{(\theequation)}\label{li7}] $\left\vert
\partial ^{\beta }\left( P^{y_{Q}}-P^{0}\right) \right\vert \leq C\left(
\epsilon \right) M_{0}\delta _{Q}^{m-\left\vert \beta \right\vert }$ on $%
\frac{65}{64}Q\cap \frac{65}{64}Q^{\prime }$, for $\left\vert \beta
\right\vert \leq m$.
\end{itemize}

However, by \eqref{li6} above, property \eqref{ap3} in Section \ref%
{auxiliary-polynomials} gives the estimate

\begin{itemize}
\item[\refstepcounter{equation}\text{(\theequation)}\label{li8}] $\left\vert
\partial ^{\beta }\left( P^{y_{Q}}-P^{0}\right) \left( x_{0}\right)
\right\vert \leq C\left( \epsilon \right) M_{0}\delta _{Q_{0}}^{m-\left\vert
\beta \right\vert }$ for $\left\vert \beta \right\vert \leq m-1$.
\end{itemize}

Recall from the hypotheses of the Main Lemma for $\mathcal{A}$ that $%
x_{0}\in 5\left( Q_{0}\right) ^{+}$. Since $P^{y_{Q}}-P^{0}$ is an $%
(m-1)^{rst}$ degree polynomial, we conclude from \eqref{li8} that

\begin{itemize}
\item[\refstepcounter{equation}\text{(\theequation)}\label{li9}] $\left\vert
\partial ^{\beta }\left( P^{y_{Q}}-P^{0}\right) \right\vert \leq C\left(
\epsilon \right) M_{0}\delta _{Q_{0}}^{m-\left\vert \beta \right\vert }$ on $%
5Q$, for $\left\vert \beta \right\vert \leq m$.
\end{itemize}

The desired inequality \eqref{li7} now follows from \eqref{li5} and %
\eqref{li9}. Thus, \eqref{li4} holds in all cases.

The proof of Lemma \ref{lemma-li1} is complete.
\end{proof}

From estimate \eqref{li1}, Lemma \ref{lemma-li1}, and Lemma \ref{lemma-cz2},
we immediately obtain the following.

\begin{corollary}
\label{cor-to-lemma-li1} Let $Q,Q^{\prime }\in \mathcal{Q}$ and suppose that 
$\frac{65}{64}Q\cap \frac{65}{64}Q^{\prime }\not=\emptyset $. Then

\begin{itemize}
\item[\refstepcounter{equation}\text{(\theequation)}\label{li10}] $%
\left\vert \partial ^{\beta }\left( F^{Q}-F^{Q^{\prime }}\right) \right\vert
\leq C\left( \epsilon \right) M_{0}\delta _{Q}^{m-\left\vert \beta
\right\vert }$ on $\frac{65}{64}Q\cap \frac{65}{64}Q^{\prime }$, for $%
\left\vert \beta \right\vert \leq m$.
\end{itemize}
\end{corollary}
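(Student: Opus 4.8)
The plan is to read off \eqref{li10} from the triangle inequality, splitting $F^{Q}-F^{Q'}=(F^{Q}-P^{Q})+(P^{Q}-P^{Q'})+(P^{Q'}-F^{Q'})$ on the overlap $\frac{65}{64}Q\cap\frac{65}{64}Q'$. All three terms are defined there: $F^{Q}$ lives on $\frac{65}{64}Q$, $F^{Q'}$ on $\frac{65}{64}Q'$, and $P^{Q},P^{Q'}\in\mathcal{P}$ are polynomials, hence defined everywhere.

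First I would control the two outer terms. Estimate \eqref{li1} holds for \emph{every} $Q\in\mathcal{Q}$ --- as recorded just before Lemma~\ref{lemma-li1}, it was verified in all four type cases --- so it gives $|\partial^{\beta}(F^{Q}-P^{Q})|\leq C(\epsilon)M_{0}\delta_{Q}^{m-|\beta|}$ on $\frac{65}{64}Q$ and, applied to $Q'$, $|\partial^{\beta}(F^{Q'}-P^{Q'})|\leq C(\epsilon)M_{0}\delta_{Q'}^{m-|\beta|}$ on $\frac{65}{64}Q'$, for $|\beta|\leq m$. For the middle term I would invoke Lemma~\ref{lemma-li1}: since $\frac{65}{64}Q\cap\frac{65}{64}Q'\neq\emptyset$, that lemma yields $|\partial^{\beta}(P^{Q}-P^{Q'})|\leq C(\epsilon)M_{0}\delta_{Q}^{m-|\beta|}$ on the overlap for $|\beta|\leq m$.

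Finally, to put these three bounds on a common footing I would use Lemma~\ref{lemma-cz2}, which gives $\tfrac12\delta_{Q}\leq\delta_{Q'}\leq2\delta_{Q}$ whenever $\frac{65}{64}Q\cap\frac{65}{64}Q'\neq\emptyset$; since $m-|\beta|\geq0$ for $|\beta|\leq m$, this implies $\delta_{Q'}^{m-|\beta|}\leq 2^{m}\delta_{Q}^{m-|\beta|}$. Adding the three estimates and absorbing the harmless factor $2^{m}$ into $C(\epsilon)$ produces \eqref{li10}. There is no genuine obstacle: the only two points deserving a moment's care are that \eqref{li1} has been set up to hold for all $Q\in\mathcal{Q}$, and that the sidelength comparison of Lemma~\ref{lemma-cz2} is precisely what lets us trade $\delta_{Q'}$ for $\delta_{Q}$ --- which is exactly why the corollary is billed as an immediate consequence of \eqref{li1}, Lemma~\ref{lemma-li1}, and Lemma~\ref{lemma-cz2}.
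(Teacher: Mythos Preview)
Your proposal is correct and follows essentially the same approach as the paper, which simply states that the corollary follows immediately from estimate \eqref{li1}, Lemma~\ref{lemma-li1}, and Lemma~\ref{lemma-cz2}. Your explicit triangle-inequality splitting $F^{Q}-F^{Q'}=(F^{Q}-P^{Q})+(P^{Q}-P^{Q'})+(P^{Q'}-F^{Q'})$ and use of the sidelength comparison are exactly the intended argument.
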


Regarding the polynomials $P^{Q}$, we make the following simple observation.

\begin{lemma}
\label{lemma-li2} We have

\begin{itemize}
\item[\refstepcounter{equation}\text{(\theequation)}\label{li11}] $%
\left\vert \partial ^{\beta }\left( P^{Q}-P^{0}\right) \right\vert \leq
C\left( \epsilon \right) M_{0}\delta _{Q_{0}}^{m-\left\vert \beta
\right\vert }$ on $\frac{65}{64}Q$, for $\left\vert \beta \right\vert \leq m$
and $Q\in \mathcal{Q}$.
\end{itemize}
\end{lemma}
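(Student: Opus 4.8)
\textbf{Proof of Lemma \ref{lemma-li2}.} The plan is to reduce the estimate on the whole cube $\frac{65}{64}Q$ to a pointwise estimate at a single convenient point, using the fact that $P^Q - P^0$ is a polynomial of degree at most $m-1$, so that control of its derivatives at one point together with the diameter of the cube controls all its derivatives everywhere on the cube, at the cost of a factor $C(\epsilon)$ coming from the ratio $\delta_{Q_0}/\delta_Q$ (which, by \eqref{cz1}, is at least $1$, and which we do not attempt to bound above — this is exactly where the $\epsilon$-dependence of $C(\epsilon)$ enters, and we do not care since $C(\epsilon)$ is allowed to depend on $\epsilon$). So first I would split into cases according to the Type of $Q$.

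If $Q$ is of Type 4, then by definition $P^Q = P^0$, so \eqref{li11} is trivial. If $Q$ is of Type 1, 2, or 3, then $P^Q = P^{y_Q}$ with $y_Q \in E \cap 5Q^+ \cap 5Q_0$ by \eqref{li3}. Property \eqref{ap3} in Section \ref{auxiliary-polynomials} then gives
\begin{equation*}
\left\vert \partial^\beta\left(P^{y_Q}-P^0\right)(x_0)\right\vert \leq CM_0\left(\epsilon^{-1}\delta_{Q_0}\right)^{m-|\beta|}
\leq C(\epsilon)M_0\delta_{Q_0}^{m-|\beta|}\text{ for }\beta\in\mathcal{M}.
\end{equation*}
Since $x_0 \in 5Q_0^+$ by the hypotheses of the Main Lemma for $\mathcal{A}$, and since $\frac{65}{64}Q \subseteq 5Q_0$ (because $\frac{65}{64}Q \cap \frac{65}{64}Q_0 \neq \emptyset$ and $\delta_Q \leq \delta_{Q_0}$, using that $Q$ is OK so $5Q \subseteq 5Q_0$), every point of $\frac{65}{64}Q$ lies within distance $C\delta_{Q_0}$ of $x_0$. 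Taylor-expanding the polynomial $P^{y_Q} - P^0$ (of degree $\leq m-1$) about $x_0$ and using the displayed bound on its derivatives at $x_0$ then yields
\begin{equation*}
\left\vert\partial^\beta\left(P^{y_Q}-P^0\right)(z)\right\vert\leq C(\epsilon)M_0\delta_{Q_0}^{m-|\beta|}\text{ for }z\in\tfrac{65}{64}Q,\ |\beta|\leq m,
\end{equation*}
where for $|\beta| = m$ both sides behave correctly since $\partial^\beta(P^{y_Q}-P^0) \equiv 0$. This is exactly \eqref{li11}.

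There is essentially no obstacle here; the one point to be careful about is the bookkeeping of which quantities the constant is allowed to depend on — \eqref{ap3} already carries a factor $(\epsilon^{-1})^{m-|\beta|}$ relative to $\delta_{Q_0}^{m-|\beta|}$, so we absorb it into $C(\epsilon)$, and likewise the passage from $\delta_{Q_0}$ (the natural scale of the Taylor estimate at $x_0$) to itself on the left-hand side requires no loss at all. Had the statement demanded $\delta_Q^{m-|\beta|}$ on the right, one would genuinely need the lower bound $\delta_Q \geq c(\epsilon)\delta_{Q_0}$ or a separate argument; but \eqref{li11} is stated with $\delta_{Q_0}$, so no such bound is needed. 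This completes the proof.   $\blacksquare$
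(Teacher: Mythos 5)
Your proof is correct and matches the paper's argument: case on the Type of $Q$, use \eqref{ap3} to bound the derivatives of $P^{y_Q}-P^0$ at $x_0$, and propagate over $\frac{65}{64}Q\subset 5Q\subset 5Q_0$ using that $P^{y_Q}-P^0$ has degree at most $m-1$. One small remark: your opening paragraph attributes the $\epsilon$-dependence of the constant to the ratio $\delta_{Q_0}/\delta_Q$, which is not where it actually comes from (and which never enters the argument, precisely because \eqref{li11} is stated in terms of $\delta_{Q_0}$); your closing paragraph correctly identifies the real source as the $(\epsilon^{-1})^{m-|\beta|}$ factor built into \eqref{ap3}.
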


\begin{proof}
Recall that if $Q$ is of Type 1, 2, or 3, then $P^{Q}=P^{y_{Q}}$ for some $%
y_{Q}\in 5Q_{0}$. From estimate \eqref{ap3} in Section \ref%
{auxiliary-polynomials}, we know that

\begin{itemize}
\item[\refstepcounter{equation}\text{(\theequation)}\label{li12}] $%
\left\vert \partial ^{\beta }\left( P^{Q}-P^{0}\right) \left( x_{0}\right)
\right\vert \leq C\left( \epsilon \right) M_{0}\delta _{Q_{0}}^{m-\left\vert
\beta \right\vert }$ for $\left\vert \beta \right\vert \leq m-1$.
\end{itemize}

Since $x_{0}\in 5Q_{0}^{+}$ (see the hypotheses of the Main Lemma for $%
\mathcal{A}$) and $P^{Q}-P^{0}$ is a polynomial of degree at most $m-1$, and
since $\frac{65}{64}Q\subset 5Q\subset 5Q_{0}$ (because $Q$ is OK), estimate %
\eqref{li12} implies the desired estimate \eqref{li11}.

If instead, $Q$ is of Type 4, then by definition $P^{Q}=P^{0}$, hence
estimate \eqref{li11} holds trivially.

Thus, \eqref{li11} holds in all cases.
\end{proof}

\begin{corollary}
\label{cor-to-lemma-li2} For $Q\in \mathcal{Q}$ and $|\beta |\leq m$, we
have $\left\vert \partial ^{\beta }\left( F^{Q}-P^{0}\right) \right\vert
\leq C\left( \epsilon \right) M_{0}\delta _{Q_{0}}^{m-\left\vert \beta
\right\vert }$ on $\frac{65}{64}Q$.
\end{corollary}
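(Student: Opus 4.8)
\textbf{Proof of Corollary \ref{cor-to-lemma-li2}.}
The plan is to combine the two estimates we have just proved: \eqref{li1}, which controls $\partial^\beta(F^Q - P^Q)$ on $\frac{65}{64}Q$ in terms of $\delta_Q$, and \eqref{li11} from Lemma \ref{lemma-li2}, which controls $\partial^\beta(P^Q - P^0)$ on $\frac{65}{64}Q$ in terms of $\delta_{Q_0}$. Writing $F^Q - P^0 = (F^Q - P^Q) + (P^Q - P^0)$ and applying the triangle inequality pointwise on $\frac{65}{64}Q$, we get
\begin{equation*}
|\partial^\beta(F^Q - P^0)| \leq |\partial^\beta(F^Q - P^Q)| + |\partial^\beta(P^Q - P^0)| \leq C(\epsilon) M_0 \delta_Q^{m-|\beta|} + C(\epsilon) M_0 \delta_{Q_0}^{m-|\beta|}
\end{equation*}
for $|\beta| \leq m$.

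The only remaining point is to absorb the first term into the second. Since $Q \in \mathcal{Q}$ is a CZ cube, it is OK, so $5Q \subseteq 5Q_0$ by \eqref{cz1}, hence $\delta_Q \leq \delta_{Q_0}$. Because $m - |\beta| \geq 0$, it follows that $\delta_Q^{m-|\beta|} \leq \delta_{Q_0}^{m-|\beta|}$. Therefore both terms on the right are bounded by $C(\epsilon) M_0 \delta_{Q_0}^{m-|\beta|}$, and we conclude
\begin{equation*}
|\partial^\beta(F^Q - P^0)| \leq C(\epsilon) M_0 \delta_{Q_0}^{m-|\beta|} \text{ on } \frac{65}{64}Q, \text{ for } |\beta| \leq m.
\end{equation*}

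There is no real obstacle here; this is a routine consequence of the two preceding lemmas together with the containment $\delta_Q \leq \delta_{Q_0}$ valid for every CZ cube. $\blacksquare$
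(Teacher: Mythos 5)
Your proof is correct and takes essentially the same approach as the paper: combine \eqref{li1} and Lemma \ref{lemma-li2} via the triangle inequality, using $5Q \subseteq 5Q_0$ (hence $\delta_Q \leq \delta_{Q_0}$) to absorb the $\delta_Q^{m-|\beta|}$ term. You have simply spelled out the elementary steps the paper leaves implicit.
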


\begin{proof}
Recall that, since $Q$ is OK, we have $5Q\subset 5Q_{0}$. The desired
estimate therefore follows from estimates \eqref{li1} and \eqref{li11}.
\end{proof}

\section{Completing the Induction}

\label{completing-the-induction}

We again place ourselves in the setting of Section \ref%
{setup-for-the-induction-step}. We use the CZ cubes $Q$ and the functions $%
F^Q$ defined above. We recall several basic results from earlier sections.

\begin{itemize}
\item[\refstepcounter{equation}\text{(\theequation)}\label{ci1}] $\vec{\Gamma%
}_{0}$ is a $\left( C,\delta _{\max }\right) $-convex shape field.
\end{itemize}

\begin{itemize}
\item[\refstepcounter{equation}\text{(\theequation)}\label{ci2}] $\epsilon
^{-1}\delta _{Q_{0}}\leq \delta _{\max }$, hence $\epsilon ^{-1}\delta
_{Q}\leq \delta _{\max }$ for $Q\in $ CZ.
\end{itemize}

\begin{itemize}
\item[\refstepcounter{equation}\text{(\theequation)}\label{ci3}] The cubes $%
Q\in $ CZ partition the interior of $5Q_{0}$.
\end{itemize}

\begin{itemize}
\item[\refstepcounter{equation}\text{(\theequation)}\label{ci4}] For $%
Q,Q^{\prime }\in $ CZ, if $\frac{65}{64}Q\cap \frac{65}{64}Q^{\prime
}\not=\emptyset $, then $\frac{1}{2}\delta _{Q}\leq \delta _{Q^{\prime
}}\leq 2\delta _{Q}$.
\end{itemize}

Let

\begin{itemize}
\item[\refstepcounter{equation}\text{(\theequation)}\label{ci5}] $\mathcal{Q}%
=\left\{ Q\in CZ:\frac{65}{64}Q\cap \frac{65}{64}Q_{0}\not=\emptyset
\right\} $.
\end{itemize}

Then

\begin{itemize}
\item[\refstepcounter{equation}\text{(\theequation)}\label{ci6}] $\mathcal{Q}
$ is finite.
\end{itemize}

For each $Q \in \mathcal{Q}$, we have

\begin{itemize}
\item[\refstepcounter{equation}\text{(\theequation)}\label{ci7}] $F^{Q}\in
C^{m}\left( \frac{65}{64}Q\right) $,
\end{itemize}

\begin{itemize}
\item[\refstepcounter{equation}\text{(\theequation)}\label{ci8}] $%
J_{z}\left( F^{Q}\right) \in \Gamma _{0}\left( z,C\left( \epsilon \right)
M_{0}\right) $ for $z\in E\cap \frac{65}{64}Q$, and
\end{itemize}

\begin{itemize}
\item[\refstepcounter{equation}\text{(\theequation)}\label{ci9}] $\left\vert
\partial ^{\beta }\left( F^{Q}-P^{0}\right) \right\vert \leq C\left(
\epsilon \right) M_{0}\delta _{Q_{0}}^{m-\left\vert \beta \right\vert }$ on $%
\frac{65}{64}Q$, for $\left\vert \beta \right\vert \leq m$.
\end{itemize}

\begin{itemize}
\item[\refstepcounter{equation}\text{(\theequation)}\label{ci10}] For each $%
Q,Q^{\prime }\in \mathcal{Q}$, if $\frac{65}{64}Q\cap \frac{65}{64}Q^{\prime
}\not=\emptyset $, then $\left\vert \partial ^{\beta }\left(
F^{Q}-F^{Q^{\prime }}\right) \right\vert \leq C\left( \epsilon \right)
M_{0}\delta _{Q}^{m-\left\vert \beta \right\vert }$ on $\frac{65}{64}Q\cap 
\frac{65}{64}Q^{\prime }$, for $\left\vert \beta \right\vert \leq m$.
\end{itemize}

We introduce a Whitney partition of unity adapted to the cubes $Q\in $ CZ.
For each $Q\in $ CZ, let $\tilde{\theta}_{Q}\in C^{m}\left( \mathbb{R}%
^{n}\right) $ satisfy 
\begin{equation*}
\tilde{\theta}_{Q}=1\text{ on }Q\text{, support }\left( \tilde{\theta}%
_{Q}\right) \subset \frac{65}{64}Q\text{, }\left\vert \partial ^{\beta }%
\tilde{\theta}_{Q}\right\vert \leq C\delta _{Q}^{-\left\vert \beta
\right\vert }\text{ for }\left\vert \beta \right\vert \leq m\text{.}
\end{equation*}%
Setting $\theta _{Q}=\tilde{\theta}_{Q}\cdot \left( \sum_{Q^{\prime }\in
CZ}\left( \tilde{\theta}_{Q^{\prime }}\right) ^{2}\right) ^{-1/2}$, we see
that

\begin{itemize}
\item[\refstepcounter{equation}\text{(\theequation)}\label{ci11}] $\theta
_{Q}\in C^{m}\left( \mathbb{R}^{n}\right) $ for $Q\in $ CZ;
\end{itemize}

\begin{itemize}
\item[\refstepcounter{equation}\text{(\theequation)}\label{ci12}] support $%
\left( \theta _{Q}\right) \subset \frac{65}{64}Q$ for $Q\in $ CZ.
\end{itemize}

\begin{itemize}
\item[\refstepcounter{equation}\text{(\theequation)}\label{ci13}] $%
\left\vert \partial ^{\beta }\theta _{Q}\right\vert \leq C\delta
_{Q}^{-\left\vert \beta \right\vert }$ for $\left\vert \beta \right\vert
\leq m,Q\in $ CZ;
\end{itemize}

and $\sum_{Q\in CZ}\theta _{Q}^{2}=1$ on the interior of $5Q_{0}$, hence

\begin{itemize}
\item[\refstepcounter{equation}\text{(\theequation)}\label{ci14}] $%
\sum_{Q\in \mathcal{Q}}\theta _{Q}^{2}=1$ on $\frac{65}{64}Q_{0}$.
\end{itemize}

We define

\begin{itemize}
\item[\refstepcounter{equation}\text{(\theequation)}\label{ci15}] $%
F=\sum_{Q\in \mathcal{Q}}\theta _{Q}^{2}F^{Q}$.
\end{itemize}

For each $Q\in \mathcal{Q}$, \eqref{ci7}, \eqref{ci11}, \eqref{ci12} show
that $\theta _{Q}^{2}F^{Q}\in C^{m}\left( \mathbb{R}^{n}\right) $. Since
also $\mathcal{Q}$ is finite (see \eqref{ci6}), it follows that

\begin{itemize}
\item[\refstepcounter{equation}\text{(\theequation)}\label{ci16}] $F\in
C^{m}\left( \mathbb{R}^{n}\right) $.
\end{itemize}

Moreover, for any $x\in \frac{65}{64}Q_{0}$ and any $\beta $ of order $%
|\beta |\leq m$, we have

\begin{itemize}
\item[\refstepcounter{equation}\text{(\theequation)}\label{ci17}] $\partial
^{\beta }F\left( x\right) =\sum_{Q\in \mathcal{Q}\left( x\right) }\partial
^{\beta }\left\{ \theta _{Q}^{2}F^{Q}\right\} $, where
\end{itemize}

\begin{itemize}
\item[\refstepcounter{equation}\text{(\theequation)}\label{ci18}] $\mathcal{Q%
}\left( x\right) =\left\{ Q\in \mathcal{Q}:x\in \frac{65}{64}Q\right\} $.
\end{itemize}

Note that

\begin{itemize}
\item[\refstepcounter{equation}\text{(\theequation)}\label{ci19}] $\#\left( 
\mathcal{Q}\left( x\right) \right) \leq C$, by \eqref{ci4}.
\end{itemize}

Let $\hat{Q}$ be the CZ cube containing $x$. (There is one and only one such
cube, thanks to \eqref{ci3}; recall that we suppose that $x\in \frac{65}{64}%
Q_{0}$.) Then $\hat{Q}\in \mathcal{Q}(x)$, and \eqref{ci17} may be written
in the form

\begin{itemize}
\item[\refstepcounter{equation}\text{(\theequation)}\label{ci20}] $\partial
^{\beta }\left( F-P^{0}\right) \left( x\right) =\partial ^{\beta }\left( F^{%
\hat{Q}}-P^{0}\right) \left( x\right) +\sum_{Q\in \mathcal{Q}\left( x\right)
}\partial ^{\beta }\left\{ \theta _{Q}^{2}\cdot \left( F^{Q}-F^{\hat{Q}%
}\right) \right\} \left( x\right) $.
\end{itemize}

(Here we use \eqref{ci14}.) The first term on the right in \eqref{ci20} has
absolute value at most $C\left( \epsilon \right) M_{0}\delta
_{Q_{0}}^{m-\left\vert \beta \right\vert }$; see \eqref{ci9}. At most $C$
distinct cubes $Q$ enter into the second term on the right in \eqref{ci20};
see \eqref{ci19}. For each $Q\in \mathcal{Q}(x)$, we have 
\begin{equation*}
\left\vert \partial ^{\beta }\left\{ \theta _{Q}^{2}\cdot \left( F^{Q}-F^{%
\hat{Q}}\right) \right\} \left( x\right) \right\vert \leq C\left( \epsilon
\right) M_{0}\delta _{Q}^{m-\left\vert \beta \right\vert }\text{,}
\end{equation*}%
by \eqref{ci10} and \eqref{ci13}. Hence, for each $Q\in \mathcal{Q}(x)$, we
have 
\begin{equation*}
\left\vert \partial ^{\beta }\left\{ \theta _{Q}^{2}\cdot \left( F^{Q}-F^{%
\hat{Q}}\right) \right\} \left( x\right) \right\vert \leq C\left( \epsilon
\right) M_{0}\delta _{Q_{0}}^{m-\left\vert \beta \right\vert };
\end{equation*}%
see \eqref{ci3}.

The above remarks and \eqref{ci19}, \eqref{20} together yield the estimate

\begin{itemize}
\item[\refstepcounter{equation}\text{(\theequation)}\label{ci21}] $%
\left\vert \partial ^{\beta }\left( F-P^{0}\right) \right\vert \leq C\left(
\epsilon \right) M_{0}\delta _{Q_{0}}^{m-\left\vert \beta \right\vert }$ on $%
\frac{65}{64}Q_{0}$, for $\left\vert \beta \right\vert \leq m$.
\end{itemize}

Moreover, let $z\in E\cap \frac{65}{64}Q_{0}$. Then 
\begin{equation*}
J_{z}\left( F\right) =\sum_{Q\in \mathcal{Q}\left( z\right) }J_{z}\left(
\theta _{Q}\right) \odot _{z}J_{z}\left( \theta _{Q}\right) \odot
_{z}J_{z}\left( F^{Q}\right) \text{ (see \eqref{ci17});}
\end{equation*}%
\begin{equation*}
\left\vert \partial ^{\beta }\left[ J_{z}\left( \theta _{Q}\right) \right]
\left( z\right) \right\vert \leq C\delta _{Q}^{-\left\vert \beta \right\vert
}\text{ for }\left\vert \beta \right\vert \leq m-1\text{, }Q\in \mathcal{Q}%
\left( z\right) \text{ (see \eqref{ci13});}
\end{equation*}%
\begin{equation*}
\sum_{Q\in \mathcal{Q}\left( z\right) }\left[ J_{z}\left( \theta _{Q}\right) %
\right] \odot _{z}\left[ J_{z}\left( \theta _{Q}\right) \right] =1
\end{equation*}%
(see \eqref{ci14} and note that $J_{z}(\theta _{Q})=0$ for $Q\not\in 
\mathcal{Q}(z)$ by \eqref{ci12} and \eqref{ci18}); 
\begin{equation*}
J_{z}\left( F^{Q}\right) \in \Gamma _{0}\left( z,C\left( \epsilon \right)
M_{0}\right) \text{ for }Q\in \mathcal{Q}\left( z\right) \text{ (see %
\eqref{ci8});}
\end{equation*}%
\begin{equation*}
\left\vert \partial ^{\beta }\left\{ J_{z}\left( F^{Q}\right) -J_{z}\left(
F^{Q^{\prime }}\right) \right\} \left( z\right) \right\vert \leq C\left(
\epsilon \right) M_{0}\delta _{Q}^{m-\left\vert \beta \right\vert }
\end{equation*}%
for $\left\vert \beta \right\vert \leq m-1$, $Q,Q^{\prime }\in \mathcal{Q}%
\left( z\right) $ (see\eqref{ci10}); 
\begin{equation*}
\#\left( \mathcal{Q}\left( z\right) \right) \leq C\text{ (see \eqref{ci19});}
\end{equation*}%
\begin{equation*}
\delta _{Q}\leq \delta _{\max }\text{ (see \eqref{ci2});}
\end{equation*}%
$\vec{\Gamma}_{0}$ is a $\left( C,\delta _{\max }\right) $-convex shape
field (see \eqref{ci1}). The above results, together with Lemma \ref%
{lemma-wsf2}, tell us that

\begin{itemize}
\item[\refstepcounter{equation}\text{(\theequation)}\label{ci22}] $%
J_{z}\left( F\right) \in \Gamma _{0}\left( z,C\left( \epsilon \right)
M_{0}\right) $ for all $z\in E\cap \frac{65}{64}Q_{0}$.
\end{itemize}

From \eqref{ci16}, \eqref{ci21}, \eqref{ci22}, we see at once that the
restriction of $F$ to $\frac{65}{64}Q_{0}$ belongs to $C^{m}\left( \frac{65}{%
64}Q_{0}\right) $ and satisfies conditions (C*1) and (C*2) in Section \ref%
{setup-for-the-induction-step}. As we explained in that section, once we
have found a function in $C^{m}\left( \frac{65}{64}Q_{0}\right) $ satisfying
(C*1) and (C*2), our induction on $\mathcal{A}$ is complete. Thus, we have
proven the Main Lemma for all monotonic $\mathcal{A}\subseteq \mathcal{M}$.

\section{Restatement of the Main Lemma}

\label{rml}

In this section, we reformulate the Main Lemma for $\mathcal{A}$ in the case
in which $\mathcal{A}$ is the empty set $\emptyset$. Let us examine
hypotheses (A1), (A2), (A3) for the Main Lemma for $\mathcal{A}$, taking $%
\mathcal{A} = \emptyset$.

Hypothesis (A1) says that $\vec{\Gamma}_{l\left( \emptyset \right) }$ has an 
$\left( \emptyset ,\epsilon ^{-1}\delta _{Q_{0}},C_{B}\right) $-basis at $%
\left( x_{0},M_{0},P^{0}\right) $. This means simply that $P^{0}\in \Gamma
_{l\left( \emptyset \right) }\left( x_{0},C_{B}M_{0}\right) $.

Hypothesis (A2) says that $\delta _{Q_{0}}\leq \epsilon \delta _{\max }$,
and hypothesis (A3) says that $\epsilon $ is less than a small enough
constant determined by $C_{B}$, $C_{w}$, $m$, $n$.

We take $\epsilon $ to be a small enough constant (determined by $C_{B}$, $%
C_{w}$, $m$, $n$) such that (A3) is satisfied. We take $C_{B}=1$. Thus, we
arrive at the following equivalent version of the Main Lemma for $\emptyset $%
.

\theoremstyle{plain} 
\newtheorem*{thm Restated Main Lemma}{Restated Main
Lemma}%
\begin{thm Restated Main Lemma}Let $\vec{\Gamma}_{0}=\left( \Gamma _{0}\left( x,M\right) \right) _{x\in
E,M>0}$ be a $\left( C_{w},\delta _{\max }\right) $-convex shape
field. For $l\geq 1$, let $\vec{\Gamma}_{l}=\left( \Gamma _{l}\left(
x,M\right) \right) _{x\in E,M>0}$ be the $l^{th}$-refinement of $\vec{\Gamma}_{0}$. Fix a dyadic cube $Q_0$ of sidelength $\delta _{Q_{0}}\leq \epsilon
\delta _{\max }$, where $\epsilon >0$ is a small enough constant determined
by $m$, $n$, $C_{W}$. Let $x_{0}\in E\cap 5Q_{0}^{+}$, and let $P_{0}\in \Gamma
_{l\left( \emptyset \right) }\left( x_{0},M_{0}\right) $.

Then there exists a function $F\in C^{m}\left( \frac{65}{64}Q_{0}\right) $,
satisfying 

\begin{itemize}
\item $\left\vert \partial ^{\beta }\left( F-P_{0}\right) \left( x\right)
\right\vert \leq C_{\ast }M_{0}\delta _{Q_{0}}^{m-\left\vert \beta
\right\vert }$ for $x\in \frac{65}{64}Q_{0}$, $\left\vert \beta \right\vert
\leq m$; and 
\item $J_{z}\left( F\right) \in \Gamma _{0}\left( z,C_{\ast }M_{0}\right) $
for all $z\in E\cap \frac{65}{64}Q_{0}$;
\end{itemize}
where $C_{\ast }$ is determined by $C_{w}$, $m$, $n$.\end{thm Restated Main Lemma}

\section{Tidying Up}

\label{tu}

In this section, we remove from the Restated Main Lemma the small constant $%
\epsilon$ and the assumption that $Q_0$ is dyadic.

\begin{theorem}
\label{theorem-tu1} Let $\vec{\Gamma}_{0}=\left( \Gamma _{0}\left(
x,M\right) \right) _{x\in E,M>0}$ be a $\left( C_{w},\delta _{\max }\right) $%
-convex shape field. For $l\geq 1$, let $\vec{\Gamma}_{l}=\left( \Gamma
_{l}\left( x,M\right) \right) _{x\in E,M>0}$ be the $l^{th}$-refinement of $%
\vec{\Gamma}_{0}$. Fix a cube $Q_0$ of sidelength $\delta _{Q_{0}}\leq
\delta _{\max }$, a point $x_0 \in E \cap 5Q_0$, and a real number $M_0>0$.
Let $P_{0}\in \Gamma _{l\left( \emptyset \right) +1}\left(
x_{0},M_{0}\right) $.

Then there exists a function $F\in C^{m}\left(Q_{0}\right) $ satisfying the
following, with $C_{\ast }$ determined by $C_{w}$, $m$, $n$.

\begin{itemize}
\item $\left\vert \partial ^{\beta }\left( F-P_{0}\right) \left( x\right)
\right\vert \leq C_{\ast }M_{0}\delta _{Q_{0}}^{m-\left\vert \beta
\right\vert }$ for $x\in Q_{0}$, $\left\vert \beta \right\vert \leq m$; and

\item $J_{z}\left( F\right) \in \Gamma _{0}\left( z,C_{\ast }M_{0}\right) $
for all $z\in E\cap Q_{0}$.
\end{itemize}
\end{theorem}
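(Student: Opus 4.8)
The plan is to deduce Theorem~\ref{theorem-tu1} from the Restated Main Lemma by two reductions: first restoring a general (non-dyadic) cube $Q_0$, then absorbing the small constant $\epsilon$ at the cost of one extra refinement.

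\textbf{Step 1: Covering a general cube by small dyadic cubes.} Given the cube $Q_0$ of sidelength $\delta_{Q_0}\le\delta_{\max}$, fix $\epsilon>0$ to be the small constant from the Restated Main Lemma (determined by $m$, $n$, $C_w$). Choose a scale $2^{-N}\le \epsilon\,\delta_{Q_0}$ with $2^{-N}\ge c(\epsilon)\delta_{Q_0}$, and let $\{Q_\nu\}$ be the dyadic cubes of sidelength $2^{-N}$ that meet $Q_0$. Each $Q_\nu$ satisfies $\delta_{Q_\nu}\le\epsilon\,\delta_{\max}$, and one checks $5Q_\nu^+\subset C\cdot Q_0$ for a controlled dilate of $Q_0$; the number of such $Q_\nu$ is at most $C(\epsilon)$. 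For each $\nu$ we must produce a point $x_\nu\in E\cap 5Q_\nu^+$ and a polynomial $P_\nu\in\Gamma_{l(\emptyset)}(x_\nu,CM_0)$ to which the Restated Main Lemma can be applied; I would get these by transporting $P_0$ from $x_0$ to a nearby $x_\nu\in E$ using Corollary~\ref{cor-to-transport} (with $\mathcal{A}=\emptyset$), which is exactly why the hypothesis here demands $P_0\in\Gamma_{l(\emptyset)+1}(x_0,M_0)$ rather than $\Gamma_{l(\emptyset)}$: the transport step consumes one level of refinement. For those $Q_\nu$ with $E\cap 5Q_\nu^+=\emptyset$ we simply take $F^{Q_\nu}=P_0$. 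The transported polynomials $P_\nu$ are mutually consistent, $|\partial^\beta(P_\nu-P_{\nu'})(x)|\le C M_0\delta_{Q_0}^{m-|\beta|}$, again by Corollary~\ref{cor-to-transport} and the fact that all the $x_\nu$ lie within $C\delta_{Q_0}$ of $x_0$.

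\textbf{Step 2: Patching.} Apply the Restated Main Lemma to each $Q_\nu$ to obtain $F^{Q_\nu}\in C^m(\tfrac{65}{64}Q_\nu)$ with $|\partial^\beta(F^{Q_\nu}-P_\nu)|\le C_\ast M_0\delta_{Q_\nu}^{m-|\beta|}$ on $\tfrac{65}{64}Q_\nu$ and $J_z(F^{Q_\nu})\in\Gamma_0(z,C_\ast M_0)$ for $z\in E\cap\tfrac{65}{64}Q_\nu$. Since $\delta_{Q_\nu}\asymp\epsilon\delta_{Q_0}$, estimate (C1) upgrades to $|\partial^\beta(F^{Q_\nu}-P_0)|\le C(\epsilon)M_0\delta_{Q_0}^{m-|\beta|}$, and the consistency of the $P_\nu$ gives $|\partial^\beta(F^{Q_\nu}-F^{Q_{\nu'}})|\le C(\epsilon)M_0\delta_{Q_\nu}^{m-|\beta|}$ on overlaps $\tfrac{65}{64}Q_\nu\cap\tfrac{65}{64}Q_{\nu'}$. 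Now take a Whitney partition of unity $1=\sum_\nu\theta_\nu^2$ subordinate to the $\tfrac{65}{64}Q_\nu$ with $|\partial^\beta\theta_\nu|\le C\delta_{Q_\nu}^{-|\beta|}$, exactly as in Section~\ref{completing-the-induction}, and set $F=\sum_\nu\theta_\nu^2 F^{Q_\nu}$ on $Q_0$. The derivative estimate for $F-P_0$ follows from the same telescoping computation as in \eqref{ci20}: write $\partial^\beta(F-P_0)(x)=\partial^\beta(F^{\hat Q}-P_0)(x)+\sum_\nu\partial^\beta\{\theta_\nu^2(F^{Q_\nu}-F^{\hat Q})\}(x)$ where $\hat Q\ni x$, and bound each term by $C(\epsilon)M_0\delta_{Q_0}^{m-|\beta|}$. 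The shape-field membership follows from Lemma~\ref{lemma-wsf2} applied to $J_z(F)=\sum_\nu J_z(\theta_\nu)\odot_z J_z(\theta_\nu)\odot_z J_z(F^{Q_\nu})$, using $\sum_\nu J_z(\theta_\nu)\odot_z J_z(\theta_\nu)=1$, the jet bounds on $\theta_\nu$, the consistency of the $J_z(F^{Q_\nu})$, the finite overlap, and $(C_w,\delta_{\max})$-convexity of $\vec\Gamma_0$. Finally, since $\epsilon$ is a constant determined by $m$, $n$, $C_w$, the constant $C(\epsilon)$ is itself determined by $m$, $n$, $C_w$, so we may rename it $C_\ast$.

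\textbf{Main obstacle.} The delicate point is the bookkeeping at the boundary of $Q_0$: I must verify that the $\tfrac{65}{64}Q_\nu$ still cover $Q_0$, that the partition of unity can be chosen with $\sum\theta_\nu^2=1$ on all of $Q_0$ (not just its interior), and that the dilates $5Q_\nu^+$ needed for the transport arguments stay within a controlled dilate of $Q_0$ so that the distances $|x_0-x_\nu|$ remain $\le C\delta_{Q_0}$ — this is what makes the $\epsilon_0$-hypothesis of Corollary~\ref{cor-to-transport} satisfiable after rescaling $\delta\mapsto\epsilon^{-1}\delta_{Q_0}$. Everything else is a routine repetition of the patching machinery already developed in Sections~\ref{cz-decomposition}--\ref{completing-the-induction}.
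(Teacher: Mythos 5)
Your proposal is correct and follows essentially the same route as the paper's proof: cover a dilate of $Q_0$ by a bounded number of comparable dyadic cubes of sidelength $\asymp\epsilon\delta_{Q_0}$, transport $P_0$ from $x_0$ to a representative $x_\nu$ in each nonempty cube (costing one level of refinement, which is exactly why $\Gamma_{l(\emptyset)+1}$ appears in the hypothesis), invoke the Restated Main Lemma on each piece, and patch with a squared partition of unity using Lemma~\ref{lemma-wsf2} for the shape-field membership. The only cosmetic difference is that the paper performs the transport step by citing the definition of the first refinement (Lemma~\ref{lemma-wsf4}(A)) directly rather than routing through Corollary~\ref{cor-to-transport}, which avoids the extra $\epsilon_0$-smallness bookkeeping you flag; both give the same controlled bound $|\partial^\beta(P_\nu-P_0)(x_0)|\le C(\epsilon)M_0\delta_{Q_0}^{m-|\beta|}$ once $\epsilon$ is fixed.
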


\begin{proof}
Let $\epsilon >0$ be the small constant in the statement of the Restated
Main Lemma in Section \ref{rml}. In particular, $\epsilon $ is determined by 
$C_{w}$, $m$, $n$. We write $c$, $C$, $C^{\prime }$, etc., to denote
constants determined by $C_{w}$, $m$, $n$. These symbols may denote
different constants in different occurrences.

We cover $CQ_{0}$ by a grid of dyadic cubes $\{Q_{\nu }\}$, all having same
sidelength $\delta _{Q_{\nu }}$, with $\frac{\epsilon }{20}\delta
_{Q_{0}}\leq \delta _{Q_{\nu }}\leq \epsilon \delta _{Q_{0}}$, and all
contained in $C^{\prime }Q_{0}$. (We use at most $C$ distinct $Q_{\nu }$ to
do so.)

For each $Q_{\nu }$ with $E\cap \frac{65}{64}Q_{\nu }\not=\emptyset $, we
pick a point $x_{\nu }\in E\cap \frac{65}{64}Q_{\nu }$; by definition of the 
$l^{th}$-refinement, there exists $P_{\nu }\in \Gamma _{l(\emptyset
)}(x_{\nu },M_{0})$ such that $\left\vert \partial ^{\beta }\left( P_{\nu
}-P_{0}\right) \left( x_{0}\right) \right\vert \leq CM_{0}\delta
_{Q_{0}}^{m-\left\vert \beta \right\vert }$ for $\beta \in \mathcal{M}$, and
therefore

\begin{itemize}
\item[\refstepcounter{equation}\text{(\theequation)}\label{tu1}] $\left\vert
\partial ^{\beta }\left( P_{\nu }-P_{0}\right) \left( x\right) \right\vert
\leq C^{\prime }M_{0}\delta _{Q_{0}}^{m-\left\vert \beta \right\vert }$ for $%
x\in \frac{65}{64}Q_{0}$ and $\left\vert \beta \right\vert \leq m$.
\end{itemize}

Since $x_{\nu }\in E\cap \frac{65}{64}Q_{\nu }$, $P_{\nu }\in \Gamma
_{l\left( \emptyset \right) }$, and $\delta _{Q_{\nu }}\leq \epsilon \delta
_{Q_{0}}\leq \epsilon \delta _{\max }$, the Restated Main Lemma applies to $%
x_{\nu },P_{\nu },Q_{\nu }$ to produce $F_{\nu }\in C^{m}\left( \frac{65}{64}%
Q_{\nu }\right) $ satisfying

\begin{itemize}
\item[\refstepcounter{equation}\text{(\theequation)}\label{tu2}] $\left\vert
\partial ^{\beta }\left( F_{\nu }-P_{\nu }\right) \left( x\right)
\right\vert \leq CM_{0}\delta _{Q_{\nu }}^{m-\left\vert \beta \right\vert
}\leq CM_{0}\delta _{Q_{0}}^{m-\left\vert \beta \right\vert }$ for $x\in 
\frac{65}{64}Q_{\nu }$, $\left\vert \beta \right\vert \leq m$;
\end{itemize}

and

\begin{itemize}
\item[\refstepcounter{equation}\text{(\theequation)}\label{tu3}] $%
J_{z}\left( F_{\nu }\right) \in \Gamma _{0}\left( z,CM_{0}\right) $ for all $%
z\in E\cap \frac{65}{64}Q_{\nu }$.
\end{itemize}

From \eqref{tu1} and \eqref{tu2}, we have

\begin{itemize}
\item[\refstepcounter{equation}\text{(\theequation)}\label{tu4}] $\left\vert
\partial ^{\beta }\left( F_{\nu }-P_{0}\right) \left( x\right) \right\vert
\leq CM_{0}\delta _{Q_{0}}^{m-\left\vert \beta \right\vert }$ for $x\in 
\frac{65}{64}Q_{\nu }$, $\left\vert \beta \right\vert \leq m$.
\end{itemize}

We have produced such $F_{\nu }$ for those $\nu $ satisfying $E\cap \frac{65%
}{64}Q_{\nu }\not=\emptyset $. If instead $E\cap \frac{65}{64}Q_{\nu
}=\emptyset $, then we set $F_{\nu }=P_{0}$. Then \eqref{tu3} holds
vacuously and \eqref{tu4} holds trivially. Thus, our $F_{\nu }$ satisfy %
\eqref{tu3}, \eqref{tu4} for all $\nu $. From \eqref{tu4} we obtain

\begin{itemize}
\item[\refstepcounter{equation}\text{(\theequation)}\label{tu5}] $\left\vert
\partial ^{\beta }\left( F_{\nu }-F_{\nu ^{\prime }}\right) \left( x\right)
\right\vert \leq CM_{0}\delta _{Q_{0}}^{m-\left\vert \beta \right\vert }$
for $x\in \frac{65}{64}Q_{\nu }\cap \frac{65}{64}Q_{\nu ^{\prime }}$, $%
\left\vert \beta \right\vert \leq m$.
\end{itemize}

Next, we introduce a partition of unity. We fix cutoff functions $\theta
_{\nu }\in C^{m}\left( \mathbb{R}^{n}\right) $ satisfying

\begin{itemize}
\item[\refstepcounter{equation}\text{(\theequation)}\label{tu6}] support $%
\theta _{\nu }\subset \frac{65}{64}Q_{\nu }$, $\left\vert \partial ^{\beta
}\theta _{\nu }\right\vert \leq C\delta _{Q_{0}}^{-\left\vert \beta
\right\vert }$ for $\left\vert \beta \right\vert \leq m$, $\sum_{\nu }\theta
_{\nu }^{2}=1$ on $Q_{0}$.
\end{itemize}

We then define

\begin{itemize}
\item[\refstepcounter{equation}\text{(\theequation)}\label{tu7}] $%
F=\sum_{\nu }\theta _{\nu }^{2}F_{\nu }$ on $Q_{0}$.
\end{itemize}

We have then

\begin{itemize}
\item[\refstepcounter{equation}\text{(\theequation)}\label{tu8}] $%
F-P_{0}=\sum_{\nu }\theta _{\nu }^{2}\left( F_{\nu }-P_{0}\right) $ on $%
Q_{0} $.
\end{itemize}

Thanks to \eqref{tu4} and \eqref{tu6}, we have $\theta _{\nu }^{2}\left(
F_{\nu }-P_{0}\right) \in C^{m}\left( Q_{0}\right) $ and $\left\vert
\partial ^{\beta }\left( \theta _{\nu }^{2}\cdot \left( F_{\nu
}-P_{0}\right) \right) \left( x\right) \right\vert \leq CM_{0}\delta
_{Q_{0}}^{m-\left\vert \beta \right\vert }$ for $x\in Q_{0},|\beta |\leq m$,
all $\nu $. Moreover, there are at most $C$ distinct $\nu $ appearing in %
\eqref{tu8}. Hence,

\begin{itemize}
\item[\refstepcounter{equation}\text{(\theequation)}\label{tu9}] $F\in
C^{m}\left( Q_{0}\right) $
\end{itemize}

and

\begin{itemize}
\item[\refstepcounter{equation}\text{(\theequation)}\label{tu10}] $%
\left\vert \partial ^{\beta }\left( F-P_{0}\right) \left( x\right)
\right\vert \leq CM_{0}\delta _{Q_{0}}^{m-\left\vert \beta \right\vert }$
for $x\in Q_{0}$, $\left\vert \beta \right\vert \leq m$.
\end{itemize}

Next, let $z\in E\cap Q_{0}$, and let $Y$ be the set of all $\nu $ such that 
$z\in \frac{65}{64}Q_{\nu }$. Then \eqref{tu6}, \eqref{tu7} give $%
J_{z}(F)=\sum_{\nu \in Y}J_{z}\left( \theta _{\nu }\right) \odot
_{z}J_{z}\left( \theta _{\nu }\right) \odot _{z}J_{z}\left( F_{\nu }\right) $
and we know that $J_{z}(F_{\nu })\in \Gamma _{0}\left( z,CM_{0}\right) $ for 
$\nu \in Y$ (by \eqref{tu3}); $|\partial ^{\beta }\left[ J_{z}\left( F_{\nu
}\right) -J_{z}\left( F_{\nu ^{\prime }}\right) \right] \left( z\right)
|\leq CM_{0}\delta _{Q_{0}}^{m-\left\vert \beta \right\vert }$ for $%
\left\vert \beta \right\vert \leq m-1$, $\nu ,\nu ^{\prime }\in Y$ (by %
\eqref{tu5}); $\left\vert \partial ^{\beta }\left[ J_{z}\left( \theta _{\nu
}\right) \right] \left( z\right) \right\vert \leq C\delta
_{Q_{0}}^{-\left\vert \beta \right\vert }$ for $\left\vert \beta \right\vert
\leq m-1$, $\nu \in Y$ (by \eqref{tu6}); $\sum_{\nu \in Y}J_{z}\left( \theta
_{\nu }\right) \odot _{z}J_{z}\left( \theta _{\nu }\right) =1$ (again thanks
to \eqref{tu6}); $\#(Y)\leq C$ (since there are at most $C$ distinct $%
Q_{\nu }$ in our grid); and $\delta _{Q_{0}}\leq \delta _{\max }$ (by
hypothesis of the Theorem we are proving). Since $\vec{\Gamma}_{0}$ is $%
(C,\delta _{\max })$-convex, the above remarks and Lemma \ref{lemma-wsf2}
tell us that $J_{z}(F)\in \Gamma _{0}(z,CM_{0})$. Thus,

\begin{itemize}
\item[\refstepcounter{equation}\text{(\theequation)}\label{tu11}] $%
J_{z}\left( F\right) \in \Gamma _{0}\left( z,CM_{0}\right) $ for all $z\in
E\cap Q_{0}$.
\end{itemize}

Our results \eqref{tu9}, \eqref{tu10}, \eqref{tu11} are the conclusions of
Theorem \ref{theorem-tu1}.

The proof of that Theorem is complete.
\end{proof}

\part{Applications}

\section{Finiteness Principle I}

\label{fp-i}

In this section we prove a finiteness principle for shape fields.

Let $\vec{\Gamma}_{0}=\left( \Gamma _{0}\left( x,M\right) \right) _{x\in
E,M>0}$ be a shape field. For $l\geq 1$, let $\vec{\Gamma}_{l}=\left( \Gamma
_{l}\left( x,M\right) \right) _{x\in E,M>0}$ be the $l^{th}$-refinement of $%
\vec{\Gamma}_{0}$. Fix $M_{0}>0$. For $x\in E$, $S\subset E$, define

\begin{itemize}
\item[\refstepcounter{equation}\text{(\theequation)}\label{fp1}] $\Gamma
\left( x,S\right) =\left\{ 
\begin{array}{c}
P^{x}:\vec{P}=\left( P^{y}\right) _{y\in S\cup \left\{ x\right\} }\in
Wh\left( S\cup \left\{ x\right\} \right) \text{, }\left\Vert \vec{P}%
\right\Vert _{\dot{C}^{m}\left( S\cup \left\{ x\right\} \right) }\leq M_{0},
\\ 
P^{y}\in \Gamma _{0}\left( y,M_{0}\right) \text{ for all }y\in S\cup \left\{
x\right\} \text{.}%
\end{array}%
\right\} $
\end{itemize}

(See Section \ref{notation-and-preliminaries} for the definition of $%
Wh(\cdot )$ and $||\cdot ||_{\dot{C}^{m}\left( \cdot \right)}$.) Note that

\begin{itemize}
\item[\refstepcounter{equation}\text{(\theequation)}\label{fp2}] $\Gamma
\left( x,\emptyset \right) =\Gamma _{0}\left( x,M_{0}\right) $.
\end{itemize}

Define

\begin{itemize}
\item[\refstepcounter{equation}\text{(\theequation)}\label{fp3}] $\Gamma
_{l}^{fp}\left( x,M_{0}\right) =\bigcap_{S\subset E,\#\left( S\right) \leq
\left( D+2\right) ^{l}}\Gamma \left( x,S\right) $ for $l\geq 0$, where
\end{itemize}

\begin{itemize}
\item[\refstepcounter{equation}\text{(\theequation)}\label{fp4}] $D=\dim 
\mathcal{P}$.
\end{itemize}

Note that

\begin{itemize}
\item[\refstepcounter{equation}\text{(\theequation)}\label{fp5}] $\Gamma
_{0}^{fp}\left( x,M_{0}\right) \subseteq \Gamma _{0}\left( x,M_{0}\right) $,
thanks to \eqref{fp2}.
\end{itemize}

Each $\Gamma (x,S)$, $\Gamma _{l}^{fp}\left( x,M_{0}\right) $ is a (possibly
empty) convex subset of $\mathcal{P}$.

\begin{lemma}
\label{lemma-fp1} Let $x\in E$, $l\geq 0$. Suppose $\Gamma
(x,S)\not=\emptyset $ for all $S\subset E$ with $\#\left( S\right) \leq
\left( D+2\right) ^{l+1}$. Then $\Gamma _{l}^{fp}\left( x,M_{0}\right)
\not=\emptyset $.
\end{lemma}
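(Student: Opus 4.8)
The plan is to prove Lemma \ref{lemma-fp1} by a straightforward application of Helly's Theorem in the vector space $\mathcal{P}$, which has dimension $D = \dim\mathcal{P}$. The key observation is that $\Gamma_l^{fp}(x,M_0) = \bigcap_{S} \Gamma(x,S)$, where the intersection runs over all $S \subseteq E$ with $\#(S) \leq (D+2)^l$, and each $\Gamma(x,S)$ is a convex subset of $\mathcal{P}$. Since $E$ is finite, this is a finite intersection of convex sets, so Helly's Theorem tells us it is nonempty provided every subcollection of at most $D+1$ of the sets $\Gamma(x,S)$ has a common point.

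First I would fix $x \in E$ and $l \geq 0$, and take sets $S_1, \dots, S_{D+1} \subseteq E$ each of cardinality $\leq (D+2)^l$. I would set $S = S_1 \cup S_2 \cup \cdots \cup S_{D+1}$, so that $\#(S) \leq (D+1)(D+2)^l \leq (D+2)^{l+1}$. By hypothesis, $\Gamma(x,S) \neq \emptyset$. The crucial monotonicity property I need is that $S' \subseteq S$ implies $\Gamma(x,S) \subseteq \Gamma(x,S')$: indeed, given $P^x \in \Gamma(x,S)$ witnessed by a Whitney field $\vec{P} = (P^y)_{y \in S \cup \{x\}}$ with $\|\vec{P}\|_{\dot{C}^m(S \cup \{x\})} \leq M_0$ and $P^y \in \Gamma_0(y,M_0)$ for all $y$, the restriction $(P^y)_{y \in S' \cup \{x\}}$ is a Whitney field on $S' \cup \{x\}$ with seminorm $\leq M_0$ (the seminorm can only decrease when we pass to a subset) and still satisfies $P^y \in \Gamma_0(y,M_0)$; hence $P^x \in \Gamma(x,S')$. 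Applying this with $S' = S_i$ for each $i$, I conclude $\Gamma(x,S) \subseteq \bigcap_{i=1}^{D+1} \Gamma(x,S_i)$, so the latter intersection is nonempty.

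Then, since any $D+1$ of the convex sets $\{\Gamma(x,S) : \#(S) \leq (D+2)^l\}$ have a common point, and there are only finitely many such sets (as $E$ is finite), Helly's Theorem (as stated in Section \ref{notation-and-preliminaries}, applied in $\mathcal{P} \cong \mathbb{R}^D$) yields that $\bigcap_S \Gamma(x,S) = \Gamma_l^{fp}(x,M_0) \neq \emptyset$. I do not anticipate any serious obstacle here; the only point requiring a moment's care is the monotonicity of $\Gamma(x,S)$ in $S$ — specifically that restricting a Whitney field to a subset does not increase its $\dot{C}^m$-seminorm, which is immediate from the definition of that seminorm as a maximum over pairs of distinct points. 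A minor bookkeeping check is the cardinality estimate $(D+1)(D+2)^l \leq (D+2)^{l+1}$, which holds since $D+1 \leq D+2$.
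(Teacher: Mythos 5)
Your proof is correct and follows essentially the same approach as the paper: you apply Helly's Theorem after observing that $\Gamma(x,S_1)\cap\cdots\cap\Gamma(x,S_{D+1})\supseteq\Gamma(x,S_1\cup\cdots\cup S_{D+1})$, which is nonempty by hypothesis since $\#(S_1\cup\cdots\cup S_{D+1})\leq (D+1)(D+2)^l\leq (D+2)^{l+1}$. The only difference is that you spell out the monotonicity $S'\subseteq S\Rightarrow\Gamma(x,S)\subseteq\Gamma(x,S')$ explicitly, which the paper treats as immediate from the definition.
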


\begin{proof}
By Helly's theorem, it is enough to show that $\Gamma \left( x,S_{1}\right)
\cap \cdots \cap \Gamma \left( x,S_{D+1}\right) \not=\emptyset $ for any $%
S_{1},\cdots ,S_{D+1}\subset E$ with $\#\left( S_{i}\right) \leq \left(
D+2\right) ^{l}$ for each $i$. However, $\Gamma \left( x,S_{1}\right) \cap
\cdots \cap \Gamma \left( x,S_{D+1}\right) \supset \Gamma \left( x,S_{1}\cup
\cdots \cup S_{D+1}\right) \not=\emptyset $, since $\#\left( S_{1}\cup
\cdots \cup S_{D+1}\right) \leq \left( D+1\right) \cdot \left( D+2\right)
^{l}\leq \left( D+2\right) ^{l+1}$.
\end{proof}

\begin{lemma}
\label{lemma-fp2} For $x \in E, l \geq 0$, we have

\begin{itemize}
\item[\refstepcounter{equation}\text{(\theequation)}\label{fp6}] $\Gamma
_{l}^{fp}\left( x,M_{0}\right) \subseteq \Gamma _{l}\left( x,M_{0}\right) $.
\end{itemize}
\end{lemma}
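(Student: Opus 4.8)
\textbf{Proof plan for Lemma~\ref{lemma-fp2}.}
The plan is to prove \eqref{fp6} by induction on $l$. The base case $l=0$ is exactly \eqref{fp5}, which was already recorded above. For the induction step, suppose $\Gamma_{l}^{fp}(x,M_0) \subseteq \Gamma_l(x,M_0)$ for all $x\in E$, and fix $x_0 \in E$ together with a polynomial $P^{x_0} \in \Gamma_{l+1}^{fp}(x_0,M_0)$. By definition of the $(l+1)^{st}$ refinement (see the definition of the first refinement, applied to $\vec\Gamma_l$), we must produce, for each $y\in E$, a polynomial $P' \in \Gamma_l(y,M_0)$ with $|\partial^\beta(P^{x_0}-P')(x_0)| \le M_0 |x_0-y|^{m-|\beta|}$ for $|\beta|\le m-1$. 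So fix $y\in E$; our task is to find such a $P'$.

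The key idea is to feed the Helly-type intersection structure of $\Gamma_{l+1}^{fp}$ into the induction hypothesis. First I would observe that since $P^{x_0} \in \Gamma_{l+1}^{fp}(x_0,M_0) = \bigcap_{\#(S)\le (D+2)^{l+1}} \Gamma(x_0,S)$, for \emph{every} $S\subset E$ with $\#(S)\le (D+2)^{l+1}$ there is a Whitney field $\vec P = (P^z)_{z\in S\cup\{x_0\}}$ with $\|\vec P\|_{\dot C^m(S\cup\{x_0\})}\le M_0$, $P^z\in\Gamma_0(z,M_0)$ for all $z$, and $P^{x_0}$ equal to the given polynomial. In particular, taking $S$ to range over sets that contain $y$ and have at most $(D+2)^{l+1}$ elements, the component $P^y$ of such a Whitney field is a candidate for $P'$: it automatically satisfies the Taylor-type estimate $|\partial^\beta(P^{x_0}-P^y)(x_0)|\le M_0|x_0-y|^{m-|\beta|}$ because $\|\vec P\|_{\dot C^m}\le M_0$. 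What remains is to show that $P^y$ can be chosen to lie in $\Gamma_l(y,M_0)$, and by the induction hypothesis it suffices to show it can be chosen in $\Gamma_l^{fp}(y,M_0)=\bigcap_{\#(S')\le(D+2)^l}\Gamma(y,S')$.

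To do this I would introduce the auxiliary convex sets, for $S'\subset E$ with $\#(S')\le (D+2)^l$,
\[
\Lambda(S') = \{\, Q^y : \text{there is } \vec P=(P^z)_{z\in S'\cup\{x_0,y\}}\in Wh(S'\cup\{x_0,y\}),\ \|\vec P\|_{\dot C^m}\le M_0,\ P^z\in\Gamma_0(z,M_0)\ \forall z,\ P^{x_0}\text{ is the given }P^{x_0},\ Q^y=P^y \,\},
\]
each a convex subset of $\mathcal P$. The claim $P^y \in \Gamma_l^{fp}(y,M_0)$ for a common choice of $P^y$ amounts to $\bigcap_{S'} \Lambda(S') \ne\emptyset$, and by Helly's theorem this follows once any $D+1$ of the $\Lambda(S')$ meet; but $\Lambda(S'_1)\cap\cdots\cap\Lambda(S'_{D+1}) \supseteq \Lambda(S'_1\cup\cdots\cup S'_{D+1})$, and $\#(S'_1\cup\cdots\cup S'_{D+1}\cup\{y\})\le (D+1)(D+2)^l + 1 \le (D+2)^{l+1}$, so the latter set is nonempty precisely because $P^{x_0}\in\Gamma(x_0,\,S'_1\cup\cdots\cup S'_{D+1}\cup\{y\})$ (here the defining Whitney field restricts to give the needed element of $\Lambda$). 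Thus $\bigcap_{S'}\Lambda(S')\ne\emptyset$; picking $P'$ in this intersection gives $P'\in\Gamma_l^{fp}(y,M_0)\subseteq\Gamma_l(y,M_0)$ by induction, and $P'$ satisfies the Taylor estimate at $x_0$ as noted. Since $y$ was arbitrary, $P^{x_0}\in\Gamma_{l+1}(x_0,M_0)$, completing the induction.

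The main obstacle, and the step deserving the most care, is the bookkeeping in the double Helly argument: one must make sure that the cardinality bounds $(D+1)(D+2)^l+1\le (D+2)^{l+1}$ genuinely close up (they do, with room to spare), and — more delicately — that the "outer" choice of a witnessing Whitney field for $P^{x_0}$ on a large set $S$ and the "inner" Helly argument over the $\Lambda(S')$ are compatible, i.e.\ that restricting a Whitney field on $S'_1\cup\cdots\cup S'_{D+1}\cup\{x_0,y\}$ to any single $S'_i\cup\{x_0,y\}$ lands in $\Lambda(S'_i)$. This is immediate from the definition of $\|\cdot\|_{\dot C^m}$ as a max over pairs, since a seminorm bound on a larger point set restricts to the same bound on any subset; once this is spelled out, the rest is the routine Helly manipulation already used in Lemma~\ref{lemma-fp1}.
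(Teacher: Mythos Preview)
Your proof is correct and follows essentially the same route as the paper: induction on $l$ with base case \eqref{fp5}, and for the induction step the auxiliary convex sets you call $\Lambda(S')$ are exactly the paper's $\hat\Gamma(S)$, with the same Helly argument and the same cardinality check $(D+1)(D+2)^l+1\le(D+2)^{l+1}$. The only point you leave implicit that the paper states explicitly is the inclusion $\Lambda(S')\subset\Gamma(y,S')$ (obtained by forgetting the constraint at $x_0$), which is what turns $\bigcap_{S'}\Lambda(S')\ne\emptyset$ into membership in $\Gamma_l^{fp}(y,M_0)$; you may wish to spell that out in one line.
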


\begin{proof}
We use induction on $l$. The base case $l=0$ is our observation \eqref{fp5}.
For the induction step, fix $l\geq 1$. We will prove \eqref{6} under the
inductive assumption

\begin{itemize}
\item[\refstepcounter{equation}\text{(\theequation)}\label{fp7}] $\Gamma
_{l-1}^{fp}\left( y,M_{0}\right) \subseteq \Gamma _{l-1}\left(
y,M_{0}\right) $ for all $y\in E$.
\end{itemize}

Thus, let $P\in \Gamma _{l}^{fp}\left( x,M_{0}\right) $ be given. We must
prove that $P\in \Gamma _{l}\left( x,M_{0}\right) $, which means that given $%
y\in E$ there exists

\begin{itemize}
\item[\refstepcounter{equation}\text{(\theequation)}\label{fp8}] $P^{\prime
}\in \Gamma _{l-1}\left( y,M_{0}\right) $ such that $\left\vert \partial
^{\beta }\left( P-P^{\prime }\right) \left( x\right) \right\vert \leq
M_{0}\left\vert x-y\right\vert ^{m-\left\vert \beta \right\vert }$ for $%
\left\vert \beta \right\vert \leq m-1$.
\end{itemize}

We will prove that there exists

\begin{itemize}
\item[\refstepcounter{equation}\text{(\theequation)}\label{fp9}] $P^{\prime
}\in \Gamma _{l-1}^{fp}\left( y,M_{0}\right) $ such that $\left\vert
\partial ^{\beta }\left( P-P^{\prime }\right) \left( x\right) \right\vert
\leq M_{0}\left\vert x-y\right\vert ^{m-\left\vert \beta \right\vert }$ for $%
\left\vert \beta \right\vert \leq m-1$.
\end{itemize}

Thanks to our inductive hypothesis \eqref{fp7}, we see that \eqref{fp9}
implies \eqref{fp8}. Therefore, to complete the proof of the Lemma, it is
enough to prove the existence of a $P^{\prime }$ satisfying \eqref{fp9}. For 
$S\subset E$, define 
\begin{equation*}
\hat{\Gamma}\left( S\right) =\left\{ 
\begin{array}{c}
P^{y}:\vec{P}=\left( P^{z}\right) _{z\in S\cup \left\{ x,y\right\} }\in
Wh\left( S\cup \left\{ x,y\right\} \right) \text{, }P^{x}=P\text{,}%
\left\Vert \vec{P}\right\Vert _{\dot{C}^{m}\left( S\cup \left\{ x,y\right\}
\right) }\leq M_{0}, \\ 
P^{z}\in \Gamma _{0}\left( z,M_{0}\right) \text{ for all }z\in S\cup \left\{
x,y\right\} \text{.}%
\end{array}%
\right\}
\end{equation*}%
By definition,

\begin{itemize}
\item[\refstepcounter{equation}\text{(\theequation)}\label{fp10}] $\hat{%
\Gamma}\left( S\right) \subset \Gamma \left( y,S\right) $ for $S\subset E$.
\end{itemize}

Let $S_{1},\cdots ,S_{D+1}\subset E$ with $\#\left( S_{i}\right) \leq \left(
D+2\right) ^{l-1}$ for each $i$.

Then $\hat{\Gamma}\left( S_{1}\right) \cap \cdots \cap \hat{\Gamma}\left(
S_{D+1}\right) \supset \hat{\Gamma}\left( S_{1}\cup \cdots \cup
S_{D+1}\right) $, and $\#\left( S_{1}\cup \cdots \cup S_{D+1}\cup \left\{
y\right\} \right) \leq \left( D+1\right) \left( D+2\right) ^{l-1}+1\leq
\left( D+2\right) ^{l}$. Since $P\in \Gamma _{l}^{fp}\left( x,M_{0}\right) $%
, it follows that there exists $\vec{P}=\left( P^{z}\right) _{z\in S_{1}\cup
\cdots \cup S_{D+1}\cup \left\{ x,y\right\} }\in Wh\left( S_{1}\cup \cdots
\cup S_{D+1}\cup \left\{ x,y\right\} \right) $ such that $P^{x}=P$, $%
\left\Vert \vec{P}\right\Vert _{\dot{C}^{m}\left( S_{1}\cup \cdots \cup
S_{D+1}\cup \left\{ x,y\right\} \right) }\leq M_{0}$, $P^{z}\in \Gamma
_{0}\left( z,M_{0}\right) $ for all $z\in S_{1}\cup \cdots \cup S_{D+1}\cup
\left\{ x,y\right\} $. We then have $P^{y}\in \hat{\Gamma}\left( S_{1}\cup
\cdots \cup S_{D+1}\right) $, hence $\hat{\Gamma}\left( S_{1}\right) \cap
\cdots \hat{\Gamma}\left( S_{D+1}\right) \supset \hat{\Gamma}\left(
S_{1}\cup \cdots \cup S_{D+1}\right) \not=\emptyset $.

By Helly's theorem, there exists

\begin{itemize}
\item[\refstepcounter{equation}\text{(\theequation)}\label{fp11}] $P^{\prime
}\in \bigcap_{S\subset E,\#\left( S\right) \leq \left( D+2\right) ^{l-1}}%
\hat{\Gamma}\left( S\right) $.
\end{itemize}

In particular, $P^{\prime }\in \hat{\Gamma}\left( \emptyset \right) $, which
implies that 
\begin{equation*}
\left\vert \partial ^{\beta }\left( P-P^{\prime }\right) \left( x\right)
\right\vert \leq M_{0}\left\vert x-y\right\vert ^{m-\left\vert \beta
\right\vert }\text{ for }\left\vert \beta \right\vert \leq m-1\text{.}
\end{equation*}%
Also, \eqref{fp10}, \eqref{fp11} imply that 
\begin{equation*}
P^{\prime }\in \bigcap_{S\subset E,\#\left( S\right) \leq \left( D+2\right)
^{l-1}}\Gamma \left( y,S\right) =\Gamma _{l-1}^{fp}\left( y,M_{0}\right) 
\text{.}
\end{equation*}%
Thus, $P^{\prime }$ satisfies \eqref{fp9}, completing the proof of Lemma \ref%
{lemma-fp2}.
\end{proof}

\begin{theorem}[Finiteness Principle for Shape Fields]
\label{theorem-fp-for-wsf} For a large enough $k^{\#}$ determined by $m$, $n$%
, the following holds. Let $\vec{\Gamma}_{0}=\left( \Gamma _{0}\left(
x,M\right) \right) _{x\in E,M>0}$ be a $\left( C_{w},\delta _{\max }\right) $%
-convex shape field and let $Q_{0}\subset \mathbb{R}^{n}$ be a cube of
sidelength $\delta _{Q_{0}}\leq \delta _{\max }$. Also, let $x_{0}\in E\cap
5Q_{0}$ and $M_{0}>0$ be given. Assume that for each $S\subset E$ with $%
\#\left( S\right) \leq k^{\#}$ there exists a Whitney field $\vec{P}%
^{S}=\left( P^{z}\right) _{z\in S}$ such that 
\begin{equation*}
\left\Vert \vec{P}^{S}\right\Vert _{\dot{C}^{m}\left( S\right) }\leq M_{0}%
\text{,}
\end{equation*}%
and 
\begin{equation*}
P^{z}\in \Gamma _{0}\left( z,M_{0}\right) \text{ for all }z\in S\text{.}
\end{equation*}%
Then there exist $P^{0}\in \Gamma _{0}\left( x_{0},M_{0}\right) $ and $F\in
C^{m}\left( Q_{0}\right) $ such that the following hold, with a constant $%
C_{\ast }$ determined by $C_{w}$, $m$, $n$:

\begin{itemize}
\item $J_{z}(F)\in \Gamma _{0}\left( z,C_{\ast }M_{0}\right) $ for all $z\in
E\cap Q_{0}$.

\item $|\partial ^{\beta }\left( F-P^{0}\right) \left( x\right) |\leq
C_{\ast }M_{0}\delta _{Q_{0}}^{m-\left\vert \beta \right\vert }$ for all $%
x\in Q_{0}$, $\left\vert \beta \right\vert \leq m$.

\item In particular, $\left\vert \partial ^{\beta }F\left( x\right)
\right\vert \leq C_{\ast }M_{0}$ for all $x\in Q_{0}$, $\left\vert \beta
\right\vert =m$.
\end{itemize}
\end{theorem}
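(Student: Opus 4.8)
The plan is to produce a polynomial $P^{0}\in \Gamma _{l(\emptyset )+1}(x_{0},M_{0})$ and then hand it to Theorem \ref{theorem-tu1}. Accordingly, I take the finiteness constant to be
\[
k^{\#}=(D+2)^{l(\emptyset )+2}+1,
\]
which is determined by $m,n$ alone, since $D=\dim \mathcal{P}$ and $l(\emptyset )$ (see \eqref{m1}) depend only on $m,n$.

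\emph{Step 1: nonemptiness of the sets $\Gamma (x_{0},S)$.} I would let $S\subseteq E$ with $\#(S)\leq (D+2)^{l(\emptyset )+2}$ and put $S'=S\cup \{x_{0}\}$, so that $\#(S')\leq k^{\#}$. By hypothesis there is a Whitney field $\vec{P}^{S'}=(P^{z})_{z\in S'}$ with $\Vert \vec{P}^{S'}\Vert _{\dot{C}^{m}(S')}\leq M_{0}$ and $P^{z}\in \Gamma _{0}(z,M_{0})$ for all $z\in S'$. Comparing with the definition \eqref{fp1} of $\Gamma (x_{0},S)$ (taking the index set $S\cup \{x_{0}\}$ there to be our $S'$), one sees that $P^{x_{0}}\in \Gamma (x_{0},S)$; hence $\Gamma (x_{0},S)\neq \emptyset $.

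\emph{Step 2: from Helly's theorem to a deep refinement.} Applying Lemma \ref{lemma-fp1} with $l=l(\emptyset )+1$ — whose hypothesis is precisely what Step 1 established, since $(D+2)^{(l(\emptyset )+1)+1}=(D+2)^{l(\emptyset )+2}$ — gives $\Gamma _{l(\emptyset )+1}^{fp}(x_{0},M_{0})\neq \emptyset $. Lemma \ref{lemma-fp2} yields $\Gamma _{l(\emptyset )+1}^{fp}(x_{0},M_{0})\subseteq \Gamma _{l(\emptyset )+1}(x_{0},M_{0})$, so I may fix $P^{0}\in \Gamma _{l(\emptyset )+1}(x_{0},M_{0})$. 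Since each refinement satisfies $\Gamma _{l+1}(x,M)\subseteq \Gamma _{l}(x,M)$ (take $y=x$ in \eqref{33} and iterate), it follows that $P^{0}\in \Gamma _{0}(x_{0},M_{0})$, which is the first claimed conclusion. Then Theorem \ref{theorem-tu1}, applied to this $P^{0}$, the cube $Q_{0}$ of sidelength $\delta _{Q_{0}}\leq \delta _{\max }$, the point $x_{0}\in E\cap 5Q_{0}$, and $M_{0}>0$, produces $F\in C^{m}(Q_{0})$ with $|\partial ^{\beta }(F-P^{0})(x)|\leq C_{\ast }M_{0}\delta _{Q_{0}}^{m-|\beta |}$ on $Q_{0}$ for $|\beta |\leq m$ and $J_{z}(F)\in \Gamma _{0}(z,C_{\ast }M_{0})$ for all $z\in E\cap Q_{0}$, with $C_{\ast }$ determined by $C_{w},m,n$. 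The ``in particular'' clause is immediate: $P^{0}$ has degree $\leq m-1$, so $\partial ^{\beta }P^{0}\equiv 0$ for $|\beta |=m$, whence $|\partial ^{\beta }F(x)|=|\partial ^{\beta }(F-P^{0})(x)|\leq C_{\ast }M_{0}$ for such $\beta $ and all $x\in Q_{0}$.

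The argument has no genuine obstacle — Theorem \ref{theorem-tu1} already packages the hard analytic work, and Lemmas \ref{lemma-fp1} and \ref{lemma-fp2} are exactly the Helly-theorem passage from ``finitely many local Whitney fields'' to nonemptiness of a deep refinement. The one point demanding care is the bookkeeping of $k^{\#}$: one must allow room to adjoin $x_{0}$ to the test set $S$ (hence the ``$+1$''), and one must land in $\Gamma _{l(\emptyset )+1}$ rather than $\Gamma _{l(\emptyset )}$ so that $P^{0}$ is an admissible input to Theorem \ref{theorem-tu1} (hence the exponent $l(\emptyset )+2$ in $k^{\#}$).
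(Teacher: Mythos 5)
Your proof is correct and follows essentially the same route as the paper: show $\Gamma(x_0,S)\neq\emptyset$ for the relevant test sets $S$, invoke Lemmas \ref{lemma-fp1} and \ref{lemma-fp2} to obtain some $P^0\in\Gamma_{l(\emptyset)+1}(x_0,M_0)\subset\Gamma_0(x_0,M_0)$, and feed it to Theorem \ref{theorem-tu1}. The only difference is cosmetic: the paper deliberately takes a wastefully large $k^\#=100+(D+2)^{l_\ast+100}$ to avoid the bookkeeping you spelled out, whereas your $k^\#=(D+2)^{l(\emptyset)+2}+1$ is sharper and your Step~1 makes explicit the ``adjoin $x_0$'' adjustment the paper leaves implicit.
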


\begin{proof}
For $l\geq 1$, define $\vec{\Gamma}_{l}=\left( \Gamma _{l}\left( x,M\right)
\right) _{x\in E,M>0}$ and $\vec{\Gamma}_{l}^{fp}=\left( \Gamma
_{l}^{fp}\left( x,M\right) \right) _{x\in E,M>0}$ as in Lemmas \ref%
{lemma-fp1} and \ref{lemma-fp2}. We take $l_{\ast }=100+l\left( \emptyset
\right) $ and $k^{\#}=100+\left( D+2\right) ^{l_{\ast }+100}$. (For the
definition of $l\left( \emptyset \right) $, see Section \ref%
{statement-of-the-main-lemma}.)

Lemmas \ref{lemma-fp1} and \ref{lemma-fp2} show that $\Gamma _{l_{\ast
}}^{fp}\left( x_{0},M_{0}\right) $ is nonempty, hence $\Gamma _{l\left(
\emptyset \right) +1}\left( x_{0},M_{0}\right) $ is nonempty. Pick any $%
P^{0}\in \Gamma _{l\left( \emptyset \right) +1}\left( x_{0},M_{0}\right)
\subset \Gamma _{0}\left( x_{0},M_{0}\right) $. Then Theorem \ref%
{theorem-tu1} in Section \ref{tu} produces a function $F\in C^{m}(Q_{0})$
with the desired properties.
\end{proof}

\section{Finiteness Principle II}

\label{fpii}

In this section, we prove the following result.

\begin{theorem}[Finiteness Principle for Vector-Valued Functions]
\label{theorem-basic-fininite-principle} Fix $m$, $n$, $D\geq 1$. Then there
exist $k^{\#}$, $C$ (determined by $m$, $n$, $D$), such that the following
holds.

Let $E \subset \mathbb{R}^n$ be finite. For each $x\in E$, let $K(x) \subset 
\mathbb{R}^D$ be convex. Assume that for any $S \subseteq E$ with $\#(S)
\leq k^\#$, there exists $F^S \in C^m(\mathbb{R}^n, \mathbb{R}^D)$ such that

\begin{itemize}
\item[\refstepcounter{equation}\text{(\theequation)}\label{fpii1}] $%
||F^S||_{C^m(\mathbb{R}^n, \mathbb{R}^D)} \leq 1$ and $F^S(z) \in K(z)$ for
all $z \in S$.
\end{itemize}

Then there exists $F\in C^{m}(\mathbb{R}^{n},\mathbb{R}^{D})$ such that

\begin{itemize}
\item[\refstepcounter{equation}\text{(\theequation)}\label{fpii2}] $%
||F||_{C^m(\mathbb{R}^n, \mathbb{R}^D)} \leq C$ and $F(z) \in K(z)$ for all $%
z \in E$.
\end{itemize}
\end{theorem}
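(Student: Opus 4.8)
The plan is to deduce Theorem~\ref{theorem-basic-fininite-principle} from the Finiteness Principle for Shape Fields (Theorem~\ref{theorem-fp-for-wsf}) by encoding the convex constraints $K(x)\subset\mathbb{R}^D$ into a scalar shape field in one extra block of $D$ variables. First I would introduce the variable $\xi=(\xi_1,\dots,\xi_D)\in\mathbb{R}^D$ and work on $\mathbb{R}^{n+D}$ with the jet space $\mathcal{P}$ of polynomials of degree $\le m$ in the variables $(x,\xi)$ (so we use the ``$C^{m+1}$ flavor'' of the machinery in $n+D$ variables --- one extra derivative, because we will want to recover a $C^m$ map by differentiating once in $\xi$). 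For $x\in E$ set $E^+ = E\times\{0\}\subset\mathbb{R}^{n+D}$, and for $(x,0)\in E^+$ define
\[
\Gamma_0\big((x,0),M\big)=\Big\{\,P\in\mathcal{P}: \|P\|\le M,\ P(x,0)=0,\ \nabla_\xi P(x,0)\in K(x)\,\Big\},
\]
where $\|P\|$ abbreviates $\max_{|\beta|\le m}|\partial^\beta P(x,0)|$; here $\partial^\beta$ ranges over multi-indices in the $(n+D)$ variables. One checks that each $\Gamma_0((x,0),M)$ is convex (it is the preimage of the convex set $\{0\}\times K(x)$ under a linear map on the finite-dimensional space of bounded jets, intersected with a ball), and that the family is nested in $M$, so $\vec\Gamma_0$ is a shape field on $E^+$.

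Second, I would verify $(C_w,\delta_{\max})$-convexity of $\vec\Gamma_0$ for $\delta_{\max}=\infty$ (any $\delta$) and a constant $C_w$ depending only on $m,n,D$. Given $P_1,P_2\in\Gamma_0((x,0),M)$, jets $Q_1,Q_2$ with the stated size bounds and $Q_1\odot Q_1+Q_2\odot Q_2=1$, we must show $P:=Q_1\odot Q_1\odot P_1+Q_2\odot Q_2\odot P_2\in\Gamma_0((x,0),C_wM)$. The norm bound is routine (product of jets with controlled coefficients). The constraint $P(x,0)=0$ holds because $P(x,0)=Q_1(x,0)^2 P_1(x,0)+Q_2(x,0)^2 P_2(x,0)=0$. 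For the gradient condition, Leibniz in $\xi$ gives $\nabla_\xi P(x,0)=Q_1(x,0)^2\nabla_\xi P_1(x,0)+Q_2(x,0)^2\nabla_\xi P_2(x,0)$ --- the extra terms $(\nabla_\xi Q_i^2)(x,0)\,P_i(x,0)$ vanish precisely because $P_i(x,0)=0$ --- and since $Q_1(x,0)^2+Q_2(x,0)^2=1$ this is a convex combination of $\nabla_\xi P_1(x,0),\nabla_\xi P_2(x,0)\in K(x)$, hence in $K(x)$ by convexity. This is the step where the special structure of the constraint (value $=0$, gradient $\in$ convex set) does the work, so I expect it to be the crux of the argument; everything else is bookkeeping.

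Third, I would translate hypotheses and conclusion. Given $S\subset E$ with $\#(S)\le k^\#$, the assumed $F^S\in C^m(\mathbb{R}^n,\mathbb{R}^D)$ with $\|F^S\|\le 1$, $F^S(z)\in K(z)$, is converted to a scalar $\mathcal{F}^S\in C^{m+1}(\mathbb{R}^{n+D})$ by $\mathcal{F}^S(x,\xi)=\sum_{j=1}^D F^S_j(x)\,\xi_j$ (or, to get genuine $C^{m+1}$ control and compact behaviour, $\mathcal F^S(x,\xi)=\chi(\xi)\sum_j F^S_j(x)\xi_j$ with $\chi$ a fixed cutoff equal to $1$ near $0$); then $\|\mathcal F^S\|_{C^{m+1}}\le C(m,n,D)$, $\mathcal F^S(x,0)=0$, and $\nabla_\xi\mathcal F^S(x,0)=F^S(x)\in K(x)$ for $x\in S$. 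So the Whitney field $(J_{(z,0)}\mathcal F^S)_{(z,0)\in S^+}$ witnesses the hypothesis of Theorem~\ref{theorem-fp-for-wsf} for $\vec\Gamma_0$ with $M_0=C(m,n,D)$. Taking $k^\#$ to be the threshold from that theorem (for parameters $m+1,n+D$), and applying it with $Q_0$ any large cube containing a neighbourhood of $E^+$ --- actually, since we want a global function, I would instead invoke the global $C^{m-1,1}$-type patching or simply note $E$ is finite and rescale, running Theorem~\ref{theorem-fp-for-wsf} on a single cube $Q_0\supset E^+$ of controlled size and then extending by a standard cutoff --- we obtain $\mathcal F\in C^{m+1}$ near $E^+$ with $\|\mathcal F\|_{C^{m+1}}\le C_\ast$ and $J_{(z,0)}(\mathcal F)\in\Gamma_0((z,0),C_\ast)$ for all $z\in E$. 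The latter says $\mathcal F(z,0)=0$ and $\nabla_\xi\mathcal F(z,0)\in K(z)$. Finally set $F(x)=\nabla_\xi\mathcal F(x,0)\in C^m(\mathbb{R}^n,\mathbb{R}^D)$; then $\|F\|_{C^m}\le\|\mathcal F\|_{C^{m+1}}\le C_\ast$ and $F(z)\in K(z)$ for all $z\in E$, which is \eqref{fpii2}. (The cube/global issue is handled exactly as in the passage from Theorem~\ref{theorem-fp-for-wsf} to the global statement: cover $\mathbb{R}^n$ by unit cubes, apply the theorem on each, and patch with a partition of unity, using the nested structure of the $\Gamma_l$; since the constraint sets are genuinely convex this patching is again legitimate by Lemma~\ref{lemma-wsf2}.) This completes the proof.
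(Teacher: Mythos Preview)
Your proposal is correct and follows essentially the same approach as the paper: lift to $\mathbb{R}^{n+D}$, define the shape field on $E^+=E\times\{0\}$ by the constraints $P(x,0)=0$, $\nabla_\xi P(x,0)\in K(x)$, $|\partial^\beta P(x,0)|\le M$, verify $(C_w,\delta_{\max})$-convexity via the observation that $P_i(x,0)=0$ kills the Leibniz cross-terms so $\nabla_\xi P(x,0)$ is a genuine convex combination, produce the Whitney fields from $\sum_j \xi_j\,J_{x_0}(F^S_j)$, apply Theorem~\ref{theorem-fp-for-wsf} on unit cubes, read off $F(x)=\nabla_\xi\mathcal F(x,0)$, and patch by a partition of unity. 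One small correction: the shape field is only $(C_w,1)$-convex, not $(C_w,\infty)$-convex, because the norm bound $|\partial^\beta P(x,0)|\le CM$ uses $\delta^{(m+1)-|\beta|}\le 1$; this is harmless since the application uses cubes of sidelength at most $1$.
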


\begin{proof}
Let us first set up notation. We write $c$, $C$, $C^{\prime }$, etc., to
denote constants determined by $m$, $n$, $D$; these symbols may denote
different constants in different occurrences. We will work with $C^{m}$
vector and scalar-valued functions on $\mathbb{R}^{n}$, and also with $%
C^{m+1}$ scalar-valued functions on $\mathbb{R}^{n+D}$. We use Roman letters
($x$, $y$, $z$$,\cdots $) to denote points of $\mathbb{R}^{n}$, and Greek
letters $(\xi ,\eta ,\zeta ,\cdots )$ to denote points of $\mathbb{R}^{D}$.
We denote points of the $\mathbb{R}^{n+D}$ by $(x,\xi )$, $(y,\eta )$, etc.
As usual, $\mathcal{P}$ denotes the vector space of real-valued polynomials of degree at
most $m-1$ on $\mathbb{R}^{n}$. We write $\mathcal{P}^{D}$ to denote the
direct sum of $D$ copies of $\mathcal{P}$. If $F\in C^{m-1}(\mathbb{R}^{n},%
\mathbb{R}^{D})$ with $F(x)=\left( F_{1}\left( x\right) ,\cdots ,F_{D}\left(
x\right) \right) $ for $x\in \mathbb{R}^{n}$, then $J_{x}(F):=(J_{x}\left(
F_{1}\right) ,\cdots ,J_{x}\left( F_{D}\right) )\in \mathcal{P}^{D}$.

We write $\mathcal{P}^{+}$ to denote the vector space of real-valued polynomials of
degree at most $m$ on $\mathbb{R}^{n+D}$. If $F\in C^{m+1}\left( \mathbb{R}%
^{n+D}\right) $, then we write $J_{\left( x,\xi \right) }^{+}F\in \mathcal{P}%
^{+}$ to denote the $m^{th}$-degree Taylor polynomial of $F$ at the point $%
\left( x,\xi \right) \in \mathbb{R}^{n+D}$.

When we work with $\mathcal{P}^{+}$, we write $\odot _{\left( x,\xi \right)
} $ to denote the multiplication 
\begin{equation*}
P\odot _{\left( x,\xi \right) }Q:=J_{\left( x,\xi \right) }^{+}\left(
PQ\right) \in \mathcal{P}^{+}\text{ for }P,Q\in \mathcal{P}^{+}\text{.}
\end{equation*}

We will use Theorem \ref{theorem-fp-for-wsf} for $C^{m+1}$-functions on $%
\mathbb{R}^{n+D}$. Thus, $m+1$ and $n+D$ will play the r\^oles of $m$, $n$,
respectively, when we apply that theorem.

We take $k^{\#}$ as in Theorem \ref{theorem-fp-for-wsf}, where we use $%
m+1,n+D$ in place of $m,n$, respectively.

We now introduce the relevant shape field.

Let $E^{+}=\left\{ \left( x,0\right) \in \mathbb{R}^{n+D}:x\in E\right\} $.
For $\left( x_{0},0\right) \in E^{+}$ and $M>0$, let

\begin{itemize}
\item[\refstepcounter{equation}\text{(\theequation)}\label{fpii3}] $\Gamma
\left( \left( x_0,0\right) ,M\right) =\left\{ 
\begin{array}{c}
P\in \mathcal{P}^{+}:P\left( x_{0},0\right) =0,\nabla _{\xi }P\left(
x_{0},0\right) \in K\left( x_{0}\right) , \\ 
\left\vert \partial _{x}^{\alpha }\partial _{\xi }^{\beta }P\left(
x_{0},0\right) \right\vert \leq M\text{ for }\left\vert \alpha \right\vert
+\left\vert \beta \right\vert \leq m%
\end{array}%
\right\} \mathcal{\subset \mathcal{P}}^{+}$.
\end{itemize}

Let $\vec{\Gamma}=\left( \Gamma \left( x_{0},0\right) ,M\right) $ $_{\left(
x_0,0\right) \in E^{+},M>0}$.

\begin{lemma}
\label{lemma-bfp1} $\vec{\Gamma}$ is a $(C,1)$-convex shape field.
\end{lemma}

\begin{proof}[Proof of Lemma \protect\ref{lemma-bfp1}]
Clearly, each $\Gamma ((x_0,0),M)$ is a (possibly empty) convex subset of $%
\mathcal{P}^{+}$, and $M^{\prime }\leq M$ implies $\Gamma \left( \left(
x_{0},0\right) ,M^{\prime }\right) \subseteq \Gamma \left( \left(
x_{0},0\right) ,M\right) $. Thus, $\vec{\Gamma}$ is a shape field (with $m+1$%
, $n+D$ playing the r\^oles of $m$, $n$, respectively). To prove $(C,1)$%
-convexity, let $x_{0}\in E,0<\delta \leq 1$, let

\begin{itemize}
\item[\refstepcounter{equation}\text{(\theequation)}\label{fpii4}] $%
P_{1},P_{2}\in \Gamma \left( \left( x_{0},0\right) ,M\right) $ with
\end{itemize}

\begin{itemize}
\item[\refstepcounter{equation}\text{(\theequation)}\label{fpii5}] $%
\left\vert \partial _{x}^{\alpha }\partial _{\xi }^{\beta }\left(
P_{1}-P_{2}\right) \left( x_{0},0\right) \right\vert \leq M\delta ^{\left(
m+1\right) -\left\vert \alpha \right\vert -\left\vert \beta \right\vert }$
for $\left\vert \alpha \right\vert +\left\vert \beta \right\vert \leq m$;
and let
\end{itemize}

\begin{itemize}
\item[\refstepcounter{equation}\text{(\theequation)}\label{fpii6}] $%
Q_{1},Q_{2}\in \mathcal{P}^{+}$, with
\end{itemize}

\begin{itemize}
\item[\refstepcounter{equation}\text{(\theequation)}\label{fpii7}] $%
\left\vert \partial _{x}^{\alpha }\partial _{\xi }^{\beta }Q_{i}\left(
x_{0},0\right) \right\vert \leq \delta ^{-\left\vert \alpha \right\vert
-\left\vert \beta \right\vert }$ for $i=1,2$, $\left\vert \alpha \right\vert
+\left\vert \beta \right\vert \leq m$, and with
\end{itemize}

\begin{itemize}
\item[\refstepcounter{equation}\text{(\theequation)}\label{fpii8}] $%
Q_{1}\odot _{\left( x_{0},0\right) }Q_{1}+Q_{2}\odot _{\left( x_{0},0\right)
}Q_{2}=1$.
\end{itemize}

We must show that the polynomial

\begin{itemize}
\item[\refstepcounter{equation}\text{(\theequation)}\label{fpii9}] $%
P:=Q_{1}\odot _{\left( x_{0},0\right) }Q_{1}\odot _{\left( x_{0},0\right)
}P_{1}+Q_{2}\odot _{\left( x_{0},0\right) }Q_{2}\odot _{\left(
x_{0},0\right) }P_{2}$ \end{itemize}
belongs to $\Gamma \left( \left( x_{0},0\right)
,CM\right) $.

From \eqref{fpii3}, \eqref{fpii4}, we have

\begin{itemize}
\item[\refstepcounter{equation}\text{(\theequation)}\label{fpii10}] $%
\left\vert \partial _{x}^{\alpha }\partial _{\xi }^{\beta }P_{1}\left(
x_{0},0\right) \right\vert \leq M$ for $\left\vert \alpha \right\vert
+\left\vert \beta \right\vert \leq m$,
\end{itemize}

\begin{itemize}
\item[\refstepcounter{equation}\text{(\theequation)}\label{fpii11}] $%
P_{1}\left( x_{0},0\right) =P_{2}\left( x_{0},0\right) =0$, and
\end{itemize}

\begin{itemize}
\item[\refstepcounter{equation}\text{(\theequation)}\label{fpii12}] $\nabla
_{\xi }P_{1}\left( x_{0},0\right) $, $\nabla _{\xi }P_{2}\left(
x_{0},0\right) \in K\left( x_{0}\right) $.
\end{itemize}

Then \eqref{fpii9}, \eqref{fpii11} give 
\begin{equation*}
P\left( x_{0},0\right) =0
\end{equation*}%
and 
\begin{equation*}
\nabla _{\xi }P\left( x_{0},0\right) =\left( Q_{1}\left( x_{0},0\right)
\right) ^{2}\nabla _{\xi }P_{1}\left( x_{0},0\right) +\left( Q_{2}\left(
x_{0},0\right) \right) ^{2}\nabla _{\xi }P_{2}\left( x_{0},0\right)
\end{equation*}%
while \eqref{fpii8} yields%
\begin{equation*}
\left( Q_{1}\left( x_{0},0\right) \right) ^{2}+\left( Q_{2}\left(
x_{0},0\right) \right) ^{2}=1\text{.}
\end{equation*}%
Together with \eqref{fpii12} and convexity of $K(x_{0})$, the above remarks
imply that

\begin{itemize}
\item[\refstepcounter{equation}\text{(\theequation)}\label{fpii13}] $P\left(
x_{0},0\right) =0$ and $\nabla _{\xi }P\left( x_{0},0\right) \in K\left(
x_{0}\right) $.
\end{itemize}

Also, \eqref{fpii8}, \eqref{fpii9} imply the formula

\begin{itemize}
\item[\refstepcounter{equation}\text{(\theequation)}\label{fpii14}] $%
P=P_{1}+Q_{2}\odot _{\left( x_{0},0\right) }Q_{2}\odot _{\left(
x_{0},0\right) }\left( P_{2}-P_{1}\right) $.
\end{itemize}

From \eqref{fpii5}, \eqref{fpii7}, and $\delta \leq 1$, we have 
\begin{eqnarray*}
\left\vert \partial _{x}^{\alpha }\partial _{\xi }^{\beta }\left[ Q_{2}\odot
_{\left( x_{0},0\right) }Q_{2}\odot _{\left( x_{0},0\right) }\left(
P_{2}-P_{1}\right) \right] \left( x_{0},0\right) \right\vert &\leq &CM\delta
^{\left( m+1\right) -\left\vert \alpha \right\vert -\left\vert \beta
\right\vert } \\
&\leq &CM
\end{eqnarray*}%
for $\left\vert \alpha \right\vert +\left\vert \beta \right\vert \leq m$.
Together with \eqref{fpii10} and \eqref{fpii14}, this tells us that

\begin{itemize}
\item[\refstepcounter{equation}\text{(\theequation)}\label{fpii15}] $%
\left\vert \partial _{x}^{\alpha }\partial _{\xi }^{\beta }P\left(
x_{0},0\right) \right\vert \leq CM$ for $\left\vert \alpha \right\vert
+\left\vert \beta \right\vert \leq m$.
\end{itemize}

From \eqref{fpii13}, \eqref{fpii15} and the definition \eqref{fpii3}, we see
that $P\in \Gamma \left( \left( x_{0},0\right) ,CM\right) $, completing the
proof of Lemma \ref{lemma-bfp1}.
\end{proof}

\begin{lemma}
\label{lemma-bfp2} Let $S^{+}\subset E^{+}$ with $\#\left( S^{+}\right) \leq
k^{\#}$. Then there exists $\vec{P}=\left( P^{z}\right) _{z\in S^{+}}$, with
each $P^{z}\in \mathcal{P}^{+}$, such that

\begin{itemize}
\item[\refstepcounter{equation}\text{(\theequation)}\label{fpii16}] $%
P^{z}\in \Gamma \left( z,C\right) $ for each $z\in S^{+}$, and
\end{itemize}

\begin{itemize}
\item[\refstepcounter{equation}\text{(\theequation)}\label{fpii17}] $%
\left\vert \partial _{x}^{\alpha }\partial _{\xi }^{\beta }\left(
P^{z}-P^{z^{\prime }}\right) \left( z\right) \right\vert \leq C\left\vert
z-z^{\prime }\right\vert ^{\left( m+1\right) -\left\vert \alpha \right\vert
-\left\vert \beta \right\vert }$ for $z$, $z^{\prime }\in S^{+}$ and $%
\left\vert \alpha \right\vert +\left\vert \beta \right\vert \leq m$.
\end{itemize}
\end{lemma}

\begin{proof}[Proof of Lemma \protect\ref{lemma-bfp2}]
Since $E^{+}=E\times \left\{ 0\right\} $, we have $S^{+}=S\times \left\{
0\right\} $ for an $S\subset E$ with $\#\left( S\right) \leq k^{\#}$. By
hypothesis of Theorem \ref{theorem-basic-fininite-principle}, there exists $%
F^{S}\in C^{m}\left( \mathbb{R}^{n},\mathbb{R}^{D}\right) $ such that

\begin{itemize}
\item[\refstepcounter{equation}\text{(\theequation)}\label{fpii18}] $%
\left\Vert F^{S}\right\Vert _{C^{m}\left( \mathbb{R}^{n},\mathbb{R}%
^{D}\right) }\leq 1$ and $F^{S}\left( x_{0}\right) \in K\left( x_{0}\right) $
for all $x_{0}\in S$.
\end{itemize}

Let $F^{S}\left( x\right) =\left( F_{1}^{S}\left( x\right) ,\cdots
,F_{D}^{S}\left( x\right) \right) $ for $x\in \mathbb{R}^{n}$, and let $\vec{P%
}=\left( P^{\left( x_{0},0\right) }\right) _{\left( x_{0},0\right) \in
S\times \left\{ 0\right\} }$ with

\begin{itemize}
\item[\refstepcounter{equation}\text{(\theequation)}\label{fpii19}] $%
P^{\left( x_{0},0\right) }\left( x,\xi \right) =\sum_{i=1}^{D}\xi _{i}\left[
J_{x_{0}}\left( F_{i}^{S}\right) \left( x\right) \right] $ for $x\in \mathbb{%
R}^{n}$, $\xi =\left( \xi _{1},\cdots ,\xi _{D}\right) \in \mathbb{R}^{D}$.
\end{itemize}

Each $P^{(x_0,0)}$ belongs to $\mathcal{P}^+$ and satisfies

\begin{itemize}
\item[\refstepcounter{equation}\text{(\theequation)}\label{fpii20}] $%
P^{\left( x_{0},0\right) }\left( x_{0},0\right) =0$, $\nabla _{\xi
}P^{\left( x_{0},0\right) }\left( x_{0},0\right) \in K\left( x_{0}\right) $,
and
\end{itemize}

and

\begin{itemize}
\item[\refstepcounter{equation}\text{(\theequation)}\label{fpii21}] $%
\left\vert \partial _{x}^{\alpha }\partial _{\xi }^{\beta }P^{\left(
x_{0},0\right) }\left( x_{0},0\right) \right\vert \leq C$ for $\left\vert
\alpha \right\vert +\left\vert \beta \right\vert \leq m$,
\end{itemize}

thanks to \eqref{fpii18}, \eqref{fpii19}. Our results \eqref{fpii20}, %
\eqref{fpii21} and definition \eqref{fpii3} together imply \eqref{fpii16}.
We pass to \eqref{fpii17}. Let $\left( x_{0},0\right) ,\left( y_{0},0\right)
\in S^{+}=S\times \left\{ 0\right\} $. From \eqref{fpii18}, \eqref{fpii19},
we have 
\begin{eqnarray*}
\left\vert \partial _{x}^{\alpha }\partial _{\xi _{j}}\left( P^{\left(
x_{0},0\right) }-P^{\left( y_{0},0\right) }\right) \left( x_{0},0\right)
\right\vert  &=&\left\vert \partial _{x}^{\alpha }\left( J_{x_{0}}\left(
F_{j}^{S}\right) -J_{y_{0}}\left( F_{j}^{S}\right) \right) \left(
x_{0}\right) \right\vert  \\
&\leq &C\left\vert x_{0}-y_{0}\right\vert ^{m-\left\vert \alpha \right\vert }
\\
&=&C\left\vert x_{0}-y_{0}\right\vert ^{\left( m+1\right) -\left( \left\vert
\alpha \right\vert +1\right) }
\end{eqnarray*}%
for $\left\vert \alpha \right\vert \leq m-1$, $j=1,\cdots ,D$. For $|\beta
|\not=1$, we have 
\begin{equation*}
\partial _{x}^{\alpha }\partial _{\xi }^{\beta }\left( P^{\left(
x_{0},0\right) }-P^{\left( y_{0},0\right) }\right) \left( x_{0},0\right) =0
\end{equation*}%
by \eqref{fpii19}. The above remarks imply \eqref{fpii17}, completing the
proof of Lemma \ref{lemma-bfp2}.
\end{proof}

\begin{lemma}
\label{lemma-bfp3} Given a cube $Q\subset \mathbb{R}^{n}$ of sidelength $%
\delta _{Q}=1$, there exists $F^{Q}\in C^{m}(Q,\mathbb{R}^{D})$ such that

\begin{itemize}
\item[\refstepcounter{equation}\text{(\theequation)}\label{fpii22}] $%
\left\vert \partial ^{\alpha }F^{Q}\left( x\right) \right\vert \leq C$ for $%
x\in Q$, $\left\vert \alpha \right\vert \leq m$; and
\end{itemize}

\begin{itemize}
\item[\refstepcounter{equation}\text{(\theequation)}\label{fpii23}] $%
F^{Q}\left( z\right) \in K\left( z\right) $ for all $z\in E\cap Q$.
\end{itemize}
\end{lemma}

\begin{proof}[Proof of Lemma \protect\ref{lemma-bfp3}]
If $E\cap Q=\emptyset $, then we can just take $F^{Q}\equiv 0$. Otherwise,
pick $x_{00}\in E\cap Q$, let $Q^{\prime }\in \mathbb{R}^{D}$ be a cube of
sidelength $\delta _{Q^{\prime }}=1$, containing the origin in its interior,
and apply Theorem \ref{theorem-fp-for-wsf} (with $m+1$, $n+D$ in place of $m$%
, $n$, respectively) to the shape field $\vec{\Gamma}=\left( \Gamma \left(
x_{0},0\right) ,M\right) _{\left( x_{0},0\right) \in E^{+},M>0}$ given by %
\eqref{fpii3}, the cube $Q_{0}:=Q\times Q^{\prime }\subset \mathbb{R}^{n+D}$%
, the point $(x_{00},0)$, and the number $M_{0}=C$.

Lemmas \ref{lemma-bfp1} and \ref{lemma-bfp2} tell us that the above data
satisfy the hypotheses of Theorem \ref{theorem-fp-for-wsf}. Applying Theorem %
\ref{theorem-fp-for-wsf}, we obtain

\begin{itemize}
\item[\refstepcounter{equation}\text{(\theequation)}\label{fpii24}] $%
P_{0}\in \Gamma \left( \left( x_{00},0\right) ,C\right) $ and $F\in
C^{m+1}\left( Q\times Q^{\prime }\right) $ such that
\end{itemize}

\begin{itemize}
\item[\refstepcounter{equation}\text{(\theequation)}\label{fpii25}] $%
\left\vert \partial _{x}^{\alpha }\partial _{\xi }^{\beta }\left(
F-P_{0}\right) \left( x,\xi \right) \right\vert \leq C$ for $\left\vert
\alpha \right\vert +\left\vert \beta \right\vert \leq m+1$ and $\left( x,\xi
\right) \in Q\times Q^{\prime }$; and
\end{itemize}

\begin{itemize}
\item[\refstepcounter{equation}\text{(\theequation)}\label{fpii26}] $%
J_{\left( z,0\right) }^{+}\left( F\right) \in \Gamma \left( \left(
z,0\right) ,C\right) $ for all $z\in E\cap Q$.
\end{itemize}

By \eqref{fpii24}, \eqref{fpii26} and definition \eqref{fpii3}, we have

\begin{itemize}
\item[\refstepcounter{equation}\text{(\theequation)}\label{fpii27}] $%
\left\vert \partial _{x}^{\alpha }\partial _{\xi }^{\beta }P_{0}\left(
x_{00},0\right) \right\vert \leq C$ for $\left\vert \alpha \right\vert
+\left\vert \beta \right\vert \leq m$
\end{itemize}

and

\begin{itemize}
\item[\refstepcounter{equation}\text{(\theequation)}\label{fpii28}] $\nabla
_{\xi }F\left( z,0\right) \in K\left( z\right) $ for all $z\in E\cap Q$.
\end{itemize}

Since $\left( x_{00},0\right) \in Q\times Q^{\prime }$ and $\delta _{Q\times
Q^{\prime }}=1$, \eqref{fpii27} implies that 
\begin{equation*}
\left\vert \partial _{x}^{\alpha }\partial _{\xi }^{\beta }P_{0}\left( x,\xi
\right) \right\vert \leq C
\end{equation*}%
for $\left( x,\xi \right) \in Q\times Q^{\prime }$, $\left\vert \alpha
\right\vert +\left\vert \beta \right\vert \leq m+1$. (Recall that $P_{0}$ is
a polynomial of degree at most $m$.) Together with \eqref{fpii25}, this
implies that

\begin{itemize}
\item[\refstepcounter{equation}\text{(\theequation)}\label{fpii29}] $%
\left\vert \partial _{x}^{\alpha }\partial _{\xi }^{\beta }F\left( x,\xi
\right) \right\vert \leq C$ for $\left( x,\xi \right) \in Q\times Q^{\prime }
$, $\left\vert \alpha \right\vert +\left\vert \beta \right\vert \leq m+1$.
\end{itemize}

Taking 
\begin{equation*}
F^{Q}\left( x\right) =\nabla _{\xi }F\left( x,0\right) \text{ for }x\in Q%
\text{,}
\end{equation*}%
we learn from \eqref{fpii28}, \eqref{fpii29} that $F^{Q}\in C^{m}\left( Q,%
\mathbb{R}^{D}\right) $; $\left\vert \partial ^{\alpha }F^{Q}\left( x\right)
\right\vert \leq C$ for $x\in Q$, $\left\vert \alpha \right\vert \leq m$;
and $F^{Q}\left( z\right) \in K\left( z\right) $ for all $z\in E\cap Q$.
Thus, $F^{Q}$ satisfies \eqref{fpii22} and \eqref{fpii23}, completing the
proof of Lemma \ref{lemma-bfp3}.
\end{proof}

Now, we can easily finish the proof of Theorem \ref%
{theorem-basic-fininite-principle}. We introduce a partition of unity%
\begin{equation*}
1=\sum_{\nu }\theta _{\nu }\text{ on }\mathbb{R}^{n}\text{,}
\end{equation*}%
where for each $\nu $: $\theta _{\nu }\in C^{m}\left( \mathbb{R}^{n}\right) $%
; $\theta _{\nu }\geq 0$, $\left\vert \partial ^{\alpha }\theta _{\nu
}\right\vert \leq C$ for $\left\vert \alpha \right\vert \leq m$, support $%
\theta _{\nu }\subset Q_{\nu }$ for a cube $Q_{\nu }$ of sidelength $\delta
_{Q_{\nu }}=1$; and any given point $x\in \mathbb{R}^{n}$ has a neighborhood
that intersects $Q_{\nu }$ for at most $C$ distinct $\nu $. For each $Q_{\nu
}$, we apply Lemma \ref{lemma-bfp3} to produce a function $F_{\nu }\in
C^{m}\left( Q_{\nu },\mathbb{R}^{D}\right) $ such that $\left\vert \partial
^{\alpha }F_{\nu }\left( x\right) \right\vert \leq C$ for $x\in Q_{\nu }$, $%
\left\vert \alpha \right\vert \leq m$; and $F_{\nu }\left( z\right) \in
K\left( z\right) $ for all $z\in E\cap Q_{\nu }$.

We then define

\begin{itemize}
\item[\refstepcounter{equation}\text{(\theequation)}\label{fpii30}] $%
F=\sum_{\nu }\theta _{\nu }F_{\nu }$ on $\mathbb{R}^{n}$.
\end{itemize}

In a small enough neighborhood of any given point of $\mathbb{R}^{n}$, this
sum contains at most $C$ nonzero terms, and for each $\nu $, we have 
\begin{equation*}
\left\vert \partial ^{\alpha }\left( \theta _{\nu }F_{\nu }\right) \left(
x\right) \right\vert \leq C\text{ for }x\in \mathbb{R}^{n}\text{, }%
\left\vert \alpha \right\vert \leq m\text{.}
\end{equation*}
Therefore, $F\in C^{m}\left( \mathbb{R}^{n},\mathbb{R}^{D}\right) $, and $%
\left\vert \partial ^{\alpha }F\left( x\right) \right\vert \leq C$ for $x\in 
\mathbb{R}^{n}$, $\left\vert \alpha \right\vert \leq m$; i.e.,

\begin{itemize}
\item[\refstepcounter{equation}\text{(\theequation)}\label{fpii31}] $%
\left\Vert F\right\Vert _{C^{m}\left( \mathbb{R}^{n},\mathbb{R}^{D}\right)
}\leq C$.
\end{itemize}

Moreover, let $z\in E$. Then $\theta _{\nu }(z)$ is nonzero for at most $C$
distinct $\nu $, each $\theta _{\nu }\left( z\right) $ is nonnegative; and $%
\sum_{\nu }\theta _{\nu }\left( z\right) =1$. Since $z\in E$, we have $%
F_{\nu }\left( z\right) \in K\left( z\right) $ whenever $z\in $ support $%
\theta _{\nu }$. Therefore, \eqref{fpii30} exhibits $F(z)$ as a convex
combination of vectors in $K(z)\subset \mathbb{R}^{D}$. Since $K(z)$ is
convex, we have

\begin{itemize}
\item[\refstepcounter{equation}\text{(\theequation)}\label{fpii32}] $F\left(
z\right) \in K\left( z\right) $ for all $z\in E$.
\end{itemize}

Thanks to \eqref{fpii31}, \eqref{fpii32}, our $F\in C^{m}\left( \mathbb{R}%
^{n},\mathbb{R}^{D}\right) $ satisfies \eqref{fpii2}.

This completes the proof of Theorem \ref{theorem-basic-fininite-principle}.
\end{proof}

\section{Interpolation by Nonnegative Functions}

\label{inf}

In this section, $c$, $C$, $C^{\prime }$, etc. denote constants determined
by $m$ and $n$. These symbols may denote different constants in different
occurrences. For $x \in \mathbb{R}^n$ and $M >0$, define

\begin{itemize}
\item[\refstepcounter{equation}\text{(\theequation)}\label{inf1}] $\Gamma
_{\ast }\left( x,M\right) =\left\{ 
\begin{array}{c}
P\in \mathcal{P}:\text{ There exists }F\in C^{m}\left( \mathbb{R}^{n}\right) 
\text{ with }\left\Vert F\right\Vert _{C^{m}\left( \mathbb{R}^{n}\right)
}\leq M\text{,} \\ 
F\geq 0\text{ on }\mathbb{R}^{n}\text{, }J_{x}\left( F\right) =P\text{.}%
\end{array}%
\right\} $
\end{itemize}

It is not immediately clear how to compute $\Gamma _{\ast }$; we will return
to this issue in a later section. Let $E\subset \mathbb{R}^{n}$ be finite,
and let $f:E\rightarrow \lbrack 0,\infty )$. Define $\vec{\Gamma}%
_{f}=(\Gamma _{f}(x,M))_{x\in E,M>0}$, where

\begin{itemize}
\item[\refstepcounter{equation}\text{(\theequation)}\label{inf2}] $\Gamma
_{f}\left( x,M\right) =\left\{ P\in \Gamma _{\ast }\left( x,M\right)
:P\left( x\right) =f\left( x\right) \right\} $.
\end{itemize}

\begin{lemma}
\label{lemma-inf1} $\vec{\Gamma}_f$ is a $(C,1)$-convex shape field.
\end{lemma}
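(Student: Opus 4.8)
The plan is to verify the two defining properties of a $(C,1)$-convex shape field directly from the definition of $\Gamma_f$. First I would check the monotonicity and convexity structure. Each $\Gamma_f(x,M)$ is the intersection of $\Gamma_\ast(x,M)$ with the affine hyperplane $\{P : P(x) = f(x)\}$, so it suffices to know that $\Gamma_\ast(x,M)$ is convex and increasing in $M$. Convexity of $\Gamma_\ast(x,M)$ follows because if $F_1, F_2 \in C^m(\mathbb{R}^n)$ are nonnegative with $\|F_i\|_{C^m} \leq M$ and $J_x(F_1) = P_1$, $J_x(F_2) = P_2$, then for $t \in [0,1]$ the function $tF_1 + (1-t)F_2$ is again nonnegative, has $C^m$-norm $\leq M$, and has jet $tP_1 + (1-t)P_2$ at $x$; hence $tP_1 + (1-t)P_2 \in \Gamma_\ast(x,M)$. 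Monotonicity in $M$ is immediate. Thus $\vec{\Gamma}_f$ is a shape field.

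Next I would establish $(C,1)$-convexity. Suppose $0 < \delta \leq 1$, $x \in E$, $M > 0$, $P_1, P_2 \in \Gamma_f(x,M)$, and $Q_1, Q_2 \in \mathcal{P}$ satisfy the hypotheses \eqref{wsf2}, \eqref{wsf3}, \eqref{wsf4}. Pick nonnegative $F_1, F_2 \in C^m(\mathbb{R}^n)$ with $\|F_i\|_{C^m(\mathbb{R}^n)} \leq M$ and $J_x(F_i) = P_i$. I would choose cutoff functions $\theta_1, \theta_2 \in C^m(\mathbb{R}^n)$ defined near $x$ on a ball $B_n(x, c_0\delta)$ by $\theta_i = Q_i / [Q_1^2 + Q_2^2]^{1/2}$ (using \eqref{wsf4} to see $Q_1(x)^2 + Q_2(x)^2 = 1$, so the denominator is nonvanishing near $x$ for small $c_0$), with the estimates $|\partial^\beta \theta_i| \leq C\delta^{-|\beta|}$ for $|\beta| \leq m$ and $\theta_1^2 + \theta_2^2 = 1$ on $B_n(x,c_0\delta)$, and $J_x(\theta_i) = Q_i$; then extend $\theta_i$ smoothly to $\mathbb{R}^n$ so that $\theta_1^2 + \theta_2^2 = 1$ everywhere (e.g. renormalize a global extension), keeping the derivative bounds. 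Set $F = \theta_1^2 F_1 + \theta_2^2 F_2$. Then $F \geq 0$ on $\mathbb{R}^n$ since each $F_i \geq 0$ and $\theta_i^2 \geq 0$; moreover $J_x(F) = Q_1 \odot_x Q_1 \odot_x P_1 + Q_2 \odot_x Q_2 \odot_x P_2 =: P$, which has $P(x) = F_1(x) \theta_1(x)^2 + F_2(x)\theta_2(x)^2 = f(x)(\theta_1(x)^2 + \theta_2(x)^2) = f(x)$, using $F_i(x) = P_i(x) = f(x)$. It remains to bound $\|F\|_{C^m(\mathbb{R}^n)}$. Away from the support issues, on a neighborhood of $x$ one gets $|\partial^\beta F| \leq CM$ for $|\beta| \leq m$ by Leibniz, the bound $\|F_i\|_{C^m} \leq M$, the estimate $|\partial^\beta \theta_i| \leq C\delta^{-|\beta|}$, and the estimate \eqref{wsf2} on $P_1 - P_2$ (writing $F = F_1 + \theta_2^2(F_2 - F_1)$ and noting $\theta_2^2(F_2 - F_1)$ has small $C^m$ norm near $x$ via Taylor expansion of $F_2 - F_1$ around $x$, whose jet is $P_2 - P_1$); globally one uses that $0 \leq \theta_i^2 \leq 1$ and the $C^m$-norm bound on $F_i$ together with the global derivative bounds on $\theta_i$ — but here $\delta \leq 1$ is used so that $\delta^{-|\beta|} \geq 1$, keeping all constants uniform. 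Thus $\|F\|_{C^m(\mathbb{R}^n)} \leq CM$ with $C = C(m,n)$, so $P \in \Gamma_f(x, CM)$, which is exactly \eqref{wsf5}.

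The main obstacle I anticipate is the global $C^m$ control of $F = \theta_1^2 F_1 + \theta_2^2 F_2$: the partition-of-unity functions $\theta_i$ are only naturally defined near $x$, and the naive global derivative bounds involve $\delta^{-|\beta|}$, which is harmless precisely because $\delta \leq 1$, but one must be careful that the extension of $\theta_i$ to all of $\mathbb{R}^n$ preserves both nonnegativity of $\theta_i^2$ and the identity $\theta_1^2 + \theta_2^2 = 1$ while retaining controlled derivatives; alternatively, one can avoid a global partition by observing that the statement only constrains $J_x(F)$ and $\|F\|_{C^m}$, so it is cleanest to take $\theta_i$ supported near $x$, set $F = \theta_1^2 F_1 + \theta_2^2 F_2 + (1 - \theta_1^2 - \theta_2^2)G$ for a suitable nonnegative background function $G$ agreeing with the $F_i$ to high order at $x$ — but since $\theta_1^2 + \theta_2^2 \equiv 1$ near $x$ already, one may simply glue $F$ (near $x$) to, say, $F_1$ (far from $x$) using one more cutoff, preserving nonnegativity because $F_1 \geq 0$ and the convex-combination structure. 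In any case, this gluing is routine once the near-$x$ analysis — which is essentially the argument already used in the proof of Lemma \ref{lemma-wsf3} — is in place; I expect the write-up to mirror that lemma closely, with the extra bookkeeping that $F \geq 0$ is preserved throughout since every coefficient appearing is a square.
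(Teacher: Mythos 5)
Your proposal takes essentially the same route as the paper: show $\Gamma_*(x,M)$ is convex and monotone via linear combinations of nonnegative interpolants, then establish $(C,1)$-convexity by building $F=\theta_1^2F_1+\theta_2^2F_2$ from nonnegative functions $F_i$ with $J_x(F_i)=P_i$ and a squared partition of unity $\theta_i$ with $J_x(\theta_i)=Q_i$, using (wsf2) plus Taylor's theorem near $x$ and $\delta\le 1$ far from $x$ to get the $C^m$ bound, and observing that every weight is a square so nonnegativity is preserved. You correctly flag the only delicate step — extending $\theta_1,\theta_2$ from a $c_0\delta$-neighborhood of $x$ to all of $\mathbb{R}^n$ while keeping $\theta_1^2+\theta_2^2\equiv 1$ and the derivative bounds — as the main obstacle; your first suggestion (``renormalize a global extension'') is too vague to carry conviction, since a naive global extension of $(Q_1,Q_2)/|(Q_1,Q_2)|$ may not stay bounded away from $(0,0)$, but your fallback (glue $F$ to $F_1$ with one more cutoff $\psi$, which preserves nonnegativity since everything is a convex combination of nonnegative functions, preserves $J_x(F)$ since $\psi\equiv 1$ near $x$, and preserves the $C^m$ bound since $\psi F+(1-\psi)F_1=F_1+\psi\theta_2^2(F_2-F_1)$ and the correction is supported where the Taylor estimate applies) is sound. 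The paper's own device is a bit cleaner: after arranging $Q_1(x)\ge 1/\sqrt2$ by relabeling and a sign flip, it sets $\tilde\theta_1=\chi Q_1+(1-\chi)$, $\tilde\theta_2=\chi Q_2$ with a single cutoff $\chi$, which makes $\tilde\theta_1$ bounded below on all of $\mathbb{R}^n$, so that $\theta_i=\tilde\theta_i(\tilde\theta_1^2+\tilde\theta_2^2)^{-1/2}$ are globally defined $C^m$ functions with $\theta_1^2+\theta_2^2\equiv1$, $\theta_1\equiv1$ and $\theta_2\equiv0$ outside $B_n(x,c_0\delta)$, and the controlled derivatives — avoiding the extra gluing step. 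Mathematically the two are equivalent; the paper's construction just packages the cutoff and the normalization in one stroke.
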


\begin{proof}
It is clear that $\vec{\Gamma}_f$ is a shape field, i.e., each $\Gamma_f(x,M)
$ is convex, and $M^{\prime }\leq M$ implies $\Gamma_f(x,M^{\prime })
\subseteq \Gamma_f(x,M)$. To establish $(C,1)$-convexity, suppose we are
given the following:

\begin{itemize}
\item[\refstepcounter{equation}\text{(\theequation)}\label{inf3}] $0<\delta
\leq 1$, $x\in E$, $M>0$;
\end{itemize}

\begin{itemize}
\item[\refstepcounter{equation}\text{(\theequation)}\label{inf4}] $%
P_{1},P_{2}\in \Gamma _{f}\left( x,M\right) $ satisfying
\end{itemize}

\begin{itemize}
\item[\refstepcounter{equation}\text{(\theequation)}\label{inf5}] $%
\left\vert \partial ^{\beta }\left( P_{1}-P_{2}\right) \left( x\right)
\right\vert \leq M\delta ^{m-\left\vert \beta \right\vert }$ for $\left\vert
\beta \right\vert \leq m-1$;
\end{itemize}

\begin{itemize}
\item[\refstepcounter{equation}\text{(\theequation)}\label{inf6}] $%
Q_{1},Q_{2}\in \mathcal{P}$ satisfying
\end{itemize}

\begin{itemize}
\item[\refstepcounter{equation}\text{(\theequation)}\label{inf7}] $%
\left\vert \partial ^{\beta }Q_{i}\left( x\right) \right\vert \leq \delta
^{-\left\vert \beta \right\vert }$ for $\left\vert \beta \right\vert \leq
m-1 $, $i=1,2$, and
\end{itemize}

\begin{itemize}
\item[\refstepcounter{equation}\text{(\theequation)}\label{inf8}] $%
Q_{1}\odot _{x}Q_{1}+Q_{2}\odot _{x}Q_{2}=1$.
\end{itemize}

Set

\begin{itemize}
\item[\refstepcounter{equation}\text{(\theequation)}\label{inf9}] $%
P=Q_{1}\odot _{x}Q_{1}\odot _{x}P_{1}+Q_{2}\odot _{x}Q_{2}\odot _{x}P_{2}$.
\end{itemize}

We must prove that

\begin{itemize}
\item[\refstepcounter{equation}\text{(\theequation)}\label{inf10}] $P\in
\Gamma _{f}\left( x,CM\right) $.
\end{itemize}

Thanks to \eqref{inf4}, we have

\begin{itemize}
\item[\refstepcounter{equation}\text{(\theequation)}\label{inf11}] $%
P_{1}\left( x\right) =f\left( x\right) $ and $P_{2}\left( x\right) =f\left(
x\right) $,
\end{itemize}

and there exist functions $F_1, F_2 \in C^m(\mathbb{R}^n)$ such that

\begin{itemize}
\item[\refstepcounter{equation}\text{(\theequation)}\label{inf12}] $%
\left\Vert F_{i}\right\Vert _{C^{m}\left( \mathbb{R}^{n}\right) }\leq M$ $%
\left( i=1,2\right) $,
\end{itemize}

\begin{itemize}
\item[\refstepcounter{equation}\text{(\theequation)}\label{inf13}] $%
F_{i}\geq 0$ on $\mathbb{R}^{n}$ $\left( i=1,2\right) $, and
\end{itemize}

\begin{itemize}
\item[\refstepcounter{equation}\text{(\theequation)}\label{inf14}] $%
J_{x}\left( F_{i}\right) =P_{i}$ $\left( i=1,2\right) $.
\end{itemize}

We fix $F_1$, $F_2$ as above. By \eqref{inf8}, we have $|Q_i(x)| \geq \frac{1%
}{\sqrt{2}}$ for $i =1$ or for $i=2$. By possibly interchanging $Q_1$ and $Q_2$, and then possibly changing $Q_1$ to $-Q_1$,
we may suppose that

\begin{itemize}
\item[\refstepcounter{equation}\text{(\theequation)}\label{inf15}] $%
Q_{1}\left( x\right) \geq \frac{1}{\sqrt{2}}$.
\end{itemize}

For small enough $c_0$, \eqref{inf7} and \eqref{inf15} yield

\begin{itemize}
\item[\refstepcounter{equation}\text{(\theequation)}\label{inf16}] $%
Q_{1}\left( y\right) \geq \frac{1}{10}$ for $\left\vert y-x\right\vert \leq
c_{0}\delta $.
\end{itemize}

Fix $c_0$ as in \eqref{inf16}. We introduce a $C^m$ cutoff function $\chi$
on $\mathbb{R}^n$ with the following properties.

\begin{itemize}
\item[\refstepcounter{equation}\text{(\theequation)}\label{inf17}] $0\leq
\chi \leq 1$ on $\mathbb{R}^{n}$; $\chi =0$ outside $B_{n}\left(
x,c_{0}\delta \right) $; $\chi =1$ in a neighborhood of $x$;
\end{itemize}

\begin{itemize}
\item[\refstepcounter{equation}\text{(\theequation)}\label{inf18}] $%
\left\vert \partial ^{\beta }\chi \right\vert \leq C\delta ^{-\left\vert
\beta \right\vert }$ on $\mathbb{R}^{n}$, for $\left\vert \beta \right\vert
\leq m$.
\end{itemize}

We then define $\tilde{\theta}_{1}=\chi \cdot Q_{1}+\left( 1-\chi \right) $
and $\tilde{\theta}_{2}=\chi \cdot Q_{2}$.

These functions satisfy the following: $\tilde{\theta}_{i}\in C^{m}\left( 
\mathbb{R}^{n}\right) $ and $\left\vert \partial ^{\beta }\tilde{\theta}%
_{i}\right\vert \leq C\delta ^{-\left\vert \beta \right\vert }$ on $\mathbb{R%
}^{n}$ for $\left\vert \beta \right\vert \leq m$, $i=1,2$; $\tilde{\theta}%
_{1}\geq \frac{1}{10}$ on $\mathbb{R}^{n}$; $J_{x}\left( \tilde{\theta}%
_{i}\right) =Q_{i}$ for $i=1,2$; outside $B_{n}\left( x,c_{0}\delta \right) $
we have $\tilde{\theta}_{1}=1$ and $\tilde{\theta}_{2}=0$. Setting $\theta
_{i}=\tilde{\theta}_{i}\cdot \left( \tilde{\theta}_{1}^{2}+\tilde{\theta}%
_{2}^{2}\right) ^{-1/2}$ for $i=1$, $2$, we find that

\begin{itemize}
\item[\refstepcounter{equation}\text{(\theequation)}\label{inf19}] $\theta
_{i}\in C^{m}\left( \mathbb{R}^{n}\right) $ and $\left\vert \partial ^{\beta
}\theta _{i}\right\vert \leq C\delta ^{-\left\vert \beta \right\vert }$ on $%
\mathbb{R}^{n}$ for $\left\vert \beta \right\vert \leq m$, $i=1,2$;
\end{itemize}

\begin{itemize}
\item[\refstepcounter{equation}\text{(\theequation)}\label{inf20}] $\theta
_{1}^{2}+\theta _{2}^{2}= 1$ on $\mathbb{R}^{n}$;
\end{itemize}

\begin{itemize}
\item[\refstepcounter{equation}\text{(\theequation)}\label{inf21}] $%
J_{x}\left( \theta _{i}\right) =Q_{i}$ for $i=1,2$ (here we use (\ref{inf8}%
)); and
\end{itemize}

\begin{itemize}
\item[\refstepcounter{equation}\text{(\theequation)}\label{inf22}] outside $%
B_{n}\left( x,c_{0}\delta \right) $ we have $\theta _{1}=1$ and $\theta
_{2}=0$.
\end{itemize}

Now set

\begin{itemize}
\item[\refstepcounter{equation}\text{(\theequation)}\label{inf23}] $F=\theta
_{1}^{2}F_{1}+\theta _{2}^{2}F_{2}=F_{1}+\theta _{2}^{2}\left(
F_{2}-F_{1}\right) $ (see (\ref{inf20})).
\end{itemize}

Clearly $F\in C^{m}(\mathbb{R}^{n})$. By \eqref{inf14}, we have $%
J_{x}(F_{2}-F_{1})=P_{2}-P_{1}$; hence \eqref{inf5} yields the estimate%
\begin{equation*}
\left\vert \partial ^{\beta }\left( F_{2}-F_{1}\right) \left( x\right)
\right\vert \leq CM\delta ^{m-\left\vert \beta \right\vert }\text{ for }%
\left\vert \beta \right\vert \leq m-1\text{.}
\end{equation*}

Together with \eqref{inf12}, this tells us that%
\begin{equation*}
\left\vert \partial ^{\beta }\left( F_{2}-F_{1}\right) \right\vert \leq
CM\delta ^{m-\left\vert \beta \right\vert }\text{ on }B_{n}\left(
x,c_{0}\delta \right) \text{ for }\left\vert \beta \right\vert \leq m\text{.}
\end{equation*}

Recalling \eqref{inf19}, we deduce that 
\begin{equation*}
\left\vert \partial ^{\beta }\left( \theta _{2}^{2}\cdot \left(
F_{2}-F_{1}\right) \right) \right\vert \leq CM\delta ^{m-\left\vert \beta
\right\vert }\text{ on }B_{n}\left( x,c_{0}\delta \right) \text{ for }%
\left\vert \beta \right\vert \leq m\text{.}
\end{equation*}%
Together with \eqref{inf12} and \eqref{inf23}, this implies
that 
\begin{equation*}
\left\vert \partial ^{\beta }F\right\vert \leq CM\text{ on }B_{n}\left(
x,c_{0}\delta \right) \text{,}
\end{equation*}%
since $0<\delta \leq 1$ (see \eqref{inf3}). On the other hand, outside $%
B_{n}(x,c_{0}\delta )$ we have $F=F_{1}$ by \eqref{inf22}, \eqref{inf23};
hence $|\partial ^{\beta }F|\leq CM$ outside $B_{n}(x,c_{0}\delta )$ for $%
|\beta |\leq m$, by \eqref{inf12}. Thus, $|\partial ^{\beta }F|\leq CM$ on
all of $\mathbb{R}^{n}$ for $|\beta |\leq m$, i.e.,

\begin{itemize}
\item[\refstepcounter{equation}\text{(\theequation)}\label{inf24}] $%
\left\Vert F\right\Vert _{C^{m}\left( \mathbb{R}^{n}\right) }\leq CM$.
\end{itemize}

Also, from \eqref{inf13} and \eqref{inf23} we have

\begin{itemize}
\item[\refstepcounter{equation}\text{(\theequation)}\label{inf25}] $F\geq 0$
on $\mathbb{R}^{n}$;
\end{itemize}

and \eqref{inf9}, \eqref{inf14}, \eqref{inf21}, \eqref{inf23}
imply that

\begin{itemize}
\item[\refstepcounter{equation}\text{(\theequation)}\label{inf26}] $%
J_{x}\left( F\right) =Q_{1}\odot _{x}Q_{1}\odot _{x}P_{1}+Q_{2}\odot
_{x}Q_{2}\odot _{x}P_{2}=P$.
\end{itemize}

Since $F\in C^{m}\left( \mathbb{R}^{n}\right) $ satisfies \eqref{inf24}, %
\eqref{inf25}, \eqref{inf26}, we have

\begin{itemize}
\item[\refstepcounter{equation}\text{(\theequation)}\label{inf27}] $P\in
\Gamma _{\ast }\left( x,CM\right) $.
\end{itemize}

Moreover,

\begin{itemize}
\item[\refstepcounter{equation}\text{(\theequation)}\label{inf28}] $P\left(
x\right) =\left( Q_{1}\left( x\right) \right) ^{2}f\left( x\right) +\left(
Q_{2}\left( x\right) \right) ^{2}f\left( x\right) =f\left( x\right) $,
\end{itemize}
thanks to \eqref{inf8}, \eqref{inf9}, \eqref{inf11}. From \eqref{inf27}, %
\eqref{inf28} we conclude that $P \in \Gamma_f(x,CM)$, completing the proof
of Lemma \ref{lemma-inf1}.
\end{proof}

\begin{lemma}
\label{lemma-inf2} Let $\left( P^{x}\right) _{x\in E}$ be a Whitney field on
the finite set $E$, and let $M>0$. Suppose that

\begin{itemize}
\item[\refstepcounter{equation}\text{(\theequation)}\label{inf29}] $P^{x}\in
\Gamma _{\ast }\left( x,M\right) $ for each $x\in E$,
\end{itemize}

and that

\begin{itemize}
\item[\refstepcounter{equation}\text{(\theequation)}\label{inf30}] $%
\left\vert \partial ^{\beta }\left( P^{x}-P^{x^{\prime }}\right) \left(
x\right) \right\vert \leq M\left\vert x-x^{\prime }\right\vert
^{m-\left\vert \beta \right\vert }$ for $x,x^{\prime }\in E$ and $\left\vert
\beta \right\vert \leq m-1$.
\end{itemize}

Then there exists $F\in C^{m}(\mathbb{R}^{n})$ such that

\begin{itemize}
\item[\refstepcounter{equation}\text{(\theequation)}\label{inf31}] $%
\left\Vert F\right\Vert _{C^{m}\left( \mathbb{R}^{n}\right) }\leq CM$,
\end{itemize}

\begin{itemize}
\item[\refstepcounter{equation}\text{(\theequation)}\label{inf32}] $F\geq 0$
on $\mathbb{R}^{n}$, and
\end{itemize}

\begin{itemize}
\item[\refstepcounter{equation}\text{(\theequation)}\label{inf33}] $%
J_{x}\left( F\right) =P^{x}$ for all $x\in E$.
\end{itemize}
\end{lemma}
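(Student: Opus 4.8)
\textbf{Proof plan for Lemma \ref{lemma-inf2}.} The plan is to deduce this lemma from the Finiteness Principle for Shape Fields (Theorem \ref{theorem-fp-for-wsf}) applied to the shape field $\vec{\Gamma}_{f}$ of \eqref{inf2}, followed by a partition-of-unity gluing over cubes of unit sidelength, exactly in the style of Lemma \ref{lemma-bfp3} and the end of the proof of Theorem \ref{theorem-basic-fininite-principle}. The subtlety is that $\vec{\Gamma}_{f}$ depends on the data $f$ only through the normalization $P^{x}(x)=f(x)$; the hypotheses \eqref{inf29}, \eqref{inf30} say precisely that the given Whitney field $(P^{x})_{x\in E}$ is an admissible competitor for the sets $\Gamma_{\ast}(x,M)$ with the $\dot{C}^m$-seminorm bound $M$, but they do \emph{not} assert $P^{x}(x)=f(x)$ for any prescribed $f$. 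So one does not pass through $\vec{\Gamma}_f$ at all; instead one works with the shape field $\vec{\Gamma}_{\ast}=(\Gamma_{\ast}(x,M))_{x\in E,M>0}$ directly. By Lemma \ref{lemma-inf1} (with $f\equiv 0$, say, or rather by repeating its proof verbatim dropping the normalization line), $\vec{\Gamma}_{\ast}$ is a $(C,1)$-convex shape field: the argument in Lemma \ref{lemma-inf1} never used $P_i(x)=f(x)$ except to record \eqref{inf28}, which we simply omit.

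The key steps, in order, are as follows. First, fix $k^{\#}$ as in Theorem \ref{theorem-fp-for-wsf} (with $m$, $n$ in their usual roles; note $\delta_{\max}=1$ here since $\vec{\Gamma}_{\ast}$ is $(C,1)$-convex). Second, cover $\mathbb{R}^n$ by unit cubes $Q_{\nu}$ together with a partition of unity $1=\sum_{\nu}\theta_{\nu}$ with $\theta_{\nu}\geq 0$, $\theta_{\nu}\in C^m(\mathbb{R}^n)$, $\operatorname{supp}\theta_{\nu}\subset Q_{\nu}$, $|\partial^{\alpha}\theta_{\nu}|\leq C$ for $|\alpha|\leq m$, and bounded overlap, exactly as at \eqref{fpii30}. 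Third, for each cube $Q_{\nu}$ with $E\cap 5Q_{\nu}\neq\emptyset$, pick $x_{\nu}\in E\cap 5Q_{\nu}$: the hypotheses \eqref{inf29}, \eqref{inf30}, restricted to any $S\subset E$ with $\#(S)\leq k^{\#}$, furnish a Whitney field $\vec{P}^{S}=(P^{z})_{z\in S}$ with $\|\vec{P}^{S}\|_{\dot{C}^m(S)}\leq M$ and $P^{z}\in\Gamma_{\ast}(z,M)$ for all $z\in S$ --- these are precisely the hypotheses of Theorem \ref{theorem-fp-for-wsf} applied to $\vec{\Gamma}_{\ast}$, the cube $Q_{\nu}$, the point $x_{\nu}$, and the number $M_0=M$. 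Theorem \ref{theorem-fp-for-wsf} then produces $P^{0}_{\nu}\in\Gamma_{\ast}(x_{\nu},M)$ and $F_{\nu}\in C^m(Q_{\nu})$ with $J_z(F_{\nu})\in\Gamma_{\ast}(z,CM)$ for all $z\in E\cap Q_{\nu}$ and $|\partial^{\beta}F_{\nu}|\leq CM$ on $Q_{\nu}$. Fourth --- this is the point where nonnegativity enters --- since $J_z(F_{\nu})\in\Gamma_{\ast}(z,CM)$, the \emph{definition} \eqref{inf1} of $\Gamma_{\ast}$ gives, for each such $z$, a \emph{global} function $G_{z}\in C^m(\mathbb{R}^n)$ with $\|G_z\|_{C^m}\leq CM$, $G_z\geq 0$ on $\mathbb{R}^n$, and $J_z(G_z)=J_z(F_{\nu})$; but in fact we only need $F_{\nu}$ itself to satisfy $\|F_{\nu}\|_{C^m(Q_\nu)}\leq CM$ and $J_z(F_\nu)\in\Gamma_\ast(z,CM)$, and we will have to replace $F_{\nu}$ by a nonnegative local solution before gluing. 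The natural device is: for the cube $Q_{\nu}$, apply Theorem \ref{theorem-fp-for-wsf} not to $\vec{\Gamma}_{\ast}$ but to the shape field whose fiber at $z$ records both $P\in\Gamma_\ast(z,\cdot)$ and "$P$ is the jet of a nonnegative function" --- but that is already what $\Gamma_\ast$ encodes. The cleaner route: run the gluing of the proof of Lemma \ref{lemma-inf1} locally. Concretely, over each $Q_{\nu}$ pick a nonnegative $C^m$ function $F_{\nu}^{+}$ with controlled norm realizing the jets $J_z(F_{\nu})$ at the finitely many points $z\in E\cap Q_{\nu}$, by iterating the two-function patching of Lemma \ref{lemma-inf1}'s proof (which shows $\Gamma_{\ast}$ is $(C,1)$-convex, hence closed under the relevant finite convex-partition-of-unity combinations via Lemma \ref{lemma-wsf2}) --- equivalently, invoke Theorem \ref{theorem-fp-for-wsf} for $\vec{\Gamma}_\ast$ and then use that $\Gamma_\ast(z,CM)\neq\emptyset$ forces the existence of a nonnegative global realizer whose jet at $z$ is the prescribed polynomial, and patch those realizers over $Q_\nu$ with a unit partition of unity.

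Fifth and last, set $F=\sum_{\nu}\theta_{\nu}F_{\nu}^{+}$ on $\mathbb{R}^n$. Bounded overlap and the product estimates $|\partial^{\alpha}(\theta_{\nu}F_{\nu}^{+})|\leq CM$ give $\|F\|_{C^m(\mathbb{R}^n)}\leq CM$, which is \eqref{inf31}. Nonnegativity \eqref{inf32} is immediate: $F$ is a sum of products of nonnegative functions. For the interpolation \eqref{inf33}: if $z\in E$, then $\theta_{\nu}(z)\neq 0$ for only boundedly many $\nu$, all with $z\in\operatorname{supp}\theta_{\nu}\subset Q_{\nu}$, so $J_z(F_{\nu}^{+})$ is the prescribed jet --- but here one must be careful, since a \emph{linear} partition of unity $\sum\theta_\nu=1$ (not $\sum\theta_\nu^2=1$) does \emph{not} preserve membership in $\Gamma_\ast$ in general. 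The fix is that all the $F_{\nu}^{+}$ agree to high order at $z$: by construction each $J_z(F_{\nu}^{+})$ equals the single polynomial $P^{z}$ from the hypothesis (up to $CM(\epsilon^{-1}\delta)^{\ldots}$-type errors controlled by \eqref{inf30} across neighboring cubes), so $\sum_{\nu}\theta_{\nu}(z)J_z(F_{\nu}^{+})=(\sum_{\nu}\theta_{\nu}(z))P^{z}=P^{z}$ holds exactly when all the local realizers share the jet $P^z$, which we arrange by insisting the local Theorem \ref{theorem-fp-for-wsf} output at each $Q_\nu$ realizes precisely the jets coming from the one globally given Whitney field $(P^x)_{x\in E}$ rather than arbitrary ones. \textbf{The main obstacle} is exactly this consistency bookkeeping at the gluing stage: ensuring that the local solutions $F_{\nu}^{+}$ produced over overlapping cubes are mutually consistent to order $m$ at the common points of $E$ \emph{and} remain nonnegative, so that a single partition of unity glues them without destroying either the jet-matching \eqref{inf33} or the sign condition \eqref{inf32}. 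This is handled by the standard Whitney-compatibility argument (as in \eqref{ci10}, \eqref{tu5}) combined with the observation --- already the content of Lemma \ref{lemma-inf1} --- that the nonnegativity constraint is stable under the squared-partition-of-unity patching encoded in $(C,1)$-convexity; one simply applies that patching (Lemma \ref{lemma-wsf2} for $\vec{\Gamma}_\ast$) at the finitely many points of $E$ lying in each cube before the final linear gluing, reducing to the case where all local realizers carry a common jet at each point of $E$.
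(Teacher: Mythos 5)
Your plan does not match the paper's proof, and I believe it contains a genuine gap. The core difficulty is that you are trying to deduce Lemma \ref{lemma-inf2} from Theorem \ref{theorem-fp-for-wsf}, but the two play logically distinct roles: Theorem \ref{theorem-fp-for-wsf} is a \emph{selection} principle (from consistent jets on small subsets it selects globally consistent jets and produces some $C^m$ function — not nonnegative — whose jets land in $\Gamma_0(z,C_*M)$), whereas Lemma \ref{lemma-inf2} is a \emph{realization} principle: its hypotheses already hand you a single Whitney field $(P^x)_{x\in E}$ consistent in the $\dot{C}^m$-seminorm, and the entire content of the lemma is to realize those specific jets by a single globally nonnegative $F$. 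Invoking Theorem \ref{theorem-fp-for-wsf} here is both unnecessary (you already have the Whitney field) and unhelpful (it outputs $F_\nu$ with $J_z(F_\nu)\in\Gamma_\ast(z,CM)$, not $J_z(F_\nu)=P^z$, and $F_\nu$ itself need not be $\geq 0$).

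Both repairs you propose are problematic. To make the local solutions over overlapping $Q_\nu$ carry matching jets at shared points of $E$, you say you will "insist" that Theorem \ref{theorem-fp-for-wsf} output the specific jets $P^z$ — but the theorem gives no such control; its conclusion constrains jets only to lie in a (generically high-dimensional) convex set. And the step "pick a nonnegative $C^m$ function $F_\nu^+$ with controlled norm realizing the jets $J_z(F_\nu)$ at the finitely many points $z\in E\cap Q_\nu$" is exactly Lemma \ref{lemma-inf2} applied to the finite set $E\cap Q_\nu$: the $(C,1)$-convexity argument (Lemma \ref{lemma-wsf2}) certifies that the jet at a \emph{single} fixed point is realizable after squared-PU patching, but it does not produce a single nonnegative function hitting prescribed jets at \emph{several} points simultaneously. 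So that step is circular.

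The paper's proof sidesteps all of this by reducing to the case of one point of $E$ per local patch before any gluing happens. It defines a Calder\'on-Zygmund decomposition of $\mathbb{R}^n$ in which a dyadic $Q$ is OK iff $\#(E\cap 5Q)\leq 1$ and $\delta_Q\leq 1$; the maximal OK cubes partition $\mathbb{R}^n$ with good geometry. On each CZ cube $Q$ one uses the \emph{definition} of $\Gamma_\ast$ to pull out a global nonnegative $F_Q\in C^m(\mathbb{R}^n)$ with $J_{x_Q}(F_Q)=P^{x_Q}$ (for the unique nearby $x_Q\in E\cap 5Q^+$; $F_Q\equiv 0$ for cubes far from $E$). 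The pairwise Whitney-compatibility $|\partial^\beta(F_Q-F_{Q'})|\leq CM\delta_Q^{m-|\beta|}$ on $\frac{65}{64}Q\cap\frac{65}{64}Q'$ then follows from \eqref{inf30} applied to $P^{x_Q},P^{x_{Q'}}$. The key point you should note: the final gluing uses a \emph{linear} partition of unity $1=\sum_Q\theta_Q$ with $\theta_Q\geq 0$, which preserves nonnegativity trivially; and it preserves the jets because every $F_Q$ with $x\in\frac{65}{64}Q$ was constructed to have $J_x(F_Q)=P^x$ exactly, so $J_x(F)=\bigl(\sum_Q J_x(\theta_Q)\bigr)\odot_x P^x=P^x$. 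The squared-PU $(C_w,\delta_{\max})$-convexity machinery is never used in this lemma — it belongs to the proof of Theorem \ref{theorem-fp-for-wsf}, which is applied separately in the proof of Theorem \ref{theorem-fp-for-nonnegative-interpolation} to \emph{produce} the Whitney field that Lemma \ref{lemma-inf2} then realizes.
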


\begin{proof}
We modify slightly Whitney's proof \cite{whitney-1934} of the Whitney
extension theorem. We say that a dyadic cube $Q \subset \mathbb{R}^n$ is
``OK'' if $\#(E\cap 5Q) \leq 1$ and $\delta_Q \leq 1$. Then every small
enough $Q$ is OK (because $E$ is finite), and no $Q$ of sidelength $\delta_Q
>1$ is OK. Also, let $Q, Q^{\prime }$ be dyadic cubes with $5Q \subset
5Q^{\prime }$. If $Q^{\prime }$ is OK, then also $Q$ is OK. We define a 
\underline{Calder\'on-Zygmund} (or \underline{CZ}) cube to be an OK cube $Q$
such that no $Q^{\prime }$ that strictly contains $Q$ is OK. The above
remarks imply that the CZ cubes form a partition of $\mathbb{R}^n$; that the
sidelengths of the CZ cubes are bounded above by $1$ and below by some
positive number; and that the following condition holds.

\begin{itemize}
\item[\refstepcounter{equation}\text{(\theequation)}\label{inf34}] %
\textquotedblleft Good Geometry": If $Q,Q^{\prime }\in $ CZ and $\frac{65}{64%
}Q\cap \frac{65}{64}Q^{\prime }\not=\emptyset $, then $\frac{1}{2}\delta
_{Q}\leq \delta _{Q^{\prime }}\leq 2\delta _{Q}$.
\end{itemize}

We classify CZ cubes into three types as follows. 

$Q \in CZ$ is of

\begin{description}
\item[Type 1] if $E \cap 5Q \not= \emptyset$

\item[Type 2] if $E \cap 5Q =\emptyset$ and $\delta_Q <1$.

\item[Type 3] if $E \cap 5Q =\emptyset$ and $\delta_Q=1$.
\end{description}

\underline{Let $Q\in $ CZ be of Type 1.} Since $Q$ is OK, we have $\#(E\cap
5Q)\leq 1$. Hence $E\cap 5Q$ is a singleton, $E\cap 5Q=\left\{ x_{Q}\right\} 
$. Since $P^{x_{Q}}\in \Gamma _{\ast }\left( x_{Q},M\right) $, there exists $%
F_{Q}\in C^{m}\left( \mathbb{R}^{n}\right) $ such that

\begin{itemize}
\item[\refstepcounter{equation}\text{(\theequation)}\label{inf35}] $%
\left\Vert F_{Q}\right\Vert _{C^{m}\left( \mathbb{R}^{n}\right) }\leq M$, $%
F_{Q}\geq 0$ on $\mathbb{R}^{n}$, $J_{x_{Q}}\left( F_{Q}\right) =P^{x_{Q}}$.
\end{itemize}

We fix $F_Q$ as in \eqref{inf35}.

\underline{Let $Q\in $ CZ be of Type 2.} Then $\delta _{Q^{+}}\leq 1$ but $%
Q^{+}$ is not OK; hence $\#\left( E\cap 5Q^{+}\right) \geq 2$. We pick $%
x_{Q}\in E\cap 5Q^{+}$. Since $P^{x_{Q}}\in \Gamma _{\ast }\left(
x_{Q},M\right) $, there exists $F_{Q}\in C^{m}\left( \mathbb{R}^{n}\right) $
satisfying \eqref{inf35}. We fix such an $F_{Q}$.

\underline{Let $Q \in $ CZ be of Type 3.} Then we set $F_Q =0$. In place of %
\eqref{inf35}, we have the trivial results

\begin{itemize}
\item[\refstepcounter{equation}\text{(\theequation)}\label{inf36}] $%
\left\Vert F_{Q}\right\Vert _{C^{m}\left( \mathbb{R}^{n}\right) }=0$ and $%
F_{Q}\geq 0$ on $\mathbb{R}^{n}$.
\end{itemize}

Thus, we have defined $F_{Q}$ for all $Q\in $ CZ, and we have defined $%
x_{Q}\in E\cap 5Q^{+}$ for all $Q$ of Type 1 or Type 2. Note that

\begin{itemize}
\item[\refstepcounter{equation}\text{(\theequation)}\label{inf37}] $%
J_{x}\left( F_{Q}\right) =P^{x}$ for all $x\in E\cap 5Q$.
\end{itemize}

Indeed, if $Q$ is of Type 1, then \eqref{inf37} follows from \eqref{inf35}
since $E\cap 5Q=\{x_{Q}\}$. If $Q$ is of Type 2 or Type 3, then \eqref{inf37}
holds vacuously since $E\cap 5Q=\emptyset $. Now suppose $Q,Q^{\prime }\in $
CZ and $\frac{65}{64}Q\cap \frac{65}{64}Q^{\prime }\not=\emptyset $. We will
show that

\begin{itemize}
\item[\refstepcounter{equation}\text{(\theequation)}\label{inf38}] $%
\left\vert \partial ^{\beta }\left( F_{Q}-F_{Q^{\prime }}\right) \right\vert
\leq CM\delta _{Q}^{m-\left\vert \beta \right\vert }$ on $\frac{65}{64}Q\cap 
\frac{65}{64}Q^{\prime }$ for $\left\vert \beta \right\vert \leq m$.
\end{itemize}

To see this, suppose first that $Q$ or $Q^{\prime }$ is of Type 3. Then $%
\delta_Q$ or $\delta_{Q^{\prime }}$ is equal to $1$, hence $\delta_Q \geq 
\frac{1}{2}$ by \eqref{inf34}. Consequently, \eqref{inf38} asserts simply
that

\begin{itemize}
\item[\refstepcounter{equation}\text{(\theequation)}\label{inf39}] $%
\left\vert \partial ^{\beta }\left( F_{Q}-F_{Q^{\prime }}\right) \right\vert
\leq CM$ on $\frac{65}{64}Q\cap \frac{65}{64}Q^{\prime }$ for $\left\vert
\beta \right\vert \leq m$,
\end{itemize}
and \eqref{inf39} follows at once from \eqref{inf35}, \eqref{inf36}. Thus, %
\eqref{inf38} holds if $Q$ or $Q^{\prime }$ is of Type 3. Suppose that
neither $Q$ nor $Q^{\prime }$ is of Type 3. Then $x_{Q}\in E\cap 5Q^{+}$, $%
x_{Q^{\prime }}\in E\cap 5(Q^{\prime +})$, $\frac{65}{64}Q\cap \frac{65}{64}%
Q^{\prime }\not=\emptyset $, $\frac{1}{2}\delta _{Q}\leq \delta _{Q^{\prime
}}\leq 2\delta _{Q}$. Consequently,

\begin{itemize}
\item[\refstepcounter{equation}\text{(\theequation)}\label{inf40}] $%
\left\vert x_{Q}-x_{Q^{\prime }}\right\vert \leq C\delta _{Q}$, and
\end{itemize}

\begin{itemize}
\item[\refstepcounter{equation}\text{(\theequation)}\label{inf41}] $%
\left\vert x-x_{Q}\right\vert $, $\left\vert x-x_{Q^{\prime }}\right\vert
\leq C\delta _{Q}$ for all $x\in \frac{65}{64}Q\cap \frac{65}{64}Q^{\prime }$%
.
\end{itemize}

Applying \eqref{inf35} to $Q$ and to $Q^{\prime }$, we find that

\begin{itemize}
\item[\refstepcounter{equation}\text{(\theequation)}\label{inf42}] $%
\left\vert \partial ^{\beta }\left( F_{Q}-P^{x_{Q}}\right) \left( x\right)
\right\vert \leq CM\left\vert x-x_{Q}\right\vert ^{m-\left\vert \beta
\right\vert }\leq CM\delta _{Q}^{m-\left\vert \beta \right\vert }$, and
\end{itemize}

\begin{itemize}
\item[\refstepcounter{equation}\text{(\theequation)}\label{inf43}] $%
\left\vert \partial ^{\beta }\left( F_{Q^{\prime }}-P^{x_{Q^{\prime
}}}\right) \left( x\right) \right\vert \leq CM\left\vert x-x_{Q^{\prime
}}\right\vert ^{m-\left\vert \beta \right\vert }\leq CM\delta
_{Q}^{m-\left\vert \beta \right\vert }$,
\end{itemize}

for $x\in \frac{65}{64}Q\cap \frac{65}{64}Q^{\prime }$, $\left\vert \beta
\right\vert \leq m$. \newline
Also, \eqref{inf30}, \eqref{inf40}, \eqref{inf41} imply that

\begin{itemize}
\item[\refstepcounter{equation}\text{(\theequation)}\label{inf44}] $%
\left\vert \partial ^{\beta }\left( P^{x_{Q}}-P^{x_{Q^{\prime }}}\right)
\left( x\right) \right\vert \leq CM\delta _{Q}^{m-\left\vert \beta
\right\vert }$ for $x\in \frac{65}{64}Q\cap \frac{65}{64}Q^{\prime }$, $%
\left\vert \beta \right\vert \leq m$.
\end{itemize}

(Recall, $P^{x_{Q}}-P^{x_{Q^{\prime }}}$ is a polynomial of degree at most $%
m-1$.)

Estimates \eqref{inf42}, \eqref{inf43}, \eqref{inf44} together imply %
\eqref{inf38} in case neither $Q$ nor $Q^{\prime }$ is of Type 3. Thus, %
\eqref{inf38} holds in all cases.

Next, as in Whitney \cite{whitney-1934}, we introduce a partition of unity

\begin{itemize}
\item[\refstepcounter{equation}\text{(\theequation)}\label{inf45}] $%
1=\sum_{Q\in CZ}\theta _{Q}$ on $\mathbb{R}^{n}$,
\end{itemize}

where each $\theta_Q \in C^m(\mathbb{R}^n)$, and

\begin{itemize}
\item[\refstepcounter{equation}\text{(\theequation)}\label{inf46}] support $%
\theta _{Q}\subset \frac{65}{64}Q$, $\left\vert \partial ^{\beta }\theta
_{Q}\right\vert \leq C\delta _{Q}^{-\left\vert \beta \right\vert }$ for $%
\left\vert \beta \right\vert \leq m$, $\theta _{Q}\geq 0$ on $\mathbb{R}^{n}$%
.
\end{itemize}

We define

\begin{itemize}
\item[\refstepcounter{equation}\text{(\theequation)}\label{inf47}] $%
F=\sum_{Q\in CZ}\theta _{Q}F_{Q}$ on $\mathbb{R}^{n}$.
\end{itemize}

Thus, $F \in C^m_{loc}(\mathbb{R}^n)$ since CZ is a locally finite partition
of $\mathbb{R}^n$, and $F \geq 0$ on $\mathbb{R}^n$ since $\theta_Q \geq 0$
and $F_Q \geq 0$ for each $Q$. Let $\hat{x} \in \mathbb{R}^n$, and let $\hat{%
Q}$ be the one and only CZ cube containing $\hat{x}$. Then for $|\beta| \leq
m$, we have

\begin{itemize}
\item[\refstepcounter{equation}\text{(\theequation)}\label{inf48}] $\partial
^{\beta }F\left( \hat{x}\right) =\partial ^{\beta }F_{\hat{Q}}\left( \hat{x}%
\right) +\sum_{Q\in CZ}\partial ^{\beta }\left( \theta _{Q}\cdot \left(
F_{Q}-F_{\hat{Q}}\right) \right) \left( \hat{x}\right) $.
\end{itemize}

A given $Q\in $ CZ enters into the sum in \eqref{inf48} only if $\hat{x}\in 
\frac{65}{64}Q$; there are at most $C$ such cubes $Q$, thanks to %
\eqref{inf34}. Moreover, for each $Q\in $ CZ with $\hat{x}\in \frac{65}{64}Q$%
, we learn from \eqref{inf38} and \eqref{inf46} that 
\begin{equation*}
\left\vert \partial ^{\beta }\left( \theta _{Q}\cdot \left( F_{Q}-F_{\hat{Q}%
}\right) \right) \left( \hat{x}\right) \right\vert \leq CM\delta
_{Q}^{m-\left\vert \beta \right\vert }\leq CM\text{ for }\left\vert \beta
\right\vert \leq m\text{, since }\delta _{Q}\leq 1\text{.}
\end{equation*}%
Since also $\left\vert \partial ^{\beta }F_{\hat{Q}}\left( \hat{x}\right)
\right\vert \leq CM$ for $\left\vert \beta \right\vert \leq m$ by %
\eqref{inf35}, \eqref{inf36}, it now follows from \eqref{inf48} that $%
\left\vert \partial ^{\beta }F\left( \hat{x}\right) \right\vert \leq CM$ for
all $\left\vert \beta \right\vert \leq m$. Here, $\hat{x}\in \mathbb{R}^{n}$
is arbitrary. Thus, $F\in C^{m}\left( \mathbb{R}^{n}\right) $ and $%
||F||_{C^{m}\left( \mathbb{R}^{n}\right) }\leq CM$.

Next, let $x\in E$. For any $Q\in $ CZ such that $x\in \frac{65}{64}Q$, we
have $J_{x}(F_{Q})=P^{x}$, by \eqref{inf37}. Since support $\theta
_{Q}\subset \frac{65}{64}Q$ for each $Q\in $ CZ, it follows that $%
J_{x}(\theta _{Q}F_{Q})=J_{x}(\theta _{Q})\odot _{x}P^{x}$ for each $Q\in $
CZ, and consequently, 
\begin{equation*}
J_{x}(F)=\sum_{Q\in CZ}J_{x}\left( \theta _{Q}F_{Q}\right) =\left[
\sum_{Q\in CZ}J_{x}\left( \theta _{Q}\right) \right] \odot _{x}P^{x}=P^{x}%
\text{, by (\ref{inf45}).}
\end{equation*}%
Thus, $F\in C^{m}\left( \mathbb{R}^{n}\right) $, $\left\Vert F\right\Vert
_{C^{m}\left( \mathbb{R}^{n}\right) }\leq CM$, $F\geq 0$ on $\mathbb{R}^{n}$%
, and $J_{x}\left( F\right) =P^{x}$ for each $x\in E$.

The proof of Lemma \ref{lemma-inf2} is complete.
\end{proof}

\begin{theorem}[Finiteness Principle for Non-negative $C^m$ Interpolation]
\label{theorem-fp-for-nonnegative-interpolation}

There exist constants $k^\#$, $C$, depending only on $m$, $n$, such that the
following holds.

Let $E \subset \mathbb{R}^n$ be finite, and let $f:E \rightarrow [0, \infty)$%
. Let $M_0 >0$. Suppose that for each $S \subset E$ with $\#(S) \leq k^\#$,
there exists $\vec{P}^S = (P^x)_{x\in S} \in Wh(S)$ such that

\begin{itemize}
\item $P^x \in \Gamma_f(x,M_0)$ for each $x \in S$, and

\item $|\partial^\beta (P^x -P^y)(x)| \leq M_0 |x-y|^{m-|\beta|}$ for $x, y
\in S$, $|\beta| \leq m-1$.
\end{itemize}

Then there exists $F\in C^{m}(\mathbb{R}^{n})$ such that

\begin{itemize}
\item $\|F\|_{C^m(\mathbb{R}^n)} \leq CM_0$,

\item $F \geq 0$ on $\mathbb{R}^n$, and

\item $F = f$ on $E$.
\end{itemize}
\end{theorem}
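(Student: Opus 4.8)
The plan is to run the finiteness principle for shape fields (Theorem~\ref{theorem-fp-for-wsf}) on the shape field $\vec{\Gamma}_f=(\Gamma_f(x,M))_{x\in E,M>0}$ of \eqref{inf2} --- which is $(C,1)$-convex by Lemma~\ref{lemma-inf1} --- and then to patch the resulting local interpolants together with a \emph{nonnegative} partition of unity, exactly in the spirit of the proof of Theorem~\ref{theorem-basic-fininite-principle}. Throughout, $c,C,C'$ denote constants depending only on $m$ and $n$. First I would fix $k^{\#}$ to be the constant from Theorem~\ref{theorem-fp-for-wsf} for the integers $m,n$, and note that the hypothesis of the present theorem is precisely the hypothesis of Theorem~\ref{theorem-fp-for-wsf} for $\vec{\Gamma}_0:=\vec{\Gamma}_f$ with parameter $M_0$: for each $S\subset E$ with $\#(S)\le k^{\#}$ we are handed a Whitney field $\vec P^S=(P^x)_{x\in S}$ with $P^x\in\Gamma_f(x,M_0)$ for all $x\in S$ and $\|\vec P^S\|_{\dot C^m(S)}\le M_0$, the seminorm bound being automatic from the assumed bounds for $|\beta|\le m-1$ since $P^x-P^y\in\mathcal P$ kills the $|\beta|=m$ terms.

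Next I would establish a local statement: \emph{for every cube $Q\subset\mathbb R^n$ of sidelength $\delta_Q\le 1$ there is $G_Q\in C^m(\mathbb R^n)$ with $\|G_Q\|_{C^m(\mathbb R^n)}\le CM_0$, $G_Q\ge 0$ on $\mathbb R^n$, and $G_Q=f$ on $E\cap Q$.} If $E\cap 5Q=\emptyset$ take $G_Q\equiv 0$; otherwise pick $x_0\in E\cap 5Q$ and apply Theorem~\ref{theorem-fp-for-wsf} with $\delta_{\max}=1$ to obtain $P^0\in\Gamma_f(x_0,M_0)$ and $F\in C^m(Q)$ with $J_z(F)\in\Gamma_f(z,CM_0)$ for all $z\in E\cap Q$ and $|\partial^\beta(F-P^0)|\le CM_0\delta_Q^{m-|\beta|}$ on $Q$ for $|\beta|\le m$; since $P^0\in\Gamma_f(x_0,M_0)$ forces $|\partial^\beta P^0(x_0)|\le M_0$ and $P^0\in\mathcal P$, this gives $\|F\|_{C^m(Q)}\le CM_0$. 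Setting $P^z:=J_z(F)$ for $z\in E\cap Q$, Taylor's theorem on the convex set $Q$ shows that $(P^z)_{z\in E\cap Q}$ is a Whitney field on the finite set $E\cap Q$ with $\|(P^z)\|_{\dot C^m(E\cap Q)}\le CM_0$, with $P^z\in\Gamma_f(z,CM_0)\subseteq\Gamma_{\ast}(z,CM_0)$, and with $P^z(z)=f(z)$. Lemma~\ref{lemma-inf2}, applied to $E\cap Q$ and $CM_0$ in place of $E$ and $M$, then produces $G_Q\in C^m(\mathbb R^n)$ with $\|G_Q\|_{C^m(\mathbb R^n)}\le CM_0$, $G_Q\ge 0$, and $J_z(G_Q)=P^z$, hence $G_Q(z)=f(z)$, for all $z\in E\cap Q$.

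Finally I would globalize. Cover $\mathbb R^n$ by a grid of cubes $\{Q_\nu\}$ of common sidelength $\tfrac12$, with a subordinate partition of unity $\sum_\nu\theta_\nu\equiv 1$ on $\mathbb R^n$, where $\theta_\nu\in C^m(\mathbb R^n)$, $\theta_\nu\ge 0$, $\operatorname{supp}\theta_\nu\subset\tfrac{65}{64}Q_\nu$, $|\partial^\beta\theta_\nu|\le C$ for $|\beta|\le m$, and only $C$ of the $\theta_\nu$ are nonzero near any given point. Applying the local statement to a cube of sidelength $\le 1$ containing $\tfrac{65}{64}Q_\nu$ gives $G_\nu$ with $\|G_\nu\|_{C^m(\mathbb R^n)}\le CM_0$, $G_\nu\ge 0$, and $G_\nu=f$ on $E\cap\tfrac{65}{64}Q_\nu$. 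Set $F:=\sum_\nu\theta_\nu G_\nu$. Bounded overlap and the product rule give $F\in C^m(\mathbb R^n)$ with $\|F\|_{C^m(\mathbb R^n)}\le CM_0$; nonnegativity of the $\theta_\nu$ and the $G_\nu$ gives $F\ge 0$ on $\mathbb R^n$; and for $z\in E$, every $\nu$ with $\theta_\nu(z)\ne 0$ satisfies $z\in E\cap\tfrac{65}{64}Q_\nu$, so $G_\nu(z)=f(z)$ and therefore $F(z)=\sum_\nu\theta_\nu(z)f(z)=f(z)$. Taking $C$ as the finiteness constant finishes the argument. The one conceptual point --- and the reason the argument is this short --- is that no $C^m$-consistency among the local solutions $G_\nu$ is needed: just as for a pointwise convex constraint in Theorem~\ref{theorem-basic-fininite-principle}, the conditions $F\ge 0$ and $F=f$ on $E$ survive a plain nonnegative partition of unity, so that Lemma~\ref{lemma-inf2} is invoked only on bounded cubes, where the consistent Whitney field it requires as input is supplied for free by the jets of the single interpolant produced by Theorem~\ref{theorem-fp-for-wsf}. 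I do not expect a genuine obstacle; the only things to check are the routine Taylor estimates verifying the hypotheses of Lemma~\ref{lemma-inf2} and the bookkeeping in the partition of unity.
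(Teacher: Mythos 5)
Your proof is correct and follows essentially the same route as the paper: apply Theorem~\ref{theorem-fp-for-wsf} to the $(C,1)$-convex shape field $\vec{\Gamma}_f$ to obtain a local (not-necessarily-nonnegative) interpolant $F$, observe via Taylor's theorem that its jets $(J_z(F))_{z\in E\cap Q}$ form a Whitney field satisfying the hypotheses of Lemma~\ref{lemma-inf2}, invoke Lemma~\ref{lemma-inf2} to produce a nonnegative local interpolant, and glue with a nonnegative partition of unity. The only cosmetic difference is bookkeeping --- the paper first proves the special case $E\subset\tfrac12 Q_0$ with $\delta_{Q_0}=1$ and then restricts $E$ to $E\cap\tfrac12 Q_\nu$ at the globalization step, whereas you phrase the local statement directly as interpolation on $E\cap Q$ --- but the underlying argument is identical.
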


\begin{proof}
Suppose first that $E \subset \frac{1}{2}Q_0$ for a cube $Q_0$ of sidelength 
$\delta_{Q_0} =1$. Pick any $x_0 \in E$. (If $E$ is empty, our theorem holds
trivially.)

Let $S \subset E$ with $\#(S) \leq k^\#$.

Our present hypotheses supply the Whitney field $\vec{P}^S$ required in the hypotheses of Theorem \ref{theorem-fp-for-wsf}.

Hence, recalling Lemma \ref{lemma-inf1} and applying Theorem \ref{theorem-fp-for-wsf}, we obtain

\begin{itemize}
\item[\refstepcounter{equation}\text{(\theequation)}\label{inf49}] $P^0 \in
\Gamma_f(x_0, CM_0)$
\end{itemize}

and

\begin{itemize}
\item[\refstepcounter{equation}\text{(\theequation)}\label{inf50}] $F^0 \in
C^m(Q_0)$
\end{itemize}

such that

\begin{itemize}
\item[\refstepcounter{equation}\text{(\theequation)}\label{inf51}] $J_x(F^0)
\in \Gamma_f(x,CM_0)$ for all $x \in E \cap Q_0 =E$
\end{itemize}

and

\begin{itemize}
\item[\refstepcounter{equation}\text{(\theequation)}\label{inf52}] $%
|\partial^\beta(P^0 -F^0)|\leq CM_0$ on $Q_0$, for $|\beta|\leq m$.
\end{itemize}

From \eqref{inf1}, \eqref{inf2}, \eqref{inf49}, we have $|\partial^\beta P^0
(x_0)| \leq CM_0$ for $|\beta| \leq m-1$.

Since $P^0$ is a polynomial of degree at most $m-1$, and since $x_0 \in E
\subset Q_0$ with $\delta_{Q_0} =1$, it follows that $|\partial^\beta P^0 |
\leq CM_0$ on $Q_0$ for $|\beta| \leq m$.

Together with \eqref{inf52}, this tells us that

\begin{itemize}
\item[\refstepcounter{equation}\text{(\theequation)}\label{inf53}] $%
|\partial^\beta F^0| \leq CM_0$ on $Q_0$ for $|\beta| \leq m$.
\end{itemize}

Note that $F^0$ needn't be nonnegative.

Set $P^x = J_x(F^0)$ for $x \in E$. Then

\begin{itemize}
\item[\refstepcounter{equation}\text{(\theequation)}\label{inf54}] $P^{x}\in
\Gamma _{f}\left( x,CM_{0}\right) $ for $x\in E$, and
\end{itemize}

\begin{itemize}
\item[\refstepcounter{equation}\text{(\theequation)}\label{inf55}] $%
\left\vert \partial ^{\beta }\left( P^{x}-P^{y}\right) \left( x\right)
\right\vert \leq CM_{0}\left\vert x-y\right\vert ^{m-\left\vert \beta
\right\vert }$ for $x,y\in E$, $\left\vert \beta \right\vert \leq m-1$.
\end{itemize}

By Lemma \ref{lemma-inf2}, there exists $F\in C^{m}\left( \mathbb{R}%
^{n}\right) $ such that

\begin{itemize}
\item[\refstepcounter{equation}\text{(\theequation)}\label{inf56}] $%
\left\Vert F\right\Vert _{C^{m}\left( \mathbb{R}^{n}\right) }\leq CM_0$,
\end{itemize}

\begin{itemize}
\item[\refstepcounter{equation}\text{(\theequation)}\label{inf57}] $F\geq 0$
on $\mathbb{R}^{n}$, and
\end{itemize}

\begin{itemize}
\item[\refstepcounter{equation}\text{(\theequation)}\label{inf58}] $%
J_{x}\left( F\right) =P^{x}$ for each $x\in E$.
\end{itemize}

From \eqref{inf54} and \eqref{inf2}, we have $P^x(x)=f(x)$ for each $x \in E$%
; hence, \eqref{inf58} implies that

\begin{itemize}
\item[\refstepcounter{equation}\text{(\theequation)}\label{inf59}] $F\left(
x\right) =f\left( x\right) $ for each $x\in E$.
\end{itemize}

Our results \eqref{inf56}, \eqref{inf57}, \eqref{inf59} are the conclusions
of our theorem. Thus, we have proven Theorem \ref%
{theorem-fp-for-nonnegative-interpolation} in the case in which $E \subset 
\frac{1}{2} Q_0$ with $\delta_{Q_0} =1$.

To pass to the general case (arbitrary finite $E \subset \mathbb{R}^n$), we
set up a partition of unity $1 =\sum_{\nu} \chi_\nu$ on $\mathbb{R}^n$,
where each $\chi_\nu \in C^m(\mathbb{R}^n)$ and $\chi_\nu \geq 0$ on $%
\mathbb{R}^n$, $\|\chi_\nu\|_{C^m(\mathbb{R}^n)} \leq C$, support $\chi_\nu
\subset \frac{1}{2} Q_\nu$, with $\delta_{Q_\nu} =1$, and with any given
point of $\mathbb{R}^n$ belonging to at most $C$ of the $Q_\nu$.

For each $\nu$, we apply the known special case of our theorem to the set $%
E_\nu = E \cap \frac{1}{2}Q_\nu$ and the function $f_\nu = f|_{E_\nu}$.
Thus, we obtain $F_\nu \in C^m(\mathbb{R}^n)$, with $\|F_\nu\|_{C^m(\mathbb{R%
}^n)} \leq CM_0$, $F_\nu \geq 0$ on $\mathbb{R}^n$, and $F_\nu = f$ on $E
\cap \frac{1}{2} Q_\nu$.

Setting $F = \sum _\nu \chi_\nu F_\nu \in C^m_{loc}(\mathbb{R}^n)$, we
verify easily that $F \in C^m (\mathbb{R}^n)$, $\|F\|_{C^m(\mathbb{R}^n)}
\leq CM_0$, $F \geq 0$ on $\mathbb{R}^n$, and $F = f $ on $E$.

This completes the proof of Theorem \ref%
{theorem-fp-for-nonnegative-interpolation}.
\end{proof}

\begin{remark}
Conversely, we make the following trivial observation: Let $E\subset \mathbb{%
R}^{n}$ be finite, let $f:E\rightarrow \lbrack 0,\infty )$, and let $M_{0}>0$%
. Suppose $F\in C^{m}(\mathbb{R}^{n})$ satisfies $\Vert F\Vert _{C^{m}(%
\mathbb{R}^{n})}\leq M_{0}$, $F\geq 0$ on $\mathbb{R}^{n}$, $F=f$ on $E$.
Then for each $x\in E$, we have

\begin{itemize}
\item $P^{x}=J_{x}(F)\in \Gamma _{f}(x,M_{0})$ by \eqref{inf1}, \eqref{inf2}%
; and

\item $|\partial ^{\beta }(P^{x}-P^{y})(x)|\leq CM_{0}|x-y|^{m-|\beta |}$
for $x,y\in E$, $|\beta |\leq m-1$.
\end{itemize}

Therefore, for any $S \subset E$, the Whitney field $\vec{P}^S = (P^x)_{x
\in S} \in Wh(S)$ satisfies

\begin{itemize}
\item $P^{x}\in \Gamma _{f}(x,CM_{0})$ for $x\in S$, and

\item $|\partial ^{\beta }(P^{x}-P^{y})(x)|\leq CM_{0}|x-y|^{m-|\beta |}$
for $x,y\in S$, $|\beta |\leq m-1$.
\end{itemize}
Note that Theorem \ref{Th4} (A) follows easily from Theorem \ref{theorem-fp-for-nonnegative-interpolation}.
\end{remark}

\section{Computable Convex Sets}

\label{ccs}

In this section, we discuss computational issues regarding the convex set

\begin{itemize}
\item[\refstepcounter{equation}\text{(\theequation)}\label{cs1}] $\Gamma
_{\ast }\left( x,M\right) =\left\{ J_{x}\left( F\right) :F\in C^{m}\left( 
\mathbb{R}^{n}\right) \text{, }\left\Vert F\right\Vert _{C^{m}\left( \mathbb{%
R}^{n}\right) }\leq M\text{, }F\geq 0\text{ on }\mathbb{R}^{n}\right\} .$
\end{itemize}

We write $c$, $C$, $C^{\prime }$, etc., to denote constants determined by $m$
and $n$. These symbols may denote different constants in different
occurrences.

We will define convex sets $\tilde{\Gamma}_{\ast}(x,M) \subset \mathcal{P}$,
prove that

\begin{itemize}
\item[\refstepcounter{equation}\text{(\theequation)}\label{cs2}] $\tilde{%
\Gamma}_{\ast }(x,cM)\subset \Gamma _{\ast }\left( x,M\right) \subset \tilde{%
\Gamma}_{\ast }(x,CM)$ for all $x\in \mathbb{R}^{n}$, $M>0$,
\end{itemize}

and explain how (in principle) one can compute $\tilde{\Gamma}_{\ast}(x,M)$.

We may then use

\begin{itemize}
\item[\refstepcounter{equation}\text{(\theequation)}\label{cs3}] $\tilde{%
\Gamma}_{f}\left( x,M\right) =\left\{ P\in \tilde{\Gamma}_{\ast
}(x,M):P\left( x\right) =f\left( x\right) \right\} $
\end{itemize}

in place of $\Gamma_f(x,M)$ in the statement of Theorem \ref%
{theorem-fp-for-nonnegative-interpolation}. (The assertion in terms of $%
\tilde{\Gamma}_f$ follows trivially from \eqref{cs2} and the original
assertion in terms of $\Gamma_f$.)

To achieve \eqref{cs2}, we will define

\begin{itemize}
\item[\refstepcounter{equation}\text{(\theequation)}\label{cs4}] $\tilde{%
\Gamma}_{\ast }(x,M)=\left\{ MP\left( \cdot +x\right) ):P\in \tilde{\Gamma}%
_{0}\right\} $, for a convex set $\tilde{\Gamma}_{0}$.
\end{itemize}

We will prove that

\begin{itemize}
\item[\refstepcounter{equation}\text{(\theequation)}\label{cs5}] $%
\Gamma_{\ast}(0,c) \subset \tilde{\Gamma}_0 \subset \Gamma_{\ast}(0,C)$.
\end{itemize}

Property \eqref{cs2} then follows at once from \eqref{cs1}, \eqref{cs4}, and %
\eqref{cs5}.

Thus, our task is to define a convex set $\tilde{\Gamma}_{0}$ satisfying %
\eqref{cs5}, and explain how (in principle) one can compute $\tilde{\Gamma}%
_{0}$.

Recall that $\mathcal{P}$ is the vector space of $(m-1)$-jets. We will work
in the space of $m$-jets. In this section, we let $\mathcal{P}^{+}$ denote
the vector space of real-valued polynomials of degree at most $m$ on $%
\mathbb{R}^{n}$, and we write $J_{x}^{+}(F)$ to denote the $m^{th}$-degree
Taylor polynomial of $F$ at $x$, i.e.,%
\begin{equation*}
J_{x}^{+}\left( F\right) \left( y\right) =\sum_{\left\vert \alpha
\right\vert \leq m}\frac{1}{\alpha !}\left( \partial ^{\alpha }F\left(
x\right) \right) \cdot \left( y-x\right) ^{\alpha }\text{.}
\end{equation*}

We define

\begin{itemize}
\item[\refstepcounter{equation}\text{(\theequation)}\label{cs6}] $\Gamma
_{0}^{+}=\left\{ 
\begin{array}{c}
P\in \mathcal{P}^{+}:\left\vert \partial ^{\beta }P\left( 0\right)
\right\vert \leq 1\text{ for }\left\vert \beta \right\vert \leq m\text{; }%
P\left( x\right) +\left\vert x\right\vert ^{m}\geq 0\text{ for all }x\in 
\mathbb{R}^{n}\text{;} \\ 
\text{and for every }\epsilon >0\text{, there exists }\delta >0\text{ such
that} \\ 
\text{ }P\left( x\right) +\epsilon \left\vert x\right\vert ^{m}\geq 0\text{
for }\left\vert x\right\vert \leq \delta \text{.}%
\end{array}%
\right\} $;
\end{itemize}

Later, we will discuss how $\Gamma^+_0$ may be computed in principle.

We next establish the following result.

\begin{lemma}
\label{lemma-ccs} For small enough $c$ and large enough $C$, the following
hold.

\begin{description}
\item[(A)] If $F \in C^m(\mathbb{R}^n)$, $\|F\|_{C^m(\mathbb{R}^n)} \leq c$, 
$F \geq 0$ on $\mathbb{R}^n$, then $J_0^+(F) \in \Gamma_0^+$.

\item[(B)] If $P \in \Gamma_0^+$, then there exists $F \in C^m(\mathbb{R}^n)$
such that $\|F\|_{C^m(\mathbb{R}^n)} \leq C$, $F \geq 0$ on $\mathbb{R}^n$,
and $J_0^+(F) = P$.
\end{description}
\end{lemma}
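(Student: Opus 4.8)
\textbf{Proof plan for Lemma \ref{lemma-ccs}.}

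The plan is to prove the two directions essentially independently, using a Taylor expansion in one direction and a careful cutoff/correction argument in the other. \emph{Part (A):} Suppose $F \in C^m(\mathbb{R}^n)$ with $\|F\|_{C^m(\mathbb{R}^n)} \le c$ and $F \ge 0$ everywhere. Set $P = J_0^+(F)$. The estimate $|\partial^\beta P(0)| = |\partial^\beta F(0)| \le c \le 1$ for $|\beta| \le m$ is immediate from the definition of the $C^m$ norm (choosing $c \le 1$). For the two positivity conditions, I would use Taylor's theorem with the Lagrange/integral form of the remainder: for any $x$, $F(x) = P(x) + R(x)$ where $|R(x)| \le C' \|F\|_{C^m} |x|^m \le C' c |x|^m$ (here $C'$ depends only on $m,n$, coming from the number and size of the degree-$m$ Taylor coefficients and the mean value estimate on the remainder). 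Since $F(x) \ge 0$, this gives $P(x) \ge -C' c |x|^m$ for all $x$; choosing $c$ small enough that $C' c \le 1$ yields $P(x) + |x|^m \ge 0$ globally. For the refined local condition: given $\epsilon > 0$, by continuity of the $m$-th order derivatives of $F$ at $0$, Taylor's theorem with the \emph{Peano} form of the remainder gives $F(x) = P(x) + o(|x|^m)$ as $x \to 0$, so there is $\delta > 0$ with $|F(x) - P(x)| \le \epsilon |x|^m$ for $|x| \le \delta$; combined with $F(x) \ge 0$ this gives $P(x) + \epsilon|x|^m \ge 0$ for $|x| \le \delta$. Hence $P \in \Gamma_0^+$.

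\emph{Part (B):} This is the harder direction. Given $P \in \Gamma_0^+$, I want to build a nonnegative $F \in C^m(\mathbb{R}^n)$ of controlled norm with $J_0^+(F) = P$. The natural first attempt is to take $F$ to be a cutoff of $P$: let $\chi$ be a fixed $C^\infty$ bump with $\chi \equiv 1$ near $0$, supported in the unit ball, and set $F_0 = \chi \cdot P$. Then $J_0^+(F_0) = P$ and $\|F_0\|_{C^m} \le C$ (since all $m$-jet coefficients of $P$ are bounded by $1$ and $\chi$ is fixed), but $F_0$ need not be nonnegative --- $P$ itself can dip slightly negative near $0$ and away from it. The fix is to add a nonnegative "bumper" term. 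I would look for $F = \chi P + A\cdot\psi$ where $\psi \ge 0$ is a fixed bump with $J_0^+(\psi)$ vanishing to order $m$ at $0$ --- more precisely, I want $\psi(x) \asymp |x|^{2\lceil m/2\rceil}$ near $0$, smooth, nonnegative, and with $\psi \equiv 1$ on an annulus, so that adding $A\psi$ does not disturb the jet $J_0^+(F) = P$ (because $J_0^+(\psi) = 0$) yet makes $F$ strictly positive away from a tiny neighborhood of $0$. The two positivity hypotheses in the definition of $\Gamma_0^+$ are exactly what is needed: the global bound $P(x) + |x|^m \ge 0$ controls $\chi P$ on the support of $\chi$ minus a small ball (where $|x|^m$ is bounded below, so a bounded multiple of $\psi$ dominates), and the "for every $\epsilon$" condition handles the small ball where one needs $P(x) \ge -\epsilon|x|^m$ to be beaten by the bumper. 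One must check that a single choice of constant $A = C$ (depending only on $m,n$) and a single $\delta$ (extracted from the $\epsilon$-condition with $\epsilon$ a fixed small constant) suffice; this is a compactness/finiteness point, not a difficulty.

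\emph{The main obstacle.} The delicate step is Part (B): making the bumper construction produce $F \ge 0$ \emph{everywhere} with a norm bound depending only on $m,n$. The global condition $P(x)+|x|^m\ge 0$ only holds on all of $\mathbb{R}^n$, but $\chi P$ is supported in the unit ball, so on the ball I need: $\chi(x)P(x) + A\psi(x) \ge 0$. On an intermediate annulus $\{\delta \le |x| \le 1\}$ where $\psi$ is bounded below by a positive constant, $\chi(x)P(x) \ge \chi(x)(-|x|^m) \ge -1$, so $A$ larger than $1/(\inf_{\text{annulus}}\psi)$ works. On the inner ball $\{|x|\le\delta\}$, I instead invoke the refined condition with, say, $\epsilon$ chosen so that $\epsilon < A\cdot\inf_{0<|x|\le\delta}\psi(x)/|x|^m$ (which is positive because $\psi(x)\asymp|x|^{2\lceil m/2\rceil}$ and $2\lceil m/2\rceil \le m$... wait, this requires $2\lceil m/2\rceil \le m$, which fails for odd $m$; so I would instead take $\psi(x)\asymp|x|^{m}$ near $0$ if $m$ is even, or more safely arrange $\psi(x)\ge c|x|^m$ near $0$ and $\psi$ smooth --- e.g. $\psi = \chi\cdot\eta$ where $\eta$ is a smooth nonnegative function with $\eta(x)\ge c|x|^m$ and $J_0^+\eta=0$; such $\eta$ exists, e.g. a smooth function agreeing with $|x|^{m+1}$ near $0$, which is $C^m$ but I need $C^\infty$ or at least $C^m$ --- $|x|^{m+1}$ is $C^m$, good enough). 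The order of the jet cancellation versus the growth rate is the point that needs care; once the bump $\psi$ is correctly chosen so that $J_0^+(\psi) = 0$ while $\psi(x)\ge c|x|^m$ for $|x|$ small and $\psi \ge c$ on an annulus, the rest is bookkeeping with fixed constants. Everything then assembles to give $\|F\|_{C^m(\mathbb{R}^n)} \le C$ and $F \ge 0$ and $J_0^+(F) = \chi P + 0 = P$, completing (B).
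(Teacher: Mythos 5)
Your Part (A) is correct and matches the paper (which simply cites Taylor's theorem). But your Part (B) has a genuine and unfixable gap in the form stated: the bump $\psi$ you require cannot exist. You want $\psi \geq 0$ with $J_0^+(\psi) = 0$ (so that adding $A\psi$ does not disturb the $m$-jet) and simultaneously $\psi(x) \geq c|x|^m$ near $0$ (so that a fixed multiple $A\psi$ dominates the deficit $\epsilon|x|^m$). These two requirements are incompatible: $J_0^+(\psi) = 0$ means all derivatives of $\psi$ up to order $m$ vanish at $0$, which by Taylor's theorem in Peano form forces $\psi(x) = o(|x|^m)$ as $x \to 0$, so $\psi(x)/|x|^m \to 0$. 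Your fallback $\psi \approx |x|^{m+1}$ near $0$ has exactly this problem --- it satisfies $J_0^+\psi = 0$ but \emph{not} $\psi \geq c|x|^m$. Concretely, once you have fixed any $\epsilon > 0$ and any constant $A$, there is a neighborhood of $0$ on which $A|x|^{m+1} < \epsilon|x|^m$, so the bumper loses; and shrinking $\epsilon$ shrinks the neighborhood where you control $P^-$, so you never catch up. The point you dismiss as ``a compactness/finiteness point, not a difficulty'' is actually the crux of the lemma.

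The deeper reason a fixed bump cannot work is that the definition of $\Gamma_0^+$ controls $P^-(x)/|x|^m$ only qualitatively: the ``for every $\epsilon$ there exists $\delta$'' clause says $\sup_{|x|\leq\delta}P^-(x)/|x|^m \to 0$, but at no controlled rate. The correction term must therefore be tailored to $P$. This is what the paper does: it uses a dyadic sequence of annular bumps $\varphi_k(x) = \varphi(2^k x)$ (each vanishing near $0$, so each has zero $m$-jet) and sets $\tilde F = P + \sum_{k \geq 0} b_k\varphi_k$, where $b_k = \max(0, -\min_{|x|\leq 2^{-k}}P(x))$ is chosen to exactly cancel the negative deficit of $P$ on the ball of radius $2^{-k}$. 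The global bound $P+|x|^m \geq 0$ gives $b_k \leq 2^{-mk}$, which controls $\|b_k\varphi_k\|_{C^m} \leq C b_k 2^{mk} \leq C$; and the $\epsilon$-condition gives $b_k 2^{mk} \to 0$, which makes the partial sums Cauchy in $C^m$ so that $\tilde F \in C^m$ and still has $J_0^+(\tilde F) = P$. Cutting off by a bump $\chi$ supported in the unit ball then produces the desired $F$. Your plan becomes correct if you replace the single fixed bump $A\psi$ by a $P$-dependent series of this kind; as written it does not close.
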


\begin{proof}
(A) follows trivially from Taylor's theorem. We prove (B).

Let $P\in \Gamma _{0}^{+}$ be given. We introduce cutoff functions $\varphi $%
, $\chi \in C^{m}\left( \mathbb{R}^{n}\right) $ with the following properties.

\begin{itemize}
\item[\refstepcounter{equation}\text{(\theequation)}\label{cs7}] $\left\Vert
\chi \right\Vert _{C^{m}\left( \mathbb{R}^{n}\right) }\leq C$, $\chi =1$ in
a neighborhood of $0$, $\chi =0$ outside $B_{n}\left( 0,1/2\right) $, and $%
0\leq \chi \leq 1$ on $\mathbb{R}^{n}$.
\end{itemize}

\begin{itemize}
\item[\refstepcounter{equation}\text{(\theequation)}\label{cs8}] $\left\Vert
\varphi \right\Vert _{C^{m}\left( \mathbb{R}^{n}\right) }\leq C$, $\varphi
=1 $ for $1/2\leq \left\vert x\right\vert \leq 2$, $\varphi \geq 0$ on $\mathbb{R}%
^{n}$,
\end{itemize}
and $\varphi \left( x\right)
=0$ unless $1/4<\left\vert x\right\vert <4$.

For $k \geq 0$, let

\begin{itemize}
\item[\refstepcounter{equation}\text{(\theequation)}\label{cs9}] $\varphi
_{k}\left( x\right) =\varphi \left( 2^{k}x\right) $ $\left( x\in \mathbb{R}%
^{n}\right) $.
\end{itemize}

Thus,

\begin{itemize}
\item[\refstepcounter{equation}\text{(\theequation)}\label{cs10}] $%
\left\Vert \varphi _{k}\right\Vert _{C^{m}\left( \mathbb{R}^{n}\right) }\leq
C2^{mk}$, $\varphi _{k}\geq 0$ on $\mathbb{R}^{n}$, $\varphi _{k}\left(
x\right) =1$ for $2^{-1-k}\leq \left\vert x\right\vert \leq 2^{1-k}$, $%
\varphi _{k}\left( x\right) =0$ unless $2^{-2-k}\leq \left\vert x\right\vert
\leq 2^{2-k}$.
\end{itemize}

Also, for $k \geq 0$, we define a real number $b_k$ as follows.

\begin{itemize}
\item[\refstepcounter{equation}\text{(\theequation)}\label{cs11}] $b_{k}=0$
if $P\left( x\right) \geq 0$ for $\left\vert x\right\vert \leq 2^{-k}$; $%
b_{k}=-\min \left\{ P\left( x\right) :\left\vert x\right\vert \leq
2^{-k}\right\} $ otherwise.
\end{itemize}

Since $P \in \Gamma_0^+$, the $b_k$ satisfy the following:

\begin{itemize}
\item[\refstepcounter{equation}\text{(\theequation)}\label{cs12}] $0\leq
b_{k}\leq 2^{-mk}$ for all $k\geq 0$.
\end{itemize}

\begin{itemize}
\item[\refstepcounter{equation}\text{(\theequation)}\label{cs13}] $%
b_{k}\cdot 2^{mk}\rightarrow 0$ as $k\rightarrow \infty $.
\end{itemize}

By definition of the $b_k$, we have also for each $k \geq 0$ that

\begin{itemize}
\item[\refstepcounter{equation}\text{(\theequation)}\label{cs14}] $P\left(
x\right) +b_{k}\geq 0$ for $\left\vert x\right\vert \leq 2^{-k}$.
\end{itemize}

We define a function $\tilde{F}$ on the closed unit ball $\overline{B_n(0,1)}$ by setting

\begin{itemize}
\item[\refstepcounter{equation}\text{(\theequation)}\label{cs15}] $\tilde{F}%
\left( x\right) =P\left( x\right) +\sum_{k=0}^{\infty }b_{k}\varphi
_{k}\left( x\right) $ for $x\in \overline{B_{n}\left( 0,1\right)} $.
\end{itemize}
(The sum contains
at most $C$ nonzero terms for any given $x$.)

We will check that

\begin{itemize}
\item[\refstepcounter{equation}\text{(\theequation)}\label{cs16}] $\tilde{F}%
\geq 0$ on $\overline{B_{n}\left( 0,1\right)} $.
\end{itemize}

Indeed, $\tilde{F}\left( 0\right) =P(0)\geq 0$ since each $\varphi _{k}(0)=0$
and $P\in \Gamma _{0}^{+}$. For $\hat{x}\in \overline{B_{n}(0,1)}\setminus \{0\}$ we
have $2^{-1-\hat{k}}\leq |\hat{x}|\leq 2^{-\hat{k}}$ for some $\hat{k}\geq 0$%
.

We then have $\varphi _{\hat{k}}(\hat{x})=1$ by \eqref{cs10}, hence $P(\hat{x%
})+b_{\hat{k}}\varphi _{\hat{k}}(\hat{x})\geq 0$ by \eqref{cs14}. Since also 
$b_{k}\varphi _{k}(\hat{x})\geq 0$ for all $k$, it follows that $$\tilde{F}(%
\hat{x})=\left[ P\left( \hat{x}\right) +b_{\hat{k}}\varphi _{\hat{k}}\left( 
\hat{x}\right) \right] +\sum_{k\not=\hat{k}}b_{k}\varphi _{k}\left( x\right)
\geq 0,$$ completing the proof of \eqref{cs16}.

Next, we check that

\begin{itemize}
\item[\refstepcounter{equation}\text{(\theequation)}\label{cs17}] $\tilde{F}%
\in C^{m}\left( \overline{B_{n}\left( 0,1\right)} \right) $, $\left\Vert \tilde{F}%
\right\Vert _{C^{m}\left( \overline{B_{n}\left( 0,1\right)} \right) }\leq C$, $%
J_{0}^{+}\left( \tilde{F}\right) =P$.
\end{itemize}

To see this, let

\begin{itemize}
\item[\refstepcounter{equation}\text{(\theequation)}\label{cs18}] $\tilde{F}%
_{K}=P+\sum_{k=0}^{K}b_{k}\varphi _{k}$ for $K\geq 0$.
\end{itemize}

Since $P\in \Gamma _{0}^{+}$, we have $\left\vert \partial ^{\beta }P\left(
0\right) \right\vert \leq 1$ for $\left\vert \beta \right\vert \leq m$, hence

\begin{itemize}
\item[\refstepcounter{equation}\text{(\theequation)}\label{cs19}] $%
\left\Vert P\right\Vert _{C^{m}\left(\overline{ B_{n}\left( 0,1\right)} \right) }\leq C$%
.
\end{itemize}

Also, \eqref{cs10} and \eqref{cs12} give%
\begin{equation*}
\left\Vert b_{k}\varphi _{k}\right\Vert _{C^{m}\left(\overline{ B_n\left( 0,1\right)}
\right) }\leq C\text{ for each }k\text{.}
\end{equation*}

Since any given $x \in \overline{ B_n(0,1)}$ belongs to at most $C$ of the supports of
the $\varphi_k$, it follows that

\begin{itemize}
\item[\refstepcounter{equation}\text{(\theequation)}\label{cs20}] $%
\left\Vert \sum_{k=0}^{K}b_{k}\varphi _{k}\right\Vert _{C^{m}\left(
\overline{B_{n}\left( 0,1\right)} \right) }\leq C$.
\end{itemize}

From \eqref{cs18}, \eqref{cs19}, \eqref{cs20}, we see that

\begin{itemize}
\item[\refstepcounter{equation}\text{(\theequation)}\label{cs21}] $\tilde{F}%
_{K}\in C^{m}\left( \overline{B_{n}\left( 0,1\right)} \right) $ and $\left\Vert \tilde{F%
}\right\Vert _{C^{m}\left( \overline{B_{n}\left( 0,1\right)} \right) }\leq C$.
\end{itemize}

Also, \eqref{cs10} and \eqref{cs18} tell us that

\begin{itemize}
\item[\refstepcounter{equation}\text{(\theequation)}\label{cs22}] $%
J_{0}^{+}\left( \tilde{F}_{K}\right) =P$ for each $K$.
\end{itemize}

Furthermore for $K_{1} < K_{2}$, \eqref{cs18} gives $\tilde{F}_{K_{2}}-%
\tilde{F}_{K_{1}}=\sum_{K_{1}<k\leq K_{2}}b_{k}\varphi _{k}$. Let $\epsilon
>0$. From \eqref{cs10} and \eqref{cs13} we see that 
\begin{equation*}
\max_{K_{1}<k\leq K_{2}}\left\Vert b_{k}\varphi _{k}\right\Vert
_{C^{m}\left( \overline{B_{n}\left( 0,1\right)} \right) }<\epsilon \text{ if }K_{1}%
\text{ is large enough.}
\end{equation*}%
Since any given point lies in support $\varphi _{k}$ for at most $C$
distinct $k$, it follows that%
\begin{equation*}
\left\Vert \sum_{K_{1}<k\leq K_{2}}b_{k}\varphi _{k}\right\Vert
_{C^{m}\left( \overline{B_{n}\left( 0,1\right)} \right) }\leq C\epsilon \text{ if }%
K_{2}>K_{1}\text{ and }K_{1}\text{ is large enough.}
\end{equation*}

Thus, $(\tilde{F}_{K})_{K\geq 0}$ is a Cauchy sequence in $C^{m}(\overline{B_{n}(0,1)})$%
. Consequently, $\tilde{F}_{K}\rightarrow \tilde{F}_{\infty }$ in $%
C^{m}(\overline{B_{n}(0,1)})$-norm for some $\tilde{F}_{\infty }\in C^{m}(\overline{B_{n}(0,1))}$.
From \eqref{cs21} and \eqref{cs22}, we have%
\begin{equation*}
\left\Vert \tilde{F}_{\infty }\right\Vert _{C^{m}\left( \overline{B_{n}\left(
0,1\right)} \right) }\leq C\text{ and }J_{0}^{+}\left( \tilde{F}_{\infty
}\right) =P\text{.}
\end{equation*}

On the other hand, comparing \eqref{cs15} to \eqref{cs18}, and recalling
that any given $x$ belongs to support $\theta_k$ for at most $C$ distinct $k$%
, we conclude that $\tilde{F}_K \rightarrow \tilde{F}$ pointwise as $K
\rightarrow \infty$.

Since also $\tilde{F}_K \rightarrow \tilde{F}_\infty$ pointwise as $K
\rightarrow \infty$, we have $\tilde{F}_\infty = \tilde{F}$.

Thus, $\tilde{F}\in C^{m}\left( \overline{B_{n}\left( 0,1\right)} \right) $, $%
\left\Vert \tilde{F}\right\Vert _{C^{m}\left(\overline{ B_{n}\left( 0,1\right)} \right)
}\leq C$, and $J_{0}^{+}\left( \tilde{F}\right) =P$, completing the proof of %
\eqref{cs17}.

Finally, we recall the cutoff function $\chi$ from \eqref{cs7}, and define $%
F=\chi \tilde{F}$ on $\mathbb{R}^n$.

From \eqref{cs16}, \eqref{cs17}, and the properties \eqref{cs7} of $\chi $,
we conclude that $F\in C^{m}\left( \mathbb{R}^{n}\right) $, $\left\Vert
F\right\Vert _{C^{m}\left( \mathbb{R}^{n}\right) }\leq C$, $F\geq 0$ on $%
\mathbb{R}^{n}$, and $J_{0}^{+}\left( F\right) =P$.

Thus, we have established (B).

The proof of Lemma \ref{lemma-ccs} is complete.
\end{proof}

Now let $\pi :\mathcal{P}^{+}\rightarrow \mathcal{P}$ denote the natural
projection from $m$-jets at $0$ to $\left( m-1\right) $-jets at $0$, namely, 
$\pi P=J_{0}\left( P\right) $ for $P\in \mathcal{P}^{+}$.

We then set $\tilde{\Gamma}_{0}=\pi \Gamma _{0}^{+}$.

From the above lemma, we learn the following.

\begin{description}
\item[$\left( A^{\prime }\right) $] Let $F\in C^{m}\left( \mathbb{R}%
^{n}\right) $ with $\left\Vert F\right\Vert _{C^{m}\left( \mathbb{R}%
^{n}\right) }\leq c$, $F\geq 0$ on $\mathbb{R}^{n}$. Then $J_{0}\left(
F\right) \in \tilde{\Gamma}_{0}$.

\item[$\left( B^{\prime }\right) $] Let $P\in \tilde{\Gamma}_{0}$. Then
there exists $F\in C^{m}\left( \mathbb{R}^{n}\right) $ such that $\left\Vert
F\right\Vert _{C^{m}\left( \mathbb{R}^{n}\right) }\leq C$, $F\geq 0$ on $%
\mathbb{R}^{n}$, and $J_{0}\left( F\right) =P$.
\end{description}

Recalling the definition (\ref{cs1}), we conclude from ($A^{\prime }$), ($%
B^{\prime }$) that $\Gamma _{\ast }\left( 0,c\right) \subset \tilde{\Gamma}%
_{0}\subset \Gamma _{\ast }\left( 0,C\right) $.

Thus, our $\tilde{\Gamma}_{0}$ satisfies the key condition (\ref{cs5}).

We discuss briefly how the convex set $\tilde{\Gamma}_{0}$ may be computed
in principle. Recall \cite{hormander-2007} that a semialgebraic set is a
subset of a vector space obtained by taking finitely many unions,
intersections, and complements of sets of the form $\left\{ P>0\right\} $
for polynomials $P$. Any subset of a vector space $V$ defined by $E=\left\{
x\in V:\Phi \left( x\right) \text{ is true}\right\} $, where $\Phi $ is a
formula of first-order predicate calculus (for the theory of real-closed
fields) is semialgebraic; moreover, there is an algorithm that accepts $\Phi 
$ as input and exhibits $E$ as a Boolean combination of sets of the form $%
\left\{ P>0\right\} $ for polynomials $P$. For any given $m$, $n$, we see,
by inspection of the definitions of $\Gamma _{0}^{+}$ and $\tilde{\Gamma}_{0}
$, that $\Gamma _{0}^{+}\subset \mathcal{P}^{+}$ is defined by a formula of
first-order predicate calculus; hence, the same holds for $\tilde{\Gamma}%
_{0}\subset \mathcal{P}$.

Therefore, in principle, we can compute $\tilde{\Gamma}_{0}$ as a Boolean
combination of sets of the form $\left\{ P\in \mathcal{P}:\Pi \left(
P\right) >0\right\} $, where $\Pi $ is a polynomial on $\mathcal{P}$.

In practice, we make no claim that we know how to compute $\tilde{\Gamma}%
_{0} $.

It would be interesting to give a more practical method to compute a convex
set satisfying (\ref{cs5}).

\section{Analogues for $C^{m-1,1}(\mathbb{R}^n)$}

\label{afc}

So far we have worked with functions in $C^{m}\left( \mathbb{R}^{n}\right) $%
. In this section we give the $C^{m-1,1}$-analogues of some our main results.

We note that a given continuous function $F:\mathbb{R}^{n}\rightarrow 
\mathbb{R}$ belongs to $C^{m-1,1}\left( \mathbb{R}^{n}\right) $ if and only
if its distribution derivatives $\partial ^{\beta }F$ belong to $L^{\infty
}\left( \mathbb{R}^{n}\right) $ for $\left\vert \beta \right\vert \leq m$.
We may take the norm on $C^{m-1,1}\left( \mathbb{R}^{n}\right) $ to be 
\begin{equation*}
\left\Vert F\right\Vert _{C^{m-1,1}\left( \mathbb{R}^{n}\right)
}=\max_{\left\vert \beta \right\vert \leq m}\text{ess.}\sup_{x\in \mathbb{R}%
^{n}}\left\vert \partial ^{\beta }F\left( x\right) \right\vert \text{,}
\end{equation*}%
whereas for $F\in C^{m}\left( \mathbb{R}^{n}\right) $ we have 
\begin{equation*}
\left\Vert F\right\Vert _{C^{m}\left( \mathbb{R}^{n}\right)
}=\max_{\left\vert \beta \right\vert \leq m}\sup_{x\in \mathbb{R}%
^{n}}\left\vert \partial ^{\beta }F\left( x\right) \right\vert \text{.}
\end{equation*}%
Moreover, the derivatives $\partial ^{\beta }F$ of $F\in C^{m-1,1}\left( 
\mathbb{R}^{n}\right) $ of order $\left\vert \beta \right\vert \leq m-1$ are
continuous. Also, Taylor's theorem holds in the form 
\begin{equation*}
\left\vert \partial ^{\beta }F\left( y\right) -\sum_{\left\vert \beta
\right\vert +\left\vert \gamma \right\vert \leq m-1}\frac{1}{\gamma !}\left[
\partial ^{\gamma +\beta }F\left( x\right) \right] \cdot \left( y-x\right)
^{\gamma }\right\vert \leq C\left\Vert F\right\Vert _{C^{m-1,1}\left( 
\mathbb{R}^{n}\right) }\cdot \left\vert y-x\right\vert ^{m-\left\vert \beta
\right\vert }
\end{equation*}%
for $x,y\in \mathbb{R}^{n}$.

Similar remarks apply to $C^{m-1,1}\left( Q\right) $ and $C^{m}\left(
Q\right) $ for cubes $Q\subset \mathbb{R}^{n}$.

Therefore, we may repeat the proofs of Lemmas \ref{lemma-inf1} and \ref%
{lemma-inf2} in Section \ref{inf}, to derive the following results.

\begin{lemma}
\label{lemma-inf1'}For $x\in \mathbb{R}^{n}$, $M>0$, let 
\begin{equation*}
\Gamma _{\ast }^{\prime }\left( x,M\right) =\left\{ 
\begin{array}{c}
P\in \mathcal{P}:\exists F\in C^{m-1,1}\left( \mathbb{R}^{n}\right) \text{
such that } \\ 
\left\Vert F\right\Vert _{C^{m-1,1}\left( \mathbb{R}^{n}\right) }\leq
M,F\geq 0\text{ on }\mathbb{R}^{n}\text{, }J_{x}\left( F\right) =P%
\end{array}%
\right\} \text{.}
\end{equation*}%
Let $f:E\rightarrow \lbrack 0,\infty )$, where $E\subset \mathbb{R}^{n}$ is
finite. For $x\in E$, $M>0$, let 
\begin{equation*}
\Gamma _{f}^{\prime }\left( x,M\right) =\left\{ P\in \Gamma _{\ast }^{\prime
}\left( x,M\right) :P\left( x\right) =f\left( x\right) \right\} \text{.}
\end{equation*}%
Then $\vec{\Gamma}_{f}^{\prime }:=\left( \Gamma _{f}^{\prime }\left(
x,M\right) \right) _{x\in E,M>0}$ is a $\left( C,1\right) $-convex shape
field, where $C$ depends only on $m$, $n$.
\end{lemma}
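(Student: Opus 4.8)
\textbf{Proof plan for Lemma \ref{lemma-inf1'}.}

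The plan is to mimic verbatim the proof of Lemma \ref{lemma-inf1} in Section \ref{inf}, replacing $C^{m}(\mathbb{R}^{n})$ by $C^{m-1,1}(\mathbb{R}^{n})$ throughout. First I would record the elementary facts about $C^{m-1,1}$ noted at the start of this section: that $F\in C^{m-1,1}(\mathbb{R}^{n})$ iff all distributional derivatives $\partial^{\beta}F$ of order $\leq m$ lie in $L^{\infty}$, that derivatives of order $\leq m-1$ are continuous (so $J_{x}(F)\in\mathcal{P}$ makes sense), and that Taylor's theorem holds with the stated remainder estimate. It is then immediate that each $\Gamma_{f}^{\prime}(x,M)$ is convex and that $M'\leq M$ implies $\Gamma_{f}^{\prime}(x,M')\subseteq\Gamma_{f}^{\prime}(x,M)$, so $\vec{\Gamma}_{f}^{\prime}$ is a shape field; this uses that $C^{m-1,1}$ is closed under convex combinations and that the pointwise-nonnegativity and jet-prescription constraints are both preserved by convex combinations and by scaling of $M$.

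The substance is $(C,1)$-convexity, and here I would copy the argument following \eqref{inf3}--\eqref{inf28}. Given $0<\delta\leq 1$, $x\in E$, $M>0$, polynomials $P_{1},P_{2}\in\Gamma_{f}^{\prime}(x,M)$ satisfying \eqref{inf5}, and $Q_{1},Q_{2}\in\mathcal{P}$ satisfying \eqref{inf7}, \eqref{inf8}, I would pick $F_{1},F_{2}\in C^{m-1,1}(\mathbb{R}^{n})$ with $\|F_{i}\|_{C^{m-1,1}(\mathbb{R}^{n})}\leq M$, $F_{i}\geq 0$, $J_{x}(F_{i})=P_{i}$; normalize so that $Q_{1}(x)\geq\tfrac{1}{\sqrt{2}}$; construct the same cutoff $\chi$ supported in $B_{n}(x,c_{0}\delta)$ and the same partition-of-unity functions $\theta_{1},\theta_{2}$ built from $\tilde\theta_{1}=\chi Q_{1}+(1-\chi)$ and $\tilde\theta_{2}=\chi Q_{2}$; and set $F=\theta_{1}^{2}F_{1}+\theta_{2}^{2}F_{2}=F_{1}+\theta_{2}^{2}(F_{2}-F_{1})$. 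The verification that $F\in C^{m-1,1}(\mathbb{R}^{n})$ with $\|F\|_{C^{m-1,1}(\mathbb{R}^{n})}\leq CM$ proceeds exactly as before: inside $B_{n}(x,c_{0}\delta)$ one controls $\partial^{\beta}(F_{2}-F_{1})$ at $x$ via \eqref{inf5} and $J_{x}(F_{2}-F_{1})=P_{2}-P_{1}$, upgrades this to an $L^{\infty}$ bound on the ball using $\|F_{i}\|_{C^{m-1,1}}\leq M$ and Taylor's theorem, then multiplies by $\theta_{2}^{2}$ using the derivative bounds \eqref{inf19} on the $\theta_{i}$ and $\delta\leq 1$; outside $B_{n}(x,c_{0}\delta)$ one has $F=F_{1}$ by the analogue of \eqref{inf22}. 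Nonnegativity of $F$ is immediate from $F_{i}\geq 0$, $\theta_{i}^{2}\geq 0$, $\theta_{1}^{2}+\theta_{2}^{2}=1$; and $J_{x}(F)=Q_{1}\odot_{x}Q_{1}\odot_{x}P_{1}+Q_{2}\odot_{x}Q_{2}\odot_{x}P_{2}=P$ and $P(x)=f(x)$ follow from $J_{x}(\theta_{i})=Q_{i}$, $J_{x}(F_{i})=P_{i}$, \eqref{inf8}, and $P_{1}(x)=P_{2}(x)=f(x)$, exactly as in \eqref{inf26}, \eqref{inf28}. Hence $P\in\Gamma_{f}^{\prime}(x,CM)$, which is the required conclusion.

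I do not anticipate a genuine obstacle: the only place the distinction between $C^{m}$ and $C^{m-1,1}$ enters is in the passage from a derivative bound at the single point $x$ to an $L^{\infty}$ bound on the ball $B_{n}(x,c_{0}\delta)$, and this is handled identically by the $C^{m-1,1}$ form of Taylor's theorem recorded at the top of this section (in the $C^{m}$ case one uses continuity of $\partial^{\beta}F_{i}$ for $|\beta|=m$; in the $C^{m-1,1}$ case one uses the essential-supremum bound directly). The only mild point of care is that $\|\theta_{i}^{2}(F_{2}-F_{1})\|_{C^{m-1,1}}$ must be estimated via the product/Leibniz rule at the level of $m$-th distributional derivatives rather than classical ones, but since $\theta_{i}\in C^{m}\subset C^{m-1,1}$ and $F_{2}-F_{1}\in C^{m-1,1}$, the product lies in $C^{m-1,1}$ with the expected norm bound. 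Therefore the proof is a routine transcription of the proof of Lemma \ref{lemma-inf1}, and I would simply remark that one repeats that argument with $C^{m-1,1}(\mathbb{R}^{n})$ in place of $C^{m}(\mathbb{R}^{n})$.
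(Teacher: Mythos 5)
Your proposal matches the paper's own treatment: the paper disposes of Lemma \ref{lemma-inf1'} by remarking that one simply repeats the proof of Lemma \ref{lemma-inf1} verbatim, using the $C^{m-1,1}$ form of Taylor's theorem recorded at the start of Section \ref{afc} in place of the $C^m$ version. You have correctly identified that the only point requiring attention is the passage from a pointwise bound on $\partial^\beta(F_2-F_1)$ at $x$ to an $L^\infty$ bound on $B_n(x,c_0\delta)$, and that the $C^{m-1,1}$ Taylor estimate handles this.
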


\begin{lemma}
\label{lemma-inf2'}Let $E$, $f$, $\Gamma _{\ast }^{\prime }\left( x,M\right) 
$ be as in Lemma \ref{lemma-inf1'}, and let $M>0$, $\vec{P}=\left(
P^{x}\right) _{x\in E}\in Wh\left( E\right) $. Suppose we have $P^{x}\in
\Gamma _{\ast }^{\prime }\left( x,M\right) $ for all $x \in E$, and $%
\left\vert \partial ^{\beta }\left( P^{x}-P^{y}\right) \left( x\right)
\right\vert \leq M\left\vert x-y\right\vert ^{m-\left\vert \beta \right\vert
}$ for $x,y\in E$, $\left\vert \beta \right\vert \leq m-1$. Then there
exists $F\in C^{m-1,1}\left( \mathbb{R}^{n}\right) $ such that $J_{x}\left(
F\right) =P^{x}$ for all $x\in E$, and $\left\Vert F\right\Vert
_{C^{m-1,1}\left( \mathbb{R}^{n}\right) }\leq CM$, where $C$ depends only on 
$m$, $n$.
\end{lemma}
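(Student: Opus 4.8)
The plan is to imitate the proofs of Lemmas \ref{lemma-inf1} and \ref{lemma-inf2} from Section \ref{inf} essentially verbatim, since the only difference is that $C^m$ is replaced throughout by $C^{m-1,1}$, and the three facts needed about $C^m$ — namely that $F \in C^m(\mathbb{R}^n)$ iff $\partial^\beta F$ are continuous and bounded for $|\beta| \le m$, the product rule estimate, and Taylor's theorem in the form $|\partial^\beta F(y) - \sum_{|\beta|+|\gamma| \le m-1} \frac{1}{\gamma!} \partial^{\gamma+\beta}F(x)(y-x)^\gamma| \le C\|F\|_{C^{m-1,1}} |y-x|^{m-|\beta|}$ — all hold equally well in the $C^{m-1,1}$ category, as recalled at the start of this section. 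So the strategy is: first verify the shape-field axioms and $(C,1)$-convexity for $\vec{\Gamma}'_f$ (this is Lemma \ref{lemma-inf1'}), then verify the Whitney-patching assembly statement (this is Lemma \ref{lemma-inf2'}).

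For Lemma \ref{lemma-inf1'}, I would observe first that each $\Gamma'_f(x,M)$ is convex (a convex combination of nonnegative functions with norm $\le M$ and matching jet is again such) and monotone in $M$, so $\vec{\Gamma}'_f$ is a shape field. For $(C,1)$-convexity, I would copy the argument of Lemma \ref{lemma-inf1} line by line: given $0 < \delta \le 1$, $x \in E$, $P_1, P_2 \in \Gamma'_f(x,M)$ with $|\partial^\beta(P_1-P_2)(x)| \le M\delta^{m-|\beta|}$ for $|\beta| \le m-1$, and $Q_1, Q_2 \in \mathcal{P}$ with $|\partial^\beta Q_i(x)| \le \delta^{-|\beta|}$ and $Q_1 \odot_x Q_1 + Q_2 \odot_x Q_2 = 1$, I choose representing functions $F_1, F_2 \in C^{m-1,1}(\mathbb{R}^n)$, normalize so that $Q_1(x) \ge \tfrac{1}{\sqrt 2}$, build the cutoff $\chi$ supported in $B_n(x, c_0\delta)$ equal to $1$ near $x$ with $|\partial^\beta \chi| \le C\delta^{-|\beta|}$, set $\tilde\theta_1 = \chi Q_1 + (1-\chi)$, $\tilde\theta_2 = \chi Q_2$, renormalize to $\theta_i = \tilde\theta_i (\tilde\theta_1^2 + \tilde\theta_2^2)^{-1/2}$, and let $F = \theta_1^2 F_1 + \theta_2^2 F_2 = F_1 + \theta_2^2(F_2-F_1)$. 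The key estimate $|\partial^\beta(\theta_2^2(F_2-F_1))| \le CM\delta^{m-|\beta|}$ on $B_n(x,c_0\delta)$ for $|\beta| \le m$ uses exactly the $C^{m-1,1}$ version of Taylor's theorem applied to $F_2 - F_1$ (whose $(m-1)$-jet at $x$ is $P_2-P_1$), together with $\|F_2-F_1\|_{C^{m-1,1}} \le 2M$, and then the product rule for the cutoffs. Since $\delta \le 1$, this yields $\|F\|_{C^{m-1,1}(\mathbb{R}^n)} \le CM$; nonnegativity, the jet identity $J_x(F) = Q_1 \odot_x Q_1 \odot_x P_1 + Q_2 \odot_x Q_2 \odot_x P_2$, and $P(x) = f(x)$ follow as before, giving $P \in \Gamma'_f(x, CM)$.

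For Lemma \ref{lemma-inf2'}, I would again transcribe the proof of Lemma \ref{lemma-inf2}: make the Calder\'on–Zygmund decomposition using the "OK" notion ($\#(E \cap 5Q) \le 1$ and $\delta_Q \le 1$), classify CZ cubes into the three types, pick local solutions $F_Q$ — for Type 1 and Type 2 cubes a nonnegative $C^{m-1,1}$ function with $J_{x_Q}(F_Q) = P^{x_Q}$ and $\|F_Q\|_{C^{m-1,1}} \le M$ from membership $P^{x_Q} \in \Gamma'_\ast(x_Q, M)$, and $F_Q = 0$ for Type 3 — establish the consistency estimate $|\partial^\beta(F_Q - F_{Q'})| \le CM\delta_Q^{m-|\beta|}$ on $\tfrac{65}{64}Q \cap \tfrac{65}{64}Q'$ for $|\beta| \le m$ via the $C^{m-1,1}$ Taylor estimate and the Whitney-field hypothesis $|\partial^\beta(P^x - P^y)(x)| \le M|x-y|^{m-|\beta|}$, and glue with a nonnegative Whitney partition of unity $1 = \sum_Q \theta_Q$, $\theta_Q \ge 0$, $\operatorname{supp}\theta_Q \subset \tfrac{65}{64}Q$. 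The resulting $F = \sum_Q \theta_Q F_Q$ lies in $C^{m-1,1}(\mathbb{R}^n)$ with $\|F\|_{C^{m-1,1}} \le CM$ (using that at each point only $C$ cubes contribute and $\delta_Q \le 1$), is $\ge 0$, and has $J_x(F) = [\sum_Q J_x(\theta_Q)] \odot_x P^x = P^x$ for $x \in E$. I do not expect any genuine obstacle here — the substance is entirely the observation, already made in the preamble of this section, that the three structural facts about $C^m$ carry over to $C^{m-1,1}$; the mildest point of care is making sure the $L^\infty$ (essential-supremum) nature of the top-order norm is handled correctly, but since $\partial^\beta F$ for $|\beta| = m$ of a product $\theta_Q F_Q$ is bounded a.e. by the pointwise estimates (the lower-order derivatives being genuinely continuous), nothing changes.
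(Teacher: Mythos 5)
Your proposal is correct and takes exactly the same route as the paper, which simply states that the proofs of Lemmas \ref{lemma-inf1} and \ref{lemma-inf2} may be repeated with $C^{m-1,1}$ in place of $C^m$, relying on the carryover of the characterization of $C^{m-1,1}$ in terms of bounded distributional derivatives, the Leibniz rule, and the Taylor estimate recorded at the head of Section \ref{afc}. Your additional remark about the essential-supremum nature of the top-order norm being handled by the a.e.\ pointwise estimates for the product is the right point of care, and there is nothing further to add.
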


Similarly, by making small changes in the proof of Theorem \ref%
{theorem-fp-for-nonnegative-interpolation}, we obtain the following result.

\begin{lemma}
\label{lemma-inf3'}There exist $k^{\#}$, $C$, depending only on $m$, $n$ for
which the following holds.

Let $E\subset \mathbb{R}^{n}$ be finite, let $f:E\rightarrow \lbrack
0,\infty )$, and let $M_{0}>0$. Suppose that for each $S\subset E$ with $%
\#\left( S\right) \leq k^{\#}$ there exists $\vec{P}^{S}=\left( P^{x}\right)
_{x\in S}\in Wh\left( S\right) $ such that $P^{x}\in \Gamma _{f}^{\prime
}\left( x,M_{0}\right) $ for all $x\in S$, and $\left\vert \partial ^{\beta
}\left( P^{x}-P^{y}\right) \right\vert \leq M_{0}\left\vert x-y\right\vert
^{m-\left\vert \beta \right\vert }$ for $x,y\in S$, $\left\vert \beta
\right\vert \leq m-1$.

Then there exists $F\in C^{m-1,1}\left( \mathbb{R}^{n}\right) $ such that $%
\left\Vert F\right\Vert _{C^{m-1,1}\left( \mathbb{R}^{n}\right) }\leq CM_{0}$%
, $F\geq 0$ on $\mathbb{R}^{n}$, and $F=f$ on $E$.
\end{lemma}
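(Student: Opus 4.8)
\textbf{Proof plan for Lemma \ref{lemma-inf3'}.}

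The plan is to mimic the proof of Theorem \ref{theorem-fp-for-nonnegative-interpolation} verbatim, substituting the $C^{m-1,1}$ objects for their $C^m$ counterparts at each step. First I would reduce to the case where $E$ is contained in the lower-left half $\frac12 Q_0$ of a unit cube $Q_0$ (i.e.\ $\delta_{Q_0}=1$), exactly as in the proof of Theorem \ref{theorem-fp-for-nonnegative-interpolation}: one covers $\mathbb{R}^n$ by unit cubes $Q_\nu$ carrying a bounded, nonnegative $C^{m-1,1}$ partition of unity $1=\sum_\nu \chi_\nu$ with support $\chi_\nu \subset \frac12 Q_\nu$ and finite overlap, applies the special case to each $E_\nu=E\cap \frac12 Q_\nu$ and $f_\nu=f|_{E_\nu}$, and glues by $F=\sum_\nu \chi_\nu F_\nu$. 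Nonnegativity is preserved because each $\chi_\nu\ge0$ and each $F_\nu\ge0$; the $C^{m-1,1}$ norm bound follows from the bounded-overlap property together with the fact that a product of $C^{m-1,1}$ functions (one of them with bounded $C^m$ norm, one nonnegative) lies in $C^{m-1,1}$ with the expected norm estimate. This last point is the only genuinely new routine verification needed; it is handled by the Leibniz rule applied to the top-order distributional derivative and the remarks at the start of Section \ref{afc} about $C^{m-1,1}(\mathbb{R}^n)$.

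For the special case $E\subset \frac12 Q_0$, $\delta_{Q_0}=1$: fix any $x_0\in E$ (if $E=\emptyset$ the statement is trivial). For each $S\subset E$ with $\#(S)\le k^\#$, the present hypothesis supplies a Whitney field $\vec{P}^S=(P^x)_{x\in S}$ with $\|\vec{P}^S\|_{\dot C^m(S)}\le M_0$ and $P^x\in \Gamma_f'(x,M_0)\subset \Gamma_*'(x,M_0)$ for all $x\in S$; by Lemma \ref{lemma-inf1'} the shape field $\vec{\Gamma}_f'$ is $(C,1)$-convex, so Theorem \ref{theorem-fp-for-wsf} applies (with $\delta_{\max}=1$, $Q_0$ as above, and $x_0\in E\cap 5Q_0$). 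It produces $P^0\in \Gamma_f'(x_0,CM_0)$ and $F^0\in C^m(Q_0)$ with $J_z(F^0)\in \Gamma_f'(z,CM_0)$ for all $z\in E\cap Q_0=E$ and $|\partial^\beta(F^0-P^0)|\le CM_0$ on $Q_0$ for $|\beta|\le m$. Since $P^0\in \Gamma_f'(x_0,CM_0)$ forces $|\partial^\beta P^0(x_0)|\le CM_0$ for $|\beta|\le m-1$ and $P^0$ has degree $\le m-1$ while $x_0\in Q_0$ with $\delta_{Q_0}=1$, we get $|\partial^\beta P^0|\le CM_0$ on $Q_0$ for $|\beta|\le m$, hence $|\partial^\beta F^0|\le CM_0$ on $Q_0$ for $|\beta|\le m$. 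Setting $P^x:=J_x(F^0)$ for $x\in E$ gives $P^x\in \Gamma_f'(x,CM_0)$ and, by Taylor's theorem on $Q_0$, $|\partial^\beta(P^x-P^y)(x)|\le CM_0|x-y|^{m-|\beta|}$ for $x,y\in E$, $|\beta|\le m-1$ (here $F^0$ need not be nonnegative; we use it only to manufacture a consistent Whitney field lying in the shape field).

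Now apply Lemma \ref{lemma-inf2'} to the Whitney field $(P^x)_{x\in E}$ with constant $CM_0$: since each $P^x\in \Gamma_*'(x,CM_0)$ and the Whitney-field differences satisfy the required estimate, we obtain $F\in C^{m-1,1}(\mathbb{R}^n)$ with $\|F\|_{C^{m-1,1}(\mathbb{R}^n)}\le CM_0$, $F\ge0$ on $\mathbb{R}^n$, and $J_x(F)=P^x$ for all $x\in E$. Because $P^x\in \Gamma_f'(x,CM_0)$ entails $P^x(x)=f(x)$, we conclude $F(x)=f(x)$ for all $x\in E$, completing the special case and, after the gluing argument above, the general case. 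I do not expect any serious obstacle: the argument is a line-by-line transcription of the $C^m$ proof, and the only substantive ingredients beyond bookkeeping — the $(C,1)$-convexity of $\vec{\Gamma}_f'$ and the $C^{m-1,1}$ Whitney-gluing Lemma \ref{lemma-inf2'} — are already established (their proofs being the obvious $C^{m-1,1}$ variants of Lemmas \ref{lemma-inf1} and \ref{lemma-inf2}, as asserted in Section \ref{afc}). The mildest care is needed in checking that products with cutoff functions stay in $C^{m-1,1}$ with controlled norm, which is immediate from the Leibniz rule for distributional derivatives.
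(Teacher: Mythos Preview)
Your proposal is correct and matches the paper's own proof essentially line for line: the paper states that Lemma \ref{lemma-inf3'} follows by making small changes in the proof of Theorem \ref{theorem-fp-for-nonnegative-interpolation}, keeping $F^0$ in $C^m(\mathbb{R}^n)$ but taking the functions $F$ and $F_\nu$ in $C^{m-1,1}(\mathbb{R}^n)$, which is precisely what you do. The only cosmetic point is that the cutoffs $\chi_\nu$ should be taken in $C^m$ (as in the paper) rather than merely $C^{m-1,1}$, so that the product $\chi_\nu F_\nu$ with $F_\nu\in C^{m-1,1}$ is controlled by the Leibniz rule exactly as you indicate.
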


In adapting the proof of Theorem \ref%
{theorem-fp-for-nonnegative-interpolation}, we keep the function called $F^{0}$ in $C^{m}\left( \mathbb{R}^{n}\right) $, but we take the
functions called $F$ and $F_{\nu }$ to belong to $C^{m-1,1}\left( \mathbb{R}%
^{n}\right) $.

Now we can easily deduce the following result.

\begin{theorem}[Finiteness Principle for Non-negative $C^{m-1,1}$%
-Interpolation]
\label{theorem-fp-for-cm-1-interpolation}There exists constants $k^{\#}$, $C$%
, depending only on $m,n$ for which the following holds.

Let $f:E\rightarrow \lbrack 0,\infty )$, with $E \subset \mathbb{R}^n$
arbitrary (not necessarily finite). Let $M_0 >0$. Suppose that for each $S
\subset E$ with $\#(S) \leq k^\#$ there exists $\vec{P}=(P^x)_{x\in S}\in
Wh(S)$ such that

\begin{itemize}
\item $P^x \in \Gamma_f^{\prime }(x,M_0)$ for all $x \in S$,

\item $\left\vert \partial ^{\beta }\left( P^{x}-P^{y}\right) \left(
x\right) \right\vert \leq M_{0}\left\vert x-y\right\vert ^{m-\left\vert
\beta \right\vert }$ for $x,y\in S$, $\left\vert \beta \right\vert \leq m-1$.
\end{itemize}

Then there exists $F\in C^{m-1,1}\left( \mathbb{R}^{n}\right) $ such that

\begin{itemize}
\item $||F||_{C^{m-1,1}(\mathbb{R}^n)}\leq CM_0$,

\item $F \geq 0$, and

\item $F=f$ on $E$.
\end{itemize}
\end{theorem}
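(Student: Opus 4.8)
The plan is to deduce Theorem \ref{theorem-fp-for-cm-1-interpolation} from its finite-$E$ version, Lemma \ref{lemma-inf3'}, by a compactness argument. Fix $E\subset\mathbb{R}^n$ and $f:E\to[0,\infty)$ satisfying the hypotheses of the theorem. For any \emph{finite} $E'\subset E$ and any $S\subset E'$ with $\#(S)\leq k^{\#}$ we have $S\subset E$, so the hypothesis of the theorem supplies a Whitney field $\vec{P}^{S}\in Wh(S)$ with $P^{x}\in\Gamma_{f}^{\prime}(x,M_{0})$ and $|\partial^{\beta}(P^{x}-P^{y})(x)|\leq M_{0}|x-y|^{m-|\beta|}$ for all $x,y\in S$, $|\beta|\leq m-1$. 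Thus the hypotheses of Lemma \ref{lemma-inf3'} hold with $E'$ in place of $E$, and that lemma produces $F_{E'}\in C^{m-1,1}(\mathbb{R}^n)$ with
\[
\|F_{E'}\|_{C^{m-1,1}(\mathbb{R}^n)}\leq CM_{0},\qquad F_{E'}\geq0\ \text{on}\ \mathbb{R}^n,\qquad F_{E'}=f\ \text{on}\ E'.
\]

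Next I would extract a limit of the family $\{F_{E'}\}$. Let $\mathcal{F}$ be the collection of finite subsets of $E$, directed by inclusion, so that $(F_{E'})_{E'\in\mathcal{F}}$ is a net in the ball $B=\{G\in C^{m-1,1}(\mathbb{R}^n):\|G\|_{C^{m-1,1}(\mathbb{R}^n)}\leq CM_{0}\}$. For each $|\beta|\leq m-1$ the functions $\partial^{\beta}F_{E'}$ are uniformly bounded by $CM_{0}$ and uniformly $CM_{0}$-Lipschitz on $\mathbb{R}^n$, so by the Arzel\`a--Ascoli theorem $B$ is relatively compact in the metrizable Fr\'echet space $C^{m-1}_{loc}(\mathbb{R}^n)$ (the topology of locally uniform convergence of all derivatives of order $\leq m-1$). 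Hence the net $(F_{E'})$ has a subnet $(F_{E_{\alpha}})_{\alpha\in A}$ converging in $C^{m-1}_{loc}(\mathbb{R}^n)$ to some $F$, where the index map $\alpha\mapsto E_{\alpha}$ is cofinal in $\mathcal{F}$: for every finite $E''\subset E$ there is $\alpha_{0}$ with $E_{\alpha}\supseteq E''$ whenever $\alpha\geq\alpha_{0}$.

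It remains to check $F$ works. For $|\beta|\leq m-1$, $\partial^{\beta}F$ is the locally uniform limit of $\partial^{\beta}F_{E_{\alpha}}$, so $|\partial^{\beta}F|\leq CM_{0}$ everywhere and $\partial^{\beta}F$ is $CM_{0}$-Lipschitz when $|\beta|=m-1$; by Rademacher's theorem this gives $\mathrm{ess\,sup}\,|\partial^{\beta}F|\leq CM_{0}$ also for $|\beta|=m$, so $F\in C^{m-1,1}(\mathbb{R}^n)$ with $\|F\|_{C^{m-1,1}(\mathbb{R}^n)}\leq CM_{0}$. Since each $F_{E_{\alpha}}\geq0$ and $F_{E_{\alpha}}\to F$ pointwise, $F\geq0$ on $\mathbb{R}^n$. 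Finally, given $x\in E$, cofinality gives $\alpha_{0}$ with $x\in E_{\alpha}$ for all $\alpha\geq\alpha_{0}$, hence $F_{E_{\alpha}}(x)=f(x)$ for such $\alpha$; letting $\alpha\to\infty$ and using pointwise convergence, $F(x)=f(x)$. This establishes all three conclusions of Theorem \ref{theorem-fp-for-cm-1-interpolation}.

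The argument is routine once Lemma \ref{lemma-inf3'} is available; the one point needing care is the compactness step, and in particular the use of a \emph{subnet} rather than a subsequence. Since $E$ may be uncountable, $\mathcal{F}$ is not cofinal in any sequence, so a sequential argument over a countable dense subset of $E$ would only yield $F=f$ on that dense set, which is insufficient because $f$ is an arbitrary nonnegative function on $E$. This is also where the $C^{m-1,1}$ hypothesis is essential: the ball $B$ is relatively compact in $C^{m-1}_{loc}(\mathbb{R}^n)$, whereas the analogous compactness fails one derivative higher, which is why the $C^{m}$-flavored results (Theorems \ref{Th3}(A) and \ref{Th4}(A)) are stated only for finite $E$.
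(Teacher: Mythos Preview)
Your proof is correct and rests on the same two ingredients as the paper's: Lemma \ref{lemma-inf3'} for finite $E$, and an Ascoli compactness argument to pass to arbitrary $E$. The execution differs slightly. The paper first restricts to $E\subset Q$ for a fixed cube $Q$, uses compactness of $\{F\in C^{m-1,1}(Q):\|F\|\leq CM_0,\ F\geq 0\}$ in the $C^{m-1}(Q)$-norm together with the finite intersection property to obtain $F$ on $Q$, and then handles unbounded $E$ by a partition of unity over a grid of unit cubes. You instead work globally from the start, using relative compactness of the ball in the Fr\'echet space $C^{m-1}_{\mathrm{loc}}(\mathbb{R}^n)$ and extracting a convergent subnet of $(F_{E'})_{E'\in\mathcal{F}}$. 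Your route is a bit more direct (no patching step), at the cost of invoking nets and the Fr\'echet topology; the paper's route stays in Banach spaces and uses the finite intersection property, which some readers may find more transparent. Your remark about why a subnet (rather than a subsequence) is needed, and why the analogous $C^m$ statement is restricted to finite $E$, is apt.
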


\begin{proof}
Suppose first that $E\subset Q$ for some cube $Q\subset \mathbb{R}^{n}$.
Then by Ascoli's theorem,%
\begin{equation*}
\left\{ F\in C^{m-1,1}\left( Q\right) :\left\Vert F\right\Vert
_{C^{m-1,1}\left( Q\right) }\leq CM_{0}\text{, }F\geq 0\text{ on }Q\right\}
\equiv X
\end{equation*}
is compact in the $C^{m-1}(Q)$-norm topology.

For each finite $E_0 \subset E$, Lemma \ref{lemma-inf3'} tells us that there
exists $F \in X$ such that $F = f$ on $E_0$.

Consequently, there exists $F \in X$ such that $F = f$ on $E$. That is,

\begin{itemize}
\item[\refstepcounter{equation}\text{(\theequation)}\label{afc1}] $F\in
C^{m-1,1}\left( Q\right) $, $\left\Vert F\right\Vert _{C^{m-1,1}\left(
Q\right) }\leq CM_{0}$, $F\geq 0$ on $Q$, $F=f$ on $E$.
\end{itemize}

We have achieved \eqref{afc1}, assuming that $E \subset Q$.

Now suppose $E \subset \mathbb{R}^n$ is arbitrary.

We introduce a partition of unity $1=\sum_{\nu }\theta _{\nu }$ on $\mathbb{R%
}^{n}$, with $\theta _{\nu }\geq 0$ on $\mathbb{R}^{n}$, $\theta _{\nu }\in
C^{m}\left( \mathbb{R}^{n}\right) $, $\left\Vert \theta _{\nu }\right\Vert
_{C^{m}\left( \mathbb{R}^{n}\right) }\leq C$, support $\theta _{\nu }\subset
Q_{\nu }$ for a cube $Q_{\nu }\subset \mathbb{R}^{n}$, with (say) $\delta
_{Q_{\nu }}=1$, and such that any given $x\in \mathbb{R}^{n}$ has a
neighborhood that intersects at most $C$ of the $Q_{\nu }$. (Here $C$
depends only on $m,n$.)

Applying our result \eqref{afc1} to $f|_{E\cap Q_{\nu }}:E\cap Q_{\nu
}\rightarrow \lbrack 0,\infty )$ for each $\nu $, we obtain functions $%
F_{\nu }\in C^{m-1,1}\left( Q_{\nu }\right) $ such that $\left\Vert F_{\nu
}\right\Vert _{C^{m-1,1}\left( Q_{\nu }\right) }\leq CM_{0}$, $F_{\nu }\geq
0 $ on $Q_{\nu }$, $F_{\nu }=f$ on $E\cap Q_{\nu }$.

(Here $C$ depends only on $m, n$.)

We define $F=\sum_{\nu }\theta _{\nu }F_{\nu }$ on $\mathbb{R}^{n}$. One
checks easily that $\left\Vert F\right\Vert _{C^{m-1,1}\left( \mathbb{R}%
^{n}\right) }\leq C^{\prime }M_{0}$ with $C^{\prime }$ determined by $m$, $n$%
; $F\geq 0$ on $\mathbb{R}^{n}$; and $F=f$ on $E$.

This completes the proof of Theorem \ref{theorem-fp-for-cm-1-interpolation}.
\end{proof}

Note that Theorem \ref{theorem-fp-for-cm-1-interpolation} easily implies Theorem \ref{Th4} (B).

As in the case of non-negative $C^{m}$-interpolation, we want to replace $%
\Gamma _{f}^{\prime }(x,M)$ by something easier to calculate. In the $%
C^{m-1,1}$-setting, it is enough to make the following observation.

Define $\tilde{\Gamma}_{0}^{\prime }=\{P\in \mathcal{P}:\left\vert \partial
^{\beta }P\left( 0\right) \right\vert \leq 1$ for $\left\vert \beta
\right\vert \leq m-1$ and $P\left( x\right) +\left\vert x\right\vert
^{m}\geq 0$ for all $x\in \mathbb{R}^{n}\}$.

Then

\begin{itemize}
\item[\refstepcounter{equation}\text{(\theequation)}\label{afc2}] $\Gamma
_{\ast }^{\prime }\left( 0,c\right) \subset \tilde{\Gamma}_{0}^{\prime
}\subset \tilde{\Gamma}_{\ast }^{\prime }\left( 0,C\right) $ with $c$, $C$
depending only on $m$, $n$.
\end{itemize}

Indeed, the first inclusion in \eqref{afc2} is immediate from the
definitions and Taylor's theorem. To prove the second inclusion, we let $P
\in \tilde{\Gamma}^{\prime }_0$ be given, and set $F(x)=\chi(x)(P(x)+|x|^m)$%
, where $\chi$ is a non-negative $C^m$ function with norm at most $C_{\ast}$
(depending only on $m$, $n$), satisfying $J_0(\chi)=1$ and support $\chi
\subset B_n(0,1)$.

We then have $F\in C^{m-1,1}(\mathbb{R}^{n})$, $\left\Vert F\right\Vert
_{C^{m-1,1}\left( \mathbb{R}^{n}\right) }\leq C$ (depending only on $m$, $n$%
), $F\geq 0$ on $\mathbb{R}^{n}$, $J_{0}\left( F\right) =P$. Hence, $P\in
\Gamma _{\ast }^{\prime }\left( 0,C\right) $, completing the proof of %
\eqref{afc2}.

This concludes our discussion of interpolation by nonnegative $C^{m-1,1}$
functions.

Next, we present the $C^{m-1,1}$ analogue of the Finiteness Principle for
vector-valued functions (Theorem \ref{theorem-basic-fininite-principle}).

\begin{theorem}[Finiteness Principle for Vector-Valued Functions in $%
C^{m-1,1}$]
\label{theorem-basic-finite-principle-for-cm-1} Fix $m$, $n$, $D\geq 1$.
Then there exist $k^{\#}$, $C$, determined by $m$, $n$, $D$, such that the
following holds.

Let $E \subset \mathbb{R}^n$ be arbitrary. For each $x \in E$, let $K(x)
\subset \mathbb{R}^n$ be closed and convex.

Assume that for any $S\subset E$ with $\#(S)\leq k^{\#}$, there exists $%
F^{S}\in C^{m-1,1}(\mathbb{R}^{n},\mathbb{R}^{D})$ such that $\left\Vert
F^{S}\right\Vert _{C^{m-1,1}\left( \mathbb{R}^{n},\mathbb{R}^{D}\right)
}\leq 1$ and $F^{S}\left( x\right) \in K\left( x\right) $ for all $x\in S$.

Then there exists $F\in C^{m-1,1}\left( \mathbb{R}^{n},\mathbb{R}^{D}\right) 
$ such that $\left\Vert F\right\Vert _{C^{m-1,1}\left( \mathbb{R}^{n},%
\mathbb{R}^{D}\right) }\leq C$ and $F\left( x\right) \in K\left( x\right) $
for all $x\in E$.
\end{theorem}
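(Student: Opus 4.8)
The plan is to run the proof of Theorem \ref{theorem-basic-fininite-principle} essentially unchanged for finite $E$, and then to pass to an arbitrary $E$ by a compactness argument of the kind used for Theorem \ref{theorem-fp-for-cm-1-interpolation}. The key point making the first step work is that the shape-field machinery only ever sees Whitney fields, and Taylor's theorem for $C^{m-1,1}$ (recalled in Section \ref{afc}) supplies those just as well as for $C^m$.

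First I would treat the case of finite $E$. As in Theorem \ref{theorem-basic-fininite-principle}, introduce $\xi=(\xi_1,\dots,\xi_D)\in\mathbb{R}^D$, set $E^+=E\times\{0\}\subset\mathbb{R}^{n+D}$, and define the shape field $\vec{\Gamma}=(\Gamma((x_0,0),M))_{(x_0,0)\in E^+,M>0}$ exactly by \eqref{fpii3} (polynomials of degree $\le m$ on $\mathbb{R}^{n+D}$, with $m+1$ and $n+D$ playing the roles of $m$ and $n$). The definition of $\vec{\Gamma}$ refers only to polynomials and to the convex sets $K(x)$, so Lemma \ref{lemma-bfp1} applies verbatim and $\vec{\Gamma}$ is $(C,1)$-convex. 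The hypothesis enters through the analogue of Lemma \ref{lemma-bfp2}: given $S\subset E$ with $\#(S)\le k^\#$, the hypothesis supplies $F^S\in C^{m-1,1}(\mathbb{R}^n,\mathbb{R}^D)$ with $\|F^S\|_{C^{m-1,1}}\le 1$ and $F^S(z)\in K(z)$ for $z\in S$; setting $P^{(x_0,0)}(x,\xi)=\sum_i\xi_iJ_{x_0}(F^S_i)(x)$ as in \eqref{fpii19} and invoking the $C^{m-1,1}$ form of Taylor's theorem in place of the $C^m$ form gives exactly the estimates \eqref{fpii16}, \eqref{fpii17}. Hence the hypotheses of Theorem \ref{theorem-fp-for-wsf} (with $m+1,n+D$ in place of $m,n$) hold, and Lemma \ref{lemma-bfp3} goes through unchanged, producing for each unit cube $Q$ a function $F^Q=\nabla_\xi\mathcal{F}(\cdot,0)\in C^m(Q,\mathbb{R}^D)$ with $|\partial^\alpha F^Q|\le C$ on $Q$ for $|\alpha|\le m$ and $F^Q(z)\in K(z)$ for $z\in E\cap Q$. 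Patching the $F^Q$ by a nonnegative partition of unity subordinate to a locally finite cover of $\mathbb{R}^n$ by unit cubes, exactly as at the end of the proof of Theorem \ref{theorem-basic-fininite-principle}, yields $F\in C^m(\mathbb{R}^n,\mathbb{R}^D)\subset C^{m-1,1}(\mathbb{R}^n,\mathbb{R}^D)$ with $\|F\|_{C^{m-1,1}}\le C$ and $F(z)\in K(z)$ for all $z\in E$.

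Next I would remove the finiteness of $E$. Fix the constant $C$ from the finite case. For each finite $E_0\subset E$, the $k^\#$-finiteness hypothesis for $E$ implies it for $E_0$, so the finite case gives an element of $X_{E_0}:=\{G\in C^{m-1,1}(\mathbb{R}^n,\mathbb{R}^D):\|G\|_{C^{m-1,1}}\le C,\ G(x)\in K(x)\text{ for all }x\in E_0\}$; thus $X_{E_0}\neq\emptyset$. Each $X_{E_0}$ is closed in the topology of $C^{m-1}_{loc}(\mathbb{R}^n,\mathbb{R}^D)$ — here one uses that each $K(x)$ is closed and that $C^{m-1}_{loc}$-convergence implies pointwise convergence — and $\{X_{E_0}\}$ has the finite-intersection property, since $X_{E_0^{(1)}}\cap\cdots\cap X_{E_0^{(k)}}\supseteq X_{E_0^{(1)}\cup\cdots\cup E_0^{(k)}}$. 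By Ascoli's theorem the ball $\{G:\|G\|_{C^{m-1,1}(\mathbb{R}^n)}\le C\}$ is compact in $C^{m-1}_{loc}$, so $\bigcap_{E_0}X_{E_0}\neq\emptyset$, and any $F$ in this intersection satisfies $\|F\|_{C^{m-1,1}(\mathbb{R}^n,\mathbb{R}^D)}\le C$ and $F(x)\in K(x)$ for all $x\in E$. (Alternatively one may first handle $E$ contained in a fixed cube and then glue over unit cubes with a nonnegative partition of unity, just as in Section \ref{afc}; both routes work.) This proves the theorem.

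Everything here is routine given what precedes, so I do not expect a serious obstacle; the one place where the hypotheses must have been stated exactly right is the compactness step, where the closedness of the $K(x)$ — unnecessary in the $C^m$ version — is what makes each $X_{E_0}$ closed in $C^{m-1}_{loc}$. The only other thing to watch is that it is the $C^{m-1,1}$ Taylor estimate, not the $C^m$ one, that converts the local finiteness hypothesis into the Whitney-field hypothesis of Theorem \ref{theorem-fp-for-wsf} — precisely the substitution already made in the $C^{m-1,1}$ versions of the lemmas of Section \ref{inf} discussed in Section \ref{afc}.
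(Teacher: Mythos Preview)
Your argument is correct, but in the finite-$E$ step you take a slightly longer path than the paper. You re-run the shape-field machinery of Theorem \ref{theorem-basic-fininite-principle}, checking that Lemmas \ref{lemma-bfp1}--\ref{lemma-bfp3} still go through when the input $F^S$ is only $C^{m-1,1}$. The paper instead reduces directly to Theorem \ref{theorem-basic-fininite-principle} as a black box: given $F^S\in C^{m-1,1}$ with norm $\le 1$, the jets $P^x=J_x(F^S)$ form a Whitney field on the finite set $S$, so Whitney's extension theorem for finite sets produces $\tilde F^S\in C^m(\mathbb{R}^n,\mathbb{R}^D)$ with $\|\tilde F^S\|_{C^m}\le C$ and $\tilde F^S(x)=F^S(x)\in K(x)$ for $x\in S$; now the hypotheses of Theorem \ref{theorem-basic-fininite-principle} hold and one applies it directly. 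This saves re-verifying the lemmas, though your route is perfectly valid and your observation that only Taylor's theorem is used in Lemma \ref{lemma-bfp2} is exactly the point.

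For the passage to arbitrary $E$, the paper first restricts to $E\subset Q$ and uses compactness of the $C^{m-1,1}(Q)$-ball in the $C^{m-1}(Q)$-norm, then glues over unit cubes with a nonnegative partition of unity; you do the compactness in one shot via $C^{m-1}_{loc}$ on all of $\mathbb{R}^n$, which also works (and you correctly flag the cube-then-glue alternative). Your remark that closedness of $K(x)$ is precisely what is needed to make each $X_{E_0}$ closed under $C^{m-1}_{loc}$-limits is right on target.
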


\begin{proof}
We write $c$, $C$, $C^{\prime }$, etc., to denote constants determined by $m$%
, $n$, $D$. These symbols may denote different constants in different
occurrences.

Suppose first that $E$ is finite.

Given $F^{S}$ as in the above hypothesis, set $\vec{P}=\left( P^{x}\right)
_{x\in S}$, with $P^{x}=J_{x}\left( F^{S}\right) \in \mathcal{P}^{D}$. Then $%
P^{x}\left( x\right) \in K\left( x\right) $ for $x\in S$, $\left\vert
\partial ^{\beta }P^{x}\left( x\right) \right\vert \leq C$ for $x\in S$, $%
\left\vert \beta \right\vert \leq m-1$, and $\left\vert \partial ^{\beta
}\left( P^{x}-P^{y}\right) \left( x\right) \right\vert \leq C\left\vert
x-y\right\vert ^{m-\left\vert \beta \right\vert }$ for $x,y\in S$, $%
\left\vert \beta \right\vert \leq m-1$.

By Whitney's extension theorem for finite sets, there exists

\begin{itemize}
\item $\tilde{F}^{S}\in C^{m}\left( \mathbb{R}^{n},\mathbb{R}^{D}\right) $
such that

\item $\left\Vert \tilde{F}^{S}\right\Vert _{C^{m}\left( \mathbb{R}^{n},%
\mathbb{R}^{D}\right) }\leq C$, and
\end{itemize}

$J_{x}\left( \tilde{F}^{S}\right) =P^{x}$ for $x\in S$; in particular,

\begin{itemize}
\item $\tilde{F}^{S}\left( x\right) =F^{S}\left( x\right) \in K\left(
x\right) $ for $x\in S$.
\end{itemize}

Thanks to the above bullet points, we have satisfied the hypotheses of the
Finiteness Principle for Vector-Valued Functions (Theorem \ref%
{theorem-basic-fininite-principle}). Hence, we obtain $F\in C^{m}\left( 
\mathbb{R}^{n},\mathbb{R}^{D}\right) \subset C^{m-1,1}\left( \mathbb{R}^{n},%
\mathbb{R}^{D}\right) $ such that $\left\Vert F\right\Vert _{C^{m-1,1}\left( 
\mathbb{R}^{n},\mathbb{R}^{D}\right) }\leq C\left\Vert F\right\Vert
_{C^{m}\left( \mathbb{R}^{n},\mathbb{R}^{D}\right) }\leq C^{\prime }$, and $%
F\left( x\right) \in K\left( x\right) $ for each $x\in E$. Thus, we have
proven Theorem \ref{theorem-basic-finite-principle-for-cm-1} in the case of
finite $E$.

Next, suppose $E$ is an arbitrary subset of a cube $Q\subset \mathbb{R}^{n}$%
. Then 
\begin{equation*}
X=\left\{ F\in C^{m-1,1}\left( Q,\mathbb{R}^{D}\right) :\left\Vert
F\right\Vert _{C^{m-1,1}\left( Q,\mathbb{R}^{D}\right) }\leq C\right\} 
\end{equation*}%
is compact in the topology of the $C^{m-1}(Q,\mathbb{R}^{D})$-norm, by
Ascoli's theorem.

For each $x\in E$, let 
\begin{equation*}
X(x)=\left\{ F\in X:F\left( x\right) \in K\left( x\right) \right\} \text{.}
\end{equation*}

Then each $X(x)$ is a closed subset of $X$, since $K(x)\subset \mathbb{R}%
^{n} $ is closed. Moreover, given finitely many points $x_{1},\cdots
,x_{N}\in E$, we have $X(x_{1})\cap \cdots \cap X(x_{N})\not=\emptyset $,
thanks to Theorem \ref{theorem-basic-finite-principle-for-cm-1} in the known
case of finite sets. 

Consequently, $\bigcap_{x\in E}X(x)\not=\emptyset $. Thus, there exists $%
F\in C^{m-1,1}\left( Q,\mathbb{R}^{D}\right) $ such that

\begin{itemize}
\item[\refstepcounter{equation}\text{(\theequation)}\label{afc3}] $%
\left\Vert F\right\Vert _{C^{m-1,1}\left( Q,\mathbb{R}^{D}\right) }\leq C$
and $F\left( x\right) \in K\left( x\right) $ for all $x\in E$.
\end{itemize}

We have achieved \eqref{afc3} under the assumption $E\subset Q$.

Finally, let $E\subset \mathbb{R}^{n}$ be arbitrary.

We introduce a partition of unity $1=\sum_{\nu }\theta _{\nu }$ on $\mathbb{R%
}^{n}$ with $\theta _{\nu }\in C^{m}\left( \mathbb{R}^{n}\right) $, $%
\left\Vert \theta \right\Vert _{C^{m}\left( \mathbb{R}^{n}\right) }\leq C$, $%
\theta _{\nu }\geq 0$ on $\mathbb{R}^{n}$, support $\theta _{\nu }\subset
Q_{\nu }$ where $Q_{\nu }$ is a cube of sidelength $1$, and any given $x\in 
\mathbb{R}^{n}$ has a neighborhood that meets at most $C$ of the $Q_{\nu }$.
For each $\nu $, the known case of Theorem \ref%
{theorem-basic-finite-principle-for-cm-1} yields a function $F_{\nu }\in
C^{m-1,1}\left( Q_{\nu },\mathbb{R}^{D}\right) $ with $\left\Vert F_{\nu
}\right\Vert _{C^{m-1,1}\left( Q_{\nu },\mathbb{R}^{D}\right) }\leq C$ and $%
F_{\nu }\left( x\right) \in K\left( x\right) $ for all $x\in E\cap Q_{\nu }$.

We set $F=\sum_{\nu }\theta _{\nu }F_{\nu }$ on $\mathbb{R}^{n}$. One checks
easily that $F\in C^{m-1,1}\left( \mathbb{R}^{n},\mathbb{R}^{D}\right) $, $%
\left\Vert F\right\Vert _{C^{m-1,1}\left( \mathbb{R}^{n},\mathbb{R}%
^{D}\right) }\leq C$, and $F\left( x\right) \in K\left( x\right) $ for all $%
x\in E$.

This completes the proof of Theorem \ref%
{theorem-basic-finite-principle-for-cm-1}.
\end{proof}

\bigskip

\bibliographystyle{plain}
\def\cprime{$'$} \def\cprime{$'$}

\vspace{1mm} 
\newpage

\begin{flushleft} 
Charles Fefferman \\
Affiliation: Department of Mathematics, Princeton University, Fine Hall Washington Road, Princeton, New Jersey, 08544, USA \\
Email: cf$\MVAt$math.princeton.edu  \\
\vspace{2mm} 
Arie Israel \\ 
Affiliation: The University of Texas at Austin, 
Department of Mathematics,
2515 Speedway Stop C1200, 
Austin, Texas, 78712-1202, USA \\ 
Email: arie$\MVAt$math.utexas.edu \\
\vspace{2mm} 
Garving K. Luli \\ 
Affiliation: Department of Mathematics, 
University of California, Davis, 
One Shields Ave, 
Davis, California, 95616, USA \\ 
Email: kluli@$\MVAt$math.ucdavis.edu 
\end{flushleft}

\end{document}